\newtheorem{theorem}{Theorem}[section]
\newtheorem{lemma}[theorem]{Lemma}
\newtheorem{proposition}[theorem]{Proposition}
\newtheorem{corollary}[theorem]{Corollary}
\theoremstyle{definition}
\newtheorem{definition}[theorem]{Definition}
\newtheorem{construction}[theorem]{Construction}
\newtheorem{remark}[theorem]{Remark}
\numberwithin{equation}{theorem}
\def\vector2#1#2{\left(\begin{array}{c} #1 \\ #2 \end{array}\right)}
\def\Cl{{\rm Cl}}
\def\Bb{\mathsf{B}}
\def\Qq{\mathsf{Q}}
\def\CC{{\mathbb C}}
\def\ZZ{{\mathbb Z}}
\def\QQ{{\mathbb Q}}
\def\OOO{{\mathcal O}}
\def\RRR{{\mathcal R}}
\def\HHH{{\mathcal H}}
\def\lll{{\mathfrak l}}
\def\conv{{\rm conv}}
\def\quot{/\!\!/}
\def\cDV{\mathtt{cDV}}
\def\bangle#1{{\langle #1 \rangle}}
\def\GL{{\rm GL}}
\def\GL{{\rm GL}}
\def\cone{{\rm cone}}
\def\trop{{\rm trop}}
\def\lin{{\rm lin}}
\title[Canonical threefold singularities and $k$-empty polytopes]{Canonical threefold singularities with a torus action of complexity one and $k$-empty polytopes}
\author[L.~Braun, D. H\"attig]{Lukas Braun and Daniel H\"attig}
\address{Mathematisches Institut, Universit\"at T\"ubingen,
Auf der Morgenstelle 10, 72076 T\"ubingen, Germany}
\email{braun@math.uni-tuebingen.de}
\address{Mathematisches Institut, Universit\"at T\"ubingen,
Auf der Morgenstelle 10, 72076 T\"ubingen, Germany}
\email{haettig@math.uni-tuebingen.de}
\subjclass[2010]{14B05, 14R05, 52B20, 11B57}
\keywords{Canonical singularities, Varieties with torus action, $k$-empty polytopes, Farey sequences}
\begin{document}

\begin{abstract}
We classify the canonical threefold singularities that allow an effective two-torus action. This extends classification results of Mori on terminal threefold singularities and of Ishida and Iwashita on toric canonical threefold singularities. Our classification relies on lattice point emptiness of certain polytopal complexes with rational vertices. Scaling the polytopes by the least common multiple $k$ of the respective denominators, we investigate $k$-emptiness of polytopes with integer vertices. We show that two dimensional $k$-empty polytopes either are sporadic or come in series given by Farey sequences. 
We finally present the Cox ring iteration tree of the classified singularities, where all roots, i.e. all spectra of factorial Cox rings, are generalized compound du Val singularities.
\end{abstract}

\maketitle

\tableofcontents

\section{Introduction}

The aim of the present paper is to contribute to the classification of (subclasses of) log-terminal singularities by classifying the three-dimensional canonical singularities that allow a two-torus action. 
Log-terminal singularities and their subclasses are of special importance for the minimal model program, see~\cite{kollarmori}. The focusing on subclasses of log-terminal singularities such as canonical and compound du Val singularities goes back to Miles Reid, see~\cite{reidcan3fold, reidminmod3fold, reidYPG}. 

By the \emph{complexity} of a variety we mean the difference between its dimension and the dimension of the biggest possible effectively acting torus. By a \emph{singularity}, here and throughout the paper we mean a pair $(X,x)$, where  $X$ is a normal affine variety together with a distinguished singular point $x \in X$, such that for every open neighbourhood $X'$ of $x$, we have $X' \cong X$. When it is clear what the point $x$ is, we only write $X$ for the singularity.

We begin with a summary of the status quo in dimensions two and three. The following first table shows classification results for log-terminal surface singularities: 

\begin{longtable}{l||c|c|}
& toric
& complexity one
\\
\hhline{==|=|}
terminal & \multicolumn{2}{c|}{smooth}
\\
\hline
canonical
&
$A_n$
&
$D_n$, $E_6$, $E_7$, $E_8$
\\
\hline
log-terminal
&
cyclic quotient,~\cite[\S 10.1]{CLS} 
&
finite quotient,~\cite{ltpticr, Bries}
\end{longtable}

All log-terminal surface singularities come at least with a one-torus action, which can be seen as a byproduct of the fact that they all are finite quotients $\CC^2/G$ with $G$ a finite subgroup of ${\rm GL}_2(\CC)$. The canonical ones are the well-known $ADE$-singularities - all Gorenstein -, serving as \emph{index one cover} for the others.
The next table shows the situation in dimension three:

\begin{longtable}{l||c|c|c|}
& toric
& complexity one
& higher complexity
\\
\hhline{==|=|=|}
terminal & \multicolumn{3}{c|}{\cite{mori}}
\\
\hline
compound du Val
&
\cite{Da}
&
\cite{ltpticr}
& 
\cite{mark}
\\
\hline
canonical
&
\cite{II}
&
{this paper}
&
? 
\\
\hline
log-terminal
&
{all}
&
{platonic tuples}
& ?
\end{longtable}

The terminal case for all complexities was examined by Mori in~\cite{mori}. The compound du Val singularities, introduced in~\cite{reidcan3fold} and lying between the terminal and the canonical ones, are characterized by being canonical and having a hyperplane section with again at most canonical singularities. They appear in the form of a very short list in~\cite{Da} for the toric case, while those of complexity one have been classified in~\cite{ltpticr} using a Cox ring based approach. Markushevich in~\cite{mark} did not find a complete list in the general case, but gave efficient criteria to decide compound du Valness. The compound du Val singularities will in fact appear in the \emph{Cox ring iteration}, see~\cite{ltpticr, ocri}, of the canonical ones from the present paper.
As the toric case is already settled, we proceed here with those canonical singularities that only admit a two-torus action. In this case, the aforementioned Cox ring based approach is ideally suited. In the following discussion of this approach, the meaning of \emph{platonic tuples} describing the log-terminal threefold singularities with two-torus action will become clear. 

For a normal variety $X$ that has finitely generated class group $\Cl(X)$ and only constant globally invertible functions, the \emph{Cox ring} of $X$ is
$$
\RRR(X):=\bigoplus\limits_{\Cl(X)} \Gamma(X,\OOO_X(D)).
$$
For those $X$ admitting an effective torus action of complexity one, there is an explicit combinatorial description of the Cox ring, as was elaborated in~\cite{hausenherpp, hausensüß, hausenwrob1}. In principle this description generalizes the one of toric varieties by cones and fans. As we are concerned with \emph{singularities} admitting a torus action of complexity one, we deal with the affine varieties of complexity one of \emph{Type 2} from~\cite{hausenwrob1}, as \emph{Type 1} only produces toric singularities, see also~\cite{ltpticr}. We give the most basic construction for threefolds of Type 2 in the following, for details and arbitrary dimension we refer to~\cite{hausenherpp, hausensüß, hausenwrob1}:

\begin{construction}
\label{constr:RAP0}
Fix integers $r,n > 0$, $m \ge 0$ and a partition 
$n = n_0+\ldots+n_r$.
For each $0 \le i \le r$, fix a tuple
$l_{i} \in \ZZ_{> 0}^{n_{i}}$ and define a monomial
$$
T_{i}^{l_{i}}
\ := \
T_{i1}^{l_{i1}} \cdots T_{in_{i}}^{l_{in_{i}}}
\ \in \
\CC[T_{ij},S_{k}; \ 0 \le i \le r, \ 1 \le j \le n_{i}, \ 1 \le k \le m].
$$
We write $\CC[T_{ij},S_{k}]$ for the 
polynomial ring from above in the following. 
For every $i \in \left\{0, \ldots, r-2\right\}$
define
$$
g_{i}
\ :=  \
(i+1)T_{i}^{l_i}+T_{i+1}^{l_{i+1}}+T_{i+2}^{l_{i+2}}
\ \in \
\CC[T_{ij},S_{k}].
$$\\
We  choose integral
$2 \times n_i$ matrices $d_i$ and column vectors $d_k' \in \ZZ^2$. 
From all these data, we build up an $(r+2) \times (n + m)$ matrix:
$$
P
\ := \
\left[
\begin{array}{ccccccc}
-l_{0} & l_{1} &  & 0 & 0  &  \ldots & 0
\\
\vdots & \vdots & \ddots & \vdots & \vdots &  & \vdots
\\
-l_{0} & 0 &  & l_{r} & 0  &  \ldots & 0 \\
d_0 & d_1 & \cdots & d_r & d'_{1} & \ldots & d'_m
\end{array}
\right].
$$
We require the columns of $P$ to be the pairwise 
different ray generators of a full-dimensional convex polyhedral cone in 
$\QQ^{r+2}$.  
Let $P^*$ denote the transpose of $P$ and 
consider the projection
$$
Q \colon 
\ZZ^{n+m} 
\ \to \ 
K 
\ := \ 
\ZZ^{n+m} / \mathrm{im}(P^*).
$$
Denoting by $e_{ij}, e_k \in \ZZ^{n+m}$ the 
canonical basis vectors corresponding to
the variables $T_{ij}$ and $S_k$, we obtain a 
$K$-grading on $\CC[T_{ij}, S_k]$ by setting
$$
\deg(T_{ij}) \ := \ Q(e_{ij} ) \ \in \ K,
\qquad\qquad
\deg(S_k) \ := \ Q(e_k) \ \in \ K.
$$
In particular, we have the $K$-graded factor algebra
$$
R(P)
\ := \
\CC[T_{ij},S_{k}] / \bangle{g_{i}; \ 0 \leq i \leq r },
$$
and, moreover, the quasitorus $H:={\rm Spec} \, \CC[K]$ as well as $\overline{X}(P):={\rm Spec} \, R(P)$ and the good quotient
$$
X(P)
\ := \
\overline{X}(P) \quot H
$$
having class group $\Cl(X(P))=K$.
\end{construction}

Now every normal rational threefold singularity $(X,x)$ admitting a two-torus action can be presented as $X \cong X(P)$ for some suitable defining matrix $P$, where $x$ is always the image of the origin $O \in \overline{X}(P) \subseteq {\rm Spec} \, \CC[T_{ij},S_{k}]$.

The log-terminal ones among the singularities of complexity one are characterized by \emph{platonic tuples}. We call a tuple $(a_0,\ldots,a_r)$ of positive integers \emph{platonic}, if either $r=1$ or $r\geq 2$ and after ordering the tuple decreasingly, we have $a_i=1$ for $i\geq 3$ and the triple of the first three is one of
$$
(5,3,2),
\quad
(4,3,2),
\quad
(3,3,2),
\quad
(a,2,2),
\quad
(a,b,1).
$$
In~\cite{ltpticr} it was shown that a singularity $X=X(P)$ admitting a torus action of complexity one is log-terminal if and only if for the defining tuples $l_i$ the maximal entries $\mathfrak{l}_i:=\max_{j_i}(l_{ij_i})$ form a platonic tuple $(\mathfrak{l}_0,\ldots,\mathfrak{l}_r)$.

In Construction~\ref{constr:RAP0}, it was stated that the columns of $P$ need to be the primitive ray generators of a convex polyhedral cone $\sigma_P$. This cone defines an affine toric variety $Z(P)$, the \emph{minimal ambient toric variety} of $X(P)$. In~\cite{michelepaper}, a certain polyhedral complex, the \emph{anticanonical complex} $A_X^c \subseteq \QQ^{r+2}$, lying on the intersection of $\sigma_P$ with the \emph{tropical variety} of $X(P)$, was defined. This polyhedral complex is a generalization of the polyhedron spanned by the origin and the primitive ray generators of the defining cone of an affine toric variety. As in the toric case, exceptional prime divisors correspond to integer points in and 'over' this polyhedral complex, so canonicity (and terminality etc.) depends on integer points inside it. 

This is where the $k$-empty polytopes come into play. 
Throughout the paper, we denote polytopes by sans-serif letters.
Some of the subpolytopes $\mathsf{P}$ of the anticanonical complex $A_X^c$ may have rational vertices. If the least common denominator of all the rational entries is $k$, then lattice points inside $\mathsf{P}$ correspond to $k$-fold points in $k\ZZ^{r+2}\cap k\mathsf{P}$. Now $k\mathsf{P}$ is a lattice polytope. We investigate $k$-emptiness of lattice polytopes in Section~\ref{sec:pol} and classify two-dimensional ones using \emph{Farey sequences}. Specifically, the $k$-th Farey sequence consists of all completely reduced fractions $0\le\frac{f_{1}}{f_{2}}<1$ where $f_{2}\le k$. The members of the $k$-th Farey sequence are called $k$-th Farey numbers in the following. The \emph{$k$-th Farey strip} corresponding to a $k$-th Farey number $f=\frac{f_{1}}{f_{2}}$ is the polyhedron
\[
F_{k,f} \ := \ 
\begin{cases}
  \left\{ \begin{pmatrix}x\\y\end{pmatrix}; \ 0 \ < \ \begin{pmatrix}
x\\y\end{pmatrix} \cdot \begin{pmatrix}-f_{1}\\f_{2}\end{pmatrix}
\ < \ k \right\}, \ &\mathrm{if} \ f_{2} = k,\\
  \left\{ \begin{pmatrix}x\\y\end{pmatrix}; \ 0 \ < \ \begin{pmatrix}
x\\y\end{pmatrix} \cdot \begin{pmatrix}-f_{1}\\f_{2}\end{pmatrix}
\ \le \ k \right\}, \ &\mathrm{if} \ f_{2} \ne k.
\end{cases}
\]
We obtain the following structure theorem for $k$-empty lattice triangles:

\begin{theorem}
\label{th:farey}
Let $\mathsf{S}$ be a $k$-empty lattice triangle. Then, up to $k$-affine unimodular transformation, $\mathsf{S}$ is either contained in a $k$-th Farey strip or it is one of finitely many sporadic exceptions.
\end{theorem}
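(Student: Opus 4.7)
The approach is to classify $k$-empty lattice triangles via their \emph{lattice width}
\[
w(\mathsf{S})\ :=\ \min_{u\in\ZZ^2\text{ primitive}} \bigl(\max_{p\in\mathsf{S}}\langle u,p\rangle-\min_{p\in\mathsf{S}}\langle u,p\rangle\bigr),
\]
with the guiding dichotomy: either $w(\mathsf{S})\le k$, in which case $\mathsf{S}$ should fit into a Farey strip, or $w(\mathsf{S})>k$, in which case $\mathsf{S}$ is one of the sporadic exceptions. The bound in the sporadic regime would come from a two-dimensional flatness estimate: $\mathsf{S}$ being $k$-empty is equivalent to $\tfrac{1}{k}\mathsf{S}$ being a hollow rational triangle in $\RR^2$, and for such triangles the lattice width is universally bounded, yielding $w(\mathsf{S})\le c\,k$ for an absolute constant $c$.

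For the main case $w(\mathsf{S})\le k$, I would let $u\in\ZZ^2$ be a primitive direction achieving the minimum. A $\GL_2(\ZZ)$-transformation brings $u$ into the form $(-f_1,f_2)$ with $\gcd(f_1,f_2)=1$ and $0\le f_1<f_2$; if additionally $f_2\le k$, then $f=f_1/f_2$ is a $k$-th Farey number, and a subsequent $\ZZ^2$-translation places $\mathsf{S}$ into the strip $\{p : 0\le\langle u,p\rangle\le k\}$. The case distinction $f_2=k$ versus $f_2<k$ in the definition of $F_{k,f}$ matches precisely when a $k\ZZ^2$-point can occur on the boundary hyperplane $\langle u,p\rangle=k$: for $f_2=k$ no such point exists and the strict inequality is forced, while for $f_2<k$ such points occur only on the boundary and not in the interior, so that the non-strict upper inequality still yields $k$-emptiness. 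If instead the minimiser has $f_2>k$, I would invoke the Stern--Brocot / best-approximation structure of the $k$-th Farey sequence to find a neighbouring Farey fraction $f'$ of denominator at most $k$ and verify, via a mediant argument, that $\mathsf{S}$ still lies in $F_{k,f'}$, except in finitely many configurations that enter the sporadic list.

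In the sporadic regime $w(\mathsf{S})>k$, the flatness bound $w(\mathsf{S})\le ck$ together with a Pick-type area estimate for the sublattice $k\ZZ^2$ confines $\mathsf{S}$, after $\GL_2(\ZZ)$-normalisation, to a bounded region. This yields only finitely many congruence classes of lattice triangles and finitely many admissible values of $k$, and explicit enumeration would produce the sporadic list.

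The principal obstacle is the subcase $f_2>k$ of the main step: approximating the width-minimising primitive direction by a $k$-th Farey direction while preserving $k$-emptiness and respecting the open/closed boundary conventions of $F_{k,f}$. This amounts to a quantitative form of the best-approximation property of the Farey sequence, combined with a careful analysis of when a $k\ZZ^2$-point can lie on the boundary hyperplane $\langle u,p\rangle=k$ and how it interacts with the boundary of $\mathsf{S}$.
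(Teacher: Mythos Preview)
Your approach via lattice width and the flatness theorem is a genuinely different route from the paper's. The paper never invokes lattice width; instead it first puts the triangle into an explicit standard form with vertices $(0,0)$, $(0,a_{\mathsf{S}})$, $(x,y)$, then argues that if $(x,y)$ lies outside every $k$-th Farey strip it must lie in a \emph{spike} attached to some strip. An explicit computation of the spike vertices and their area, combined with Pick's theorem applied to the spike (not to $\mathsf{S}$), shows that eventually the spikes contain no interior lattice points, giving the concrete bound $x\le(k^2-1)k-1$. This explicit bound is what the paper actually uses downstream.

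Your proposal has the right spirit but real gaps. First, the worry about $f_2>k$ and the Stern--Brocot/mediant detour is misplaced: $\GL_2(\ZZ)$ acts transitively on primitive vectors, so the width-minimising direction can always be brought to $(0,1)$, i.e.\ $f=0/1$ with $f_2=1\le k$. There is no obstruction here to overcome. Second, and more seriously, the finiteness argument in the regime $w(\mathsf{S})>k$ is incomplete. Bounding the lattice width by $ck$ does not by itself confine the triangle: a triangle of bounded width can still be arbitrarily long in the transverse direction. Your ``Pick-type area estimate for the sublattice $k\ZZ^2$'' does not apply as stated, since the vertices of $\mathsf{S}$ need not lie in $k\ZZ^2$, so Pick gives no area bound. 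What is actually needed is the observation that, after normalising the width direction to $(0,1)$, the open height interval of length $>k$ contains a line $y=mk$ of $k$-fold points; $k$-emptiness then forces the horizontal slice of $\mathsf{S}$ at that height to be short, and for a triangle a bound on one interior slice together with the height bound does confine the whole figure. This argument is available, but it is not the one you sketched. Finally, the aside about ``finitely many admissible values of $k$'' is confused: $k$ is fixed throughout the statement.
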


We show in Section~\ref{sec:pol} that the sporadic triangles are contained in $\emph{spikes}$ attached to Farey strips and can be listed explicitly. Subsequently, the techniques developed are applied when we consider \emph{canonical polytopes}, i.e. polytopes defining canonical singularities.

There are two invariants of log-terminal singularities that turn out to be essential for our classification: the well-known \emph{Gorenstein index} $\imath$ and the \emph{canonical multiplicity} $\zeta$, defined in~\cite{ltpticr}. There is a finite number of combinations of these invariants, as Corollary 4.6 and Proposition 4.7 of~\cite{ltpticr} show, and the classification is split up in this finite number of cases.
Since for Gorenstein singularities, log-terminality equals canonicity, the Gorenstein canonical singularities are characterized by the platonic tuples together with $\imath=1$. As in the toric case, where a classification of the Gorenstein canonical threefold singularities would be equivalent to a classification of all lattice polygons up to lattice equivalence, this is no appropriate class for classification. Thus we go the same way as~\cite{II} in the toric case: we let $\imath\geq 2$.
The results are split up in two groups: those singularities that can be given by their Cox ring in general and those that come in large series admitting only a description by the defining matrix $P$. However, determination of the Cox ring of a single member of these series is always possible.

\begin{theorem}
\label{th:class}
Let $X$ be an affine canonical threefold singularity of Gorenstein index $\imath \geq 2$ admitting a two-torus action.
The following table lists those $X$ 
that are either sporadic or belong to a series of singularities with 'few' - up to three - parameters. The singularities $X$ are encoded by their Cox Ring and class group:

\renewcommand{\arraystretch}{1.3} 
\setlength{\tabcolsep}{0.5pt}
\setlength{\arraycolsep}{1pt}

\begin{longtable}{c|c|c|c|c|c}
No. & $\RRR(X)$ & $\Cl(X)$ & $Q$ & $\imath$ & $\zeta$ \\
\hline
1 
&
$\CC[T_1,\ldots,T_4]$
&
$
{\tiny
\begin{array}{c}
\ZZ \times \ZZ/ 2\mathfrak{d}\ZZ, \\
m,n \in \ZZ_{\geq 1} \\
\mathfrak{d}\!=\!\gcd(2m,m\!+\!n)
 \end{array}
 }
$
&
$
{\tiny
\begin{array}{c}
\begin{bmatrix}

-\!\frac{m+n}{\mathfrak{d}} & \frac{2n}{\mathfrak{d}} & -\!\frac{m\!+\!n}{\mathfrak{d}} & \frac{2m}{\mathfrak{d}} \\
\overline{\alpha_1 \!+\!\mathfrak{d}} & \overline{-(2\alpha_1\!+\!\alpha_2)} & \overline{\alpha_1} & \overline{\alpha_2} 
\end{bmatrix} \\
\mathrm{~with~}
2m\alpha_1+(m+n)\alpha_2=\mathfrak{d}
\end{array}
}
$ 
&
$2$
&
$1$
\\
\hline
2
&
$\CC[T_1,\ldots,T_3]$
&
$
{\tiny
\begin{array}{c}
\ZZ/\imath m \ZZ, \\
m,n \in \ZZ_{\geq 1}, \\
\gcd(n,\imath)=1
\end{array}
}
$
&
$
{\tiny
\begin{array}{c}
\begin{bmatrix}
\overline{1} & \overline{m\alpha_1} & \overline{-1}
\end{bmatrix} \\
 \mathrm{~with~}
n\alpha_1 \equiv 1 \mod \imath
\end{array}
}
$ 
&
$\geq\!2$
&
$1$
\\
\hline
3
&
$\CC[T_1,\ldots,T_3]$
&
$
{\tiny
\begin{array}{c}
\ZZ/4m\ZZ, \\
m \in \ZZ_{\geq 2}
\end{array}
}
$ 
&
${\tiny
\begin{bmatrix}\overline{2} & \overline{2m-1} & \overline{-1}\end{bmatrix}
}$
&
$2$
&
$1$
\\
\hline
4
&
$\CC[T_1,\ldots,T_3]$
&
$
\ZZ/10\ZZ
$
&
${\tiny
\begin{bmatrix}\overline{1} & \overline{1} & \overline{3}\end{bmatrix}
}$
&
$2$
&
$1$
\\
\hline
5 
&
$\CC[T_1,\ldots,T_3]$
&
$
\ZZ/9\ZZ
$
&
${\tiny
\begin{bmatrix}\overline{1} & \overline{4} & \overline{7}\end{bmatrix}
}$
&
$3$
&
$1$
\\
\hline
6
&
$\frac{\CC[T_1,\ldots,T_4]}{\left\langle T_1^3T_2+T_3^3+T_4^2\right\rangle}$
&
$\ZZ/2\ZZ$
&
${\tiny
\begin{bmatrix}
\overline{1} & \overline{1} & \overline{0} & \overline{1}
\end{bmatrix}}
$
&
$2$
&
$1$
 \\
\hline
7
&
$\frac{\CC[T_1,\ldots,T_5]}{\left\langle T_1^3T_2+T_3^3T_4+T_5^2\right\rangle}$
&
$\ZZ \times \ZZ/2\ZZ$
&
${\tiny
\begin{bmatrix}
3 & -1 & 3 & -1 & 4 \\
\overline{0} & \overline{0} & \overline{1} & \overline{1} & \overline{1}
\end{bmatrix}}
$
&
$2$
&
$1$
\\
\hline
8
&
$\frac{\CC[T_1,\ldots,T_4]}{\left\langle T_1^3+T_2^2T_3+T_4^2\right\rangle}$
&
$ \ZZ/3\ZZ$
&
${\tiny
\begin{bmatrix}
\overline{2} & \overline{1} & \overline{1} & \overline{0}
\end{bmatrix}}
$
&
$3$
&
$1$
\\
\hline
9
&
$\frac{\CC[T_1,\ldots,T_4]}{\left\langle T_1^3+T_2^2+T_3T_4\right\rangle}$
&
$ \ZZ/10\ZZ$
&
${\tiny
\begin{bmatrix}
\overline{2} & \overline{3} & \overline{9} & \overline{7}
\end{bmatrix}}
$
&
$2$
&
$1$
\\
\hline
10
&
$\frac{\CC[T_1,\ldots,T_4]}{\left\langle T_1^4+T_2^3+T_3^2\right\rangle}$
&
$ \ZZ/2\ZZ$
&
${\tiny
\begin{bmatrix}
\overline{1} & \overline{0} & \overline{1} & \overline{1}
\end{bmatrix}}
$
&
$2$
&
$2$
\\
\hline
11
&
$\frac{\CC[T_1,\ldots,T_4]}{\left\langle T_1^4+T_2^3T_3+T_4^2\right\rangle}$
&
$ \ZZ/2\ZZ$
&
${\tiny
\begin{bmatrix}
\overline{0} & \overline{1} & \overline{1} & \overline{1}
\end{bmatrix}}
$
&
$2$
&
$2$
\\
\hline
12
&
$\frac{\CC[T_1,\ldots,T_5]}{\left\langle T_1^4+T_2^3T_3+T_4^2\right\rangle}$
&
$ \ZZ  \times \ZZ/2\ZZ$
&
${\tiny
\begin{bmatrix}
5 & 7 & -1 & 10 & -1 \\
\overline{1} & \overline{0} & \overline{0} & \overline{1} & \overline{1}
\end{bmatrix}}
$
&
$2$
&
$2$
\\
\hline
13
&
$\frac{\CC[T_1,\ldots,T_5]}{\left\langle T_1^4+T_2^3+T_3^2\right\rangle}$
&
$ \ZZ  \times \ZZ/2\ZZ$
&
${\tiny
\begin{bmatrix}
-6 & -8 & -12 & 1 & 1  \\
\overline{1} & \overline{0} & \overline{1} & \overline{1} & \overline{0}
\end{bmatrix}}
$
&
$2$
&
$2$
\\
\hline
14
&
$\frac{\CC[T_1,\ldots,T_5]}{\left\langle T_1^4+T_2^3T_3T_4+T_5^2\right\rangle}$
&
$ \ZZ  \times \ZZ/2\ZZ$
&
${\tiny
\begin{bmatrix}
4 & 6 & -1 & -1 & 8 \\
\overline{0} & \overline{1} & \overline{0} & \overline{1} & \overline{1}
\end{bmatrix}}
$
&
$2$
&
$2$
\\
\hline
15
&
$\frac{\CC[T_1,\ldots,T_4]}{\left\langle T_1^3+T_2^3+T_3^2\right\rangle}$
&
$  \ZZ/3\ZZ$
&
${\tiny
\begin{bmatrix}
\overline{1} & \overline{2} & \overline{0} & \overline{2}
\end{bmatrix}}
$
&
$3$
&
$3$
\\
\hline
16
&
$\frac{\CC[T_1,\ldots,T_4]}{\left\langle T_1^3+T_2^3+T_3^2T_4\right\rangle}$
&
$  \ZZ/3\ZZ$
&
${\tiny
\begin{bmatrix}
\overline{0} & \overline{1} & \overline{2} & \overline{2}
\end{bmatrix}}
$
&
$3$
&
$3$
\\
\hline
17
&
$\frac{\CC[T_1,\ldots,T_4]}{\left\langle T_1^4+T_2^2+T_3^2T_4\right\rangle}$
&
$  \ZZ/4\ZZ$
&
${\tiny
\begin{bmatrix}
\overline{1} & \overline{0} & \overline{3} & \overline{2}
\end{bmatrix}}
$
&
$2$
&
$2$
\\
\hline
18
&
$\frac{\CC[T_1,\ldots,T_5]}{\left\langle T_1^4+T_2^2+T_3^2T_4T_5\right\rangle}$
&
$\ZZ \times  \ZZ/4\ZZ$
&
${\tiny
\begin{bmatrix}
1 & 2 & 3 & -1 & -1 \\
\overline{0} & \overline{2} & \overline{0} & \overline{3} & \overline{1}
\end{bmatrix}}
$
&
$2$
&
$2$
\\
\hline
19
&
$\frac{\CC[T_1,\ldots,T_5]}{\left\langle T_1^4+T_2^2+T_3^2T_4\right\rangle}$
&
$\ZZ \times  \ZZ/2\ZZ$
&
${\tiny
\begin{bmatrix}
3 & 6 & 7 & -2 & -2 \\
\overline{1} & \overline{1} & \overline{0} & \overline{0} & \overline{1}
\end{bmatrix}}
$
&
$2$
&
$2$
\\
\hline
20
&
$\frac{\CC[T_1,\ldots,T_4]}{\left\langle T_1^4+T_2^4+T_3T_4\right\rangle}$
&
$\ZZ/8\ZZ$
&
${\tiny
\begin{bmatrix}
\overline{5} & \overline{7} & \overline{1} & \overline{3}
\end{bmatrix}}
$
&
$2$
&
$2$
\\
\hline
21
&
$\frac{\CC[T_1,\ldots,T_4]}{\left\langle T_1^4+T_2^2+T_3T_4\right\rangle}$
&
$\ZZ/6\ZZ$
&
${\tiny
\begin{bmatrix}
\overline{2} & \overline{1} & \overline{3} & \overline{5}
\end{bmatrix}}
$
&
$2$
&
$2$
\\
\hline
22
&
$\frac{\CC[T_1,\ldots,T_4]}{\left\langle T_1^8+T_2^2+T_3T_4\right\rangle}$
&
$\ZZ/10\ZZ$
&
${\tiny
\begin{bmatrix}
\overline{6} & \overline{9} & \overline{7} & \overline{1}
\end{bmatrix}}
$
&
$2$
&
$2$
\\
\hline
23
&
$\frac{\CC[T_1,\ldots,T_4]}{\left\langle T_1^2+T_2^2+T_3T_4\right\rangle}$
&
$\ZZ/8\ZZ$
&
${\tiny
\begin{bmatrix}
\overline{5} & \overline{1} & \overline{7} & \overline{3}
\end{bmatrix}}
$
&
$4$
&
$2$
\\
\hline
24
&
$\frac{\CC[T_1,\ldots,T_4]}{\left\langle T_1^3+T_2^3+T_3T_4\right\rangle}$
&
$\ZZ/6\ZZ$
&
${\tiny
\begin{bmatrix}
\overline{3} & \overline{1} & \overline{4} & \overline{5}
\end{bmatrix}}
$
&
$3$
&
$3$
\\
\hline
25
&
$\frac{\CC[T_1,\ldots,T_4]}{\left\langle T_1^5+T_2^4+T_3T_4\right\rangle}$
&
$\ZZ/6\ZZ$
&
${\tiny
\begin{bmatrix}
\overline{4} & \overline{5} & \overline{1} & \overline{1}
\end{bmatrix}}
$
&
$2$
&
$3$
\\
\hline
26
&
$\frac{\CC[T_1,\ldots,T_5]}{\left\langle T_1^{k-1}+T_2T_3+T_4T_5\right\rangle}$
&
$\ZZ\!\times\! \ZZ/2k\ZZ$
&
${\tiny
\begin{bmatrix}
0 & 1 & -1 & -1 & 1 \\
\overline{1} & \overline{k} & \overline{-1} & \overline{k-1} & \overline{0}
\end{bmatrix}}
$
&
$2$
&
$1$
\\
\hline
27
&
$\frac{\CC[T_1,\ldots,T_4]}{\left\langle T_1^{2k}+T_2^2+T_3^2\right\rangle}$
&
$(\ZZ/2\ZZ)^2$
&
${\tiny
\begin{bmatrix}
\overline{1} & \overline{k+1} & \overline{k} & \overline{1}  \\
\overline{1} & \overline{k} & \overline{k+1} & \overline{0} 
\end{bmatrix}}
$
&
$2$
&
$2$
\\
\hline
28
&
$\frac{\CC[T_1,\ldots,T_5]}{\left\langle T_1^{2k}+T_2^2+T_3^2\right\rangle}$
&
$\ZZ \times (\ZZ/2\ZZ)^2$
&
$
{\tiny
\begin{bmatrix}
2 & 2k & 2k & -1 & -1 \\
\overline{1} & \overline{k+1} & \overline{k} & \overline{0} & \overline{1}  \\
\overline{1} & \overline{k} & \overline{k+1} & \overline{0} & \overline{0}
\end{bmatrix}}
$
&
$2$
&
$2$
\\
\hline
29
&
$\frac{\CC[T_1,\ldots,T_4]}{\left\langle T_1^{2k+1}+T_2^2+T_3^2\right\rangle}$
&
$ \ZZ/4\ZZ$
&
$
{\tiny
\begin{bmatrix}
\overline{2} & \overline{1-2k} & \overline{-1-2k} & \overline{1}
\end{bmatrix}}
$
&
$4$
&
$2$
\\
\hline
30
&
$\frac{\CC[T_1,\ldots,T_4]}{\left\langle T_1^{k\zeta-2}+T_2^2+T_3T_4\right\rangle}$
&
$ \ZZ/2k\ZZ$
&
$
{\tiny
\begin{bmatrix}
\overline{1} & \overline{k(1-\zeta/2)-1} & \overline{k\zeta-1} & \overline{1}
\end{bmatrix}}
$
&
$2$
&
$\in 4\ZZ$
\\
\hline
31
&
$\frac{\CC[T_1,\ldots,T_4]}{\left\langle T_1^{5\zeta-2}+T_2^2+T_3T_4\right\rangle}$
&
$ \ZZ/10\ZZ$
&
${\tiny
\begin{bmatrix}
\overline{4} & \overline{1} & \overline{3} & \overline{9}
\end{bmatrix}}
$
&
$2$
&
$\geq 3$
\\
\hline
32
&
$\frac{\CC[T_1,\ldots,T_4]}{\left\langle T_1^{3\zeta-4}+T_2^4+T_3T_4\right\rangle}$
&
$ \ZZ/6\ZZ$
&
${\tiny
\begin{bmatrix}
\overline{2} & \overline{1} & \overline{5} & \overline{5}
\end{bmatrix}}
$
&
$2$
&
$\in 2\ZZ\!+\!1$
\\
\hline
33
&
$\frac{\CC[T_1,\ldots,T_4]}{\left\langle T_1^{3\zeta-2}+T_2^2+T_3T_4\right\rangle}$
&
$ \ZZ/6\ZZ$
&
${\tiny
\begin{bmatrix}
\overline{2} & \overline{1} & \overline{3} & \overline{5}
\end{bmatrix}}
$
&
$2$
&
$\geq 3$
\\
\hline
34
&
$
{\tiny
\frac{\CC[T_1,\ldots,T_6]}
{\left\langle 
\begin{array}{c}
T_1^{3\zeta-2}+T_2^2+T_3T_4 \\
2T_2^2+T_3T_4+T_5T_6
\end{array}
\right\rangle}
}
$
&
$ \ZZ \times \ZZ/6\ZZ$
&
${\tiny
\begin{bmatrix}
0&0&1&-1&-1&1\\
\overline{2} & \overline{1} & \overline{3} & \overline{5} & \overline{2} & \overline{0}
\end{bmatrix}}
$
&
$2$
&
$\geq 3$
\\
\hline
35
&
$\frac{\CC[T_1,\ldots,T_4]}{\left\langle T_1^{3\zeta-2}+T_2^2+T_3T_4\right\rangle}$
&
$ \ZZ/12\ZZ$
&
${\tiny
\begin{bmatrix}
\overline{4} & \overline{2} & \overline{5} & \overline{11}
\end{bmatrix}}
$
&
$2$
&
$\geq 3$
\\
\hline
36
&
$\frac{\CC[T_1,\ldots,T_5]}{\left\langle (T_1T_2)^{3\zeta-2}+T_3^2+T_4T_5\right\rangle}$
&
$\ZZ \times  \ZZ/6\ZZ$
&
${\tiny
\begin{bmatrix}
-1 & 1 & 0 & 1 & -1 \\
\overline{2} & \overline{0} & \overline{1} & \overline{3} & \overline{5}
\end{bmatrix}}
$
&
$2$
&
$\geq 3$
\\
\hline
37
&
$\frac{\CC[T_1,\ldots,T_5]}{\left\langle T_1^{3\zeta-2}T_2^{\zeta-2}+T_3^2+T_4T_5\right\rangle}$
&
$\ZZ \times  \ZZ/2\ZZ$
&
${\tiny
\begin{bmatrix}
-1 & 3 & -2 & 3 & -7 \\
\overline{1} & \overline{1} & \overline{1} & \overline{0} & \overline{0}
\end{bmatrix}}
$
&
$2$
&
$\geq 3$
\\
\hline
38
&
$\frac{\CC[T_1,\ldots,T_4]}{\left\langle T_1^{2\zeta-2}+T_2^2+T_3T_4\right\rangle}$
&
$\ZZ/6\ZZ$
&
${\tiny
\begin{bmatrix}
\overline{5} & \overline{5} & \overline{3} & \overline{1}
\end{bmatrix}}
$
&
$3$
&
$\in 6\ZZ\!-\!1$
\\
\hline
39
&
${\tiny
\frac{\CC[T_1,\ldots,T_6]}
{\left\langle 
\begin{array}{c}
T_1^{2\zeta-2}+T_2^2+T_3T_4 \\
2T_2^2+T_3T_4+T_5T_6
\end{array}
\right\rangle}
}$
&
$\ZZ \times \ZZ/6\ZZ$
&
${\tiny
\begin{bmatrix}
0&0&1&-1&-1&1 \\
\overline{5} & \overline{5} & \overline{3} & \overline{1} & \overline{4} & \overline{0}
\end{bmatrix}}
$
&
$3$
&
$\in 6\ZZ\!-\!1$
\\
\hline
40
&
$\frac{\CC[T_1,\ldots,T_4]}{\left\langle T_1^{2\zeta-2}+T_2^2+T_3T_4\right\rangle}$
&
$\ZZ/12\ZZ$
&
${\tiny
\begin{bmatrix}
\overline{10} & \overline{10} & \overline{7} & \overline{1}
\end{bmatrix}}
$
&
$3$
&
$\in 6\ZZ\!-\!1$
\\
\hline
41
&
$\frac{\CC[T_1,\ldots,T_4]}{\left\langle T_1^{2\zeta-2}+T_2^2+T_3T_4\right\rangle}$
&
$\ZZ/6\ZZ$
&
${\tiny
\begin{bmatrix}
\overline{5} & \overline{5} & \overline{2} & \overline{2}
\end{bmatrix}}
$
&
$3$
&
$\in 6\ZZ\!-\!1$
\\
\hline
42
&
$\frac{\CC[T_1,\ldots,T_5]}{\left\langle (T_1T_2)^{2\zeta-2}+T_3^2+T_4T_5\right\rangle}$
&
$\ZZ \times \ZZ/6\ZZ$
&
${\tiny
\begin{bmatrix}
1 & -1 & 0  & -1 & 1 \\
\overline{0} & \overline{5} & \overline{5} & \overline{2} & \overline{2}
\end{bmatrix}}
$
&
$3$
&
$\in 6\ZZ\!-\!1$
\\
\hline
43
&
$\frac{\CC[T_1,\ldots,T_6]}{\left\langle (T_1T_2)^{2\zeta-2}+(T_3T_4)^2+T_5T_6\right\rangle}$
&
$\ZZ^2 \times \ZZ/6\ZZ$
&
${\tiny
\begin{bmatrix}
1 & -1 & 0 & 0 & -1 & 1 \\
0 & 0 & -1 & 1 & 1 & -1 \\
\overline{0} & \overline{5} & \overline{5} & \overline{0} & \overline{2} & \overline{2}
\end{bmatrix}}
$
&
$3$
&
$\in 6\ZZ\!-\!1$
\\
\hline
44
&
$\frac{\CC[T_1,\ldots,T_5]}{\left\langle (T_1T_2)^{2\zeta-2}+T_3^2+T_4T_5\right\rangle}$
&
$\ZZ \times \ZZ/6\ZZ$
&
${\tiny
\begin{bmatrix}
1 & -1 & 0  & -2 & 2 \\
\overline{0} & \overline{5} & \overline{5} & \overline{1} & \overline{3}
\end{bmatrix}}
$
&
$3$
&
$\in 6\ZZ\!-\!1$
\\
\hline
45
&
$\frac{\CC[T_1,\ldots,T_5]}{\left\langle T_1^{2\zeta-2}+(T_2T_3)^2+T_4T_5\right\rangle}$
&
$\ZZ \times \ZZ/6\ZZ$
&
${\tiny
\begin{bmatrix}
0&-1&1&1&-1\\
\overline{5} & \overline{5} & \overline{0} & \overline{3} & \overline{1}
\end{bmatrix}}
$
&
$3$
&
$\in 6\ZZ\!-\!1$
\\
\hline
46
&
$\frac{\CC[T_1,\ldots,T_5]}{\left\langle T_1^{2\zeta-2}+(T_2T_3)^2+T_4T_5\right\rangle}$
&
$\ZZ \times \ZZ/6\ZZ$
&
${\tiny
\begin{bmatrix}
0&-1&1&2&-2\\
\overline{5} & \overline{5} & \overline{0} & \overline{3} & \overline{1}
\end{bmatrix}}
$
&
$3$
&
$\in 6\ZZ\!-\!1$
\\
\hline
47
&
$\frac{\CC[T_1,\ldots,T_5]}{\left\langle T_1^{2\zeta-2}T_2^{\zeta-2}+T_3^2+T_4T_5\right\rangle}$
&
$\ZZ \times \ZZ/3\ZZ$
&
${\tiny
\begin{bmatrix}
-1&2&-1&1&-3\\
\overline{0} & \overline{2} & \overline{0} & \overline{1} & \overline{2}
\end{bmatrix}}
$
&
$3$
&
$\in 6\ZZ\!-\!1$
\\
\hline
48
&
$\frac{\CC[T_1,\ldots,T_4]}{\left\langle T_1^{2\zeta-3}+T_2^3+T_3T_4\right\rangle}$
&
$\ZZ/6\ZZ$
&
${\tiny
\begin{bmatrix}
\overline{3} & \overline{5} & \overline{2} & \overline{1}
\end{bmatrix}}
$
&
$3$
&
$\geq4$
\\
\hline
49
&
$\frac{\CC[T_1,\ldots,T_5]}{\left\langle T_1^{2\zeta-3}+(T_2T_3)^3+T_4T_5\right\rangle}$
&
$\ZZ \times \ZZ/6\ZZ$
&
${\tiny
\begin{bmatrix}
0&-1&1&1&-1 \\
\overline{3} & \overline{5} & \overline{0} & \overline{2} & \overline{1}
\end{bmatrix}}
$
&
$3$
&
$\geq4$
\\
\hline
50
&
$\frac{\CC[T_1,\ldots,T_4]}{\left\langle T_1^{2\zeta-3}+T_2^3+T_3T_4\right\rangle}$
&
$\ZZ/6\ZZ$
&
${\tiny
\begin{bmatrix}
\overline{1} & \overline{1} & \overline{4} & \overline{5}
\end{bmatrix}}
$
&
$3$
&
$\begin{array}{c}
\in 9\ZZ, \\
 9\ZZ-3
\end{array}$
\\
\hline
51
&
$\frac{\CC[T_1,\ldots,T_5]}{\left\langle T_1^{2\zeta-3}+(T_2T_3)^3+T_4T_5\right\rangle}$
&
$\ZZ \times \ZZ/6\ZZ$
&
${\tiny
\begin{bmatrix}
0&-1&1&1&-1 \\
\overline{1} & \overline{1}  & \overline{0} & \overline{4} & \overline{5}
\end{bmatrix}}
$
&
$3$
&
$\begin{array}{c}
\in 9\ZZ, \\
 9\ZZ-3
\end{array}$
\\
\hline
52
&
$\frac{\CC[T_1,\ldots,T_5]}{\left\langle (T_1T_2)^{2\zeta-3}+T_3^3+T_4T_5\right\rangle}$
&
$\ZZ \times \ZZ/6\ZZ$
&
${\tiny
\begin{bmatrix}
-1&1&0&1&-1 \\
\overline{1}   & \overline{0} & \overline{1} & \overline{4} & \overline{5}
\end{bmatrix}}
$
&
$3$
&
$\begin{array}{c}
\in 9\ZZ, \\
 9\ZZ-3
\end{array}$
\\
\hline
53
&
$\frac{\CC[T_1,\ldots,T_4]}{\left\langle T_1^{2\zeta-4}+T_2^4+T_3T_4\right\rangle}$
&
$\ZZ/6\ZZ$
&
${\tiny
\begin{bmatrix}
\overline{1} & \overline{1} & \overline{5} & \overline{5}
\end{bmatrix}}
$
&
$3$
&
$\begin{array}{c}
\in\!12\ZZ\!+\!1,\\
12\ZZ-5
\end{array}$
\\
\hline
54
&
$\frac{\CC[T_1,\ldots,T_5]}{\left\langle T_1^{k\zeta-1}+T_2T_3+T_4T_5\right\rangle}$
&
$\ZZ \times \ZZ/2k\ZZ$
&
${\tiny
\begin{bmatrix}
0&1&-1&-1&1 \\
\overline{1} & \overline{k} & \overline{-1} & \overline{k-1} & \overline{0}
\end{bmatrix}}
$
&
$2$
&
$\in 2\ZZ+1$
\\
\hline
55
&
$\frac{\CC[T_1,\ldots,T_5]}{\left\langle T_1^{\zeta-2}+T_2^2+T_3T_4\right\rangle}$
&
$\ZZ/4\ZZ$
&
${\tiny
\begin{bmatrix}
\overline{2} & \overline{\zeta-2} & \overline{\zeta} & \overline{\zeta}
\end{bmatrix}}
$
&
$4$
&
$\in 2\ZZ\!+\!1$
\end{longtable}
\end{theorem}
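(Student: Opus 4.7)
The plan is to proceed by a finite case-by-case analysis organized by the Gorenstein index $\imath$ and the canonical multiplicity $\zeta$. First I would invoke Corollary~4.6 and Proposition~4.7 of~\cite{ltpticr} to reduce to a finite list of admissible pairs $(\imath,\zeta)$ with $\imath\ge 2$. For each such pair, log-terminality forces the maximal entries $\mathfrak{l}_i$ of the defining tuples to form a platonic tuple, of which there are only finitely many up to reordering, so the whole discussion splits into a finite combinatorial tree.

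For each choice of $(\imath,\zeta,(\mathfrak{l}_0,\ldots,\mathfrak{l}_r))$ I would translate the canonicity condition on $X(P)$ into the requirement that the two-dimensional subpolytopes of the anticanonical complex $A_X^c$ from~\cite{michelepaper} contain no non-origin lattice points. These subpolytopes have rational vertices whose least common denominator $k$ is determined by $\imath$ and $\zeta$; rescaling by $k$ turns canonicity into a $k$-emptiness question for lattice polygons, so Theorem~\ref{th:farey} applies. Consequently, each $k$-empty triangle that occurs either sits inside a $k$-th Farey strip---yielding an infinite series indexed by Farey numbers---or is one of the finitely many sporadic exceptions.

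Then I would pull these combinatorial data back through the dictionary of~\cite{hausenherpp, hausensüß, hausenwrob1}: each admissible polygon corresponds, up to the $\GL_2(\ZZ)$-action on the $d$-block of $P$ and elementary column operations, to finitely many defining matrices $P$, from which one reads off the Cox ring $\RRR(X)$, the class group $\Cl(X)$ and the degree matrix $Q$. Keeping only those singularities whose defining data depend on at most three free parameters---and deferring the large series, which will be presented directly via $P$ in a separate statement---produces exactly the $55$ rows of the table.

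The main obstacle will be the combinatorial bookkeeping. For each $(\imath,\zeta)$-slab and each platonic tuple one must enumerate all admissible configurations of lattice points on the anticanonical complex, eliminate redundancies coming from lattice-unimodular equivalences, check that the surviving rows correspond to pairwise non-isomorphic singularities, and finally match each sporadic Farey triangle with an actual matrix $P$ of the required block shape; these matchings between the $k$-empty polygon classification and the admissible $P$-matrices are the technical heart of the proof.
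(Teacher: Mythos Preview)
Your broad strategy --- organize by normal forms of $P$ (equivalently by $\imath$, $\zeta$, and the leading platonic tuple), translate canonicity into lattice-point conditions on the anticanonical complex, then read off Cox rings and class groups from the surviving matrices $P$ via Smith normal form --- matches the paper's. But there is a genuine gap in how you propose to control the lattice points.

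You reduce canonicity to $k$-emptiness of \emph{two-dimensional} subpolytopes and then invoke Theorem~\ref{th:farey}. The polytopes $A_X^c(\lambda_i)$ making up the anticanonical complex are three-dimensional, not two, and the canonicity criterion is that all their non-origin lattice points lie on the boundary hyperplane $\HHH_i$, not that certain planar slices are $k$-empty. In the paper's actual argument (Propositions~\ref{prop:zeta=1}--\ref{prop:zeta(l0,l1,1)}), the two-dimensional slice $A_X^c(\lambda)$ together with Lemma~\ref{le:twopoints} does force $\imath$ small in many subcases, but the full analysis repeatedly needs the three-dimensional Ishida--Iwashita classification (Theorem~\ref{th:toric3dim}) and Corollary~\ref{corr:quadrangle} --- for instance throughout Case~3 of Proposition~\ref{prop:zeta(l0,l1,1)}, where one compares parallel line segments inside a genuinely three-dimensional polytope $\Qq\subseteq A_X^c(\lambda_0)\cup A_X^c(\lambda_1)$ and must recognize it as one of Ishida--Iwashita's types. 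Theorem~\ref{th:farey} alone does not see these configurations; the Farey machinery enters only indirectly, through the proofs of auxiliary Lemmas~\ref{le:1245cone} and~\ref{le:halfheight}, and is not the organizing principle of the classification.

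A smaller issue: the pairs $(\imath,\zeta)$ do not form a finite list --- $\zeta$ is unbounded in several rows of the table. What is finite is the list of normal-form cases in Proposition~\ref{prop:zetaexcep}(iii), and the case analysis is organized around those, not around values of $(\imath,\zeta)$.
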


\begin{theorem}
\label{th:class2}
Let $X$ be an affine canonical threefold singularity of Gorenstein index $\imath \geq 2$ admitting a two-torus action.
The following table lists those $X$ 
that  belong to a 'many parameter series'. They are given  by their defining matrix $P$. It holds $\imath=2$ for all but No. 59, where we have $\imath \geq 2$. For the canonical multiplicity, we have $\zeta=4$ for No. 56, $\zeta=2$ for No. 57 and $\zeta>\imath$ for Nos. 58 and 59.
Moreover, in all cases we have $r\geq 2$,  $\mathbf{k} \in \ZZ_{\geq 1}^t$, $\mathbf{d_{0}} \in \ZZ^t$  for some $t \in \ZZ_{\geq 1}$.

\renewcommand{\arraystretch}{1.1} 
 \setlength{\tabcolsep}{3pt}
  \setlength{\arraycolsep}{1pt}
  $$
  \mathrm{56a:}
  {\tiny
\left[
\begin{matrix}
-2\mathbf{k}\!-\!1 & 2   & 0  & 0 & 0 & \dots & 0 &0
\\
-2\mathbf{k}\!-\!1 & 0   & 2  & 0 &0& \dots & 0 & 0
\\
-2\mathbf{k}\!-\!1 & 0  & 0  & 1 &1 & \dots & 0 &0
\\[-5pt]
\vdots & \vdots  & \vdots  & \vdots & \vdots & \ddots & \vdots & \vdots
\\
-2\mathbf{k}\!-\!1 & 0   & 0 & 0 & 0 & \dots &  1 & 1
\\
\mathbf{d_{0}} & 1  & 1  & 0 & d_{3} & \ldots & 0 & d_{r}
\\
\mathbf{k} & 0  & 1  & 0 & 0  & \ldots & 0 & 0
\end{matrix}
\right]
}
\quad
\mathrm{56b:}
{\tiny
\left[
\begin{matrix}
-2\mathbf{k}\!-\!1 & 2   & 0 & 0  & 0 & 0 & \dots & 0 &0
\\
-2\mathbf{k}\!-\!1 & 0   & 2 & 2 & 0 &0& \dots & 0 & 0
\\
-2\mathbf{k}\!-\!1 & 0  & 0  & 0& 1 &1 & \dots & 0 &0
\\[-5pt]
\vdots & \vdots  & \vdots & \vdots & \vdots & \vdots & \ddots & \vdots & \vdots
\\
-2\mathbf{k}\!-\!1 & 0 & 0  & 0 & 0 & 0 & \dots &  1 & 1
\\
\mathbf{d_{0}} & 1  & 1 & d_2  & 0 & d_{3} & \ldots & 0 & d_{r}
\\
\mathbf{k} & 0  & 1  & 1 & 0 & 0  & \ldots & 0 & 0
\end{matrix}
\right]
}
$$
$$
\mathrm{57a:}
{\tiny
\begin{bmatrix}
-\mathbf{k} & 2   & 0  & 0 & 0 & \dots & 0 & 0 & 0& 0
\\
-\mathbf{k}  & 0   & 2  & 0 &0& \dots & 0 & 0 & 0 & 0
\\
-\mathbf{k}  & 0  & 0  & 1 &1 & \dots & 0 & 0 & 0  & 0
\\[-5pt]
\vdots & \vdots  & \vdots  & \vdots & \vdots & \ddots & \vdots & \vdots & \vdots & \vdots
\\
-\mathbf{k}  & 0   & 0 & 0 & 0 & \dots &  1 & 1 & 0 & 0
\\
\mathbf{d_{0}} & 1  & 1  & 0 & d_{3} & \ldots & 0 & d_{r} & d'_{1} & d'_{2}
\\
1\!-\!\mathbf{k} & 0  & 2  & 0 & 0  & \ldots & 0 & 0 & 1 & 1
\end{bmatrix}
}
\quad
\mathrm{57b:}
{\tiny
\begin{bmatrix}
-2 & 2     & 0 & 0 & \dots & 0 & 0 & 0& 0
\\
-2  & 0    & 1 &1& \dots & 0 & 0 & 0 & 0
\\[-5pt]
\vdots   & \vdots  & \vdots & \vdots & \ddots & \vdots & \vdots & \vdots & \vdots
\\
-2  & 0    & 0 & 0 & \dots &  1 & 1 & 0 & 0
\\
1 & 1   & 0 & d_{2} & \ldots & 0 & d_{r} & d'_{1} & d'_{2}
\\
0 & 2    & 0 & 0  & \ldots & 0 & 0 & 1 & 1
\end{bmatrix}
}
$$
$$
\mathrm{58a:}
{\tiny
\begin{bmatrix}
2(\zeta-k) & 2k     & 0 & 0 & \dots & 0 & 0 
\\
2(\zeta-k)  & 0    & 1 &1& \dots & 0 & 0 
\\[-5pt]
\vdots   & \vdots  & \vdots & \vdots & \ddots & \vdots & \vdots 
\\
2(\zeta-k)  & 0    & 0 & 0 & \dots &  1 & 1 
\\
1 & 1   & 0 & d_{2} & \ldots & 0 & d_{r} 
\\
2\left(\frac{k\mu+1}{\zeta}-\mu\right) & 2\frac{k\mu+1}{\zeta}    & 0 & 0  & \ldots & 0 & 0 
\end{bmatrix}
}
\quad
\mathrm{58b:}
{\tiny
\begin{bmatrix}
2(\zeta-k) & 2k-\zeta & 2k     & 0 & 0 & \dots & 0 & 0 
\\
2(\zeta-k) & 2k-\zeta  & 0    & 1 &1& \dots & 0 & 0 
\\[-5pt]
\vdots & \vdots  & \vdots  & \vdots & \vdots & \ddots & \vdots & \vdots 
\\
2(\zeta-k) & 2k-\zeta  & 0    & 0 & 0 & \dots &  1 & 1 
\\
1 & 1 & d_{11}  & 0 & d_{2} & \ldots & 0 & d_{r} 
\\
2\left(\frac{k\mu+1}{\zeta}-\mu\right) & 2\frac{k\mu+1}{\zeta}-\mu & 2\frac{k\mu+1}{\zeta}    & 0 & 0  & \ldots & 0 & 0 
\end{bmatrix}
}
$$
$$
\mathrm{58c:}
{\tiny
\begin{bmatrix}
2(\zeta-k) & 2k-\zeta & 2k-\zeta & 2k     & 0 & 0 & \dots & 0 & 0 
\\
2(\zeta-k) & 2k-\zeta & 2k-\zeta  & 0    & 1 &1& \dots & 0 & 0 
\\[-5pt]
\vdots & \vdots & \vdots  & \vdots  & \vdots & \vdots & \ddots & \vdots & \vdots 
\\
2(\zeta-k) & 2k-\zeta & 2k-\zeta & 0    & 0 & 0 & \dots &  1 & 1 
\\
1  & d_{11} & d_{12} & 1 & 0 & d_{2} & \ldots & 0 & d_{r} 
\\
2\left(\frac{k\mu+1}{\zeta}-\mu\right) & 2\frac{k\mu+1}{\zeta}-\mu  & 2\frac{k\mu+1}{\zeta}-\mu & 2\frac{k\mu+1}{\zeta}    & 0 & 0  & \ldots & 0 & 0 
\end{bmatrix}
}
$$
$$
\mathrm{59a:}
{\tiny
\begin{bmatrix}
-k\imath & \lll & 0 & 0 & \cdots & 0 & 0 \\
-k\imath & 0 & 1 & 1 & \cdots & 0 & 0 \\[-5pt]
\vdots & \vdots & \vdots & \vdots & \ddots & \vdots & \vdots \\
-k\imath & 0 & 0 & 0 & \cdots & 1 & 1 \\
d & d & 0 & d_{2} & \cdots & 0 & d_{r} \\
\imath\frac{1-\mu k}{\lll+k \imath} & \frac{\imath+\mu \lll}{\lll+k \imath} & 0 & 0 & \cdots & 0 & 0
\end{bmatrix}
}
\quad
\mathrm{59b:}
{\tiny
\begin{bmatrix}
-k\imath & \lll & \lll & 0 & 0 & \cdots & 0 & 0 \\
-k\imath & 0 & 0 & 1 & 1 & \cdots & 0 & 0 \\[-5pt]
\vdots & \vdots & \vdots & \vdots & \vdots & \ddots & \vdots & \vdots \\
-k\imath & 0 & 0 & 0 & 0 & \cdots & 1 & 1 \\
d & d & d+d_1 &  0 & d_{2} & \cdots & 0 & d_{r} \\
\imath\frac{1-\mu k}{\lll+k \imath} & \frac{\imath+\mu \lll}{\lll+k \imath} & \frac{\imath+\mu \lll}{\lll+k \imath} & 0 & 0 & \cdots & 0 & 0
\end{bmatrix}
}
$$
$$
\mathrm{59c:}
{\tiny
\begin{bmatrix}
-k\imath & -k\imath & \lll & 0 & 0 & \cdots & 0 & 0 \\
-k\imath & -k\imath & 0 & 1 & 1 & \cdots & 0 & 0 \\[-5pt]
\vdots & \vdots & \vdots & \vdots & \vdots & \ddots & \vdots & \vdots \\
-k\imath & -k\imath & 0 & 0 & 0 & \cdots & 1 & 1 \\
d & d+d_0 & d & 0 & d_{2} & \cdots & 0 & d_{r} \\
\imath\frac{1-\mu k}{\lll+k \imath} & \imath\frac{1-\mu k}{\lll+k \imath} & \frac{\imath+\mu \lll}{\lll+k \imath} & 0 & 0 & \cdots & 0 & 0
\end{bmatrix}
}
\quad
\mathrm{59d:}
{\tiny
\begin{bmatrix}
-k\imath & -k\imath & \lll & \lll & 0 & 0 & \cdots & 0 & 0 \\
-k\imath & -k\imath & 0 & 0 & 1 & 1 & \cdots & 0 & 0 \\[-5pt]
\vdots & \vdots & \vdots & \vdots & \vdots & \vdots & \ddots & \vdots & \vdots \\
-k\imath & -k\imath & 0 & 0 & 0 & 0 & \cdots & 1 & 1 \\
d & d+d_0 & d & d+ d_1 &  0 & d_{2} & \cdots & 0 & d_{r} \\
\imath\frac{1-\mu k}{\lll+k \imath} & \imath\frac{1-\mu k}{\lll+k \imath} & \frac{\imath+\mu \lll}{\lll+k \imath} & \frac{\imath+\mu \lll}{\lll+k \imath} & 0 & 0 & \cdots & 0 & 0
\end{bmatrix}
}
$$
\end{theorem}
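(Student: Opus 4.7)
The plan is to exploit the combinatorial Cox ring description from Construction \ref{constr:RAP0}. A canonical singularity is in particular log-terminal, so by the criterion of \cite{ltpticr} recalled in the introduction, the tuple $(\mathfrak{l}_0,\ldots,\mathfrak{l}_r)$ of maximal entries of the defining $l_i$ is platonic. The series listed in Nos.~56--59 have $r \geq 2$ together with a long tail of $l_i = (1)$, so one is placed in the platonic regime $(a,b,1,\ldots,1)$, which is precisely the branch that admits arbitrarily many trailing unit columns and hence the parameter vector $\mathbf{k} \in \ZZ_{\geq 1}^t$. The finite list of admissible combinations of $(\imath,\zeta)$ from Corollary~4.6 and Proposition~4.7 of \cite{ltpticr} then fixes $\imath = 2$ (with the one exception being No.~59 where $\imath$ ranges) and the value of $\zeta$ listed in the statement; the corresponding leading columns of $P$ governing the $T_0,T_1,T_2$-block are forced by these invariants.

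For each fixed pair of invariants, canonicity is translated, via the anticanonical complex $A_X^c$ of \cite{michelepaper}, into lattice-point emptiness of an explicit rational polytopal complex sitting inside $\sigma_P$. Clearing the common denominator $k$ makes this into a $k$-emptiness question for a lattice polytope, and Theorem \ref{th:farey} applies. The decisive observation is that the ``many parameter'' series correspond exactly to the situation in which the governing $k$-empty triangle is contained in a $k$-th Farey strip $F_{k,f}$: the strip is unbounded in one direction, so arbitrarily many lattice columns can be stacked along it without creating interior lattice points. This is what produces the repeated $-2\mathbf{k}-1$ entries in 56, the $-\mathbf{k}$ entries in 57, and the $-k\imath$ entries in 59; the extra parameters $\mathbf{d_0}, d_2,\ldots,d_r$ just parameterize lattice translates within the strip compatible with the primitivity and full-dimensionality requirements on the columns of $P$.

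The split into subcases (a)/(b)/(c)/(d) then arises from the distinguished role of the first two ``non-1'' blocks $l_0, l_1$ in the platonic tuple: each of them can, independently, either be kept minimal (length one) or be enlarged by an additional column sitting on the boundary of the same Farey strip. Since the Farey-strip geometry forces the new columns to share the $P$-rows corresponding to the upper $l$-block and to differ only in the bottom $d$-row, this doubling exactly produces the pairs of columns with matching upper entries in 58b, 58c, 59b, 59c, 59d. Putting these options together for the at most two enlargeable blocks yields the four variants (a)--(d) of family 59 and the three variants of family 58; families 56 and 57 have fewer variants because one of the two blocks is already pinned down by the $\zeta$-value.

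The main obstacle will be to separate these infinite series cleanly from the sporadic entries of Theorem \ref{th:class} and from one another. Concretely, one has to verify two things for each family: first, that the sporadic exceptions from Theorem \ref{th:farey} have already been absorbed into the list of Theorem \ref{th:class}, so that in the ``many parameter'' regime only the genuine Farey strip survives; second, that no two matrices within or between families 56--59 are identified by a $\GL(r+2,\ZZ)$ change of lattice basis or by admissible column operations on $P$. This requires fixing a normal form for $P$ (for instance by ordering the $l_i$-blocks by the platonic tuple and lex-ordering columns within each block) and checking that the listed $P$ is already in this normal form; the verification is a finite but lengthy bookkeeping exercise driven by the $d$-row entries.
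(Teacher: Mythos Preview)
Your outline captures the broad philosophy correctly---canonicity becomes a lattice-point condition on the anticanonical complex, and infinite families should correspond to some unbounded direction in the relevant polytopes---but the mechanism you propose does not actually close. The central gap is the invocation of Theorem~\ref{th:farey}. That theorem classifies two-dimensional $k$-empty lattice triangles, whereas the pieces $A_X^c(\lambda_i)$ of the anticanonical complex are three-dimensional polytopes in $\QQ^{r+2}$. There is no single ``governing $k$-empty triangle'' whose containment in a Farey strip decides canonicity of $X(P)$; one has to control all of $A_X^c(\lambda_0),\ldots,A_X^c(\lambda_r)$ simultaneously, and the interaction between leaves is what produces most of the constraints. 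Your sentence ``clearing the common denominator $k$ makes this into a $k$-emptiness question for a lattice polytope, and Theorem~\ref{th:farey} applies'' hides the entire difficulty.

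The paper's route is quite different in structure. It does not deduce the series 56--59 from the Farey-strip dichotomy at all. Instead, the normal form of Proposition~\ref{prop:zetaexcep} partitions the problem into finitely many cases indexed by the leading platonic triple and the pair $(\imath,\zeta)$, and each case is handled by a direct, hands-on analysis of the anticanonical complex: one writes down the leaving points $v(\tau)'$ of elementary cones, locates explicit candidate interior lattice points such as $(-1,-1,k,0)$ or $(1,0,1,1)$, and uses two-dimensional slices together with the tailored Lemmas~\ref{le:twopoints}, \ref{le:halfheight} and Corollaries~\ref{corr:halfcone}--\ref{corr:1313cone} to force either $\imath=2$ or a contradiction. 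The many-parameter series emerge not as ``triangles in a Farey strip'' but as the residual cases where, after all these eliminations, certain leaves $A_X^c(\lambda_i)$ turn out to have height one (so they are automatically canonical regardless of how many columns are stacked in them), while the remaining leaves admit no further columns. The subcases (a)--(d) then record which of the two distinguished leaves $\lambda_0,\lambda_1$ receive an extra column, exactly as in Case~3.3.2.1 of the proof of Proposition~\ref{prop:zeta(l0,l1,1)}, where the condition $\gcd(\delta,\imath)=1$ on the $d$-entries comes from Mori's terminal classification~\cite{mori} rather than from any Farey-strip argument.

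In short: your sketch would need, at minimum, a reduction lemma explaining how canonicity of the full three-dimensional complex is governed by a single two-dimensional triangle, and no such lemma exists in the paper (nor is one easy to formulate). The actual argument is an exhaustive case analysis carried out in Propositions~\ref{prop:zeta=4}, \ref{prop:zeta=2(k,2,2)} and~\ref{prop:zeta(l0,l1,1)}, and the Farey-strip theory enters only indirectly, inside the proofs of auxiliary lemmas such as~\ref{le:1245cone} and~\ref{le:halfheight}.
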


\begin{corollary}\label{cor:terminal}
Let $X$ be an affine \emph{terminal} threefold singularity of Gorenstein index $\imath \geq 2$ admitting a two-torus action. Then $X$ either is No. 2 with $m=1$ or one of Nos. 59a-d with $d_i=1$ for all $i$.
\end{corollary}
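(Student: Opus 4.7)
The plan is to inspect the classification of canonical singularities provided by Theorems~\ref{th:class} and~\ref{th:class2} row by row and apply the stricter terminality criterion. Recall from Section~\ref{sec:pol} and the discussion preceding Theorem~\ref{th:class} that canonicity of a complexity-one singularity $X = X(P)$ is detected by the absence of lattice points in the relative interior of the anticanonical complex $A_X^c$ beyond the origin, while terminality additionally forbids all lattice points of $A_X^c$ other than the origin and the primitive ray generators of $\sigma_P$. In the scaled setting this means that for each face $\mathsf{P}$ of $A_X^c$ with denominator $k$, canonicity asks for $k$-emptiness of $k\mathsf{P}$ in the interior, whereas terminality forbids lattice points on $\partial(k\mathsf{P})$ as well, apart from the cone generators.

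I would first dispatch the sporadic and few-parameter table of Theorem~\ref{th:class}. In every row with canonical multiplicity $\zeta \ge 2$ (rows 10--25 and the series 30--55), the rational vertices of $A_X^c$ forced by $\zeta \ne 1$ contribute extra lattice points to $A_X^c$ after scaling, so terminality fails uniformly. For the $\zeta = 1$ rows 1--9, direct inspection of the defining matrix $P$ reveals an extra lattice point $\tfrac{1}{m}$-way along an edge of $A_X^c$ for row~2, and an analogous fixed mid-edge lattice point for each of the remaining rows. Only row~2 with $m = 1$ is free of such a point; there $X$ is the cyclic quotient $\CC^3 / (\ZZ/\imath\ZZ)$ with weights $(1, \alpha_1, -1)$ and $n\alpha_1 \equiv 1 \pmod{\imath}$, which is precisely Mori's terminal cyclic quotient $\tfrac{1}{\imath}(1, \alpha_1, -1)$.

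Turning to Theorem~\ref{th:class2}, series 56, 57 and 58 all have $\zeta > \imath = 2$, so the argument of the previous paragraph produces lattice points in $A_X^c$ irrespective of the remaining parameters $\mathbf{k}, \mathbf{d_{0}}, \mu$; thus none of these series yields a terminal singularity. The only surviving candidates are series 59a--d. Here the penultimate row of $P$ contains the entries $d, d_2, \ldots, d_r$ (together with $d_0, d_1$ in flavours 59b--d), and each $d_i$ controls the lattice length of one edge of $A_X^c$; by elementary lattice geometry such an edge carries $d_i - 1$ additional lattice points, all lying in $A_X^c$. Terminality therefore forces $d_i = 1$ for all $i$.

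The main obstacle is the converse direction: once $d_i = 1$ holds throughout, one must verify that no further lattice points appear in $A_X^c$. This reduces to checking that the two-dimensional facets of $A_X^c$ are empty lattice triangles, which can be deduced either by a direct computation using the integrality conditions imposed on the last row of $P$ (expressing $\imath(1-\mu k)$ and $\imath+\mu\lll$ as multiples of $\lll + k\imath$), or by invoking Theorem~\ref{th:farey} in its degenerate $k = 1$ form. The resulting terminal singularities agree with the complexity-one hyperquotient terminal threefolds on Mori's list, which completes the identification claimed in the corollary.
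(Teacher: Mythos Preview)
Your overall strategy---run through the classification tables and check the terminality criterion---is exactly what the paper does, and the paper's own proof is no more than a one-line pointer to such a check. The problems lie in the shortcuts you take.

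The central gap is your blanket claim that for every entry with $\zeta\ge 2$ ``the rational vertices of $A_X^c$ forced by $\zeta\ne 1$ contribute extra lattice points to $A_X^c$ after scaling, so terminality fails uniformly.'' This is not an argument: rational vertices of a polytope do not by themselves produce lattice points in the polytope, and the $k$-scaling is only a bookkeeping device for detecting lattice points in a rational polytope, not a mechanism that manufactures them. More seriously, your principle is refuted by the very conclusion of the corollary: series~59 has $\zeta>\imath\ge 2$, hence $\zeta\ge 3$, and yet the $d_i=1$ members \emph{are} terminal. So ``$\zeta\ge 2$ $\Rightarrow$ not terminal'' cannot be true, and you give no reason why it should hold for rows 10--25, 30--55 but fail for 59. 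What actually distinguishes the non-terminal cases is the concrete shape of $\partial A_X^c(\lambda_i)$ in each instance; this is the case-by-case inspection the paper alludes to, and it cannot be replaced by an appeal to $\zeta$.

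There are also smaller errors. Your claim that series 56, 57, 58 ``all have $\zeta>\imath=2$'' is wrong for 57, where $\zeta=\imath=2$. Your case split misses rows 26--29 entirely (row~26 has $\zeta=1$ but is not among your ``$\zeta=1$ rows 1--9''; rows 27--29 have $\zeta=2$ but fall outside your ``10--25 and 30--55''). Finally, your treatment of the $\zeta=1$ rows 1--9 asserts the existence of a mid-edge lattice point ``by direct inspection'' without carrying it out; since this is the entire content of the proof, the inspection needs to be done, not invoked.
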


The total coordinate space $\overline{X}$ of a log-terminal singularity $X$ admitting a torus action of complexity one is again of complexity one and a Gorenstein canonical singularity, from what follows that, as was shown in~\cite{ltpticr}, we have a chain
$$
X_p \rightarrow \ldots \rightarrow X_1:=\overline{X} \rightarrow X,
$$ 
where $X_{k+1}$ is the total coordinate space of $X_k$ and $X_p$ is factorial. This is what we call Cox ring iteration. For certain classes of singularities, a tree of such chains can be drawn. 
In the case of log-terminal surface singularities, this tree is given by the following picture from~\cite{ltpticr}, where a non-$ADE$-singularity is denoted by its $ADE$ index one cover and - in the superscript -  its canonical multiplicity $\zeta$ if not $\zeta=1$ as well as Gorenstein index $\imath$. 

$$ 
\xymatrix@C-5pt{
&
{\CC^2}
\ar[dl]
\ar[dr]^{\mathrm{CR}}
\ar@/^1pc/[drrr]
\ar@/^2pc/[drrrrr]
\ar@/^3pc/[drrrrrrr]
&
&
&
& 
&
&
&
\\
E_8
\ar[d]
&
&
A_n
\ar[rr]^{\mathrm{CR} \quad}
\ar[d]
\ar[dr]^{n \text{ odd}}
&
&
D_{n+3}
\ar[rr]^{\mathrm{CR}}_{n=1}
\ar[d]
\ar[dr]^{n=1}
&
&
E_6
\ar[rr]^{\mathrm{CR}}
\ar[d]
&
&
E_7
\ar[d]  
\\
E_8^{\imath}
&
&
A_{n,k}^{\imath}
&
D_{(n+3)/2}^{2,\imath}
&
D_{n+3}^{\imath}
&
E_6^{3,\imath}
&
E_6^{\imath}
&
&
E_7^{\imath}
}
$$

If we put aside the quotient $\CC^2 \to E_8$ by the binary icosahedral group that does not stem from Cox ring iteration, the tree has the two factorial roots $\CC^2$ and $E_8$. 
The respective tree for compound du Val threefold singularities also was given in~\cite{ltpticr}. It comprises canonical singularities $X_1-X_5$ of dimension $\geq 4$ that are \emph{generalized} compound du Val in the sense that they  have a hyperplane section with at most canonical singularities. The following theorem shows that this tree can be extended to a \emph{joint} tree for canonical  and compound du Val threefold singularities.

\begin{theorem}
\label{th:CRI}
Denote by $1\mathrm{-}59$ the canonical singularities from Theorems~\ref{th:class},~\ref{th:class2} and by $1_{\cDV}\mathrm{-}18_{\cDV}$, $X_1\mathrm{-}X_5$  the (generalized) compound du Val singularities from~\cite[Th. 1.8]{ltpticr}. Let the canonical singularities $Y_1\mathrm{-}Y_{10}$ of complexity one be given by 
$$
Y_1\!=\!V(T_1^3T_2+T_3^3T_4+T_5^2),
\
Y_2\!=\!V(T_1^4+T_2^3T_3+T_4^2),
\
Y_3\!=\!V(T_1^3T_2T_3+T_4^3T_5T_6+T_7^2),
$$
$$
Y_4\!=\!V(T_1^4+T_2^3T_3T_4+T_5^2),
\
Y_5\!=\!V(T_1^2T_2+T_3^2T_4+T_5^2T_6),
\
Y_6\!=\!V(T_1^3+T_2^3+T_3^2T_4),
$$
$$
Y_7\!=\!V(T_2^1T_2T_3+T_4^2T_5T_6+T_7^2+),
\
Y_8\!=\!V(T_1^4+T_2^2+T_3^2T_4T_5),
$$
$$
Y_9\!=\!V(T_0^k+T_1^2+T_2^2, \ldots,
(r-1)T_{(r-2)1}T_{(r-2)2}+T_{(r-1)1}T_{(r-1)2}+T_{r1}T_{r2}),
$$
$$
Y_{10}\!=\!V(T_0^k+T_{1}^2+T_{21}^2T_{22}^2, \ldots,
(r-1)T_{(r-2)1}T_{(r-2)2}+T_{(r-1)1}T_{(r-1)2}+T_{r1}T_{r2}).
$$
We have the following complete tree of Cox ring iterations for all these singularities, where arrows indicate total coordinate spaces and double frames factorial singularities. For dashed double frames and arrows, factoriality depends on parameters.
$$
\xymatrix@C16pt@R2pt{
*+<10pt>[o][F-]+<3pt>[o][F-]{\mathbb{C}^n} \ar[rrrrr] \ar@/_0.5pc/[drr] \ar@/_1pc/[ddddrr]
&&&&&
*+<10pt>[o][F-]{1\!\mathrm{-}\!5,3_{\cDV}}
\\
&
&
*+<10pt>[o][F-]{1_{\cDV}}  \ar[r] \ar@/_1pc/[rrrdd]
&
*+<10pt>[o][F-]{4_{\cDV}}  \ar[r] \ar@/_0.5pc/[rrd]
&
*+<10pt>[o][F-]{6_{\cDV}}  \ar[r]
&
*+<10pt>[o][F-]{{\tiny\begin{array}{c}
10,13 \\
7_{\cDV}
\end{array}}}
\\
&&&&&
*+<10pt>[o][F-]{15}
\\
&&&&&
*+<10pt>[o][F-]{{\tiny\begin{array}{c}
27\,\text{-}\,29,56a_{r\!=\!2} \\
57a_{r\!=\!2},5_{\cDV}
\end{array}}}
\\
*+<7pt>[o][F-]+<3pt>[o][F-]{{\tiny\begin{array}{c}
8_{\cDV},17_{\cDV} \\
18_{\cDV}
\end{array}}}
&&
*+<10pt>[o][F-]{2_{\cDV}} \ar[rrr]
&&&
*+<10pt>[o][F-]{{\tiny\begin{array}{c}
23,12_{\cDV} \\
57b_{r\!=\!2}
\end{array}}}
\\
*+<10pt>[o][F-]+<3pt>[o][F-]{15_{\cDV}} \ar[rrrrr]
&&&&&
*+<10pt>[o][F-]{6}
\\
*+<10pt>[o][F-]+<3pt>[o][F-]{Y_1} \ar[rrrrr] \ar@/_0.5pc/[drr]
&&&&&
*+<10pt>[o][F-]{7}
\\
&&
*+<10pt>[o][F-]{Y_2} \ar[rrr]
&&&
*+<10pt>[o][F-]{11,12}
\\
*+<10pt>[o][F-]+<3pt>[o][F-]{Y_3}  \ar[rr]
&&
*+<10pt>[o][F-]{Y_4} \ar[rrr]
&&&
*+<10pt>[o][F-]{14}
\\
*+<10pt>[o][F-]+<3pt>[o][F-]{Y_5}  \ar[rr]
&&
*+<10pt>[o][F-]{Y_6} \ar[rrr]
&&&
*+<10pt>[o][F-]{16}
\\
*+<10pt>[o][F-]+<3pt>[o][F-]{Y_7}  \ar[rr]
&&
*+<10pt>[o][F-]{Y_8} \ar[rrr]
&&&
*+<10pt>[o][F-]{18}
\\
*+<10pt>[o][F-]+<3pt>[o][F-]{X_1} \ar@{-->}[r]
&
*+<10pt>[o][F-]+<3pt>[o][F--]{10_{\cDV}} \ar[rrrr]
&&&&
*+<10pt>[o][F-]{{\tiny\begin{array}{c}
8,17,19 \\
11_{\cDV},16_{\cDV}
\end{array}}}
\\
*+<10pt>[o][F-]+<3pt>[o][F-]{X_{2\!-\!5}} \ar@{-->}@/_0.5pc/[dr] \ar[rrrrr]  \ar@/_1.5pc/[dddrr] \ar@/_2pc/[ddddrr]
&&&&&
*+<11pt>[o][F-]{\tiny\begin{array}{c}
26,34,36\!\mathrm{-}\!39,42\!\mathrm{-}\!47 \\
49,57b_{r\!\geq\!2},58a_{r\!\geq\!2},58b \\
58c,59a_{r\!\geq\!2},59b,59c \\
59d,13e_{\cDV}
\end{array}}
\\
&
*+<10pt>[o][F-]+<3pt>[o][F--]{9_{\cDV}} \ar[rrrr] \ar@/_0.5pc/[dr]
&&&&
*+<10pt>[o][F-]{\tiny\begin{array}{c}
9,20\!\mathrm{-}\!22,24,25 \\
30\!\mathrm{-}\!33,35,38,40,41 \\
48,50,53,55,58a_{r\!=\!2} \\
59a_{r\!=\!2},13o_{\cDV}
\end{array}}
\\
&&
*+<10pt>[o][F-]{14_{\cDV}} \ar[rrr]
&&&
*+<10pt>[o][F-]{56b_{r\!=\!2}}
\\
&&
*+<10pt>[o][F-]{Y_9} \ar[rrr]
&&&
*+<10pt>[o][F-]{56b_{r\!\geq\!2}}
\\
&&
*+<10pt>[o][F-]{Y_{10}} \ar[rrr]
&&&
*+<10pt>[o][F-]{56/57a_{r\!\geq\!2}}
}
$$
\end{theorem}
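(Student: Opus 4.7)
The plan is to work through the singularities in Theorems~\ref{th:class} and~\ref{th:class2} one by one, computing for each its total coordinate space and locating the result among the nodes of the proposed tree. Recall that for $X = X(P)$ presented as in Construction~\ref{constr:RAP0}, the total coordinate space is $\overline{X} = \Spec \, \RRR(X) = \overline{X}(P)$; thus an arrow $\overline{X} \to X$ in the tree is verified by checking that the spectrum of the Cox ring listed for $X$ coincides, as a graded affine variety, with the claimed source node.

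I would organize the verification in layers. The cases 1--5 have polynomial Cox ring, so $\overline{X} \cong \CC^n$ is factorial and the iteration terminates after one step; this yields the topmost branch of the tree. For the hypersurface Cox rings in cases 6--25, direct inspection of the defining equation identifies $\overline{X}$ with one of the (generalized) compound du Val singularities $1_{\cDV}$--$18_{\cDV}$ or with one of the canonical complexity-one singularities $Y_1,\ldots,Y_8$. The remaining cases 26--55 together with the many-parameter families 56--59 have Cox ring descriptions whose spectra must be matched against $Y_9$, $Y_{10}$, the compound du Val singularities, and the generalized roots $X_1$--$X_5$, using admissible elementary row and column operations on the defining matrix $P$ to reach a common normal form.

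The intermediate nodes $Y_1$--$Y_{10}$ of complexity one themselves require a check. For each $Y_i$ I would write down an explicit defining matrix $P_i$ in the shape of Construction~\ref{constr:RAP0}, verify that the singularity is canonical via the anticanonical complex criterion of~\cite{michelepaper} (no non-trivial lattice points in the relative interior of the anticanonical complex), and compute $\overline{Y_i}$ by taking the spectrum of its Cox ring. The generalized compound du Val roots $X_1$--$X_5$ and the arrows among $1_{\cDV}$--$18_{\cDV}$ are established in~\cite{ltpticr}, so these portions of the tree are imported by citation.

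Factoriality of a node is equivalent to vanishing of its class group and can be read off directly from the $Q$-column of the tables as the cokernel of the transposed defining matrix. Solid double frames are thus immediate. The dashed frames at $9_{\cDV}$ and $10_{\cDV}$, together with the dashed arrows emanating from $X_1$ and $X_2$--$X_5$, record the fact that factoriality of these nodes depends on the free parameters in their defining matrices; in each such case I would pin down the precise parameter condition cutting out the factorial locus and indicate it alongside the tree. The principal difficulty is bookkeeping rather than conceptual: with fifty-nine canonical singularities, four multi-parameter families split into several subcases each, ten $Y_i$, and the imported compound du Val lists, the combined number of arrows is large. The most delicate individual verifications are those for the families 56--59, where the total coordinate space is itself a complexity-one singularity rather than a hypersurface in $\CC^n$, and one must match defining matrices across different normalizations of $P$ before identifying the target node in the tree.
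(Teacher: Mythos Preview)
Your plan is correct and aligns with the paper's own argument: the proof there is extremely terse, essentially saying that the Cox rings are read off from the defining matrices $P$ (already in the tables of Theorems~\ref{th:class} and~\ref{th:class2}), the compound du Val portion of the tree is imported from~\cite[Th.~1.8]{ltpticr}, the Cox rings of the $Y_i$ are handled via~\cite[Rem.~6.7]{ltpticr}, and factoriality is checked via~\cite[Cor.~5.8~(i)]{ltpticr}. Your layered verification (toric cases, hypersurface cases, parameter families) is exactly how one would flesh this out, and your remark that the heart of the matter is bookkeeping rather than a new idea is accurate.

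Two small points where the paper streamlines relative to your proposal. First, you suggest verifying canonicity of each $Y_i$ via the anticanonical complex; the paper simply asserts canonicity in the statement and does not revisit it in the proof, treating it as part of the data rather than something to be re-established. Second, for the Cox rings of the $Y_i$ and for factoriality, the paper leans on specific references in~\cite{ltpticr} rather than recomputing from scratch; your direct computation from an explicit $P$-matrix would of course work but is more labor than the paper expends. Neither of these is a gap in your approach---just a difference in how much is delegated to citation versus carried out in place.
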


\begin{corollary}\label{cor:cdvroot}
All roots of the tree from Theorem~\ref{th:CRI} are generalized compound du Val. In particular, every three-dimensional singularity of complexity one that is either canonical of Gorenstein index $\imath\geq 2$ or compound du Val is a quotient of a factorial generalized compound du Val singularity of complexity one.
\end{corollary}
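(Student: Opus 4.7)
The plan is to first enumerate the roots of the tree from Theorem~\ref{th:CRI} and verify that each is generalized compound du Val, and then deduce the quotient statement by composing the characteristic quasitorus quotients of Cox ring iteration. The double-framed vertices of the tree fall into four classes: the smooth spaces $\CC^n$; the three-dimensional compound du Val singularities $8_{\cDV}, 15_{\cDV}, 17_{\cDV}, 18_{\cDV}$ from~\cite[Th.~1.8]{ltpticr}; the higher-dimensional singularities $X_1, \ldots, X_5$ from~\cite{ltpticr}; and the complete intersections $Y_1, Y_3, Y_5, Y_7$ defined in the statement of Theorem~\ref{th:CRI}.

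For the first three classes nothing new needs to be checked: the $\CC^n$ are smooth (hence any linear hyperplane section is smooth and canonical), the $i_{\cDV}$ are compound du Val threefolds (hence admit a canonical hyperplane section by the very definition of compound du Val), and the $X_i$ are explicitly labelled generalized compound du Val in~\cite{ltpticr}, as recalled in the text immediately preceding Theorem~\ref{th:CRI}. It thus remains to handle $Y_1, Y_3, Y_5, Y_7$. For each of these I would exhibit a linear hyperplane section whose defining equation, after an obvious linear change of coordinates, becomes that of a product $\CC \times S$ with $S$ an $ADE$ surface singularity, or another canonical normal form on Reid's list. For instance, the slice $T_1 = T_3$ of $Y_1 = V(T_1^3T_2+T_3^3T_4+T_5^2)$ has defining equation $T_1^3(T_2+T_4)+T_5^2 = 0$; setting $u := T_2+T_4$ this is $\CC \times V(T_1^3 u + T_5^2)$, i.e.\ $\CC \times D_4$, which is canonical. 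Analogous slicings handle $Y_3, Y_5, Y_7$; this is the only substantive computation.

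For the ``in particular'' statement, every node $X$ of the tree terminates a chain of Cox ring iterations $X_p \to \cdots \to X_1 = \overline{X} \to X$ ending at a root $X_p$. By construction each arrow $X_{k+1} \to X_k$ is the good quotient of $X_{k+1}$ by the characteristic quasitorus $H_k := \Spec\,\CC[\Cl(X_k)]$, so the composition presents $X$ as the good quotient of the factorial root $X_p$ by the product quasitorus $H_1 \times \cdots \times H_p$. Since complexity one is preserved along Cox ring iteration (cf.~\cite[Cor.~4.6]{ltpticr}), the root $X_p$ is a factorial generalized compound du Val singularity of complexity one, as required.

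The main obstacle is the canonicity verification for the slices of the $Y_j$: the correct slicing direction must be chosen and the resulting equation matched against known canonical normal forms. All remaining ingredients come either from the compound du Val classifications of~\cite{Da, ltpticr, mark} and the text preceding Theorem~\ref{th:CRI} or from the formal properties of Cox ring iteration.
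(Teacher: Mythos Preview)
Your approach is essentially the paper's: it too reduces to the four roots $Y_1,Y_3,Y_5,Y_7$ and takes the hyperplane section $T_1=T_3$, and for the second assertion it invokes Cox ring iteration (the paper simply cites \cite[Th.~1.6]{ltpticr} rather than spelling out the quotient composition).

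Two small cautions. First, your illustrative computation for $Y_1$ misidentifies the slice: the surface $V(T_1^3u+T_5^2)$ is singular along the entire line $T_1=T_5=0$, hence not normal and certainly not $D_4$; the verification that the $T_1=T_3$ section has at most canonical singularities therefore needs more care than ``i.e.\ $\CC\times D_4$''. Second, the composite of the characteristic-quasitorus quotients is not literally a quotient by the \emph{product} $H_1\times\cdots\times H_p$; the correct statement (which is what \cite[Th.~1.6]{ltpticr} provides) is that $X$ is a quotient of the factorial root by a single solvable group built from the iteration, so you should cite that result rather than assert the product structure.
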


\section{Threefold singularities with a two-torus action}
This section subsumes Sections 2, 3 and 4 of~\cite{ltpticr}. For further details and proofs of statements given in the following, we thus refer to these.
We have seen in Construction~\ref{constr:RAP0} in the introduction that all normal rational threefold singularities with two-torus action can be constructed from a defining matrix $P$. Moreover, such $X(P)$ is log-terminal if and only if the maximal entries $\mathfrak{l}_i:=\max_{j_i}(l_{ij_i})$ of the defining tuples $l_i$ form a platonic tuple $(\mathfrak{l}_0,\ldots,\mathfrak{l}_r)$.
Let in the following $P$ be a defining matrix meeting the requirements of Construction~\ref{constr:RAP0} and, moreover, let $X=X(P)$ always be $\QQ$-Gorenstein log-terminal. 

\subsection{The matrix $P$}

We write $v_{ij}=P(e_{ij})$ and $v_{k}=P(e_k)$ for  the columns of $P$ and call $(v_{i1},\ldots,v_{in_i})$ the $i$-th column block of $P$. By the data of such a block we mean $l_i$ and $d_i$.
The entries $d_i$ and $d_i'$ of $P$ can be manipulated in the following ways by so called \emph{admissible operations}, keeping the isomorphy type of $X(P)$:
\begin{enumerate}
\item
Swapping two columns inside a column block.
\item
Exchanging the data of two column blocks.
\item
Adding multiples of the upper $r$ rows
to one of the last two rows.
\item
Any elementary row operation among the last two
rows.
\item
Swapping two of the last $m$ columns.
\end{enumerate}

Two important invariants of $X(P)$ are encoded in $P$: the  familiar \emph{Gorenstein index} $\imath \in \ZZ_{\geq 1}$ and the \emph{canonical multiplicity} $\zeta \in \ZZ_{\geq 1}$, defined in~\cite{ltpticr}. We use these two invariants in the following to establish normal forms for matrices $P$ defining log-terminal $X(P)$.

\begin{proposition}
\label{prop:zetaexcep}
Let $X$ be a log-terminal threefold singularity of Gorenstein index $\imath_X$ and canonical multiplicity $\zeta_X$ admitting a two-torus action. Then there is a defining matrix $P$ such that $X\cong X(P)$ and $P$ meets the following requirements:
\begin{enumerate}[leftmargin=*]
\item We have $\mathfrak{l}_0 \geq \ldots \geq \mathfrak{l}_r$ and $l_{i1}\geq \cdots \geq l_{in_i}$ for all $i$. We call $(l_{01},\ldots,l_{r1})$ the leading platonic tuple and $(v_{01},\ldots,v_{r1})$ the leading block of $P$.
\item If $\mathfrak{l}_i=1$ for some $i$, then $n_i\geq 2$. We call $P$ irredundant if this holds.
\item Either $\zeta_X=1$ and for $1\leq i \leq r$, $ 1 \leq j \leq n_i$ and  
$1 \leq \kappa \leq m$:
$$
d_{0j2}=\imath_X(1-(r-1)l_{0j}, 
\quad
d_{ij2}=d'_{\kappa 2}=\imath_X,
$$
or $\zeta_X>1$ and one of the following cases holds:
\renewcommand{\arraystretch}{1.8} 

\begin{longtable}{c|c|c|c|c}
Case 
& 
$(l_{01},l_{11},l_{21})$ 
& 
$(d_{0j2},d_{1j2},d_{2j2})$ 
& 
$\zeta_X$ 
& 
$\imath_X$
\\
\hline
$(i)$ 
&
$(4,3,2)$ 
& 
$\frac{1}{2}
(\imath_X + l_{0j},\, \imath_X(1 - l_{1j}), \, \imath_X - l_{2j})$  
&
$2$ 
&
$ 0 \mod 2$
\\
\hline
$(ii)$ 
&
$(3,3,2)$ 
& 
$
\frac{1}{3}
(\imath_X - l_{0j},\,
\imath_X + l_{1j}, \, 
\imath_X(1 - l_{2j})) 
$
&
$3$ 
&
$0 \mod 3$
\\
\hline
$(iii)$ 
&
$(2k+1,2,2)$ 
& 
$
\frac{1}{4}
(\imath_X(1 - l_{0j}), \, 
\imath_X - l_{1j}, \,
\imath_X + l_{2j}) 
$  
&
$4$ 
&
$2 \mod 4$
\\
\hline
$(iv)$ 
&
$(2k,2,2)$ 
& 
$
\frac{1}{2}
(\imath_X - l_{0j}, \,
\imath_X + l_{1j}, \, 
\imath_X(1 -  l_{2j}))
$  
&
$2$ 
&
$0 \mod 2$
\\
\hline
$(v)$ 
&
$(k,2,2)$ 
& 
$
\frac{1}{2}
(\imath_X(1 - l_{0j}, \,
\imath_X - l_{1j}, \, 
\imath_X + l_{2j})
$  
&
$2$ 
&
$0 \mod 2$
\\
\hline
$(vi)$ 
&
$(k_0,k_1,1)$ 
& 
$
\frac{1}{\zeta_X}\left(\imath_X-\mu l_{0j}, \, \imath_X+\mu l_{1j}, \, 0\right)
$ 
&
&
\end{longtable}
\noindent
where $d_{ij2}=0$  and $d'_{\kappa 2}=\imath_X/\zeta_X$ holds for $2\leq i \leq r$,   
$1 \leq \kappa \leq m$ and $\mu \in \ZZ$ must satisfy $\gcd(\imath_X, \zeta_X, \mu)=1$.
\end{enumerate}
\end{proposition}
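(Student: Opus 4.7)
The plan is to apply the admissible operations (1)--(5) listed at the start of this section to bring an arbitrary defining matrix $P$ for $X$ into the claimed normal form, treating the three items in turn and invoking the classification results from \cite{ltpticr} for the platonic/canonical-multiplicity data.

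For (i), operation (1) reorders the columns within each block so that $l_{i1}\geq\cdots\geq l_{in_i}$, while operation (2) permutes the column blocks to achieve $\mathfrak{l}_0\geq\cdots\geq\mathfrak{l}_r$. Both are finite reorderings and commute, so a single pass suffices. For (ii), if some block has $\mathfrak{l}_i=1$ and $n_i=1$, then the block contributes a single variable $T_{i1}$ with $l_{i1}=1$; this variable appears linearly in at most two of the relations $g_{i-2},g_{i-1}$, and can be eliminated by substitution to produce a smaller matrix $P'$ with $X(P')\cong X(P)$. Iterating removes all redundant single-column unit blocks.

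Part (iii) carries the main content. The strategy is to express the anticanonical class $-K_X\in K=\Cl(X)$ via the standard complexity-one formula coming from the Cox ring construction of \cite{hausenwrob1,ltpticr}. The Gorenstein index $\imath_X$ is the minimal positive integer with $\imath_X K_X\in\im(P^*)$, equivalently the minimal integer admitting a linear functional $u\in\QQ^{r+2}$ whose pairings with the columns of $P$ lift $\imath_X(-K_X)$ to $\ZZ^{n+m}$. By the classification of log-terminal complexity-one threefold singularities recalled in \cite[Cor.~4.6, Prop.~4.7]{ltpticr}, once the leading platonic tuple $(\mathfrak{l}_0,\mathfrak{l}_1,\mathfrak{l}_2)$ is fixed, the canonical multiplicity $\zeta_X$ is pinned to one of the values $1,2,3,4$ (or is free in case (vi)) together with the divisibility of $\imath_X$ shown in the table. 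Operations (3) and (4) provide the freedom of adding integer multiples of the upper $r$ rows of $P$ to either of the last two rows and of performing a $\GL_2(\ZZ)$ action on those two rows; these degrees of freedom suffice first to normalize the last row so that its entries realize the canonical multiplicity datum (the parameter $\mu$ in case (vi), and fixed column patterns otherwise), and then to reduce the second-to-last row modulo $\zeta_X$ to the displayed expressions for $d_{ij2}$ and $d'_{\kappa 2}$.

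The main obstacle is the case-by-case verification of (iii) for the six platonic configurations and the Gorenstein case $\zeta_X=1$. For each, one must (a) confirm that the stated formulas for $d_{ij2}$ yield integers --- this is what forces the divisibilities $\zeta_X\mid\imath_X$ recorded in the rightmost column of the table, and ultimately follows from the admissible row operations being integral, (b) check that the resulting $P$ still satisfies the requirements of Construction~\ref{constr:RAP0}, namely that its columns are pairwise distinct primitive generators of a full-dimensional cone, and (c) verify the condition $\gcd(\imath_X,\zeta_X,\mu)=1$ in case (vi), which follows from the minimality of $\imath_X$: a common divisor would allow a further row rescaling and contradict that $\imath_X$ is the Gorenstein index.
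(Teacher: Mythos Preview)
The paper does not supply its own proof of this proposition. The opening of Section~2 states that the section ``subsumes Sections 2, 3 and 4 of \cite{ltpticr}'' and refers there for ``further details and proofs of statements given in the following''; Proposition~\ref{prop:zetaexcep} is presented as a summary of normal-form results established in that reference, so there is no in-paper argument against which to compare.

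Your outline is broadly the right strategy and matches what one expects the argument in \cite{ltpticr} to look like: items (i) and (ii) are handled by the listed admissible operations, and item (iii) carries the content via the anticanonical linear form and row operations. Two small corrections and one substantive comment. First, in (ii) a monomial $T_i^{l_i}$ can appear in up to \emph{three} consecutive relations $g_{i-2},g_{i-1},g_i$, not two; the elimination still goes through, but one must also check that after substitution the remaining relations can be rewritten as trinomials of the required shape (this is where the specific coefficient pattern in Construction~\ref{constr:RAP0} matters). Second, in (iii) the table normalizes the \emph{last}-row entries $d_{ij2}$, so your description of first fixing the last row and then the second-to-last is inverted. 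More substantively, your sketch asserts that the available $\GL_2(\ZZ)$-freedom ``suffices'' to reach the displayed expressions, but this is exactly the step that requires work: for each platonic configuration one must identify the linear functional $u$ with $\langle u,v_{ij}\rangle=\imath_X$ on all columns (this is what pins down the last row up to the action of operations (3)--(4)), and then verify the stated divisibility constraints on $\imath_X$ and the integrality of the displayed $d_{ij2}$. Your paragraph (a)--(c) names these obligations without discharging them, so what you have is a correct proof plan rather than a proof.
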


We say that a matrix $P$ meeting these requirements is \emph{in normal form}. Proposition~\ref{prop:zetaexcep} tells us that there is a finite number of combinations of leading blocks and values of $\zeta$ and $\imath$. This is the basis for the classification in Section~\ref{sec:class}.

\subsection{The anticanonical complex}
As we already mentioned in the introduction, the columns of the matrix $P$ are the primitive ray generators of the cone $\sigma_P$ defining the minimal ambient toric variety $Z(P)$. Let $\Sigma={\rm fan}(\sigma_P)$ be the fan of $Z(P)$ consisting of all faces of $\sigma_P$. Now $X(P)$ is $\QQ$-factorial if and only if $Z(P)$ is, which is the case if and only if $P$ is square. Let $e_1,\ldots,e_{r+2}$ be the standard basis of the column space of $P$. Set $e_0:=-e_1-\ldots-e_r$. The \emph{tropical variety} of $X(P)$ is the fan ${\rm trop}(X)$ built up by the \emph{leaves}
$$
\lambda_i:=\cone(e_i) \times \lin(e_{r+1},e_{r+2})
$$
intersecting in the \emph{lineality part} $\lambda=\lin(e_{r+1},e_{r+2})$. Now a resolution of the singularity $X(P)$ can be obtained by the following two steps
\begin{enumerate}
\item Determine the coarsest common refinement $\Sigma':= \Sigma \sqcap \trop(X)$.
\item Compute a regular subdivision $\Sigma''$ of the fan $\Sigma'$.
\end{enumerate}

Now writing the primitive ray generators of the fan $\Sigma''$ in a matrix $P''$, we obtain a (non-affine) variety $X'':=X(P'')\subseteq Z(P'')$ with torus action of complexity one. The restriction  $X''\to X$ of the toric morphism $Z(P'')\to Z(P)$ coming from $\Sigma'' \to \Sigma$ is a resolution of singularities.

Next we define the \emph{anticanonical complex} $A_X^c$. Consider the degree map $Q:\ZZ^{n+m}\to K=\Cl(X)$, the relations $g_i$ from Construction~\ref{constr:RAP0} and denote by $e_\Sigma$ the sum over all $e_{ij}$ and $e_k$. Then the canonical divisor class of $X$ is
$$
\mathcal{K}_X=\sum\limits_{i=0}^{r-2} \deg(g_i) -Q(e_\Sigma) \in \Cl(X).
$$
Now let $B(-\mathcal{K}_X):=Q^{-1}\left(-\mathcal{K}_X\right) \cap \QQ_{\geq 0}^{n+m}$ and by $B:=\sum B(g_i)$ denote the Minkowski sum of the Newton polytopes of all $g_i$. 

\begin{definition}\cite[Def. 3.1]{ltpticr}.
Let $X=X(P)$ be a $\QQ$-Gorenstein log-terminal threefold singularity with two-torus action.
\begin{enumerate}
\item The \emph{anticanonical polyhedron} $A_X$ is the dual polyhedron of 
$$
B_X:= (P^*){-1}(B(-\mathcal{K}_X)+B-e_\Sigma).
$$
\item The \emph{anticanonical complex} of $X$ is the coarsest common refinement
$$
A_X^c:={\rm faces} (A_X) \sqcap \Sigma \sqcap \trop(X).
$$
\item The \emph{relative interior} of $A_X^c$ is the interior of its support with respect to the intersection ${\rm Supp}(\Sigma) \cap \trop(X)$ and the \emph{relative boundary} $\partial A_X^c$ is its complement in ${\rm Supp}(A_X^c)$.
\end{enumerate}
\end{definition}

Consider cones of the form
$
\tau = \cone(v_{0j_0},\ldots,v_{rj_r}),
$
i.e. cones generated by exactly one column from each leaf. If $\tau$ is a face of $\sigma_P$, then we call it a \emph{$P$-elementary cone}, otherwise a \emph{fake elementary cone}. To a ($P$- or fake) elementary cone $\tau$, we associate numbers
$$
\ell_{\tau,i}:=\frac{l_{0j_0} \cdots l_{rj_r}}{l_{ij_i}},
\quad
\ell_\tau:=(1-r)l_{0j_0} \cdots l_{rj_r} +\sum_{i=0}^r \ell_{\tau,i},
$$
and set
$$
v(\tau):=\sum_{i=0}^r \ell_{\tau,i} v_{ij_i},
\quad
v(\tau)':=\ell_\tau^{-1} v(\tau).
$$
Note that $v(\tau)$ is a (not necessarily primitive) generator of the ray $\lambda \cap \tau$ and $v(\tau)'$ may be rational. In terms of these definitions, we get a concrete description of $A_X^c$: it consists of the polytopes
$$
A_X^c(\lambda_i):= \conv(v_{ij},v(\tau)'; 1\leq j \leq n_i, \tau ~P\mathrm{-elementary})
$$
for $i=0,\ldots,r$ and all their faces. Important among these faces are the
$$
A_X^c(\lambda_i,\tau):= A_X^c(\lambda_i)  \cap \tau
$$
for $P$-elementary cones $\tau$.
Now let $X'' \to X$ be a resolution of the singularity $X$ and $\rho \in \Sigma''$ be a ray in $\Sigma''$. Let moreover $v_\rho$ be the primitive generator of $\rho$ and $v_\rho'$ be the \emph{leaving point} $\rho \cap \partial A_X^c$. Then Proposition 3.2 of~\cite{ltpticr} tells us that the discrepancy of the corresponding exceptional divisor $D_\rho$ equals
$$
a_\rho = \frac{\|v'_\rho\|}{\|v_\rho\|}-1.
$$
Now assume $P$ is in normal form.  Then for the standard coordinates $x_1,\ldots,x_{r+2}$ of the column space $\QQ^{r+2}$ of $P$ and setting $x_0=-\sum x_i$, we define the halfplanes
$$
\HHH_i := V\left(\zeta_X x_{r+2} + \frac{\imath_X-\zeta_X d_{i12}}{l_{i1}}x_i-\imath_X\right) \cap \lambda_i.
$$
Then Proposition 7.3 of~\cite{ltpticr} tells us that all columns $v_{ij}$ of $P$ lie on the halfplane $\HHH_i$, the columns $v_k$ of $P$ lie on $\HHH:=\bigcap \HHH_i \subseteq \lambda$ and that
$$
\partial A_X^c(\lambda_i):=\partial A_X^c \cap \lambda_i = A_X^c(\lambda_i) \cap \HHH_i.
$$
Thus together with the discrepancy formula from above, we get the following characterization of canonicity:

\begin{lemma}
Let $X=X(P)$ be a log-terminal threefold singularity with a two-torus action and let $P$ be in normal form.
Then $X$ is canonical if and only if one of the following statements holds:
\begin{enumerate}
\item Besides the origin, there is no lattice point in the relative interior of $A_X^c$.
\item The only lattice points  of the polytopes $A_X^c(\lambda_i)$ besides the origin lie on $\HHH_i$.
\end{enumerate}
Moreover, $X$ is terminal if and only if in addition the only lattice points on $\HHH_i$ are columns of $P$.
\end{lemma}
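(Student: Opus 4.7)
The lemma is essentially a direct translation of the two propositions recalled just above its statement: the discrepancy formula from \cite[Prop.~3.2]{ltpticr} and the identification $\partial A_X^c(\lambda_i)=A_X^c(\lambda_i)\cap\HHH_i$ from \cite[Prop.~7.3]{ltpticr}. Together they pin the sign of $a_\rho$ to the position of the primitive generator $v_\rho$ of a resolution ray relative to $\partial A_X^c$: strictly in the relative interior of $A_X^c$ forces $a_\rho<0$, sitting on $\HHH_i$ gives $a_\rho=0$, and lying on the other side of $\HHH_i$ gives $a_\rho>0$. Thus the numeric conditions of canonicity ($a_\rho\ge 0$ for every exceptional $\rho$) and terminality ($a_\rho>0$ for every exceptional $\rho$) translate into combinatorial conditions on lattice points of $A_X^c$.

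For the canonical characterization, the \emph{if} direction is immediate under hypothesis (1): the primitive generator of any exceptional ray $\rho$ of any resolution $X''\to X$ is then not in the relative interior of $A_X^c$, so $a_\rho\ge 0$ by the discrepancy formula. For the \emph{only if} direction I argue by contradiction. Given a non-origin lattice point $v$ in $\relint(A_X^c)$, first replace $v$ by the primitive lattice vector $w$ on the ray $\QQ_{\ge 0}\cdot v$; because each $A_X^c(\lambda_i)$ is a convex polytope containing the origin as a vertex, it is star-shaped about $0$, so $w$ is again in the relative interior. Then invoke the standard toric fact that $\Sigma\sqcap\trop(X)$ admits a regular refinement $\Sigma''$ whose ray set contains $\cone(w)$; the associated resolution $X''\to X$ then carries an exceptional divisor $D_w$ with $a_w<0$, contradicting canonicity. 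The equivalence (1)$\Leftrightarrow$(2) is a formal consequence of $\partial A_X^c(\lambda_i)=A_X^c(\lambda_i)\cap\HHH_i$ together with the decomposition $A_X^c=\bigcup_i A_X^c(\lambda_i)$ meeting only in the lineality subspace $\lambda$: a non-origin lattice point of $A_X^c$ fails to lie on some $\HHH_i$ precisely when it is in the relative interior.

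For the terminal addendum, one sharpens $\ge$ to $>$, which additionally rules out the borderline case $v_\rho\in\HHH_i$ for exceptional $\rho$. A ray whose primitive generator lies on $\HHH_i$ is harmless for terminality exactly if it is \emph{non-exceptional}, i.e.\ already a ray of $\Sigma$, which by construction means $v_\rho$ is a column of $P$. Combining with the canonical criterion yields the stated condition.

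The main technical hurdle I anticipate is the toric promotion ``interior lattice point yields an exceptional ray of some resolution'', which relies on two classical facts: that any rational ray can be incorporated into a refinement of a polyhedral fan, and that any simplicial fan admits a regular subdivision adding only rays interior to existing cones. Beyond this, the proof is bookkeeping with the two cited propositions.
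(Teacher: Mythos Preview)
Your proposal is correct and follows precisely the approach the paper indicates: the paper does not give a standalone proof of this lemma but states it as a direct consequence of the discrepancy formula \cite[Prop.~3.2]{ltpticr} together with the identification $\partial A_X^c(\lambda_i)=A_X^c(\lambda_i)\cap\HHH_i$ from \cite[Prop.~7.3]{ltpticr}. You have simply filled in the details of that deduction, including the standard refinement argument needed for the ``only if'' direction, so there is nothing to add.
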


The following figures from~\cite{ltpticr} visualize the situation for the case $r=2$.
In the first one, the three-dimensional halfspaces $\lambda_i$ are shown, meeting (in $\QQ^4$) in the dark shaded subspace $\lambda$. In the second one in addition the halfplanes $\HHH_i$ turn up, while the third figure then shows the polytopes $A_X^c(\lambda_i)$ making up the anticanonical complex $A_X^c$. In the fourth one, the $A_X^c(\lambda_i,\tau)$ for a $P$-elementary cone $\tau$ are shown.

\begin{center}
\begin{tikzpicture}[scale=0.8]


\draw[thick, draw=black, fill=gray!20!, fill opacity=0.5] (0,-1,-1)--(0,-1,1)--(-1.8,-1,1)--(-1.8,-1,-1)--cycle;
\draw[thick, draw=black, fill=gray!20!, fill opacity=0.5] (0,-1,-1)--(0,1,-1)--(-1.8,1,-1)--(-1.8,-1,-1)--cycle;

\draw[thick, draw=black, fill=gray!20!, fill opacity=0.80] (0,-1,1)--(0,1,1)--(-1.8,1,1)--(-1.8,-1,1)--cycle;
\draw[thick, draw=black, fill=gray!10!, fill opacity=0.80] (0,1,1)--(0,1,-1)--(-1.8,1,-1)--(-1.8,1,1)--cycle;

\draw[thick, draw=black, fill=gray!60!, fill opacity=0.80] (0,-1,-1)--(0,-1,1)--(0,1,1)--(0,1,-1)--cycle;

\draw[thick, draw=black, fill=gray!90!, fill opacity=0.80]
(2,-1,-2.8)--(2,-1,-0.8)--(2,1,-0.8)--(2,1,-2.8)--cycle;

\draw[thick, draw=black, fill=gray!20!, fill opacity=0.5] (2,-1,-2.8)--(2,-1,-0.8)--(3.8,-1,-1)--(3.8,-1,-3)--cycle;
\draw[thick, draw=black, fill=gray!20!, fill opacity=0.5] (2,-1,-2.8)--(2,1,-2.8)--(3.8,1,-3)--(3.8,-1,-3)--cycle;

\draw[thick, draw=black, fill=gray!10!, fill opacity=0.80] (2,1,-0.8)--(2,1,-2.8)--(3.8,1,-3)--(3.8,1,-1)--cycle;
\draw[thick, draw=black, fill=gray!20!, fill opacity=0.80] (2,1,-0.8)--(2,-1,-0.8)--(3.8,-1,-1)--(3.8,1,-1)--cycle;

\draw[thick, draw=black, fill=gray!90!, fill opacity=0.80] (2,-1,1.3)--(2,-1,3.3)--(2,1,3.3)--(2,1,1.3)--cycle;

\draw[thick, draw=black, fill=gray!20!, fill opacity=0.5] (2,-1,1.3)--(2,-1,3.3)--(4.3,-1,4.6)--(4.3,-1,2.6)--cycle;
\draw[thick, draw=black, fill=gray!20!, fill opacity=0.5] (2,-1,1.3)--(2,1,1.3)--(4.3,1,2.6)--(4.3,-1,2.6)--cycle;

\draw[thick, draw=black, fill=gray!10!, fill opacity=0.80] (2,1,3.3)--(2,1,1.3)--(4.3,1,2.6)--(4.3,1,4.6)--cycle;
\draw[thick, draw=black, fill=gray!20!, fill opacity=0.80] (2,1,3.3)--(2,-1,3.3)--(4.3,-1,4.6)--(4.3,1,4.6)--cycle;

\end{tikzpicture}
\hfill
\begin{tikzpicture}[scale=0.8]


\draw[thick, draw=black, fill=gray!20!, opacity=0.2] (0,-1,-1)--(0,-1,1)--(-1.8,-1,1)--(-1.8,-1,-1)--cycle;
\draw[thick, draw=black, fill=gray!20!, opacity=0.2] (0,-1,-1)--(0,1,-1)--(-1.8,1,-1)--(-1.8,-1,-1)--cycle;

\foreach \i in {0,-0.4,...,-1.6}
	{
\draw[draw=black] (\i,\i/4,1)--(\i,\i/4,-1);	
	}
	
\foreach \i in {0.8,0.4,...,-1.2}
	{
\draw[draw=black] (0,0,\i)--(-1.8,-0.45,\i);	
	}

\draw[thick, draw=black, fill=gray!20!, opacity=0.2] (0,-1,1)--(0,1,1)--(-1.8,1,1)--(-1.8,-1,1)--cycle;
\draw[thick, draw=black, fill=gray!10!, opacity=0.2] (0,1,1)--(0,1,-1)--(-1.8,1,-1)--(-1.8,1,1)--cycle;

\draw[thick, draw=black, fill=gray!60!, opacity=0.2] (0,-1,-1)--(0,-1,1)--(0,1,1)--(0,1,-1)--cycle;

\draw[thick, draw=black, fill=gray!90!, opacity=0.2]
(2,-1,-2.8)--(2,-1,-0.8)--(2,1,-0.8)--(2,1,-2.8)--cycle;

\draw[thick, draw=black, fill=gray!20!, opacity=0.2] (2,-1,-2.8)--(2,-1,-0.8)--(3.8,-1,-1)--(3.8,-1,-3)--cycle;
\draw[thick, draw=black, fill=gray!20!, opacity=0.2] (2,-1,-2.8)--(2,1,-2.8)--(3.8,1,-3)--(3.8,-1,-3)--cycle;

\foreach \i in {0,0.4,...,1.6}
	{
\draw[draw=black] (2+\i,0,-0.8-\i*0.1)--(2+\i,0,-2.8-\i*0.1);	
	}
	
\foreach \i in {-1,-1.4,...,-2.6}
	{
\draw[draw=black] (2,0,\i)--(3.8,0,-0.2+\i);	
	}

\draw[thick, draw=black, fill=gray!10!, opacity=0.2] (2,1,-0.8)--(2,1,-2.8)--(3.8,1,-3)--(3.8,1,-1)--cycle;
\draw[thick, draw=black, fill=gray!20!, opacity=0.2] (2,1,-0.8)--(2,-1,-0.8)--(3.8,-1,-1)--(3.8,1,-1)--cycle;

\draw[thick, draw=black, fill=gray!90!, opacity=0.2] (2,-1,1.3)--(2,-1,3.3)--(2,1,3.3)--(2,1,1.3)--cycle;

\draw[thick, draw=black, fill=gray!20!, opacity=0.2] (2,-1,1.3)--(2,-1,3.3)--(4.3,-1,4.6)--(4.3,-1,2.6)--cycle;
\draw[thick, draw=black, fill=gray!20!, opacity=0.2] (2,-1,1.3)--(2,1,1.3)--(4.3,1,2.6)--(4.3,-1,2.6)--cycle;

\foreach \i in {0,0.4,...,2}
	{
\draw[draw=black] (2+\i,0,1.3+\i*0.5652174)--(2+\i,0,3.3+\i*0.5652174);	
	}
	
\foreach \i in {1.5,1.9,...,3.1}
	{
\draw[draw=black] (2,0,\i)--(4.3,0,1.3+\i);	
	}

\draw[thick, draw=black, fill=gray!10!, opacity=0.2] (2,1,3.3)--(2,1,1.3)--(4.3,1,2.6)--(4.3,1,4.6)--cycle;
\draw[thick, draw=black, fill=gray!20!, opacity=0.2] (2,1,3.3)--(2,-1,3.3)--(4.3,-1,4.6)--(4.3,1,4.6)--cycle;

\end{tikzpicture}
\hfill
\begin{tikzpicture}[scale=0.8]


\draw[thick, draw=black, fill=gray!20!, opacity=0.2] (0,-1,-1)--(0,-1,1)--(-1.8,-1,1)--(-1.8,-1,-1)--cycle;
\draw[thick, draw=black, fill=gray!20!, opacity=0.2] (0,-1,-1)--(0,1,-1)--(-1.8,1,-1)--(-1.8,-1,-1)--cycle;

\draw[draw=black, fill=gray!20!, opacity=0.8] (0,0,-0.4)--(-0.8,-0.2,-0.4)--(0,-0.5,0)--cycle;
\draw[draw=black, fill=gray!20!, opacity=0.8] (-0.8,-0.2,-0.4)--(-1.2,-0.3,0.4)--(0,-0.5,0)--cycle;
\draw[draw=black, fill=gray!20!, opacity=0.8] (0,0,0.4)--(-1.2,-0.3,0.4)--(0,-0.5,0)--cycle;
\draw[draw=black, fill=gray!50!, opacity=0.8] (0,0,-0.4)--(-0.8,-0.2,-0.4)--(-1.2,-0.3,0.4)--(0,0,0.4)--cycle;

\foreach \i in {0,-0.4,...,-1.6}
	{
\draw[draw=black, opacity=0.4] (\i,\i/4,1)--(\i,\i/4,-1);	
	}
	
\foreach \i in {0.8,0.4,...,-1.2}
	{
\draw[draw=black, opacity=0.4] (0,0,\i)--(-1.8,-0.45,\i);	
	}

\draw[thick, draw=black, fill=gray!20!, opacity=0.2] (0,-1,1)--(0,1,1)--(-1.8,1,1)--(-1.8,-1,1)--cycle;
\draw[thick, draw=black, fill=gray!10!, opacity=0.2] (0,1,1)--(0,1,-1)--(-1.8,1,-1)--(-1.8,1,1)--cycle;

\draw[thick, draw=black, fill=gray!60!, opacity=0.2] (0,-1,-1)--(0,-1,1)--(0,1,1)--(0,1,-1)--cycle;

\draw[thick, draw=black, fill=gray!90!, opacity=0.2]
(2,-1,-2.8)--(2,-1,-0.8)--(2,1,-0.8)--(2,1,-2.8)--cycle;

\draw[thick, draw=black, fill=gray!20!, opacity=0.2] (2,-1,-2.8)--(2,-1,-0.8)--(3.8,-1,-1)--(3.8,-1,-3)--cycle;
\draw[thick, draw=black, fill=gray!20!, opacity=0.2] (2,-1,-2.8)--(2,1,-2.8)--(3.8,1,-3)--(3.8,-1,-3)--cycle;

\draw[draw=black, fill=gray!20!, opacity=0.8] (2,0,-2.2)--(3.2,0,-1.92)--(2,-0.5,-1.8)--cycle;
\draw[draw=black, fill=gray!20!, opacity=0.8] (2,0,-1.4)--(3.2,0,-1.92)--(2,-0.5,-1.8)--cycle;
\draw[draw=black, fill=gray!50!, opacity=0.8] (2,0,-2.2)--(3.2,0,-1.92)--(2,0,-1.4)--cycle;

\foreach \i in {0,0.4,...,1.6}
	{
\draw[draw=black, opacity=0.4] (2+\i,0,-0.8-\i*0.1)--(2+\i,0,-2.8-\i*0.1);	
	}
	
\foreach \i in {-1,-1.4,...,-2.6}
	{
\draw[draw=black, opacity=0.4] (2,0,\i)--(3.8,0,-0.2+\i);	
	}

\draw[thick, draw=black, fill=gray!10!, opacity=0.2] (2,1,-0.8)--(2,1,-2.8)--(3.8,1,-3)--(3.8,1,-1)--cycle;
\draw[thick, draw=black, fill=gray!20!, opacity=0.2] (2,1,-0.8)--(2,-1,-0.8)--(3.8,-1,-1)--(3.8,1,-1)--cycle;

\draw[thick, draw=black, fill=gray!90!, opacity=0.2] (2,-1,1.3)--(2,-1,3.3)--(2,1,3.3)--(2,1,1.3)--cycle;

\draw[thick, draw=black, fill=gray!20!, opacity=0.2] (2,-1,1.3)--(2,-1,3.3)--(4.3,-1,4.6)--(4.3,-1,2.6)--cycle;
\draw[thick, draw=black, fill=gray!20!, opacity=0.2] (2,-1,1.3)--(2,1,1.3)--(4.3,1,2.6)--(4.3,-1,2.6)--cycle;

\draw[draw=black, fill=gray!20!, opacity=0.8] (2,0,1.9)--(3.6,0,4)--(2,-0.5,2.3)--cycle;
\draw[draw=black, fill=gray!20!, opacity=0.8] (2,0,2.7)--(3.6,0,4)--(2,-0.5,2.3)--cycle;
\draw[draw=black, fill=gray!50!, opacity=0.8] (2,0,2.7)--(3.6,0,4)--(2,0,1.9)--cycle;

\foreach \i in {0,0.4,...,2}
	{
\draw[draw=black, opacity=0.2] (2+\i,0,1.3+\i*0.5652174)--(2+\i,0,3.3+\i*0.5652174);	
	}
	
\foreach \i in {1.5,1.9,...,3.1}
	{
\draw[draw=black, opacity=0.2] (2,0,\i)--(4.3,0,1.3+\i);	
	}

\draw[thick, draw=black, fill=gray!10!, opacity=0.2] (2,1,3.3)--(2,1,1.3)--(4.3,1,2.6)--(4.3,1,4.6)--cycle;
\draw[thick, draw=black, fill=gray!20!, opacity=0.2] (2,1,3.3)--(2,-1,3.3)--(4.3,-1,4.6)--(4.3,1,4.6)--cycle;

\end{tikzpicture}
\hfill
\begin{tikzpicture}[scale=0.8]


\draw[thick, draw=black, fill=gray!20!, opacity=0.2] (0,-1,-1)--(0,-1,1)--(-1.8,-1,1)--(-1.8,-1,-1)--cycle;
\draw[thick, draw=black, fill=gray!20!, opacity=0.2] (0,-1,-1)--(0,1,-1)--(-1.8,1,-1)--(-1.8,-1,-1)--cycle;

\draw[draw=black, fill=gray!20!, opacity=0.2] (0,0,-0.4)--(-0.8,-0.2,-0.4)--(0,-0.5,0)--cycle;
\draw[draw=black, fill=gray!20!, opacity=0.2] (-0.8,-0.2,-0.4)--(-1.2,-0.3,0.4)--(0,-0.5,0)--cycle;
\draw[draw=black, fill=gray!80!, opacity=0.9] (0,0,0.4)--(-1.2,-0.3,0.4)--(0,-0.5,0)--cycle;
\draw[thick, draw=black, opacity=0.9] (0,0,0.4)--(-1.2,-0.3,0.4);
\draw[draw=black, fill=gray!50!, opacity=0.2] (0,0,-0.4)--(-0.8,-0.2,-0.4)--(-1.2,-0.3,0.4)--(0,0,0.4)--cycle;

\foreach \i in {0,-0.4,...,-1.6}
	{
\draw[draw=black, opacity=0.4] (\i,\i/4,1)--(\i,\i/4,-1);	
	}
	
\foreach \i in {0.8,0.4,...,-1.2}
	{
\draw[draw=black, opacity=0.4] (0,0,\i)--(-1.8,-0.45,\i);	
	}

\draw[thick, draw=black, fill=gray!20!, opacity=0.2] (0,-1,1)--(0,1,1)--(-1.8,1,1)--(-1.8,-1,1)--cycle;
\draw[thick, draw=black, fill=gray!10!, opacity=0.2] (0,1,1)--(0,1,-1)--(-1.8,1,-1)--(-1.8,1,1)--cycle;

\draw[thick, draw=black, fill=gray!60!, opacity=0.2] (0,-1,-1)--(0,-1,1)--(0,1,1)--(0,1,-1)--cycle;

\draw[thick, draw=black, fill=gray!90!, opacity=0.2]
(2,-1,-2.8)--(2,-1,-0.8)--(2,1,-0.8)--(2,1,-2.8)--cycle;

\draw[thick, draw=black, fill=gray!20!, opacity=0.2] (2,-1,-2.8)--(2,-1,-0.8)--(3.8,-1,-1)--(3.8,-1,-3)--cycle;
\draw[thick, draw=black, fill=gray!20!, opacity=0.2] (2,-1,-2.8)--(2,1,-2.8)--(3.8,1,-3)--(3.8,-1,-3)--cycle;

\draw[draw=black, fill=gray!20!, opacity=0.2] (2,0,-2.2)--(3.2,0,-1.92)--(2,-0.5,-1.8)--cycle;
\draw[draw=black, fill=gray!80!, opacity=0.9] (2,0,-1.4)--(3.2,0,-1.92)--(2,-0.5,-1.8)--cycle;
\draw[thick, draw=black, opacity=0.9] (2,0,-1.4)--(3.2,0,-1.92);
\draw[draw=black, fill=gray!50!, opacity=0.2] (2,0,-2.2)--(3.2,0,-1.92)--(2,0,-1.4)--cycle;

\foreach \i in {0,0.4,...,1.6}
	{
\draw[draw=black, opacity=0.4] (2+\i,0,-0.8-\i*0.1)--(2+\i,0,-2.8-\i*0.1);	
	}
	
\foreach \i in {-1,-1.4,...,-2.6}
	{
\draw[draw=black, opacity=0.4] (2,0,\i)--(3.8,0,-0.2+\i);	
	}

\draw[thick, draw=black, fill=gray!10!, opacity=0.2] (2,1,-0.8)--(2,1,-2.8)--(3.8,1,-3)--(3.8,1,-1)--cycle;
\draw[thick, draw=black, fill=gray!20!, opacity=0.2] (2,1,-0.8)--(2,-1,-0.8)--(3.8,-1,-1)--(3.8,1,-1)--cycle;

\draw[thick, draw=black, fill=gray!90!, opacity=0.2] (2,-1,1.3)--(2,-1,3.3)--(2,1,3.3)--(2,1,1.3)--cycle;

\draw[thick, draw=black, fill=gray!20!, opacity=0.2] (2,-1,1.3)--(2,-1,3.3)--(4.3,-1,4.6)--(4.3,-1,2.6)--cycle;
\draw[thick, draw=black, fill=gray!20!, opacity=0.2] (2,-1,1.3)--(2,1,1.3)--(4.3,1,2.6)--(4.3,-1,2.6)--cycle;

\draw[draw=black, fill=gray!20!, opacity=0.2] (2,0,1.9)--(3.6,0,4)--(2,-0.5,2.3)--cycle;
\draw[draw=black, fill=gray!80!, opacity=0.9] (2,0,2.7)--(3.6,0,4)--(2,-0.5,2.3)--cycle;
\draw[thick, draw=black, opacity=0.9] (2,0,2.7)--(3.6,0,4);
\draw[draw=black, fill=gray!50!, opacity=0.2] (2,0,2.7)--(3.6,0,4)--(2,0,1.9)--cycle;

\foreach \i in {0,0.4,...,2}
	{
\draw[draw=black, opacity=0.2] (2+\i,0,1.3+\i*0.5652174)--(2+\i,0,3.3+\i*0.5652174);	
	}
	
\foreach \i in {1.5,1.9,...,3.1}
	{
\draw[draw=black, opacity=0.2] (2,0,\i)--(4.3,0,1.3+\i);	
	}

\draw[thick, draw=black, fill=gray!10!, opacity=0.2] (2,1,3.3)--(2,1,1.3)--(4.3,1,2.6)--(4.3,1,4.6)--cycle;
\draw[thick, draw=black, fill=gray!20!, opacity=0.2] (2,1,3.3)--(2,-1,3.3)--(4.3,-1,4.6)--(4.3,1,4.6)--cycle;

\end{tikzpicture}
\end{center}

\section{Polytopes}
\label{sec:pol}

\subsection{$k$-empty polytopes}

In the following we give a description of a standard form of $k$-empty
lattice triangles. Then, the Farey sequences are used to classify those. We set
$\Delta \ := \ \left\{ (x,y) \in \mathbb{Z}^{2}; \ 0 \le y < x \right\}$. Members of
$k\mathbb{Z}^{n}$ for $k \in \mathbb{Z}_{\ge 1}$ are called
\emph{$k$-fold lattice points}.

\begin{definition}
Let $n,k \in \mathbb{Z}_{\ge 1}$ and consider a convex rational polytope
$\mathsf{P} \subseteq \mathbb{Q}^{n}$. The set of vertices of $\mathsf{P}$ is
denoted by $\mathcal{V}(\mathsf{P})$, the interior by $\mathsf{P}^{\circ}$ and
the boundary by $\partial \mathsf{P}$. We call $\mathsf{P}$

\begin{enumerate}
\item a \emph{lattice polytope}, if
$\mathcal{V}(\mathsf{P}) \subseteq \mathbb{Z}^{n}$.

\item a \emph{lattice polygon}, if $\mathsf{P}$ is a lattice polytope and
$n=2$.

\item \emph{$k$-empty}, if
$\mathsf{P} \cap k \mathbb{Z}^{n} \subseteq \mathcal{V}(\mathsf{P})$.
\end{enumerate}
\end{definition}

\begin{definition}
The group $\mathrm{Aff}_{k}^{n}(\mathbb{Z})$ of
\emph{$k$-affine unimodular transformations in $\mathbb{Q}^{n}$} is
defined by
\[
\mathrm{Aff}_{k}^{n}(\mathbb{Z}) \ := \ \left\{ T \colon
\mathbb{Q}^{n} \rightarrow \mathbb{Q}^{n}; \  T(v) = Av+w, \ 
A \in \mathrm{GL}_{n}(\mathbb{Z}), w \in k\mathbb{Z}^{n} \right\}.
\]
It naturally acts on the set of lattice polytopes in $\mathbb{Q}^{n}$. Lattice
polytopes $\mathsf{P}_{1}$ and $\mathsf{P}_{2}$ are called
\emph{$k$-equivalent}, if $\mathsf{P}_{2} \in \mathrm{Aff}_{k}(\mathbb{Z})
\cdot \mathsf{P}_{1}$. Additionally, we call $1$-equivalent polytopes
\emph{lattice equivalent}.
\end{definition}

\begin{remark}
Let $T \in \mathrm{Aff}_{k}^{n}(\mathbb{Z})$ and $\mathsf{P}$ be a
lattice polytope. Then the following hold.

\begin{enumerate}
\item $T(\mathbb{Z}^{n}) = \mathbb{Z}^{n}$.

\item $T(k\mathbb{Z}^{n}) = k\mathbb{Z}^{n}$.

\item $T(\mathcal{V}(\mathsf{P})) = \mathcal{V}(T(\mathsf{P}))$.

\item $T(\partial \mathsf{P}) = \partial T(\mathsf{P})$.

\item $T(\mathsf{P}^{\circ}) = T(\mathsf{P})^{\circ}$.

\item $\mathrm{vol}(\mathsf{P}) = \mathrm{vol}(T(\mathsf{P}))$.
\end{enumerate}

Therefore, the number of vertices, the number
of (interior) lattice points and the number of (interior) $k$-fold
lattice points are invariant under the action of
$\mathrm{Aff}_{k}^{n}(\mathbb{Z})$.

Note that $k$-equivalence of lattice polytopes indeed depends
on the specific value of $k$. Consider for example the following
pair of lattice polygons. They are $1$-equivalent but not $2$-
equivalent. The marked point is the origin.

\begin{center}
\begin{tikzpicture}[scale=0.8]

\draw (0,0) circle (6pt);
\draw (3,0) circle (6pt);

\draw [help lines, dashed] (0,0) grid (2,1);
\draw [very thick] (0,0) -- (0,1) -- (2,0) -- (0,0);

\draw [help lines, dashed] (3,0) grid (5,1);
\draw [very thick] (3,0) -- (3,1) -- (5,1) -- (3,0);

\end{tikzpicture}
\end{center}
\end{remark}

\begin{definition}\label{ap}
Let $k \in \mathbb{Z}_{\ge 1}$ and $\mathsf{P}$ be a lattice polygon. We
set
\[
a_{\mathsf{P}} \ := \ \min \left\{ \text{number of lattice points in the
relative
interior of} \ \mathsf{E} \right\} \ + \ 1
\]
where $\mathsf{E}$ runs through the edges of $\mathsf{P}$ which have a
vertex
in $k\mathbb{Z}^{2}$.
\end{definition}

\begin{definition}[Standard form of $k$-empty lattice triangles]
\label{standardform}
Let $k \in \mathbb{Z}_{\ge 1}$ and $\mathsf{S}$ be a $k$-empty lattice
polygon with exactly three vertices such that one of them is in
$k\mathbb{Z}^{2}$. We refer to $\mathsf{S}$ as in \emph{standard form},
if the following conditions are satisfied.

\begin{enumerate}
\item $\mathsf{S}$ has the vertices $(0,0), (0,a_{\mathsf{S}})$ and
$(x,y)$ where
$(x,y) \in \Delta$.

\item If $(x,y) \notin k\mathbb{Z}^{2}$ and $\mathrm{gcd}
(x,y)=a_{\mathsf{S}}$ then for each $z=1,\dots,y-1$
we have $a_{\mathsf{S}} \nmid z$ or
$a_{\mathsf{S}}x \nmid a_{\mathsf{S}}^{2}-zy$.

\item If $(x,y) \in k\mathbb{Z}^{2}$ and $\mathrm{gcd}
(x,y)=a_{\mathsf{S}}$ then for each $z=1,\dots,y-1$ we have 
$a_{\mathsf{S}} \nmid z$ or
$a_{\mathsf{S}}x \nmid a_{\mathsf{S}}(z+y)-zy$.
\end{enumerate}

If $\mathsf{S}$ is in standard form, we write
$\mathsf{S}=\Delta(a_{\mathsf{S}},x,y)$. The simplex $\mathsf{S}$ is 
called \emph{minimal}, if $a_{\mathsf{S}}=1$.
\end{definition}

\begin{remark}
The last two conditions of Definition \ref{standardform} ensure that the 
second coordinate of the vertex $(x,y) \in \Delta$ is minimal. To
illustrate this, consider the $2$-equivalent $2$-empty polytopes
\begin{align*}
\mathsf{P}_{1} \ &= \ \mathrm{conv} \left( (0,0),(0,1),(5,3) \right),\\
\mathsf{P}_{2} \ &= \ \mathrm{conv} \left( (0,0),(0,1),(5,2) \right).
\end{align*}
We can see that $\mathsf{P}_{1}$ does not fulfill condition (ii) so it is
not in standard form whereas $\mathsf{P}_{2}$ is. Therefore, these
conditions make sure that the \emph{right} vertex lies on the vertical
axis. This is relevant in case that there are several edges which attain
the minimum of Definition \ref{ap}.
\end{remark}

\begin{proposition}
Let $\mathsf{S}$ be a $k$-empty lattice triangle with a vertex
$z \in k\mathbb{Z}^{2}$. Then there is a unique lattice triangle
$\mathsf{S}'$ in standard form that is $k$-equivalent to $\mathsf{S}$.
\end{proposition}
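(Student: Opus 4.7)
The plan is to prove existence by explicit normalization and uniqueness by combining the invariants $a_{\mathsf{S}}$ and area with a short case analysis.

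For \emph{existence}, first apply the translation $v \mapsto v - z$, which lies in $\mathrm{Aff}_{k}^{2}(\ZZ)$ since $z \in k\ZZ^{2}$, bringing the distinguished vertex to the origin. Among the edges incident to the origin, at least one attains the minimum $a_{\mathsf{S}}$; write such an edge as $(0,0)\to w$ with $w = a_{\mathsf{S}}\, p$ for a primitive $p \in \ZZ^{2}$. Choose $A \in \mathrm{GL}_{2}(\ZZ)$ with $Ap = (0,1)$, so $Aw = (0, a_{\mathsf{S}})$, while the remaining vertex maps to some $(x', y')$ with $x' \ne 0$. After reflecting with $\mathrm{diag}(-1, 1)$ if necessary one may assume $x' > 0$, and a shear $(u, v) \mapsto (u, v - cu)$ for a suitable $c \in \ZZ$ reduces $y'$ modulo $x'$ into $[0, x'-1]$, yielding $(x, y) \in \Delta$ and condition (i).

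For conditions (ii) and (iii), observe that they are non-trivial only when $\gcd(x,y) = a_{\mathsf{S}}$, i.e., when the edge from $(0,0)$ to $(x,y)$ also attains $a_{\mathsf{S}}$; in that case there are two plausible edges to place along the vertical axis. Swapping to the second one yields a competing representative $\Delta(a_{\mathsf{S}}, x, \tilde y)$ with the same $a_{\mathsf{S}}$ and $x$ (by area invariance) but possibly smaller second coordinate. Writing $(x,y) = a_{\mathsf{S}}(x_{0}, y_{0})$ with $\gcd(x_{0}, y_{0}) = 1$ and solving $\tilde y \equiv a_{\mathsf{S}}\, y_{0}^{-1} \pmod{x}$ in $[0, x)$ (and treating the extra $k\ZZ^{2}$-vertex analogously in the (iii)-case) shows that the divisibility clauses in (ii) and (iii) are precisely the condition $\tilde y \geq y$. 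Iterating this swap, which strictly decreases $y$ whenever (ii) or (iii) fails, terminates at a representative satisfying all three conditions.

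For \emph{uniqueness}, suppose $T \in \mathrm{Aff}_{k}^{2}(\ZZ)$ sends $\Delta(a_{1}, x_{1}, y_{1})$ to $\Delta(a_{2}, x_{2}, y_{2})$. Since $a_{\mathsf{S}}$ and area are preserved, $a_{1} = a_{2}$ and $a_{1} x_{1} = a_{2} x_{2}$, so $x_{1} = x_{2}$. The map $T$ permutes vertices and respects $k\ZZ^{2}$, leaving only finitely many admissible vertex-permutations. For each such permutation the image of the vertical edge is a lattice segment of length $a_{1}$ starting at the image of the origin; the constraints on the third vertex combined with (ii) and (iii) imposed on the \emph{target} force $y_{1} = y_{2}$, as the only mechanism by which $y_{1} \ne y_{2}$ could arise is the swap described above, which is excluded by the very conditions defining standard form.

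The main obstacle is the divisibility arithmetic underlying (ii) and (iii): these conditions are tightly tailored to the swap between the two $a_{\mathsf{S}}$-attaining edges, and verifying that they are simultaneously sufficient for the existence step and necessary for uniqueness requires a careful bookkeeping of congruences among $a_{\mathsf{S}}$, $x$, $y$ and $k$ through shears, reflections and the dichotomy $(x,y) \in k\ZZ^{2}$ versus $(x,y) \notin k\ZZ^{2}$.
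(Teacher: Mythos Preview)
Your overall strategy coincides with the paper's---translate a $k$-fold vertex to the origin, rotate an $a_{\mathsf S}$-edge onto the $y$-axis, shear the third vertex into $\Delta$, and resolve the residual ambiguity via conditions (ii)/(iii). However, you have collapsed what the paper treats as three separate cases (one, two, or three vertices in $k\ZZ^2$) into a single argument, and this collapse creates a real gap.

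The first problem is the sentence ``Among the edges incident to the origin, at least one attains the minimum $a_{\mathsf{S}}$.'' When $\mathsf S$ has two $k$-fold vertices $z_1,z_2$ and one ordinary vertex $v$, the edges through $z_1$ are $z_1v$ and $z_1z_2$; the latter has $k-1$ interior lattice points, while $\gcd(v-z_i)\le k-1$ forces $d_i\le k-2$. So if $d_2<d_1$ and you happened to translate $z_1$ to the origin, \emph{neither} incident edge realizes $a_{\mathsf S}=d_2+1$. The paper avoids this by choosing the origin according to where the minimum is attained; you never make that choice explicit.

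The second problem is your description of the ``swap''. You phrase it as interchanging the two $a_{\mathsf S}$-edges through the origin, and you tie condition (iii) to this via ``treating the extra $k\ZZ^2$-vertex analogously''. But in the two-$k$-fold-vertex case with $d_1=d_2$, the two $a_{\mathsf S}$-edges are $z_1v$ and $z_2v$: they share the \emph{non}-$k$-fold vertex, not the origin. The competing standard form arises by moving the \emph{other} $k$-fold vertex to the origin, not by rotating a different edge onto the axis. The paper makes this explicit by producing the $k$-equivalent triangle $\conv((0,0),(x,y-a_{\mathsf S}),(x,y))$ (the point-reflection of $\mathsf S'$ through the midpoint of its $k$-fold edge, which lies in $\mathrm{Aff}_k^2(\ZZ)$ precisely because $(x,y)\in k\ZZ^2$) and writing down the matrix $A$ that renormalizes it. Your congruence $\tilde y\equiv a_{\mathsf S}\,y_0^{-1}\pmod x$ is tailored to the Case-1 swap and does not obviously yield the Case-2 transformation; the vague parenthetical does not fill this in.

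Finally, your uniqueness paragraph asserts that ``the only mechanism by which $y_1\ne y_2$ could arise is the swap'', but this is exactly what needs proof. One must check that any $T\in\mathrm{Aff}_k^2(\ZZ)$ carrying one standard form to another either fixes the vertex labelling (forcing $y_1=y_2$ via the shear constraint $0\le y<x$) or realizes one of the explicit swaps above, and then invoke (ii)/(iii) on the \emph{target}. The paper does this by exhibiting the matrices; your sketch stops short of it.
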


\begin{proof}
There are three cases depending on the number of vertices of
$\mathsf{S}$ in $k\mathbb{Z}^{2}$.

\smallskip

\noindent \emph{Case $1$: There is exactly one vertex
$z \in k\mathbb{Z}^{2}$.} Let $v_{1}$ and $v_{2}$ be the other two
vertices and $d_{i}$ the number of lattice points in the relative
interior of the edge of $\mathsf{S}$ with vertices $z$ and $v_{i}$.
We have $a_{\mathsf{S}}=\min \left\{ d_{1},d_{2} \right\} +1$.

\smallskip

\noindent \emph{Case $1.1$: $d_{1} \ne d_{2}$.} We can assume, without
loss of generality, that $d_{1}<d_{2}$. So we have
$a_{\mathsf{S}}=d_{1}+1$. Consider the $k$-affine unimodular
transformation $T_{1}$ given by $T_{1}(v)=v-z$. The coordinates of
$T_{1}(v_{1})$ have the greatest common divisor $a_{\mathsf{S}}$. Thus,
there is a $k$-affine unimodular transformation $T_{2}$ that leaves the
origin fixed and takes $T_{1}(v_{1})$ to $(0,a_{\mathsf{S}})$. A third
transformation $T_{3}$ sends $T_{2}(T_{1}(v_{2}))$ to a point
$(x,y) \in \Delta$ without changing the coordinates of
$T_{2}(T_{1}(v_{1}))$ and $T_{2}(T_{1}(z))$. The $k$-empty lattice
triangle $\mathsf{S}':=T_{3} \circ T_{2} \circ T_{1}(S)$ satisfies the
conditions of Definition \ref{standardform}. Note that
$\mathrm{gcd}(x,y) \ne a_{\mathsf{S}}$ since $d_{1} \ne d_{2}$. The
representation is obviously unique in this case.

\smallskip

\noindent \emph{Case $1.2$: $d_{1}=d_{2}$.} As before, let $T_{1}$ be
given by $T_{1}(v)=v-z$. Then, let $T_{2}$ be the $k$-affine unimodular
transformation that leaves the origin fixed and sends $T_{1}(v_{1})$ to
$(0,a_{\mathsf{S}})$. We choose a transformation $T_{3}$ that takes
$T_{2}(T_{1}(v_{2}))$ to a point $(x,y) \in \Delta$ without changing the
coordinates of $(0,a_{\mathsf{S}})$ and $(0,0)$. Analogously, let
$T_{2}'$ be the $k$-affine unimodular transformation that leaves the
origin fixed and sends $T_{1}(v_{2})$ to $(0,a_{\mathsf{S}})$. We choose
a transformation $T_{3}'$ that takes $T_{2}'(T_{1}(v_{1}))$ to a
point $(x,y') \in \Delta$ without changing the coordinates of
$(0,a_{\mathsf{S}})$ and $(0,0)$. Without loss of generality we have
$y<y'$. Set $\mathsf{S}':=T_{3} \circ T_{2} \circ T_{1}(S)$.

The lattice triangle $\mathsf{S}'$ fulfills condition (i) of Definition
\ref{standardform}. Suppose that it does not satisfy condition (ii).
That means there is a $z$ with $1 \le z \le y-1$ such that 
$a_{\mathsf{S}}|z$ and $a_{\mathsf{S}}x|a_{\mathsf{S}}^{2}-zy$. Consider 
the transformation $T$ given by the matrix
\[
A \ = \
\begin{pmatrix}
-\frac{y}{a_{\mathsf{S}}} & \frac{x}{a_{\mathsf{S}}}\\
\frac{a_{\mathsf{S}}^{2}-zy}{a_{\mathsf{S}}x} & \frac{z}{a_{\mathsf{S}}}
\end{pmatrix}
\ \in \ \mathrm{GL}_{2}(\mathbb{Z})
\]
and apply it to $\mathsf{S}'$. Since $(x,z) \in \Delta$ is a vertex of
of $T(\mathsf{S}')$, we have $z=y'$. So $y'=z<y<y'$ which is a
contradiction. Thus $\mathsf{S}'$ is in standard form and by
construction unique.

\smallskip

\noindent \emph{Case $2$: There are exactly two vertices 
$z_{1}, z_{2} \in k\mathbb{Z}^{2}$.} Let $v$ be the third vertex and
$d_{i}$ the number of lattice points in the relative interior of the
edge of $\mathsf{S}$ with vertices $z_{i}$ and $v$. As in Case $1$
we have $a_{\mathsf{S}}=\min \left\{ d_{1},d_{2} \right\} +1$.

\smallskip

\noindent \emph{Case $2.1$: $d_{1} \ne d_{2}$.} Without loss of
generality $d_{1}<d_{2}$. So we have $a_{\mathsf{S}}=d_{1}+1$. Consider
the $k$-affine unimodular transformation $T_{1}$ which is given by
$T_{1}(v')=v'-z_{1}$. Let $T_{2}$ be such that $T_{2}(0)=0$ and
$T_{2}(v-z_{1})=(0,a_{\mathsf{S}})$. Then, choose a third transformation
$T_{3}$ which fixes $(0,0$ and $(0,a_{\mathsf{S}})$ and sends
$T_{2}(T_{1}(z_{2}))$ to a point $(x,y) \in \Delta$. The lattice
triangle $\mathsf{S}':=T_{3} \circ T_{2} \circ T_{1}(S)$ satisfies the
conditions of Definition \ref{standardform}. As before, note that
$\mathrm{gcd}(x,y) \ne a_{\mathsf{S}}$ since $d_{1} \ne d_{2}$. The 
representation is unique.

\smallskip

\noindent \emph{Case $2.2$: $d_{1}=d_{2}$.} Let $T_{1}$ be given by
$T_{1}(v')=v'-z_{1}$ and $T_{2}$ be the transformation fixing the
origin and $T_{2}(T_{1}(v))=(0,a_{\mathsf{S}})$. Choose additionally
$T_{3}$ such that the origin and $(0,a_{\mathsf{S}})$ are fixed and
$(x,y) := T_{3}(T_{2}(T_{1}(z_{2}))) \in \Delta$. Accordingly, let
$T_{1}'$ be defined by $T_{1}'(v')=v'-z_{2}$ and $T_{2}'$ fixing $(0,0)$
and $T_{2}'(T_{1}'(v))=(0,a_{\mathsf{S}})$. Finally, choose a
transformation $T_{3}'$ which fixes $(0,0)$ and $(0,a_{\mathsf{S}})$ and
$(x,y') := T_{3}'(T_{2}'(T_{1}'(z_{2}))) \in \Delta$. Again, without
loss of generality, we have $y<y'$. We set
$\mathsf{S}':=T_{3} \circ T_{2} \circ T_{1}(S)$.

Assume that condition (iii) of Definition \ref{standardform} were not 
satisfied. Then there is a $z$ with $1 \le z \le y-1$ such that 
$a_{\mathsf{S}}|z$ and $a_{\mathsf{S}}x|a_{\mathsf{S}}(z+y)-zy$. 
Consider  the transformation $T$ given by the matrix
\[
A \ = \
\begin{pmatrix}
1-\frac{y}{a_{\mathsf{S}}} & \frac{x}{a_{\mathsf{S}}}\\
\frac{a_{\mathsf{S}}(z+y)-zy}{a_{\mathsf{S}}x} & \frac{z}
{a_{\mathsf{S}}}
-1
\end{pmatrix}
\ \in \ \mathrm{GL}_{2}(\mathbb{Z})
\]
and apply it to the lattice triangle
\[
\mathrm{conv} \left( \left( 0,0 \right),
\left( x,y-a_{\mathsf{S}}) \right), \left( x,y \right) \right)
\]
which is $k$-equivalent to $\mathsf{S}'$. The lattice triangle 
$T(\mathsf{S}')$ has the vertex $(x,z)$ and so we have $z=y'$. That
means $y'=z<y<y'$ which is a contradiction. Therefore $\mathsf{S}'$ is
in standard form and the uniqueness is clear by construction.

\smallskip

\noindent \emph{Case $3$: There are three vertices 
$z_{1}, z_{2}, z_{3} \in k\mathbb{Z}^{2}$.} Since the number of lattice
points in the relative interior of each edge of $\mathsf{S}$ is $k-1$,
the only possible standard form is the lattice triangle
$\mathrm{conv} \left( \left( 0,0 \right), \left( 0,k \right), \left( k,0 
\right) \right)$.
\end{proof}

\begin{definition}
Let $k \in \mathbb{Z}_{\ge 1}$. We define the
\emph{$k$-th Farey sequence} to be
\[
F_{k} \ := \ \left( \frac{f_{1}}{f_{2}}; \ 0 \le f_{1} < f_{2} \le k, \ 
\mathrm{gcd}(f_{1},f_{2})=1\right).
\]
Members of $F_{k}$ are called \emph{$k$-th Farey numbers}.

Let $f=\frac{f_{1}}{f_{2}}$ be a $k$-th Farey number. We define the
\emph{$k$-th Farey strip} corresponding to $f$ to be the
polyhedron
\[
F_{k,f} \ := \ 
\begin{cases}
  \left\{ \begin{pmatrix}x\\y\end{pmatrix}; \ 0 \ < \ \begin{pmatrix}
x\\y\end{pmatrix} \cdot \begin{pmatrix}-f_{1}\\f_{2}\end{pmatrix}
\ < \ k \right\}, \ &\mathrm{if} \ f_{2} = k,\\
  \left\{ \begin{pmatrix}x\\y\end{pmatrix}; \ 0 \ < \ \begin{pmatrix}
x\\y\end{pmatrix} \cdot \begin{pmatrix}-f_{1}\\f_{2}\end{pmatrix}
\ \le \ k \right\}, \ &\mathrm{if} \ f_{2} \ne k.
\end{cases}
\]
\end{definition}

\begin{remark}
By definition, the number of $k$-th Farey strips equals the length of
the $k$-th Farey sequence. That is, there are $\varphi(1)+\dots+
\varphi(k)$ many $k$-th Farey strips where $\varphi$ is the Euler 
totient function.
\end{remark}

\begin{definition}
A \emph{spike} attached to a $k$-th Farey strip $F_{k,f}$ is a
$2$-dimensional convex  polytope $\mathsf{S}$ with exactly three
rational vertices satisfying the following  conditions.

\begin{enumerate}

\item Two of the vertices of $\mathsf{S}$ are on the line
$y=\frac{f_{1}}{f_{2}}x+\frac{f_{2}}{k}$.

\item One vertex of $\mathsf{S}$ is above the line
$y=\frac{f_{1}}{f_{2}}x+\frac{f_{2}}{k}$.

\item If $(x,y)$ is a lattice point in $\mathsf{S}$, then
$\mathrm{conv}((0,0),(0,1),(x,y))$ is $k$-empty.

\end{enumerate}
\end{definition}

The following picture shows the $k$-th Farey strips and spikes attached 
to them for $k=3$. There are four strips where spikes are attached to
only two of them.

\begin{center}
\includegraphics[scale=0.3]{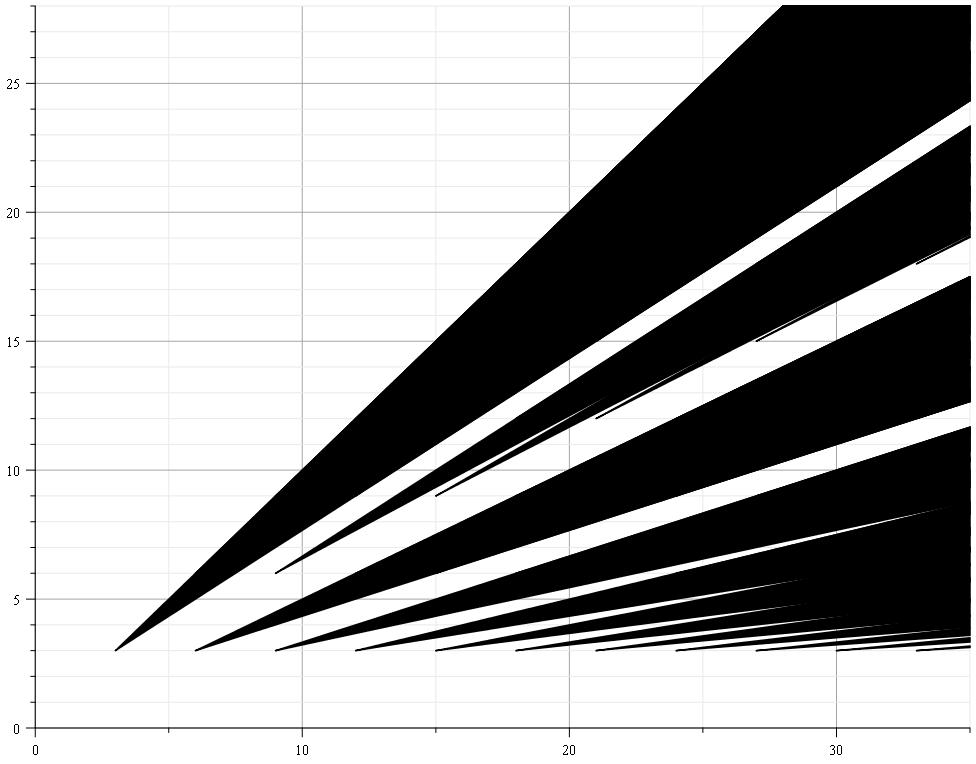}
\end{center}

\begin{proposition}
Let $k \in \mathbb{Z}_{\ge 1}$ and $f=\frac{f_{1}}{f_{2}}$ be a $k$-th
Farey number. If the lattice triangle
$\mathsf{S}=\mathrm{conv}((0,0),(0,1),(x,y))$ is contained in the $k$- th
Farey strip $F_{k,f}$, it is $k$-empty.
\end{proposition}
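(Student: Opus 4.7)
The plan is to use the linear form $\varphi(X,Y):=-f_{1}X+f_{2}Y$ that defines the Farey strip. Any $k$-fold lattice point $p=(k\alpha,k\beta)\in\mathsf{S}$ has $\varphi(p)=k(-f_{1}\alpha+f_{2}\beta)$, and the strip inequalities confine this integer multiple of $k$ to a handful of values.

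After disposing of $p=(0,0)$, the interpretation $\mathsf{S}\setminus\{(0,0)\}\subseteq F_{k,f}$ places every other $k$-fold lattice point $p$ in $F_{k,f}$, so $0<\varphi(p)\le k$ with strict right-hand inequality when $f_{2}=k$. Dividing by $k$ yields $0<m\le 1$ for the integer $m:=-f_{1}\alpha+f_{2}\beta$, with $0<m<1$ when $f_{2}=k$. I would then split the two Farey-strip cases. When $f_{2}=k$ no integer satisfies $0<m<1$, so no such $p$ exists and the claim is immediate. When $f_{2}<k$ the only possibility is $m=1$, meaning $p$ lies on the line $L:\varphi=k$. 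Evaluating $\varphi$ at the vertices $(0,0),(0,1),(x,y)$ of $\mathsf{S}$ gives $0,\,f_{2},\,\varphi(x,y)$, all $\le k$ since $\mathsf{S}\subseteq\overline{F_{k,f}}$. A linear functional on a polytope is maximized at a vertex, and $p$ already attains $\varphi=k$, so that maximum equals $k$; because $0$ and $f_{2}$ are both strictly less than $k$, only $(x,y)$ can realize it, forcing $\varphi(x,y)=k$ and collapsing the face $\{\varphi=k\}\cap\mathsf{S}$ to the single point $\{(x,y)\}$. Hence $p=(x,y)$, a vertex of $\mathsf{S}$.

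The one delicate point is the reading of ``contained in $F_{k,f}$'', since the strip is open on the side $\varphi=0$ on which the vertex $(0,0)$ of $\mathsf{S}$ sits. Interpreting the containment as $\mathsf{S}\setminus\{(0,0)\}\subseteq F_{k,f}$ in particular forces $\varphi(x,y)>0$, which excludes the only pathological configuration, namely the edge from $(0,0)$ to $(x,y)$ lying entirely on the line $\varphi=0$ and potentially harbouring additional non-vertex $k$-fold lattice points. Once this is ruled out, the argument above goes through verbatim.
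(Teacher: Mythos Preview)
Your proof is correct and follows essentially the same approach as the paper's: evaluate the linear form $\varphi=-f_1X+f_2Y$ on a putative $k$-fold point, use the strip inequalities to force the integer $m=-f_1\alpha+f_2\beta$ into $\{1\}$ (or into the empty set when $f_2=k$), and then argue that $\varphi=k$ is attained only at the vertex $(x,y)$. Your version is in fact a little more careful than the paper's on two points: you explicitly justify why $p$ must equal $(x,y)$ via the maximum-on-a-vertex principle (the paper just asserts ``must be a vertex''), and you address the issue that $(0,0)$ lies on the open boundary $\varphi=0$ of the strip, so the containment has to be read as $\mathsf{S}\setminus\{(0,0)\}\subseteq F_{k,f}$.
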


\begin{proof}
Assume that $\mathsf{S}$ is not $k$-empty. Then we  find
$a,b \in \mathbb{Z}_{\ge 1}$ such that
$(ka,kb) \in \mathsf{S} \subseteq F_{k,f}$. By  definition we have
$0<f_{2}kb-f_{1}ka \le k$ and therefore $0<f_{2}b-f_{1}a \le 1$. So
$f_{2}b-f_{1}a =1$. This means that $(ka,kb)$ cannot lie in the interior
of $F_{k,f}$. If $f_{2}=k$, this is  already a contradiction. If on the other
hand $f_{2} \ne k$, the point $(ka,kb)$ must be a vertex of $\mathsf{S}$
which is again a contradiction.
\end{proof}

\begin{definition}
Let $k \in \mathbb{Z}_{\ge 1}$. A $k$-empty lattice triangle in
standard form which is not contained in a $k$-th Farey strip is called
\emph{sporadic}.
\end{definition}

\begin{proposition}\label{finiteness}
Let $k \in \mathbb{Z}_{\ge 1}$. The number of minimal sporadic $k$-empty
lattice triangles in standard form is finite. The bound for the first 
coordinate of the vertex in $\Delta$ is $(k^{2}-1)k-1$.
\end{proposition}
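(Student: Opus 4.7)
The plan is to bound the first coordinate $x$ of the vertex $(x,y) \in \Delta$ of any minimal sporadic $k$-empty lattice triangle $\mathsf{S} = \mathrm{conv}((0,0),(0,1),(x,y))$ in standard form; since $(x,y) \in \Delta$ forces $0 \le y < x$, this yields finiteness. Let $s := y/x \in [0,1)$, and locate $s$ in the Farey sequence $F_k$: either (a) $s = a/b$ for some $a/b \in F_k$, or (b) $s$ lies strictly between consecutive Farey fractions $a/b < c/d$ of $F_k \cup \{1/1\}$, where $bc - ad = 1$, $b + d \ge k+1$, and
\[
x \;=\; b\,(cx - dy) \;+\; d\,(by - ax).
\]

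Sporadicity, by definition, means $(x,y) \notin F_{k,f}$ for every $f \in F_k$. For $f > s$ this is automatic, while for each $f = f_1/f_2 \in F_k$ with $f \le s$ it translates to the inequality $f_2 y - f_1 x > k$ (respectively $\ge k$ when $f_2 = k$, and $= 0$ when $f = s$ exactly). In parallel, $k$-emptiness forbids any non-vertex $k$-fold lattice point $(k\alpha, k\beta) \in \mathsf{S}$, which, after unfolding the defining inequalities of $\mathsf{S}$, is equivalent to the Diophantine avoidance that, for every integer $\alpha$ with $1 \le k\alpha < x$, one has $\alpha y \not\equiv 0 \pmod{x}$ and $\{\alpha y / x\} < (k-1)/k + \alpha/x$.

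In case (a), write $(x,y) = (bt, at)$ with $t \ge 1$. The $k$-empty avoidance applied along the edge from $(0,0)$ to $(x,y)$ directly forces $t \le k$, so $x \le bk \le k^2$, well within the claimed bound; a short verification using the strip $F_{k, a'/b'}$ for the Farey predecessor $a'/b'$ of $a/b$ (via $b'a - a'b = 1$, hence $b'y - a'x = t$) shows that every such $(x,y)$ with $t \ge 1$ fails to be sporadic, except in the degenerate subcase $a/b = 0/1$, which already gives $x \le k$. In case (b), the sporadic inequality for $a/b$ and the $k$-empty Diophantine avoidance interact via the identity above: already $\alpha = 1$ forces $s \le (k-1)/k + 1/x$, confining $s$ to the lower part of $[0,1)$, and successively larger $\alpha$ pin $(x,y)$ to extremal positions near the nearest Farey fraction $a/b$. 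A finite case analysis, split according to whether $b = k$ or $d = k$, and invoking the uniqueness clauses (ii)--(iii) of Definition \ref{standardform}, then produces the stated bound $x \le (k^2-1)k - 1$.

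The main obstacle is the precise numerical calibration of this bound. Sporadicity alone already forces $x$ to be of order $(k+1)^2$, whereas $k$-emptiness must cap it at exactly $k^3 - k - 1$; pinning down this sharp constant, rather than a loose polynomial bound, requires a careful bookkeeping of the extremal configurations, in which $s$ lies just past a Farey fraction $a/b$ of minimal admissible denominator, the strict upper bound in $F_{k,a/b}$ for $b = k$ is active, and the standard-form normalization selects a unique representative from each $k$-equivalence class.
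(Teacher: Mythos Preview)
Your proposal is not a proof but an outline with the central step missing. In case (b) you write that ``a finite case analysis, split according to whether $b=k$ or $d=k$, \dots then produces the stated bound $x \le (k^2-1)k-1$,'' and in the closing paragraph you explicitly concede that ``the main obstacle is the precise numerical calibration of this bound'' and that ``pinning down this sharp constant \dots requires a careful bookkeeping of the extremal configurations.'' That bookkeeping \emph{is} the proof; everything before it is setup. At present nothing in your argument rules out, say, $x$ of order $k^4$, and the vague phrase ``successively larger $\alpha$ pin $(x,y)$ to extremal positions'' does no work.

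The paper's argument avoids this entirely by switching to a geometric viewpoint. Rather than tracking Diophantine inequalities on $(x,y)$ directly, it observes that any sporadic $(x,y)$ must lie in a \emph{spike} attached to some Farey strip $F_{k,f}$, writes down the three rational vertices of each spike explicitly as a function of an integer index $i$, and computes the spike's area in closed form:
\[
A \;=\; \tfrac{1}{2}\!\left(\frac{ik^{2}}{\,i-k+f_{2}\,} - k^{2}\right).
\]
Pick's theorem then says that once $A<1$ the spike contains no interior lattice point, and solving $A<1$ for $i$ gives an explicit threshold $i > \tfrac{1}{2}(k^{3}-f_{2}k^{2}+2k-2f_{2})$, from which the bound $x \le (k^{2}-1)k-1$ drops out. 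The whole argument is three lines of algebra plus Pick; no case analysis on the Farey neighbours and no normalisation conditions from Definition~\ref{standardform} are needed. If you want to salvage your approach, you would need to actually execute the ``careful bookkeeping'' you allude to and show it terminates at the stated constant --- but the area-of-spikes computation is both shorter and sharper.
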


\begin{proof}
Let $(x,y) \in \mathbb{Z}^{2}$ such that
$\mathrm{conv}((0,0,),(0,1),(x,y))$ is a $k$-empty triangle in normal 
form which is not completely contained in
a $k$-th Farey strip. Then there is a $k$-th Farey strip $F_{k,f}$ and a 
spike $\mathsf{S}$ attached to it, such that $(x,y)$ is in the interior 
of it.

We have $f_{2} \ne k$. Otherwise $\mathsf{S}$ is empty. For some
$i>k-f_{2}$ the spike has the vertices
\begin{align*}
  \biggl( ik,\frac{f_{1}}{f_{2}}i&k+\frac{k}{f_{2}}\biggr), \ \biggl(
(i+f_{2})k,\frac{f_{1}}{f_{2}}(i+f_{2})k+\frac{k}{f_{2}}\biggr),\\
&\biggl( \frac{(i+f_{2})ik}{i-k+f_{2}},\frac{\frac{f_{1}}{f_{2}}(i
+f_{2})ik+
i\frac{k}{f_{2}}}{i-k+f_{2}}\biggr).
\end{align*}
Its area is therefore
\begin{align*}
A(\mathsf{S}) \ := \ \frac{1}{2}\biggl( \frac{ik^{2}}{i-k+f_{2}}-k^{2}
\biggr).
\end{align*}
Let $I(\mathsf{S})$ be the number of interior integral points of
$\mathsf{S}$ and $B(\mathsf{S})$ the number of integral points on the 
boundary of $\mathsf{S}$. By Pick's
theorem we have
\[
A(\mathsf{S})=I(\mathsf{S})+\frac{B(\mathsf{S})}{2}-1.
\]
Furthermore, it is $A(\mathsf{S})<1$ if and only if
\[
i \ > \ \frac{k^{3}-f_{2}k^{2}+2k-2f_{2}}{2}.
\]
Since $B(\mathsf{S}) \ge 1$, if $A(\mathsf{S})<1$ then 
$I(\mathsf{S})<1$. So $x$ is bounded from above by $(k^{2}-1)k-1$.
\end{proof}

\begin{corollary}
Let $k \in \mathbb{Z}_{\ge 1}$. The number of $k$-empty lattice 
triangles in standard form which are not contained in a $k$-th Farey 
strip is finite.
\end{corollary}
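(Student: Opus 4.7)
The plan is to derive the corollary from Proposition \ref{finiteness}, which covers the minimal case $a_{\mathsf{S}} = 1$, by first bounding $a_{\mathsf{S}}$ uniformly and then treating each residual value separately.

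First I would observe that $a_{\mathsf{S}} \le k$. Indeed, the edge of $\mathsf{S} = \Delta(a_{\mathsf{S}}, x, y)$ from $(0, 0)$ to $(0, a_{\mathsf{S}})$ contains the relative interior lattice points $(0, 1), \ldots, (0, a_{\mathsf{S}} - 1)$, and $k$-emptiness forbids any of them from lying in $k\mathbb{Z}^{2}$; hence $a_{\mathsf{S}} - 1 < k$. This reduces the problem to showing, for each fixed $a \in \{1, \ldots, k\}$, that only finitely many sporadic $\Delta(a, x, y)$ in standard form occur.

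For $a = 1$ this is exactly Proposition \ref{finiteness}, which bounds $x \le (k^{2} - 1)k - 1$, and then $0 \le y < x$ leaves only finitely many $(x, y)$. For $a \ge 2$, the key observation is that $\mathsf{S}_{0} := \mathrm{conv}((0, 0), (0, 1), (x, y))$ is contained in $\mathsf{S} = \Delta(a, x, y)$ and is itself $k$-empty: any $k$-fold lattice point of $\mathsf{S}_{0}$ is a $k$-fold lattice point of $\mathsf{S}$ and hence a vertex of $\mathsf{S}$, but since $(0, a) \notin \mathsf{S}_{0}$ for $a \ge 2$, it must lie in $\{(0, 0), (x, y)\}$, both vertices of $\mathsf{S}_{0}$. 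I would then adapt the spike analysis from the proof of Proposition \ref{finiteness} by replacing the apex $(0, 1)$ with $(0, a)$: the $a$-spike attached to a Farey strip $F_{k, f}$ is obtained by the same construction but with the line through $(0, a)$ of slope $f_{1}/f_{2}$ in place of the line through $(0, 1)$ of slope $f_{1}/f_{2}$. Its vertices and area are computed by the same formulas as in the proof of Proposition \ref{finiteness}, with $1/f_{2}$ replaced by $a/f_{2}$. Applying Pick's theorem to the resulting triangle, the area drops below $1$ once the spike parameter $i$ exceeds an explicit constant depending on $a$, $f_{2}$ and $k$, which again forces $I(\mathsf{S}) = 0$ and bounds the first coordinate of the admissible $(x, y)$.

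The main obstacle is carrying out this spike computation for $a \ge 2$: one must verify that the slightly more involved algebraic expressions for the vertices and the area still yield an effective Pick-type bound, and one must check that the sporadicity of $\mathsf{S}$ (rather than of $\mathsf{S}_{0}$) really lands $(x, y)$ in some such $a$-spike — concretely, that $\mathsf{S}_{0} \subseteq \overline{F_{k, f}}$ together with $\mathsf{S}$ sporadic forces $f_{2} > k/a$, so only Farey fractions of denominator exceeding $k/a$ are relevant and the associated $a$-spike is the one to be analyzed. Once this is in place, the bound on $x$ combined with $0 \le y < x$ shows the sporadic $\Delta(a, x, y)$ form a finite set for each $a$, and taking the union over $a \in \{1, \ldots, k\}$ concludes the proof.
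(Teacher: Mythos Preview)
Your proposal is correct but considerably more elaborate than the paper's argument. The paper's entire proof is a single sentence: given $\mathsf{S}=\Delta(a,x,y)$ in standard form, the minimal triangle $\mathsf{S}'=\Delta(1,x,y)$ is contained in $\mathsf{S}$ and hence $k$-empty. The implied conclusion is that finiteness follows from Proposition~\ref{finiteness}.

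You make the same containment observation but then correctly flag the logical gap the paper glosses over: sporadicity of $\mathsf{S}$ does not immediately imply sporadicity of $\mathsf{S}'$, since $\mathsf{S}'\subseteq\mathsf{S}$ may well lie in a Farey strip $F_{k,f}$ even when $\mathsf{S}$ does not (precisely when $af_2>k$). Your remedy---bounding $a\le k$ and then rerunning the spike argument with apex $(0,a)$ in place of $(0,1)$---is a legitimate way to close this gap, and the Pick-theorem computation does go through with only cosmetic changes to the constants. What your approach buys is a self-contained argument that does not rely on the reader filling in the missing case; what the paper's one-liner buys is brevity, at the cost of leaving that case to the reader. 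Both routes rest on the same geometric picture of Farey strips and spikes, so the difference is one of rigor rather than of method.
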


\begin{proof}
Let $\mathsf{S}=\Delta(a,x,y)$ be a $k$-empty lattice triangle in 
standard form.
Then, there is a minimal $k$-empty lattice triangle $\mathsf{S}'=
\Delta(1,x,y)$ contained in $\mathsf{S}$.
\end{proof}

\begin{remark}
The proof of Proposition \ref{finiteness} shows that we can list the 
sporadic minimal $k$-empty lattice triangles in standard form explicitly 
for a given $k \in \mathbb{Z}_{\ge 1}$. The following table lists the 
number of those simplices for low values of $k$.

\begin{center}
\begin{tabular}{c||c|c|c|c|c|c}

$k$ & $1$ & $2$ & $3$ & $4$ & $5$ & $6$ \\ \hline
\# & 0 & 2 & 7 & 32 & 96 & 279  \\

\end{tabular}
\end{center}
\end{remark}

\subsection{Canonical polytopes}

\begin{definition}
Let $d \in \ZZ_{\geq 2}, r \in \ZZ_{\geq d}$ and $v_0:=0,v_1,\ldots,v_r \in \QQ^d$ be pairwise different so that the polytope 
$
\mathsf{P} := \conv(v_0,\ldots,v_r)
$
is full-dimensional and the $v_i$ are the vertices of $\mathsf{P}$. 
We say that $\mathsf{P}$ is \emph{$\QQ$-Gorenstein of index $\imath \in \ZZ_{\geq 1}$}
if there exist $\alpha_1,\ldots,\alpha_d \in \ZZ$ with
$$
\sum\limits_{j=1}^{d} \alpha_j v_{ij} = \imath
$$
for all $i=1,\ldots,r$ and $\imath$ is minimal with that property.

For a  $\QQ$-Gorenstein  polytope
$
\mathsf{P} = \conv(v_0,\ldots,v_r)
$
of index $\imath$ and $\alpha_1,\ldots,\alpha_d \in \ZZ$ with $
\sum_{j=1}^{d} \alpha_j v_{ij} = \imath
$, 
we say that $\mathsf{P}$ is \emph{canonical} (\emph{terminal})
if for each lattice point $0 \neq p \in \mathsf{P}$ we have
$\sum_{j=1}^{d} \alpha_j p_j = \imath$ 
(if $\mathsf{P} \cap \ZZ^d \subseteq \{v_0,\ldots,v_r\}$).
\end{definition}

\begin{remark}
Let $v_1,\ldots,v_r \in \ZZ^d$ be primitive and $X$ be the affine toric variety defined by the full-dimensional cone
$
\cone(v_1,\ldots,v_r).
$
Then $X$ is $\QQ$-Gorenstein if and only if the polytope $\mathsf{P} :=\conv(0,v_1,\ldots,v_r)$ is $\QQ$-Gorenstein and in this case it is canonical (terminal) if and only if $\mathsf{P}$ is canonical (terminal).
\end{remark}

It is well known that the only canonical lattice polygons are those of index $1$. From this, we can deduce the following statement for canonical polytopes with rational vertices.

\begin{lemma}
\label{le:twopoints}
Let $\mathsf{P} \subseteq \QQ^2$ be $\QQ$-Gorenstein of index $\imath \geq 2$. Then if $\mathsf{P}$ contains two lattice points different from the origin, it cannot be canonical.
\end{lemma}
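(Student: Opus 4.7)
The plan is to reduce the statement to the cited well-known fact that every canonical lattice polygon has $\QQ$-Gorenstein index $1$. So I assume for contradiction that $\mathsf{P}$ is canonical of index $\imath \geq 2$ and contains two distinct lattice points $p, q \neq 0$, and aim to construct from these data a canonical lattice polygon of index $\imath \geq 2$.

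First I would extract the basic geometric picture. Let $\alpha = (\alpha_1, \alpha_2) \in \ZZ^2$ be the integer vector witnessing $\QQ$-Gorensteinness, so that $\alpha \cdot v_i = \imath$ for every vertex $v_i \neq 0$ of $\mathsf{P}$. All non-origin vertices of $\mathsf{P}$ therefore lie on the line $L := \{x \in \QQ^2 : \alpha \cdot x = \imath\}$, and the origin lies off $L$ since $\imath \geq 2 > 0$. Canonicity of $\mathsf{P}$ forces both $p$ and $q$ to satisfy $\alpha \cdot p = \alpha \cdot q = \imath$, hence $p, q \in L$. Now I consider the lattice triangle $\mathsf{P}' := \conv(0, p, q)$, which is full-dimensional (because $0 \notin L$ while $p \neq q$ both lie on $L$) and contained in $\mathsf{P}$.

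Next I verify that $\mathsf{P}'$ is $\QQ$-Gorenstein of index exactly $\imath$, and canonical. The same $\alpha$ works as a supporting vector for $\mathsf{P}'$, so the index $\imath'$ of $\mathsf{P}'$ satisfies $\imath' \leq \imath$. Conversely, any admissible $\beta \in \ZZ^2$ with $\beta \cdot p = \beta \cdot q = \imath'$ must be orthogonal to $q - p$, as is $\alpha$; both are therefore integer multiples of a primitive $\gamma \in \ZZ^2$ orthogonal to $q - p$. Writing $\alpha = m \gamma$ and $\beta = n \gamma$ with $m, n \in \ZZ \setminus \{0\}$, minimality of $\imath = m (\gamma \cdot p)$ forces $|m| = 1$, so $\imath = |\gamma \cdot p|$ is the smallest positive value attainable, giving $\imath' = \imath$. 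For canonicity: any lattice point $0 \neq r \in \mathsf{P}' \subseteq \mathsf{P}$ lies on $L$ by canonicity of $\mathsf{P}$, and $L$ is precisely the supporting hyperplane of $\mathsf{P}'$, so $r$ satisfies the canonicity equation for $\mathsf{P}'$ (after possibly replacing $\gamma$ by $-\gamma$).

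The contradiction then reads: $\mathsf{P}'$ is a canonical \emph{lattice} polygon of index $\imath \geq 2$, which is impossible by the well-known fact recalled just before the lemma. The main obstacle is the index-preservation step, i.e., showing that $\mathsf{P}'$ inherits the full index $\imath$ rather than some proper divisor thereof; this is handled by exploiting the one-dimensionality of the space of integer normals to the line $L$ together with the minimality clause in the definition of the $\QQ$-Gorenstein index of $\mathsf{P}$.
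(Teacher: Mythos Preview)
Your proof is correct and follows essentially the same route as the paper's: both reduce to the cited fact about lattice polygons by passing to the triangle $\mathsf{P}'=\conv(0,p,q)$. The paper handles the two cases $\alpha\cdot p_i=\imath$ versus $\alpha\cdot p_i<\imath$ explicitly, whereas you absorb the second case into the contradiction hypothesis; and where the paper simply asserts that $\mathsf{P}'$ has index $\imath$, you supply the verification via the primitive normal $\gamma$ to $L$, which is a welcome bit of extra care.
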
 

\begin{proof}
Let
$
\mathsf{P} := \conv(v_0,\ldots,v_r)
$
contain the two lattice points $p_1,p_2$ and choose
$\alpha_1,\ldots,\alpha_d \in \ZZ$ satisfying
$
\sum_{j=1}^{d} \alpha_j v_{ij} = \imath
$.
Then if $
\sum \alpha_j p_{ij} = \imath
$ for $i=1,2$, the polygon $\mathsf{P}$ contains the lattice polygon $\mathsf{P}'=\conv(0,p_1,p_2)$ of index $\imath$. But $\mathsf{P}'$ and thus also $\mathsf{P}$ cannot be canonical.
If in turn $
\sum_{j=1}^{d} \alpha_j p_{ij} < \imath
$ for some $i \in \{1,2\}$, then $\mathsf{P}$ is not canonical by definition.
\end{proof}

In dimension three, we have a classification of the canonical lattice polytopes by Ishida and Iwashita, which we recall in the following theorem:

\begin{theorem}\cite[Theorem 3.6]{II}.\label{th:toric3dim}
Let $\mathsf{P} \subseteq \QQ^3$ be a $\QQ$-Gorenstein canonical lattice polytope of index $\imath$ with vertices $0,v_1,\ldots,v_r$. Then, up to lattice equivalence, the polytope $\mathsf{P}$ belongs to one of the following cases.

\renewcommand{\arraystretch}{1.5} 

\begin{longtable}{c|c|c|c|c}
Case 
& 
$r$ 
& 
$\imath$ 
& 
$v_1,\ldots,v_r$ 
&
Conditions
\\
\hline
$(i)$ 
&
any
& 
$1$ 
&
any
&

\\
\hline
$(ii)$ 
&
$4$
& 
$
2 
$
&
$(1,0,2),(1+m,1,2),(1,2,2),(1-n,1,2)$ 
&
$
n,m \in \ZZ_{\geq 1}
$
\\
\hline
$(iii)$ 
&
$3$ 
& 
$
\geq 2
$  
&
$(1,n,\imath),(0,n,\imath),(1,n+m,\imath)$
&
$
n,m \in \ZZ_{\geq 1}
$
\\
\hline
$(iv)$ 
&
$3$ 
& 
$
2
$  
&
$(1,1,2),(0,1,2),(2,1+2m,2)$
&
$
m \in \ZZ_{\geq 2}
$
\\
\hline
$(v)$ 
&
$3$ 
& 
$
2
$  
&
$(1,-2,2),(-1,-1,2),(2,1,2)$
\\
\hline
$(vi)$ 
&
$3$ 
& 
$
3
$
&
$(1,-1,3),(0,-1,3),(2,2,3)$
\end{longtable}

\noindent
Additionally, $\gcd(n,\imath)=1$ holds in Case $(iii)$ and $\mathsf{P}$ is terminal if and only if it falls under Case $(iii)$ with $m=1$.
\end{theorem}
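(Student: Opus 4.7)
The approach is to translate the three-dimensional canonicity condition into a two-dimensional lattice-emptiness problem accessible via the Farey-strip machinery of Section~\ref{sec:pol}. Case~(i) is vacuous: it simply records that the classification imposes no constraint when $\imath=1$. So assume $\imath\geq 2$. An element of $\GL_3(\ZZ)$ brings the Gorenstein equation to the form $x_3=\imath$, placing every vertex on the affine plane $H=\{x_3=\imath\}$. Then $\mathsf{P}$ is the cone with apex $0$ over the lattice polygon $\mathsf{Q}:=\mathsf{P}\cap H\subseteq H\cong\QQ^2$, and its slice at integer height $k$ is exactly $(k/\imath)\mathsf{Q}$. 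Canonicity therefore translates into the simultaneous two-dimensional condition
\[
(k/\imath)\mathsf{Q}\cap\ZZ^2 \ = \ \emptyset \qquad\text{for every } 1\leq k\leq \imath-1,
\]
which after rescaling becomes a simultaneous $k$-emptiness statement on $\imath\mathsf{Q}$ for $k=1,\dots,\imath-1$.

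To classify such $\mathsf{Q}$, I would combine Theorem~\ref{th:farey} with Lemma~\ref{le:twopoints}. Any triangle inscribed in $\mathsf{Q}$ with vertices among the $v_i$ inherits the same emptiness condition, so Theorem~\ref{th:farey} places its $\imath$-fold dilation either inside a $k$-th Farey strip or into the finite sporadic list of Proposition~\ref{finiteness}, for every $k<\imath$ simultaneously. A case split by the number $r$ of vertices of $\mathsf{Q}$ now proceeds: $r=3$ produces the Farey-strip family Case~(iii) (where $\gcd(n,\imath)=1$ is precisely the condition that $n/\imath$ be a Farey number) together with the sporadic Cases~(iv), (v), (vi), which match the short lists produced by Proposition~\ref{finiteness} for $k=2$ and $k=3$; $r=4$ yields the one-parameter family Case~(ii); and $r\geq 5$ is excluded by applying Lemma~\ref{le:twopoints} to a well-chosen sub-polygon to force an unwanted lattice point in an intermediate slice. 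The terminality criterion then falls out by inspection: among the listed cases, only (iii) with $m=1$ has $\mathsf{Q}\cap\ZZ^2$ equal to its vertex set.

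The main obstacle I anticipate is the \emph{simultaneity} of the emptiness condition. The Farey-strip classification of Section~\ref{sec:pol} controls emptiness for a single value of $k$, but here the constraints for $k=1,\dots,\imath-1$ must be imposed together. Intersecting the relevant Farey strips across different $k$'s and checking which of the finitely many sporadic candidates survive every value of $k$ is the combinatorial heart of the argument and the place where most of the casework concentrates. A secondary subtlety is that the standard form of Definition~\ref{standardform} depends on $k$, so one must track how the normal forms for different $k$'s align on one and the same polygon $\mathsf{Q}$; this is what pins down the explicit coordinates of the vertices in Cases~(ii)--(vi) rather than merely their combinatorial type.
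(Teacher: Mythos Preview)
The paper does not prove this theorem: it is quoted from Ishida--Iwashita \cite[Theorem~3.6]{II} and used as a black box throughout Section~\ref{sec:class}. There is therefore no proof in the present paper to compare your proposal against; the Farey-strip machinery of Section~\ref{sec:pol} is developed here for the \emph{non-toric} classification and is logically downstream of, not a tool for, this cited toric result.

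Your sketch is nonetheless a plausible independent route, and you have correctly located its weakest point, namely the simultaneity of the emptiness conditions across all heights $1\le k\le\imath-1$. Two further issues deserve attention. First, the rescaling step is garbled: the condition $(k/\imath)\mathsf{Q}\cap\ZZ^2=\emptyset$ is equivalent to $\imath$-emptiness of the lattice polygon $k\mathsf{Q}$, not to $k$-emptiness of $\imath\mathsf{Q}$; since Theorem~\ref{th:farey} classifies $k$-empty triangles for a \emph{fixed} modulus, you would want to fix the modulus at $\imath$ and let the polygon range over the dilates $k\mathsf{Q}$ (or, equivalently, work with $\imath$-emptiness throughout and vary $k$). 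Second, the appeal to Lemma~\ref{le:twopoints} to rule out $r\ge 5$ does not apply as stated: that lemma concerns two-dimensional $\QQ$-Gorenstein polygons of index at least~$2$, whereas $\mathsf{Q}$ itself is the slice at height $\imath$ on which lattice points are permitted, and the intermediate slices $(k/\imath)\mathsf{Q}$ are not $\QQ$-Gorenstein polygons of any prescribed index in the sense of that lemma. Excluding $r\ge 5$ (and indeed bounding $\imath$) therefore needs a separate argument, closer in spirit to the direct lattice-width and area estimates used in \cite{II}.
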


There is another very useful assertion from~\cite{II}:

\begin{corollary}\cite[Lemma 2.2]{II}.\label{corr:quadrangle}
Let $a,b,\imath \in \ZZ$. The three-dimensional polytope
$$
\mathsf{P} :=\conv(0,(a,b,\imath),(a+1,b,\imath),(a,b+1,\imath),(a+1,b+1,\imath))
$$ 
is only canonical for $\imath=1$.
\end{corollary}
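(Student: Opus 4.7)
The plan is to derive a contradiction by invoking the Ishida--Iwashita classification (Theorem~\ref{th:toric3dim}) and then comparing a two-dimensional lattice invariant, namely the lattice area of the facet opposite the origin. So I assume $\imath\geq 2$ and that $\mathsf{P}$ is canonical.

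First I would pin down the $\QQ$-Gorenstein index of $\mathsf{P}$: solving $\alpha_1 v_{i1}+\alpha_2 v_{i2}+\alpha_3 v_{i3}=\imath'$ for the four top vertices $v_1,\dots,v_4$ forces $\alpha_1=\alpha_2=0$ and $\alpha_3\imath=\imath'$, so the minimal positive value is $\imath'=\imath$ and the index of $\mathsf{P}$ really is $\imath$. Now I invoke Theorem~\ref{th:toric3dim}: up to lattice equivalence, $\mathsf{P}$ must be one of the Cases (ii)--(vi) (Case (i) is ruled out by $\imath\geq 2$). Cases (iii)--(vi) all have $r=3$, whereas $\mathsf{P}$ has $r=4$, so only Case (ii) is compatible; but Case (ii) forces $\imath=2$, which disposes of every $\imath\geq 3$ immediately.

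It remains to handle $\imath=2$, where I would compare the facets opposite the origin. For $\mathsf{P}$ this facet is the unit square with vertices $(a,b,2),(a+1,b,2),(a,b+1,2),(a+1,b+1,2)$, of lattice area $1$. For the Case (ii) polytope the analogous facet has vertices $(1,0,2),(1+m,1,2),(1,2,2),(1-n,1,2)$ with lattice area $m+n\geq 2$ (a quick shoelace computation). A putative lattice equivalence $T(x)=Ax+w$ identifying $\mathsf{P}$ with the Case (ii) polytope sends vertices to vertices, and since in both polytopes the origin is the unique vertex of valence four, $T$ maps origin to origin; hence $w=0$ and $T$ is linear. It then carries the facet opposite the origin to the facet opposite the origin, and since both facets span the plane $\{z=2\}$, the map $T$ preserves this plane and restricts there to a two-dimensional lattice equivalence of $\ZZ^3\cap\{z=2\}\cong\ZZ^2$, which preserves lattice area. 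The discrepancy $1\ne m+n$ yields the contradiction.

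The main obstacle I expect is the combinatorial bookkeeping in the last step: justifying that the ambient lattice equivalence really restricts to a two-dimensional lattice equivalence between the top facets. The key ingredients are the identification of the origin as the combinatorially distinguished vertex of valence four (so that $T$ is forced to be linear and to match the top facets under $T$) and the observation that the affine span of either top facet is the plane $\{z=2\}$, which is therefore preserved by $T$. With these in hand the area comparison closes the argument cleanly.
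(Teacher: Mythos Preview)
The paper gives no proof of this statement; it simply imports it from \cite[Lemma~2.2]{II}. The label ``Corollary'' is slightly misleading---in \cite{II} this is Lemma~2.2, appearing \emph{before} the classification Theorem~3.6 that the paper quotes as Theorem~\ref{th:toric3dim}, and presumably serving as an ingredient in its proof. Your argument runs the deduction in the opposite direction, deriving the corollary from Theorem~\ref{th:toric3dim}, and it is correct: the vertex count ($r=4$) excludes Cases (iii)--(vi); the index condition in Case~(ii) then disposes of all $\imath\geq 3$; and for $\imath=2$ the lattice area of the base facet ($1$ versus $m+n\geq 2$) rules out Case~(ii). The bookkeeping you flag---that the apex is the unique valence-four vertex in a pyramid over a quadrilateral, forcing $w=0$, and that a linear $T\in\GL_3(\ZZ)$ sending the plane $\{z=2\}$ to itself restricts to a two-dimensional lattice equivalence there---is sound.

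The only caveat is logical rather than mathematical: within \cite{II} your argument would be circular. In the present paper both statements are cited as black boxes, so your deduction is legitimate here; but be aware that it does not constitute an independent proof of the result.
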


Using the data from Theorem~\ref{th:toric3dim}, we compute the Class group and Cox ring of the toric threefold singularities in Subsection~\ref{subs:toric} of Section~\ref{sec:class}. But for now, we go one step further and have a look at certain canonical polytopes that have not only integer but also rational vertices. 

\begin{lemma}\label{le:ncone}
The rational polygon $\conv\left((0,0),(k,\imath),(k+1/q,\imath) \right)$ with $k \in \ZZ$, $\imath,q \in \ZZ_{>1}$  is canonical  if and only if there is an integral $0<c<q$ with
$$
kc \equiv - 1 \mod \imath.
$$
\end{lemma}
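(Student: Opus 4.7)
The plan is to translate canonicity into a lattice-point condition on horizontal slices of $\mathsf{P}$ and reduce the resulting system of congruences to the single relation at $m=1$.

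First I would pin down the Gorenstein form. The vertex equations $\alpha_{1}k + \alpha_{2}\imath = \imath$ and $\alpha_{1}(k + 1/q) + \alpha_{2}\imath = \imath$ force $\alpha_{1}=0$ and $\alpha_{2}=1$, so $\mathsf{P}$ has Gorenstein index $\imath$ and canonicity asks exactly that $\mathsf{P}$ contain no lattice point $(x,s)$ with $1 \leq s \leq \imath-1$. The horizontal slice at height $y=s$ is the segment $[ks/\imath,\; ks/\imath + s/(q\imath)]$, and a lattice point $(x,s)$ of $\mathsf{P}$ with $1 \leq s \leq \imath-1$ corresponds precisely to an integer $m := \imath x - ks$ satisfying $m \geq 0$ and $qm \leq s$.

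The case $m=0$ produces a forbidden lattice point exactly when $\imath \mid ks$ for some $s \in \{1,\ldots,\imath-1\}$, i.e.\ when $\gcd(k,\imath) > 1$; since $kc \equiv -1 \pmod{\imath}$ also forces $\gcd(k,\imath)=1$, this case may be discarded on both sides. Assuming $\gcd(k,\imath)=1$, the assignment $s \mapsto m_{s} := (-ks \bmod \imath)$ is a bijection of $\{1,\ldots,\imath-1\}$; a short check (using $q\imath > \imath-1 \geq s$) shows that the unique candidate for a forbidden $m$ at height $s$ is $m_s$, so canonicity is equivalent to
\[
s_{m} \ < \ qm \qquad \text{for every } m \in \{1,\ldots,\imath-1\},
\]
where $s_m$ denotes the inverse bijection.

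The ``only if'' direction is then immediate: evaluating at $m=1$ gives $c := s_{1} \in \{1,\ldots, q-1\}$ with $kc \equiv -1 \pmod{\imath}$. For the converse I would exploit the key observation that the defining congruence $k s_{m} \equiv -m \pmod{\imath}$ is $\ZZ$-linear in $m$, which yields $s_{m} \equiv m s_{1} \pmod{\imath}$. Writing $s_{m} = m s_{1} - n_{m}\imath$ with $n_{m} = \lfloor m s_{1}/\imath \rfloor \geq 0$ and plugging in the hypothesis $s_{1} = c < q$ gives $s_{m} \leq m s_{1} < mq$ uniformly in $m$, which is the desired system. The only real obstacle is spotting the multiplicative identity $s_{m} \equiv m s_{1} \pmod{\imath}$; once it is in place the estimate is a one-liner and the rest is direct comparison of the explicit slices with congruences modulo $\imath$.
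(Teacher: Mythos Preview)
Your argument is correct. You directly slice $\mathsf{P}$ at integer heights $s\in\{1,\dots,\imath-1\}$, observe that a forbidden lattice point corresponds to an integer $m=\imath x-ks$ with $0\le m$ and $qm\le s$, and then exploit the linearity $s_m\equiv ms_1\pmod\imath$ to collapse the whole system of inequalities to the single condition $s_1<q$. The small points you might make more explicit are that the hypothesis only gives \emph{some} $c$ with $kc\equiv-1\pmod\imath$ and $0<c<q$, not necessarily $c=s_1$; but since $s_1$ is the least positive solution one has $s_1\le c<q$, which is all you need.

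The paper proceeds quite differently: it scales $\mathsf{P}$ by $q$ to obtain a lattice triangle $\mathsf{P}_1$, so that canonicity becomes $q$-emptiness, and then applies the $q$-affine unimodular transformation $\left(\begin{smallmatrix}\imath&-k\\-c&d\end{smallmatrix}\right)$ (available once $\gcd(k,\imath)=1$) to pass to a normalized triangle $\mathsf{P}_2=\conv((0,0),(0,q),(\imath,q-c))$. In that normal form one reads off by inspection whether a $q$-fold point lies inside. This fits the paper's Section~3.1 machinery of $k$-empty triangles and their standard forms. Your approach is more self-contained and arithmetic, avoiding the scaling and the change of lattice entirely; the paper's approach is more geometric and reuses the $k$-emptiness framework it has already set up. Both reach the same destination with comparable effort.
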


\begin{proof}
Let $\mathsf{P} :=\conv\left((0,0),(k,\imath),(k+1/q,\imath) \right)$ with $k\in\mathbb{Z}$ and $2\leq q$. If $\gcd(k,\imath)\neq 1$, then $\mathsf{P}$ cannot be canonical. Thus we have $\gcd(k,\imath)=1$.
The polygon $\mathsf{P}$ contains no lattice points but $(0,0)$ and $(k,\imath)$ if and only if the lattice polygon
$$
\mathsf{P}_1:=\conv\left((0,0),(qk,q\imath),(qk+1,q\imath) \right)
$$
contains no $q$-fold points but $(0,0)$ and $(qk,q\imath)$. But $\mathsf{P}_1$ is $q$-equivalent to the polygon
$$
\mathsf{P}_2:=\conv\left((0,0),(0,q),(\imath,q-c) \right)
$$
with integer $0<q-c<\imath$ and $kc \equiv \imath-1 \mod \imath$. The $q$-affine unimodular transformation is given by the matrix
$$
\begin{pmatrix}
\imath & -k \\
-c & d
\end{pmatrix}
\ \in \ \GL_2(\ZZ),
$$
where appropriate $c$ and $d$ exist since $k$ and $\imath$ are coprime. 

Now if $q-\imath < -c \leq 0$, then the $q$-fold point $(q,q)$ lies inside $\mathsf{P}_2$ while for $0<c<q$, no $q$-fold point lies inside $\mathsf{P}_2$. The assertion follows. 
\end{proof}

\begin{corollary}
\label{corr:halfcone}
The rational polygon $\conv\left((0,0),(k,\imath),(k+1/2,\imath) \right)$ with $k \in \ZZ$, $\imath \in \ZZ_{>1}$ is canonical  if and only if $k+1$ is a multiple of $\imath$, i.e. it is lattice equivalent to
$
\conv\left((0,0),(1/2,\imath),(1,\imath) \right)
$.
\end{corollary}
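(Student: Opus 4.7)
The plan is to reduce the corollary to the special case $q=2$ of Lemma~\ref{le:ncone}. In the Lemma the canonicity criterion asks for an integer $c$ with $0<c<q$ and $kc \equiv -1 \pmod{\imath}$; for $q=2$ the only admissible value is $c=1$, so the criterion collapses to $k \equiv -1 \pmod{\imath}$, i.e.\ $\imath \mid k+1$. This yields the ``if and only if'' part with essentially no work.

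For the lattice-equivalence claim, I would write $k = m\imath - 1$ with $m \in \ZZ$ and exhibit an explicit element of $\mathrm{Aff}_1^2(\ZZ)$ carrying the polygon to the standard representative $\mathsf{P}_\ast := \conv\bigl((0,0),(1/2,\imath),(1,\imath)\bigr)$. The origin is a lattice vertex of both polygons and must be fixed; the remaining two vertices of either polygon lie on the horizontal line $\{y=\imath\}$, which therefore has to be preserved. This forces the ansatz of a shear $A = \bigl(\begin{smallmatrix} \pm 1 & m \\ 0 & 1 \end{smallmatrix}\bigr) \in \GL_2(\ZZ)$, and a one-line verification shows that $A = \bigl(\begin{smallmatrix} -1 & m \\ 0 & 1 \end{smallmatrix}\bigr)$ sends $(k,\imath) = (m\imath-1,\imath)$ to $(1,\imath)$ and $(k+1/2,\imath)$ to $(1/2,\imath)$, as required.

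No real obstacle arises: Lemma~\ref{le:ncone} does all of the geometric work, and the only task left is to specialize $q=2$ and to spot the correct shear. The argument should take only a couple of lines in print.
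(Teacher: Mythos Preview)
Your proposal is correct and is exactly the intended approach: the paper states this result as a corollary of Lemma~\ref{le:ncone} without further proof, and your specialization to $q=2$ together with the explicit shear $\bigl(\begin{smallmatrix}-1 & m\\ 0 & 1\end{smallmatrix}\bigr)$ supplies precisely the missing lines.
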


\begin{corollary}
\label{corr:halfhalfcone}
The rational polygon $\conv\left((0,0),(k-1/2,\imath),(k+1/2,\imath) \right)$ with $k \in \ZZ$, $\imath \in \ZZ_{>1}$ is canonical  if and only if $\imath=2$ and $k$ is odd.
\end{corollary}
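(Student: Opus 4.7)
My plan is to bisect $\mathsf{P}$ along the segment from the origin to the unique lattice point of its upper edge and then apply Corollary~\ref{corr:halfcone} to each half.

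First, observe that the two non-origin vertices of $\mathsf{P}$ both have second coordinate $\imath$, so the functional $\alpha = (0,1) \in \ZZ^{2}$ certifies that $\mathsf{P}$ is $\QQ$-Gorenstein of index exactly $\imath$: subtracting the two defining equations forces $\alpha_1 = 0$, making $\imath$ the minimal positive value. Canonicity for $\mathsf{P}$ is therefore equivalent to the statement that every non-zero lattice point of $\mathsf{P}$ has $y = \imath$. The upper edge $[k-\tfrac12, k+\tfrac12] \times \{\imath\}$ meets $\ZZ^{2}$ only at $(k,\imath)$, so the real content is the absence of lattice points at heights $0 < y < \imath$.

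Next, decompose $\mathsf{P} = \mathsf{P}_1 \cup \mathsf{P}_2$ with
\[
\mathsf{P}_1 := \conv((0,0),(k-\tfrac{1}{2},\imath),(k,\imath)),
\qquad
\mathsf{P}_2 := \conv((0,0),(k,\imath),(k+\tfrac{1}{2},\imath)).
\]
Both $\mathsf{P}_i$ again have both non-origin vertices at height $\imath$, so each one is $\QQ$-Gorenstein of the same index $\imath$ with the same witness $\alpha=(0,1)$. Consequently $\mathsf{P}$ is canonical if and only if both $\mathsf{P}_1$ and $\mathsf{P}_2$ are canonical. The triangle $\mathsf{P}_2$ is already in the form of Corollary~\ref{corr:halfcone}, so it is canonical iff $\imath \mid k+1$, i.e.\ $k \equiv -1 \pmod{\imath}$. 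Applying the unimodular reflection $(x,y) \mapsto (-x,y)$ carries $\mathsf{P}_1$ to $\conv((0,0),(-k,\imath),(-k+\tfrac12,\imath))$, which is again in the form of Corollary~\ref{corr:halfcone} with $k$ replaced by $-k$; hence $\mathsf{P}_1$ is canonical iff $\imath \mid -k+1$, i.e.\ $k \equiv 1 \pmod{\imath}$.

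Imposing both $k \equiv 1$ and $k \equiv -1 \pmod{\imath}$ forces $\imath \mid 2$, and together with $\imath \geq 2$ this yields $\imath = 2$, whereupon $k \equiv 1 \pmod 2$, i.e.\ $k$ is odd. The converse is immediate: for $\imath=2$ and $k$ odd both congruences hold, so Corollary~\ref{corr:halfcone} makes $\mathsf{P}_1$ and $\mathsf{P}_2$ canonical and therefore $\mathsf{P}$ canonical. The only slightly delicate point is confirming that the two sub-triangles inherit the same Gorenstein index as $\mathsf{P}$, but this is forced by the height-$\imath$ position of their non-origin vertices; once that is noted, the result reduces to two instances of Corollary~\ref{corr:halfcone} combined in one congruence line.
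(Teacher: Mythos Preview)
Your proof is correct and is precisely the argument the paper intends: Corollary~\ref{corr:halfhalfcone} is stated without proof immediately after Corollary~\ref{corr:halfcone}, and your bisection at the lattice point $(k,\imath)$ together with the reflection $(x,y)\mapsto(-x,y)$ reduces it to two applications of that preceding corollary. The verification that both sub-triangles retain Gorenstein index~$\imath$ is the only point requiring a word of justification, and you handle it correctly.
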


\begin{corollary}
\label{corr:13cone}
The rational polygon $\conv\left((0,0),(k,\imath),(k+1/3,\imath) \right)$ with $k \in \ZZ$, $\imath \in \ZZ_{>1}$ is canonical  if and only if either $k+1$ is a multiple of $\imath$ or $\imath$ is odd and $k \equiv \frac{\imath-1}{2} \mod \imath$. 
\end{corollary}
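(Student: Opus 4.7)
The plan is to apply Lemma~\ref{le:ncone} directly with $q=3$. According to the lemma, the polygon $\conv\left((0,0),(k,\imath),(k+1/3,\imath) \right)$ is canonical if and only if there exists an integer $c$ with $0 < c < 3$ satisfying $kc \equiv -1 \pmod{\imath}$, so I would split into the two possible values $c=1$ and $c=2$.

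For $c=1$, the congruence $k \equiv -1 \pmod{\imath}$ is exactly the condition that $k+1$ is a multiple of $\imath$, giving the first case of the statement. For $c=2$, the congruence $2k \equiv -1 \pmod{\imath}$ must be solved. Here I would observe that a solution in $k$ exists if and only if $\gcd(2,\imath)$ divides $-1$, i.e. if and only if $\imath$ is odd. When $\imath$ is odd, multiplying both sides of $2k \equiv -1 \pmod{\imath}$ by $(\imath+1)/2$ (or equivalently noting $2 \cdot \tfrac{\imath-1}{2} = \imath - 1 \equiv -1 \pmod{\imath}$) yields $k \equiv \tfrac{\imath-1}{2} \pmod{\imath}$, which is the second case of the statement.

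Combining the two cases shows that the polygon is canonical iff $k+1 \equiv 0 \pmod{\imath}$, or $\imath$ is odd and $k \equiv \tfrac{\imath-1}{2} \pmod{\imath}$. There is no real obstacle here; the only point to be careful about is the solvability of the congruence for $c=2$, which forces the parity hypothesis on $\imath$ in the second case. Since Lemma~\ref{le:ncone} has already done the geometric work of translating canonicity into an arithmetic condition, the corollary reduces to this elementary case analysis.
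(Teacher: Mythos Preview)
Your proposal is correct and is precisely the intended derivation: the paper states this result as a direct corollary of Lemma~\ref{le:ncone} with $q=3$, and the case split $c\in\{1,2\}$ together with the parity observation for $c=2$ is exactly the elementary argument needed.
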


\begin{corollary}
\label{corr:1313cone}
The rational polygon $\conv\left((0,0),(k-1/3,\imath),(k+1/3,\imath) \right)$ with $k \in \ZZ$, $\imath \in \ZZ_{>1}$ is canonical  if and only if $\imath=2$ and $k$ is odd or $\imath=3$ and $k \equiv 1,2 \mod 3$.
\end{corollary}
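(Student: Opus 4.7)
The plan is to cut $\mathsf{P}$ along the segment from the origin to $(k,\imath)$ into the right half $\mathsf{P}_r := \conv((0,0),(k,\imath),(k+1/3,\imath))$ and the left half $\mathsf{P}_l := \conv((0,0),(k-1/3,\imath),(k,\imath))$, and to reduce canonicity of $\mathsf{P}$ to two independent applications of Corollary~\ref{corr:13cone}, one for each half.

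The first step is to verify that $\mathsf{P}$, $\mathsf{P}_r$ and $\mathsf{P}_l$ are all $\QQ$-Gorenstein of index $\imath$ with the common defining functional $\alpha = (0,1)$: equating $\alpha\cdot v$ on the two top vertices of each polytope forces $\alpha_1 = 0$ (their $x$-coordinates differ by a proper fraction), and minimality of the index then yields $\alpha_2 = 1$. Consequently the canonical condition for each of the three polytopes reads identically: the only lattice point $p$ with $p_2 < \imath$ is the origin (the only lattice point in any of them on $y=\imath$ is $(k,\imath)$, which is allowed). Since $\mathsf{P} = \mathsf{P}_l \cup \mathsf{P}_r$, canonicity of $\mathsf{P}$ is therefore equivalent to simultaneous canonicity of $\mathsf{P}_l$ and $\mathsf{P}_r$. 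The right half $\mathsf{P}_r$ matches Corollary~\ref{corr:13cone} directly, and is canonical iff $k \equiv -1\pmod{\imath}$ or ($\imath$ odd and $k \equiv (\imath-1)/2\pmod{\imath}$). The lattice reflection $x\mapsto -x$ carries $\mathsf{P}_l$ to $\conv((0,0),(-k,\imath),(-k+1/3,\imath))$, so Corollary~\ref{corr:13cone} with $k$ replaced by $-k$ gives that $\mathsf{P}_l$ is canonical iff $k \equiv 1\pmod{\imath}$ or ($\imath$ odd and $k \equiv (\imath+1)/2\pmod{\imath}$).

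The final step is to intersect these two sets of congruence classes modulo $\imath$. For $\imath = 2$ the odd-only disjuncts drop out and both conditions read $k \equiv 1\pmod{2}$. For $\imath = 3$ the admissible classes for each half are $\{1,2\}\pmod{3}$, so their intersection is $k \not\equiv 0\pmod{3}$, precisely the two asserted cases. For $\imath \geq 4$, every pairing of a disjunct from the right-half condition with one from the left-half condition forces one of the divisibilities $\imath\mid 2$, $\imath\mid 3$ or $\imath\mid 1$, none of which holds; hence $\mathsf{P}$ is never canonical. I expect the most delicate step to be the first one: verifying that all three polytopes share the same defining functional requires careful tracking of the minimality of $\imath$, so that the canonicity test transfers cleanly between $\mathsf{P}$ and its halves. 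Once that is in place the remaining modular arithmetic is entirely routine.
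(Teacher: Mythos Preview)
Your proof is correct and follows exactly the approach the paper intends: the corollary is stated without proof immediately after Corollary~\ref{corr:13cone}, and the evident derivation is precisely your splitting of $\mathsf{P}$ along the segment to $(k,\imath)$, applying Corollary~\ref{corr:13cone} to each half (using the reflection $x\mapsto -x$ for the left one), and intersecting the resulting congruence conditions.
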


\begin{remark} 
\label{rem:2513polytope}
We compare polytopes $\mathsf{Q}:=\conv\left((0,0),(k,\imath),(k+2/5,\imath) \right)$ with the polytopes $\conv\left((0,0),(k,\imath),(k+1/3,\imath) \right)$ from Corollary~\ref{corr:13cone}. The polytope $\mathsf{Q}$ is canonical if and only if the polytope 
$$
\conv((0,0),(0,5),(2\imath,5-2c))
$$
with $0<5-2c<2\imath$ and $kc \equiv -1 \mod \imath$ is $5$-empty. More specifically, if it contains no $5$-fold points but $(0,0)$ and $(0,5)$. Hence, it cannot contain the $5$-fold point $(5,5)$ and so $(2\imath,5-2c)$ cannot lie in the dark gray area, but must be in the light gray area. The points of the form $(2\imath,5-2c)$ and $(0,0)$, $(0,5)$ are marked with circles. The ones which violate the condition that $c$ and $\imath$ are coprime are crossed out.

\begin{center}
\begin{tikzpicture}[scale=0.5]
\fill[fill=gray!90!, fill opacity=0.50] (5,5)--(10,10)--(16,10)--(16,5)--cycle;
\fill[fill=gray!50!, fill opacity=0.50] (0,0)--(5,5)--(16,5)--(16,0)--cycle;
\draw[gray, very thin] (-0.5,-0.5) grid (16.5,10.5);
\fill (5,5) circle (0.2);

\fill[fill=white] (0,0) circle (0.2);
\draw (0,0) circle (0.2);

\fill[fill=white] (0,5) circle (0.2);
\draw (0,5) circle (0.2);

\fill[fill=white] (4,1) circle (0.2);
\draw (4,1) circle (0.2);
\draw[line width=0.5pt] (3.8,1.2)--(4.2,0.8);
\draw[line width=0.5pt] (4.2,1.2)--(3.8,0.8);

\fill[fill=white] (6,1) circle (0.2);
\draw (6,1) circle (0.2);

\fill[fill=white] (8,1) circle (0.2);
\draw (8,1) circle (0.2);
\draw[line width=0.5pt] (7.8,1.2)--(8.2,0.8);
\draw[line width=0.5pt] (8.2,1.2)--(7.8,0.8);

\fill[fill=white] (10,1) circle (0.2);
\draw (10,1) circle (0.2);

\fill[fill=white] (12,1) circle (0.2);
\draw (12,1) circle (0.2);
\draw[line width=0.5pt] (11.8,1.2)--(12.2,0.8);
\draw[line width=0.5pt] (12.2,1.2)--(11.8,0.8);

\fill[fill=white] (14,1) circle (0.2);
\draw (14,1) circle (0.2);

\fill[fill=white] (16,1) circle (0.2);
\draw (16,1) circle (0.2);
\draw[line width=0.5pt] (15.8,1.2)--(16.2,0.8);
\draw[line width=0.5pt] (16.2,1.2)--(15.8,0.8);

\fill[fill=white] (4,3) circle (0.2);
\draw (4,3) circle (0.2);

\fill[fill=white] (6,3) circle (0.2);
\draw (6,3) circle (0.2);

\fill[fill=white] (8,3) circle (0.2);
\draw (8,3) circle (0.2);

\fill[fill=white] (10,3) circle (0.2);
\draw (10,3) circle (0.2);

\fill[fill=white] (12,3) circle (0.2);
\draw (12,3) circle (0.2);

\fill[fill=white] (14,3) circle (0.2);
\draw (14,3) circle (0.2);

\fill[fill=white] (16,3) circle (0.2);
\draw (16,3) circle (0.2);

\end{tikzpicture}

\end{center}

Altogether, we see that $c=1$ or $c=2$, if $\imath$ is odd. Thus $\mathsf{Q}$ is canonical if and only if $\conv\left((0,0),(k,\imath),(k+1/3,\imath) \right)$ is canonical. 
\end{remark}

\begin{lemma}
\label{le:1245cone}
The rational polygon $\conv\left((0,0),(k+1/2,\imath),(k+4/5,\imath) \right)$ with $k \in \ZZ$, $\imath \in \ZZ_{>1}$ contains no lattice points but $(0,0)$ if and only if it is lattice equivalent to one of the polytopes
$$
\conv\left((0,0),(-1/2,\imath),(-1/5,\imath) \right),
$$
$$
\conv\left((0,0),(1/2,\imath),(4/5,\imath) \right),
$$
$$
\conv\left((0,0),\left(\frac{\imath}{2}-\frac{1}{2},\imath\right),\left(\frac{\imath}{2}-\frac{1}{5},\imath\right) \right) 
\qquad \text{only if}~~ 2 | \imath.
$$
\end{lemma}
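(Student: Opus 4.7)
The approach is to reduce $k$ modulo $\imath$, encode the existence of a non-trivial lattice point as a single congruence condition, and then read off which residue classes of $k\bmod\imath$ correspond to the three polygons listed in the statement. The unimodular translation $(x,y)\mapsto(x-my,y)$ with $m\in\ZZ$ replaces $k$ by $k-m\imath$, so only $c:=(-k)\bmod\imath\in\{0,\ldots,\imath-1\}$ matters, and the three displayed polygons correspond exactly to $c=0$, $c=1$ and, when $\imath$ is even, $c=\imath/2+1$. At height $b\in\{1,\ldots,\imath\}$ the polygon $\mathsf{P}:=\conv((0,0),(k+1/2,\imath),(k+4/5,\imath))$ meets the line $\{y=b\}$ in the interval $[b(2k+1)/(2\imath),b(5k+4)/(5\imath)]$; substituting $n:=a\imath-bk$ rewrites ``$(a,b)\in\mathsf{P}\cap\ZZ^2$'' as the existence of an integer $n$ with
\[
\lceil b/2\rceil\ \le\ n\ \le\ \lfloor 4b/5\rfloor, \qquad n\equiv bc\pmod{\imath}.
\]
The range is empty for $b=1$, and for $b=\imath$ the congruence forces $n\equiv 0\pmod\imath$, which lies outside $[\imath/2,4\imath/5]$. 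So only $b\in\{2,\ldots,\imath-1\}$ is relevant, and the lemma reduces to identifying the $c$ for which no such $(b,n)$ exists.

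\textbf{The three good cases.} For $c=0$ the target $n\equiv 0\pmod\imath$ is incompatible with $1\le n\le\imath-1$. For $c=1$ it forces $n\equiv b\pmod\imath$, hence $n=b$, which contradicts $n\le 4b/5<b$. For $c=\imath/2+1$ with $\imath$ even I would split by the parity of $b$: if $b$ is even then $bc\equiv b\pmod\imath$, excluded as above; if $b$ is odd with $b<\imath/2$ then $bc\equiv b+\imath/2\pmod\imath$, which exceeds $b$ and hence $\lfloor 4b/5\rfloor$; if $b$ is odd with $b\ge\imath/2$ then $bc\equiv b-\imath/2\pmod\imath$, which is strictly below $b/2\le\lceil b/2\rceil$. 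In each sub-case the target falls outside $[\lceil b/2\rceil,\lfloor 4b/5\rfloor]$, so the three polygons contain no non-trivial lattice points, proving one direction.

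\textbf{The remaining cases.} The main obstacle is to show that for every other $c$---that is, $c\in\{2,\ldots,\imath-1\}$ and, when $\imath$ is even, $c\ne\imath/2+1$---some $b\in\{2,\ldots,\imath-1\}$ provides a witness $n\in[\lceil b/2\rceil,\lfloor 4b/5\rfloor]$ with $n\equiv bc\pmod\imath$. My preferred route uses the $k$-empty machinery of Section~\ref{sec:pol}: $\mathsf{P}$ is lattice-point-free away from the origin precisely when the lattice triangle $10\mathsf{P}=\conv((0,0),(10k+5,10\imath),(10k+8,10\imath))$ is $10$-empty, so by Theorem~\ref{th:farey} it must sit inside a $10$-th Farey strip (the finitely many sporadic $10$-empty triangles from Proposition~\ref{finiteness} can be ruled out by the specific top-edge shape). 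Enumerating those $10$-th Farey strips that can accommodate a triangle with top edge of the form $\{(10k+5,10\imath),(10k+8,10\imath)\}$ is a finite check which pins down precisely the three admissible residues of $k\pmod\imath$. A more hands-on alternative is to extract explicit witnesses $b$ from the convergents of the continued-fraction expansion of $c/\imath$ and verify the congruence case by case; the bookkeeping of matching each bad $c$ with its witness is the technically delicate step.
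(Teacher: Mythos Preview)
Your reduction to the congruence problem is clean and correct, and your verification of the three good cases $c\in\{0,1,\imath/2+1\}$ is a genuinely more direct argument than the paper gives for that direction. The paper never isolates this implication; it handles both directions simultaneously through the Farey-strip analysis.

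For the converse your Farey route is exactly the paper's. After scaling by $10$ the paper applies a $10$-affine unimodular transformation taking $10\mathsf{P}$ to $\conv((0,0),(0,5),(6\imath,5+3c'))$ (here $c'$ is the paper's parameter, related to yours via $c'(2k+1)\equiv 1\bmod 2\imath$), and then locates the third vertex in $F_{10,0}$, in $F_{10,\frac12}$, or in a spike. The one place your sketch is too optimistic is the phrase ``can be ruled out by the specific top-edge shape'': the spikes attached to $F_{10,0}$ \emph{do} contain candidate vertices, namely $(\imath,c')\in\{(5,2),(7,2),(9,2)\}$, and the paper eliminates these not by shape but by the unimodularity constraint $c'\equiv\pm1\bmod 2\imath$ coming from the transformation matrix. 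So to complete your argument along these lines you would still need to bring $10\mathsf{P}$ into standard form, argue that $g=\gcd(10k+5,10\imath)=5$, and then run the three-case check including the spike candidates.

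Your alternative continued-fraction idea is plausible but not carried out, and the ``bookkeeping'' you mention is exactly where the difficulty lies; the paper's Farey machinery is essentially a packaged version of that bookkeeping.
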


\begin{proof}
Let $\mathsf{P}:=\conv\left((0,0),(k+1/2,\imath),(k+4/5,\imath) \right)$ with $k\in\mathbb{Z}$ and $\imath\in\mathbb{Z}_{>1}$. The polygon $\mathsf{P}$ contains no lattice points but $(0,0)$ if and only if the lattice polygon
$$
\mathsf{P}_1:=\conv\left((0,0),(10k+5,10\imath),(10k+8,10\imath) \right)
$$
contains no $10$-fold points but $(0,0)$. Consider the $10$-affine unimodular transformation given by
$$
\begin{pmatrix}
-2\imath & 2k+1 \\
c & d
\end{pmatrix}
\ \in \ \GL_2(\ZZ)
$$
which yields a $10$-equivalent lattice polygon
$$
\mathsf{P}_2:=\conv\left((0,0),(0,g),(6\imath,g+3c) \right)
$$
where $g:=c\cdot(10k+5)+d\cdot10\imath=\mathrm{gcd}(10k+5,10\imath)\ge5$. Note that $g\le9$. Since $g$ can't be $6,7,8$ or $9$ we have $g=5$. We also assume that $0<5+3c<6\imath$. The lattice polygon $\mathsf{P}_{2}$ contains no $10$-fold points but $(0,0)$ if and only if one of the following cases occurs.

\noindent \emph{Case $1$: The lattice point $(6\imath,5+3c)$ lies in the relative interior of the Farey strip $F_{10,\frac{1}{2}}$.} Then we have $5+3c=3\imath+2$, so $c=\imath-1$.

\noindent \emph{Case $2$: The lattice point $(6\imath,5+3c)$ lies in the relative interior of the Farey strip $F_{10,0}$.} In this case we  have $c=\pm1$.

\noindent \emph{Case $3$: The lattice point $(6\imath,5+3c)$ lies in the relative interior of a spike attached to $F_{10,0}$.} The spikes are of the form
$$
\conv\left(\left(10m,10\right),\left(10(m+1),10\right),\left(\frac{10m(m+1)}{m-1},\frac{10m}{m-1}\right) \right)
$$
where $2\le m\le11$. This leaves the possibilities $(\imath,c)\in\left\{(5,2),(7,2),(9,2)\right\}$. But those cannot occur since $c\equiv\pm1\mod 2\imath$.
\end{proof}

\begin{lemma}
\label{le:halfheight}
The rational polygon $\conv\left((0,0),(i,k+1/2),(j,k+1/2) \right)$ with $i\neq j \in \ZZ$, $k \in \ZZ_{\geq 0}$ contains no lattice points but $(0,0)$ if and only if $k=0$.
\end{lemma}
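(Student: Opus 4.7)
The plan is to prove the two implications separately. The forward direction ``$k=0\Rightarrow \mathsf{P}\cap\ZZ^2=\{(0,0)\}$'' is almost immediate: every point of $\mathsf{P}=\conv((0,0),(i,\tfrac{1}{2}),(j,\tfrac{1}{2}))$ has $y$-coordinate in $[0,\tfrac{1}{2}]$, so an integral $y$ forces $y=0$, and the only point of $\mathsf{P}$ at height zero is the apex $(0,0)$.

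For the converse, assume $k\ge 1$, relabel so that $i<j$, and set $d:=j-i\ge 1$. I would split into two cases according to the value of $d$. In the easy case $d\ge 2$, the horizontal cross-section of $\mathsf{P}$ at the integer height $y=k$ is the closed interval $[\tfrac{2ik}{2k+1},\tfrac{2jk}{2k+1}]$, whose length equals $\tfrac{2dk}{2k+1}\ge\tfrac{4k}{2k+1}>1$ (since $k\ge 1$ gives $4k>2k+1$). Hence this interval contains some integer $m$, and $(m,k)$ is a lattice point of $\mathsf{P}$ distinct from the origin.

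The remaining case $d=1$ is the delicate one, as the slice at $y=k$ now has length $\tfrac{2k}{2k+1}<1$ and no length-based pigeonhole suffices. The plan is to bring $\mathsf{P}$ into a normal form using two origin-fixing transformations in $\GL_2(\ZZ)$: first the reflection $(x,y)\mapsto(-x,y)$, which lets us assume $i\ge 0$; then the shear $(x,y)\mapsto(x+2by,y)$ with $b\in\ZZ$, which replaces the vertex $(i,k+\tfrac{1}{2})$ by $(i+b(2k+1),k+\tfrac{1}{2})$ and thereby reduces $i$ modulo $2k+1$ to a value $0\le i\le 2k$. In this normal form I claim that the integer point $(i,k)$ itself lies in $\mathsf{P}$, since the pair of inequalities $\tfrac{2ik}{2k+1}\le i\le\tfrac{2(i+1)k}{2k+1}$ defining containment simplify to $i\ge 0$ and $i\le 2k$ respectively. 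Because $k\ge 1$ this lattice point is distinct from the origin, and pulling it back through the two unimodular transformations yields a nontrivial lattice point of the original $\mathsf{P}$. The main obstacle to navigate is precisely the parity constraint on the shear: the coefficient must be even, because the top edge of $\mathsf{P}$ sits at the non-integer height $k+\tfrac{1}{2}$, so that only $2b(k+\tfrac{1}{2})=b(2k+1)$ (with $b\in\ZZ$) remains an integer shift of the first coordinate; once this is arranged carefully the reduction to $(i,k)$ is immediate.
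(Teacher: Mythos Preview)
Your proof is correct. The forward direction is as immediate as you say, and for the reverse direction your two-case split works cleanly: for $d\ge 2$ the horizontal slice at height $k$ has length $>1$ and hence contains an integer, and for $d=1$ the even shear $(x,y)\mapsto(x+2by,y)$ reduces $i$ modulo $2k+1$ into $\{0,\ldots,2k\}$, after which the verification that $(i,k)$ lies in the triangle is a one-line inequality check. (The reflection step is harmless but in fact unnecessary, since the shear alone already reaches every residue class modulo $2k+1$.)

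Your route is genuinely different from the paper's. The paper proceeds by scaling the triangle by $2$ to obtain a lattice triangle $\mathsf{P}_1$, then applies a $2$-affine unimodular transformation to bring $\mathsf{P}_1$ into the standard form $\conv((0,0),(0,g),(\ast,\ast))$, argues that $g=\gcd(2i,2k+1)$ must equal $1$, and finally invokes the Farey-strip/spike classification developed earlier in the section: the third vertex must lie either in the interior of the Farey strip $F_{2,1/2}$, or of $F_{2,0}$, or of one of its spikes, and in each case a determinant condition forces $k=0$. Your argument is more elementary and entirely self-contained, avoiding the Farey machinery altogether; the paper's proof, by contrast, is written to illustrate the general structure theorem for $k$-empty triangles (Theorem~\ref{th:farey}) in action.
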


\begin{proof}
Let $\mathsf{P}:=\conv\left((0,0),(i,k+1/2),(j,k+1/2) \right)$ with $i,j\in\mathbb{Z}, k\in\mathbb{Z}_{\ge0}$ and without loss of generality $i<j$. The polygon $\mathsf{P}$ contains no lattice points but $(0,0)$ if and only if the lattice polygon
$$
\mathsf{P}_1:=\conv\left((0,0),(2i,2k+1),(2j,2k+1) \right)
$$
contains no $2$-fold points but $(0,0)$. Consider the $2$-affine unimodular transformation given by
$$
\begin{pmatrix}
-(2k+1) & 2i \\
c & d
\end{pmatrix}
\ \in \ \GL_2(\ZZ)
$$
which yields a $2$-equivalent lattice polygon
$$
\mathsf{P}_2:=\conv\left((0,0),(0,g),(2\cdot(2k+1)(j-i),g+c(j-i)) \right)
$$
where $g:=c\cdot2i+d\cdot(2k+1)=\mathrm{gcd}(2i,2k+1)\ge1$. Since there are no $2$-fold points but $(0,0)$, we have $g=1$. Moreover, we can assume that $0<1+c(j-i)<2\cdot(2k+1)(j-i)$. Now, the lattice polygon $\mathsf{P}_{2}$ contains no $2$-fold points but $(0,0)$ if and only if one of the following cases occurs.

\noindent \emph{Case $1$: The lattice point $(2\cdot(2k+1)(j-i),1+c(j-i))$ lies in the relative interior of the Farey strip $F_{2,\frac{1}{2}}$.} Then the lattice point lies on the line $y=\frac{1}{2}x+1$ and we have $2k+1=c$. Since the matrix above has determinant $\pm1$, we get $2k+1=\pm1$ and so $k=0$.

\noindent \emph{Case $2$: The lattice point $(2\cdot(2k+1)(j-i),1+c(j-i))$ lies in the relative interior of the Farey strip $F_{2,0}$.} Then we have $c=0$ and by considering the determinant again we see $k=0$.

\noindent \emph{Case $3$: The lattice point $(2\cdot(2k+1)(j-i),1+c(j-i))$ lies in the relative interior of a spike attached to $F_{2,0}$.} The spikes are of the form
$$
\conv\left(\left(2m,2\right),\left(2(m+1),2\right),\left(\frac{2m(m+1)}{m-1},\frac{2m}{m-1}\right) \right)
$$
where $2\le m\le4$. These interiors do not contain any lattice points, so there are no additional possibilities.
\end{proof}

\section{Canonical threefold singularities with two-torus action}
\label{sec:class}

\subsection{Toric singularities}
\label{subs:toric}

This subsection deals with the \emph{toric} canonical threefold singularities. From the data provided by Theorem~\ref{th:toric3dim}, we compute the class groups and the Cox rings.

\begin{proposition}
\label{prop:ClXQtoric}
If $X$ is a toric canonical threefold singularity, then it is either - Case $(i)$ - Gorenstein or belongs to one of the following other cases with respective total coordinate space, class group and grading:
\renewcommand{\arraystretch}{1.5} 

\begin{longtable}{c|c|c|c}
Case 
& 
$
\overline{X}
$ 
&
$\Cl(X)$ 
& 
$Q$ 
\\
\hline
$(ii)$ 
&
$\CC^4$
& 
$
\begin{array}{c}
\ZZ \times \ZZ/ 2\mathfrak{d}\ZZ \\
\mathfrak{d}=\gcd(2m,m+n)
\end{array}
$
&
$
\begin{array}{c}
\begin{bmatrix}

-\frac{m+n}{\mathfrak{d}} & \frac{2n}{\mathfrak{d}} & -\frac{m+n}{\mathfrak{d}} & \frac{2m}{\mathfrak{d}} \\
\overline{\alpha_1+\mathfrak{d}} & \overline{-(2\alpha_1+\alpha_2)} & \overline{\alpha_1} & \overline{\alpha_2} 
\end{bmatrix} \\
\mathrm{~with~}
2m\alpha_1+(m+n)\alpha_2=\mathfrak{d}
\end{array}
$ 
\\
\hline
$(iii)$ 
&
$\CC^3$ 
& 
$
\ZZ/\imath m \ZZ
$  
&
$
\begin{bmatrix}
\overline{1} & \overline{m\alpha_1} & \overline{-1}
\end{bmatrix} \mathrm{~with~}
n\alpha_1 \equiv 1 \mod \imath
$ 
\\
\hline
$(iv)$ 
&
$\CC^3$ 
& 
$
\ZZ/4m\ZZ
$  
&
$[\overline{2},\overline{2m-1},\overline{-1}]$
\\
\hline
$(v)$ 
&
$\CC^3$ 
& 
$
\ZZ/10\ZZ
$  
&
$[\overline{1},\overline{1},\overline{3}]$
\\
\hline
$(vi)$ 
&
$\CC^3$ 
& 
$
\ZZ/9\ZZ
$
&
$[\overline{1},\overline{4},\overline{7}]$
\end{longtable}

\noindent
The singularity $X$ is terminal if and only if it falls under Case $(iii)$ with $m=1$.
\end{proposition}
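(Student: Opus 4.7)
The strategy is to invoke the standard exact sequence for the class group of an affine toric variety: if $\sigma$ is a full-dimensional cone with primitive ray generators $v_{1},\ldots,v_{r}$ and $P=[v_{1}\mid\cdots\mid v_{r}]$ is the associated $3\times r$ matrix, then $\overline{X}_{\sigma}=\CC^{r}$ and $\Cl(X_{\sigma})$ is the cokernel of $P^{*}\colon \ZZ^{3}\to\ZZ^{r}$, with the cokernel projection serving as the degree matrix $Q$. Thus each of the six cases of Theorem~\ref{th:toric3dim} reduces to a Smith normal form computation for the vertex data listed there, together with an explicit identification of $Q$. Case $(i)$ is Gorenstein by definition ($\imath=1$) and is set aside.

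For cases $(iii)$--$(vi)$ the anticanonical polytope is a simplex, so $r=3$, the cone is simplicial, $\overline{X}=\CC^{3}$, and the class group is cyclic of order $|\det(P)|$. Plugging in the vertex tuples from Theorem~\ref{th:toric3dim} yields the orders $\imath m$, $4m$, $10$, and $9$ respectively. The entries of $Q$ are then uniquely determined (up to automorphism of the cyclic group and permutation of the variables $T_{i}$) by the three congruences $Q\cdot v_{i}\equiv 0$ modulo this order; in case $(iii)$ the combinatorial input is the inverse $\alpha_{1}$ of $n$ modulo $\imath$, which exists precisely because $\gcd(n,\imath)=1$.

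Case $(ii)$ is the only non-simplicial situation ($r=4$), so $\Cl(X)$ has $\ZZ$-rank one. Here I would first row-reduce $P^{*}$ by pivoting on the isolated first column of its standard form, which splits off a $\mathrm{diag}(1,1)$ block and reduces the remainder to a $2\times 1$ block with entries $4m$ and $2(m+n)$; the torsion part is therefore $\ZZ/2\mathfrak{d}\ZZ$ with $\mathfrak{d}=\gcd(2m,m+n)$, and the free part is $\ZZ$. To recover the particular $Q$ stated in the theorem (rather than the one produced by the reduction) I would simply verify the identity $Q\cdot P^{*}=0$ directly, checking the free row by elementary arithmetic and the torsion row using the Bezout identity $2m\alpha_{1}+(m+n)\alpha_{2}=\mathfrak{d}$, which exists because $\mathfrak{d}=\gcd(2m,m+n)$. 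Surjectivity of $Q$ then follows from $\gcd\bigl(\tfrac{m+n}{\mathfrak{d}},\tfrac{2m}{\mathfrak{d}},\tfrac{2n}{\mathfrak{d}}\bigr)=1$ in the free part and from $\gcd(\alpha_{1},\alpha_{2},\mathfrak{d})=1$ in the torsion part, the latter being a consequence of the same Bezout relation.

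The terminality statement is inherited verbatim from Theorem~\ref{th:toric3dim}. The main obstacle is the bookkeeping in case $(ii)$: matching the precise normalization of $Q$ in the statement requires the carefully chosen Bezout coefficients $\alpha_{1},\alpha_{2}$, after which the verification is a short direct calculation.
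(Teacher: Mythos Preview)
Your proposal is correct and follows essentially the same route as the paper: both compute $\Cl(X)$ as the cokernel of $P^{*}$ via Smith normal form, using the vertex data of Theorem~\ref{th:toric3dim}. The only difference is in how $Q$ is pinned down. The paper exhibits explicit unimodular matrices $V,W$ with $V\cdot P^{*}\cdot W$ diagonal for cases $(ii)$--$(iv)$ and reads off $Q$ as the last rows of $V$ (delegating $(v)$ and $(vi)$ to a computer check), whereas you instead verify the stated $Q$ a posteriori by checking $Q\cdot P^{*}=0$ together with surjectivity. Both are valid; your verification route is slightly quicker once the target group is known, while the paper's constructive route explains where the particular Bezout-shaped entries in $Q$ come from.
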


\begin{proof}
By writing the vertices of the canonical polytopes from Theorem~\ref{th:toric3dim} as columns, we get a matrix $P$ representing the canonical toric threefold singularities $X$ as affine threefolds with a two-torus action. We only observe the Cases $(ii)$ to $(vi)$, since Case $(i)$ consists of all Gorenstein toric threefold singularities. In any case, we determine unimodular matrices $V$ and $W$, so that $S := V \cdot P^* \cdot W$ is in Smith normal form. Then if $\beta_1,\ldots,\beta_\nu$ are the elementary divisors and $\beta$ the number of zero rows of $S$, we have that
$$
\Cl(X) \cong \ZZ^\beta \oplus \ZZ/\beta_1\ZZ \oplus \ldots \oplus \ZZ/\beta_\nu \ZZ.
$$
Furthermore, the degree matrix $Q$ is the stack of the last $\nu+\beta$ rows of $V$. We go through the  Cases $(ii)$ to $(vi)$ in the following.
For Case $(ii)$, let $\mathfrak{d}:=\gcd(2m,m+n)$ and $\alpha_1,\alpha_2 \in \ZZ$ with $2m\alpha_1+(m+n)\alpha_2=\mathfrak{d}$. Then
$$
{\tiny
S=
\begin{bmatrix}
1 & 0 & 0 & 0 \\
-(m+1) & 1 & 0 & 0 \\
\alpha_1+\mathfrak{d} & -(2\alpha_1+\alpha_2) & \alpha_1 & \alpha_2 \\
-\frac{(m+n)}{\mathfrak{d}} & \frac{2n}{\mathfrak{d}} & -\frac{(m+n)}{\mathfrak{d}} & \frac{2m}{\mathfrak{d}}
\end{bmatrix}
\cdot P^* 
\cdot
\begin{bmatrix}
1 & 0 & -2 \\
0 & 1 & 2m \\
0 & 0 & 1
\end{bmatrix}
 = 
 \begin{bmatrix}
1 & 0 & 0 \\
0 & 1 & 0 \\
0 & 0 & 2\mathfrak{d} \\
0 & 0 & 0 
\end{bmatrix}.}
$$
So $\Cl(X)=\ZZ/2\mathfrak{d}\ZZ$ and
$$
{\tiny
Q =
\begin{bmatrix}
\overline{\alpha_1+\mathfrak{d}} & \overline{-(2\alpha_1+\alpha_2)} & \overline{\alpha_1} & \overline{\alpha_2} \\
-\frac{m+n}{\mathfrak{d}} & \frac{2n}{\mathfrak{d}} & -\frac{m+n}{\mathfrak{d}} & \frac{2m}{\mathfrak{d}}
\end{bmatrix}.}
$$
In Case $(iii)$, with $\alpha_1,\alpha_2$ meeting $\alpha_1 n+ \alpha_2 m =1$, the equality $S=V \cdot P^* \cdot W$ becomes
$$
{\tiny
\begin{bmatrix}
1 & 0 & 0 \\
0 & 1 & 0 \\
0 & 0 & m\imath
\end{bmatrix}
=
\begin{bmatrix}
1 & -1 & 0 \\
0 & 1 & 0 \\
-1 & -m\alpha_1  & 1
\end{bmatrix}
\cdot\begin{bmatrix}
1 & n & \imath \\
0 & n & \imath \\
1 & n+m & \imath
\end{bmatrix}
\cdot
\begin{bmatrix}
1 & 0 & 0 \\
0 & \alpha_1 & \imath \\
0 & \alpha_2 & -n
\end{bmatrix},}
$$
leading to the respective class group and degree matrix. The same equation has the form
$$
{\tiny
\begin{bmatrix}
1 & 0 & 0 \\
0 & 1 & 0 \\
0 & 0 & 4m
\end{bmatrix}
=
\begin{bmatrix}
1 & -1 & 0 \\
0 & 1 & 0 \\
-2 & 1-2m  & 1
\end{bmatrix}
\cdot\begin{bmatrix}
1 & 1 & 2 \\
0 & 1 & 2 \\
2 & 1+2m & 2
\end{bmatrix}
\cdot
\begin{bmatrix}
1 & 0 & 0 \\
0 & 1 & -2 \\
0 & 0 & 1
\end{bmatrix}}
$$
in Case $(iv)$, while in the cases $(v)$ and $(vi)$, the data $\Cl(X)$ and $Q$ can easily be computed using the Maple package~\cite{HaKe}.
\end{proof}

\subsection{Matrices $P$ for non-toric singularities}
In the following, we classify the matrices $P$ defining non-toric canonical threefold singularities of complexity one. The classification is divided into the subcases of Proposition~\ref{prop:zetaexcep}~(iii). 
We begin with those singularities of canonical multiplicity one:

\begin{proposition}
\label{prop:zeta=1}
Let $X$ be a non-toric threefold singularity of complexity one.
Assume that~$X$ is of canonical multiplicity 
one, Gorenstein index $\imath \geq 2$ and is at most canonical. 
Then $X \cong X(P_i)$,
where $P_i$ is one of the following matrices:
\setlength{\arraycolsep}{4pt}
 $$
 {\tiny
 P_1=
\begin{bmatrix}
-3 & -1 & 3 & 0  \\
-3 & -1 & 0 & 2  \\
1 & 0  & 0 & -1 \\
-4 & 0 & 2 & 2 
\end{bmatrix}
\quad
P_2=
\begin{bmatrix}
-3 & -1 & 3 & 1 & 0  \\
-3 & -1 & 0 & 0 & 2  \\
1 & 0  & 0 & -1 & -1 \\
-4 & 0 & 2 & 2 & 2 
\end{bmatrix}
\quad
P_3=
\begin{bmatrix}
-3 & 2 & 1 & 0  \\
-3 & 0 & 0 & 2  \\
2 & 0 & 2 &  1 \\
-6 & 3 & 3 & 3 
\end{bmatrix}}
$$
$$
{\tiny
P_4 =
\begin{bmatrix}
-3 & 2  & 0 & 0 
\\
-3 & 0 & 1 & 1 
\\
0 & 1 & 0 & 1 
\\
-4 & 2 & 2 & 2 
\end{bmatrix}
\quad
P_5
=
\begin{bmatrix}
-k & 1 & 1  & 0 & 0 
\\
-k & 0 & 0 & 1 & 1 

\\
1 & 0 & 1 & 0 & 1 
\\
2-2k & 2 & 2 & 2 & 2 
\end{bmatrix}}
$$
\end{proposition}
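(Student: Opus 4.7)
The plan is to start from the normal form provided by Proposition~\ref{prop:zetaexcep}: up to admissible operations we may assume $(\mathfrak l_0,\dots,\mathfrak l_r)$ is an ordered platonic tuple, $P$ is irredundant, and in the case $\zeta_X=1$ the last row of $P$ is determined entirely by $\imath_X$ and the $l_{ij}$. The only free data remaining are the platonic tuple itself, the integers $l_{ij}\le \mathfrak l_i$, and the entries $d_{ij1}$ of the next-to-last row, which can be further normalised by the admissible row operations among the last two rows. The classification then amounts to a case-by-case analysis along the five possible shapes of the leading tuple: $(5,3,2)$, $(4,3,2)$, $(3,3,2)$, $(\mathfrak l_0,2,2)$ and $(\mathfrak l_0,\mathfrak l_1,1,\ldots,1)$.

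The engine driving each case is the canonicity criterion recalled in Section~2: the polytopes $A_X^c(\lambda_i)$ may contain no lattice point other than the origin except on the halfplane $\HHH_i$. Since $\zeta_X=1$, the $\HHH_i$ lie at integer height $\imath_X\ge 2$, and the two-dimensional slice $A_X^c(\lambda_i)\cap\HHH_i$ is a $\QQ$-Gorenstein polygon of index $\imath_X$, whose vertices are the lattice columns $v_{ij}$ and the (generally rational) points $v(\tau)'\in\HHH=\bigcap_i\HHH_i$ associated to the $P$-elementary cones $\tau$. Lemma~\ref{le:twopoints} then immediately forbids two distinct non-origin lattice points on any such slice, forcing $n_i\le 1$ whenever $\mathfrak l_i>1$; and the enumeration of canonical polygons with one rational vertex (Lemmas~\ref{le:ncone}, \ref{le:1245cone}, \ref{le:halfheight} together with Corollaries~\ref{corr:halfcone}--\ref{corr:1313cone}) pins down the admissible denominators and numerators arising in the $v(\tau)'$. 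Running through the tuples in turn, I expect the three ``small'' tuples $(5,3,2)$, $(4,3,2)$, $(3,3,2)$ and the $(\mathfrak l_0,2,2)$-case to yield the four sporadic matrices $P_1,\dots,P_4$, while the tuple $(\mathfrak l_0,\mathfrak l_1,1,\ldots,1)$ collapses to $r=2$, $\mathfrak l_1=\mathfrak l_2=1$, producing the one-parameter series $P_5$ with $k=\mathfrak l_0$.

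The main obstacle is the simultaneous control of the rational points $v(\tau)'$ across distinct leaves. Since the same $v(\tau)'$ lives in the intersection $\HHH$, its coordinates appear in several slices $A_X^c(\lambda_i)\cap\HHH_i$ at once, so the rational denominators allowed by the canonicity of one slice must match those allowed by the others. I expect the tightest combinatorics to occur in the $(\mathfrak l_0,2,2)$-case, where two leaves simultaneously carry rational vertices and Lemma~\ref{le:1245cone} together with the half-height result of Lemma~\ref{le:halfheight} are needed to force $\imath_X=2$ and to rule out infinite sub-families that appear canonical on any single leaf in isolation. Once this cross-leaf compatibility is set up, a finite check exhausts the remaining possibilities in each tuple-case, and a final application of the admissible operations brings the matrices to precisely the forms $P_1,\dots,P_5$ listed in the proposition.
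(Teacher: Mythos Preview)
Your overall strategy is the same as the paper's: normalise via Proposition~\ref{prop:zetaexcep}, split into the five leading platonic tuples, and use Lemma~\ref{le:twopoints} together with the rational-polygon Corollaries~\ref{corr:halfcone}--\ref{corr:1313cone} to cut down the free parameters leaf by leaf. That part is fine.

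Where you go wrong is in your predicted distribution of the outcomes. The cases $(5,3,2)$ and $(4,3,2)$ do \emph{not} contribute any $P_i$: both are eliminated entirely (for $(4,3,2)$ one needs a somewhat delicate argument with half-integer shifts and Corollary~\ref{corr:halfcone}). The two matrices $P_1,P_2$ come from $(3,3,2)$, and $P_3$ alone from $(\mathfrak l_0,2,2)$ (in fact only $\mathfrak l_0=3$, $\imath=3$ survives). The matrix $P_4$ does \emph{not} arise from the small tuples at all; it lies in the $(\mathfrak l_0,\mathfrak l_1,1)$ case with $\mathfrak l_0=3$, $\mathfrak l_1=2$, so your assertion that this last case ``collapses to $\mathfrak l_1=\mathfrak l_2=1$'' is false. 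The one-parameter series $P_5$ is the $\mathfrak l_1=1$ branch of the same case.

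A related overreach is the claim that Lemma~\ref{le:twopoints} ``immediately'' forces $n_i\le 1$ whenever $\mathfrak l_i>1$. The paper never proves or uses such a blanket statement; the actual argument computes, for each candidate extra column $v_{i2}=p_{i,t}+se_{r+1}$, the displacement $v(\tau_2)'-v(\tau_1)'$ explicitly and checks whether a second lattice point is forced into $A_X^c(\lambda)$. In several subcases (notably Case~3 with $d_{011}-d_{111}\notin 3\ZZ$, and Case~4.2.2.2.2 with $l_{01}$ odd) this displacement has absolute value strictly less than $1$, so Lemma~\ref{le:twopoints} alone is inconclusive and one has to invoke Corollary~\ref{corr:halfcone} or Lemma~\ref{le:1245cone} on the resulting half- or quarter-integer endpoints, then perform further admissible operations, and only \emph{then} locate an explicit interior lattice point such as $(-1,-1,0,-1)$ in a specific $A_X^c(\lambda_i)$. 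Your outline elides this layer of the argument, which is where most of the actual work in the proof lies.
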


\begin{proof}
Let $P$ be a matrix so that $X(P)$ is a canonical threefold with $\zeta_X=1$ admitting a two-torus action.
We may assume that $P$ is in the normal form of Proposition~\ref{prop:zetaexcep}. 
Since $l_{ij}=1$ holds for $i\geq 3$, we have $n_i\geq 2$ for $i\geq 3$. 
We now go through all possible different leading platonic tuples. 
Let $e_1,\ldots,e_{r+2}$ be the standard basis of the column space of $P$. 
Set $e_0:=-e_1-\ldots-e_r$. 

Let in the following $\tau_1$ be the (fake or $P$-) elementary cone generated by the columns of the leading block and $\tau_i$ for $i\geq 2$ denote other (fake or $P$-) elementary cones. 
For such cones, recall that $v(\tau_i)'=\partial A_X^c (\lambda) \cap \tau_i$ denotes the point 
where $\lambda \cap \tau_i$ leaves $A_X^c$.

\bigskip

\noindent
\emph{Case 1: leading platonic tuple $(5,3,2)$}.
The matrix $P$ has leading block 
$$
{\tiny
\begin{bmatrix}
-5 & 3  & 0 & 0 & \dots & 0 
\\
-5 & 0 & 2 & 0 & \dots & 0 
\\
-5 & 0 & 0 & 1 & \dots & 0 
\\
\vdots & \vdots & \vdots & \vdots & \ddots & \vdots 
\\
-5 & 0 & 0 &  0 &  \dots &  1 
\\
d_{011} & d_{111} & d_{211} & 0 & \ldots & 0 
\\
6\imath -5\imath r & \imath & \imath & \imath & \ldots & {\imath} 
\end{bmatrix}
}
$$
due to Proposition~\ref{prop:zetaexcep}. 
If $r\geq 3$, due to irredundancy we have at least another column $v_{32}=e_{3}+d_{321}e_{r+1}+\imath e_{r+2}$ with $d_{321}\neq 0$. We thus have an elementary cone $\tau_2$ generated by $v_{32}$ and all columns of the leading block but $v_{31}$.
So $A_X^c(\lambda)$ contains the integer points
\begin{align*}
v(\tau_1)' &= (0,\ldots,0,6d_{011}+10 d_{111}+15 d_{211},\imath), \\
v(\tau_2)' &= (0,\ldots,0,6d_{011}+10 d_{111}+15 d_{211}+30d_{321},\imath)
\end{align*}
and thus the corresponding variety can not be canonical with Lemma~\ref{le:twopoints}. 
So $r=2$ remains. We have leading block
$$
{\tiny
\begin{bmatrix}
-5 & 3 & 0  \\
-5 & 0 & 2  \\
d_{011} & d_{111} & d_{211} \\
-4\imath & \imath & \imath 
\end{bmatrix}
}
$$
here. So we get 
$
v(\tau_1)'=(0,0,6d_{011}+10d_{111}+15d_{211},\imath) \in \ZZ^4
$.
Thus $ \partial A_X^c(\lambda_i,\tau_1)$ contains the integer points
\begin{align*}
p_{0,t}
&:=(-t,-t,(6-t)d_{011}+2(5-t)d_{111}+3(5-t)d_{211},(1-t)\imath)
& t=0,1,\ldots,5
\\
p_{1,t}
&:=(t,0,2(3-t)d_{011}+(10-3t)d_{111}+5(3-t)d_{211},\imath)
& t=0,1,2,3
\\
p_{2,t}
&:=(0,t,3(2-t)d_{011}+5(2-t)d_{111}+(15-7t)d_{211},\imath)
& t=0,1,2
\end{align*}
These points are not allowed to be columns of $P$, as they would lie inside $\tau_1$. 
Any column of $P$ not contained in the leading block must now be of the form $v_{i2}=p_{i,t}+se_3$ for $0 \neq s \in \ZZ$ and $p_{i,t}$ one of the points above. 
Let now $\tau_2$ be the elementary cone generated by this additional column and the two leading block columns from the other two leaves. 
Depending on $i=0,1,2$, we have $v(\tau_2)'= v(\tau_1)'+\delta_i e_3$, with
$$
\delta_0 = 6s/(6-t),
\quad
\delta_1 = 10s/(10-3t),
\quad
\delta_2 = 15s/(15-7t).
$$
Now since $|ks/(k-x)| \geq 1$ for $0 \neq s \in \ZZ$ and $0 \leq x  <k$, 
between the integer point $v(\tau_1)'$ and $v(\tau_2)'$ lies another integer point in $A_X^c(\lambda)$. Lemma~\ref{le:twopoints} tells us that the corresponding variety cannot be canonical.

\bigskip

\noindent
\emph{Case 2: leading platonic tuple $(4,3,2)$}.
The matrix $P$ has leading block 
$$
{\tiny
\begin{bmatrix}
-4 & 3  & 0 & 0 & \dots & 0 
\\
-4 & 0 & 2 & 0 & \dots & 0 
\\
-4 & 0 & 0 & 1 & \dots & 0 
\\
\vdots & \vdots & \vdots & \vdots & \ddots & \vdots 
\\
-4 & 0 & 0 &  0 &  \dots &  1 
\\
d_{011} & d_{111} & d_{211} & 0 & \ldots & 0 
\\
5\imath -4\imath r & \imath & \imath & \imath & \ldots & {\imath} 
\end{bmatrix}
}
$$
due to Proposition~\ref{prop:zetaexcep}. 
If $r\geq 3$, due to irredundancy we have at least another column $v_{32}=e_{3}+d_{321}e_{r+1}+\imath e_{r+2}$ with $d_{321}\neq 0$. 
So $A_X^c(\lambda)$ contains the integer points
\begin{align*}
v(\tau_1)' &= (0,\ldots,0,3d_{011}+4 d_{111}+6 d_{211},\imath), \\
v(\tau_2)' &= (0,\ldots,0,3d_{011}+4 d_{111}+6 d_{211}+12d_{321},\imath)
\end{align*}
and as in Case 1, the corresponding variety can not be canonical. 
So we have $r=2$ again and the leading block
$$
{\tiny
\begin{bmatrix}
-4 & 3 & 0  \\
-4 & 0 & 2  \\
d_{011} & d_{111} & d_{211} \\
-3\imath & \imath & \imath 
\end{bmatrix}
}
$$
with $v(\tau_1)' = (0,0,3d_{011}+4 d_{111}+6 d_{211},\imath)\in \ZZ^4$. 
As in Case 1, any column of $P$ not contained in the leading block is of the form $v_{i2}=p_{i,t}+se_3$, now with
\begin{align*}
p_{0,t}
&:=(-t,-t,(3-t/2)d_{011}+(4-t)d_{111}+(6-3t/2)d_{211},(1-t)\imath)
& t=0,1,\ldots,4 
\\
p_{1,t}
&:=(t,0,(3-t)d_{011}+(4-t)d_{111}+2(3-t)d_{211},\imath)
& t=0,1,2,3
\\
p_{2,t}
&:=(0,t,(3-3t/2)d_{011}+2(2-t)d_{111}+(6-5t/2)d_{211},\imath)
& t=0,1,2
\end{align*}
 For $i=0,1,2$, we have $v(\tau_2)'= v(\tau_1)'+\delta_i e_3$, with
$$
\delta_0 = 6s/(6-t),
\quad
\delta_1 =  8s/(8-2t),
\quad
\delta_2 = 12s/(12-5t).
$$
The third coordinate of the points $p_{i,t}$ is in $\ZZ$ for $i=1$ and also for $i=0,2$ if additionally $t \in 2\ZZ$. 
It is as well in $\ZZ$ if $d_{011} - d_{211} \in 2\ZZ$. 
In these cases we use $|ks/(k-x)| \geq 1$  for $0 \leq x  <k$ to obtain another (in addition to $v(\tau_1)'$) integer point in $A_X^c(\lambda)$. 
In the remaining cases, the third coordinate of the points $p_{i,t}$ is in $\ZZ + 1/2$ and we have $s \in \ZZ + 1/2$.
We first examine $i=0$ and $t=3$, here the distance $6s/(6-t)$ between $v(\tau_1)'$ and $v(\tau_2)'$ becomes $2s$, which means that again there must be an integer point inbetween.

Only $t=1$ and $i=0,2$ remain. Since $d_{011} - d_{211} \in 2\ZZ + 1$, the Gorenstein index $\imath$ must be odd, otherwise the first or the third column of the leading block would not be primitive. 
By admissible operations, we achieve $d_{011}=1$, $d_{211}=0$.

We first examine the case $i=0$. This leads to 
$
v(\tau_1)' = (0,0,3+4d_{111},\imath)$
and
$
v(\tau_2)' = (0,0,3+4d_{111}+6s/5,\imath)$. 
Only for $s=\pm 1/2$, there is no integer point inbetween. 
In these two cases, the polytope
$$
\conv(0_{\ZZ_4},(0,0,3+4d_{111},\imath),(0,0,3+4d_{111}\pm 1/2,\imath)
$$ 
is contained in $A_X^c(\lambda)$. 
In the first case $s=1/2$, Corollary~\ref{corr:halfcone} tells us that $4+4d_{111}$ is a multiple of $\imath$. Our leading block together with the column $v_{02}$ has the form
$$
{\tiny
\begin{bmatrix}
-4 & -1 & 3 & 0  \\
-4 & -1 & 0 & 2  \\
1 &  3d_{111}+2    & d_{111} & 0 \\
-3\imath & 0 & \imath & \imath 
\end{bmatrix}.
}
$$
By the admissible operations of adding the $(1+d_{111})$-fold of the first, the $(2+2d_{111})$-fold of the second and the $-4(1+d_{111})/\imath$-fold of the fourth all to the third row, we obtain 
$$
{\tiny
\begin{bmatrix}
-4 & -1 & 3 & 0  \\
-4 & -1 & 0 & 2  \\
1 &  -1 & -1 & 0 \\
-3\imath & 0 & \imath & \imath 
\end{bmatrix}.
}
$$
But $A_X^c(\lambda_0)$ contains the point $(-1,-1,0,-1)$ and thus the corresponding variety can not be canonical. 
In the second case $s=-1/2$,  Corollary~\ref{corr:halfcone} tells us that $2+4d_{111}$ is a multiple of $\imath$. 
By the admissible operations of adding the $(1+d_{111})$-fold of the first, the $(1+2d_{111})$-fold of the second and the $-2(1+2d_{111})/\imath$-fold of the fourth all to the third row, we obtain the exact same matrix as above and are done with this case as well.
The final case $i=2$ is analogue to the case $i=0$.

\bigskip

\noindent
\emph{Case 3: leading platonic tuple $(3,3,2)$}.
The matrix $P$ has leading block 
$$
{\tiny
\begin{bmatrix}
-3 & 3  & 0 & 0 & \dots & 0 
\\
-3 & 0 & 2 & 0 & \dots & 0 
\\
-3 & 0 & 0 & 1 & \dots & 0 
\\
\vdots & \vdots & \vdots & \vdots & \ddots & \vdots 
\\
-3 & 0 & 0 &  0 &  \dots &  1 
\\
d_{011} & d_{111} & d_{211} & 0 & \ldots & 0 
\\
4\imath -3\imath r & \imath & \imath & \imath & \ldots & {\imath} 
\end{bmatrix}
}
$$
due to Proposition~\ref{prop:zetaexcep}. 
If $r\geq 3$, due to irredundancy we have at least another column $v_{32}=e_{3}+d_{321}e_{r+1}+\imath e_{r+2}$ in $P$ with $d_{321}\neq 0$. 
So $A_X^c(\lambda)$ contains the integer points
\begin{align*}
v(\tau_1)' &= (0,\ldots,0,2(d_{011}+ d_{111}) + 3 d_{211},\imath), \\
v(\tau_2)' &= (0,\ldots,0,2(d_{011}+ d_{111}) + 3 d_{211}  + 6d_{321},\imath)
\end{align*}
and with Lemma~\ref{le:twopoints}, the corresponding variety can not be canonical. 
Thus only $r=2$ is possible and the leading block is
$$
{\tiny
\begin{bmatrix}
-3 & 3 & 0  \\
-3 & 0 & 2  \\
d_{011} & d_{111} & d_{211} \\
-2\imath & \imath & \imath 
\end{bmatrix}
}
$$
with $v(\tau_1)' = (0,0,2(d_{011}+ d_{111})+3 d_{211},\imath)\in \ZZ^4$. 
The points $p_{i,t}$ in the intersection of $\partial A_X^c(\lambda_i,\tau_1)$ with the hyperplanes $\{x_i=t\}$ have the forms:
\begin{align*}
p_{0,t}
&:=(-t,-t,(2-t/3)d_{011}+(2-2t/3)d_{111}+(3-t)d_{211},(1-t)\imath)
& t=0,1,2,3
\\
p_{1,t}
&:=(t,0,(2-2t/3)d_{011}+(2-t/3)d_{111}+(3-t)d_{211},\imath)
& t=0,1,2,3
\\
p_{2,t}
&:=(0,t,(2-t)d_{011}+(2-t)d_{111}+(3-t)d_{211},\imath)
& t=0,1,2
\end{align*}
Now any column of $P$ not contained in the leading block must be of the form $v_{i2}=p_{i,t}+se_3$ with $0 \neq s \in \QQ$. 
This is because it must firstly be contained in $\partial A_X^c$, which fixes the last coordinate, secondly it is not in the leading block, restricting its first two coordinates and lastly it must not be contained in $\tau_1$, so in fact $s \neq 0$. 
Now  for  $i=0,1,2$ let $\tau_2$ be the elementary cone generated by $v_{i2}$ and $v_{j1}$ for $j \neq i$.
 For $i=0,1,2$, we have $v(\tau_2)'= v(\tau_1)'+\delta_i e_3$, with
$$
\delta_0 = 6s/(6-t),
\quad
\delta_1 =  6s/(6-t),
\quad
\delta_2 = 3s/(3-t).
$$ 
The third coordinate of the points $p_{i,t}$ is in $\ZZ$ for $i=2$ and also for $i=0,1$ if additionally $d_{011} - d_{111} \in 3\ZZ$ or $t=0,3$. 
Since $v_{i2}$ must be integer, in these cases also $s$ must be integer. 
Then we use $|ks/(k-t)| \geq 1$ for $0 \leq t  <k$ and integer $s \neq 0$ to obtain another (in addition to $v(\tau_1)'$) integer point in $A_X^c(\lambda)$. 
Lemma~\ref{le:twopoints} shows that such variety can not be canonical.

It remains $i=0,1$ with $t=1,2$ and $d_{011} - d_{111} \notin 3\ZZ$. 
Here $\imath \notin 3\ZZ$, because otherwise neither of $d_{011}$ and $d_{111}$ would be in $3\ZZ$ due to primitivity of the columns of $P$. But then $d_{011} + d_{111} \in 3\ZZ$ and $v(\tau_1)'$ would not be primitive.
The cases $i=0$ and $i=1$ are equivalent, since the leaves $\lambda_1$ and $\lambda_0$ till now are interchangeable. So we only treat $i=0$ in the following. 
Moreover, by admissible operations, we can achieve $d_{011}=0$ and $d_{111}=-1$. 

This works as follows: We have $d_{011} - d_{111} \notin 3\ZZ$. 
Thus $2d_{011} + d_{111} \pm 1 \in 3\ZZ$. 
We have that $\imath$ and $3$ are coprime. 
Thus there always exists an $a \in \ZZ$ so that adding 
$
-a + (2d_{011}+d_{111} \pm 1)/3
$
times the first, $-a+d_{011}+d_{111}\pm 1$ times the second and 
$ 
(3a-2d_{011}-2d_{111}\pm 2)/\imath 
$
the fourth to the third row is an admissible operation. 
Then we have $d_{011}=0$ and $d_{111}=\pm 1$, where a possible negation of the third row if required leads to the desired form. 
Now we examine the two remaining cases $t=1,2$ with $P$ in this form.

\medskip

\noindent
\emph{Case 3.1: $t=1$}.
In order not to have two integer points in $A_X^c(\lambda)$, we require $s=-2/3$ or $s=1/3$. 
For $s=-2/3$, Corollary~\ref{corr:halfcone} applied to the polytope
$$
\conv(0_{\ZZ_4},v(\tau_1)',v(\tau_2)')
$$ 
tells us that $3d_{211}-3$ is a multiple of $\imath$ and then adding the $(d_{211}-1)$-fold of the first and second and the $3(1-d_{211})/\imath$-fold of the last to the third row, the matrix $P$ can be brought into the form:
$$
{\tiny
\begin{bmatrix}
-3 & -1 & 3 & 0  \\
-3 & -1 & 0 & 2  \\
0 & 0  & -1 & 1 \\
-2\imath & 0 & \imath & \imath 
\end{bmatrix}.
}
$$
But the first two columns of $P$ here build up a two-dimensional polytope in $A_X^c(\lambda_0)$ that can not be canonical due to Lemma~\ref{le:twopoints}.
For $s=1/3$, the leading block together with the column $v_{02}$ has the form:
$$
{\tiny
\begin{bmatrix}
-3 & -1 & 3 & 0  \\
-3 & -1 & 0 & 2  \\
0 & 2d_{211} -1  & -1 & d_{211} \\
-2\imath & 0 & \imath & \imath 
\end{bmatrix}.
}
$$
The polytope
$
\conv(0_{\ZZ_4},(0,0,3d_{211}-2,\imath),(0,0,3d_{211}-2+2/5,\imath))
$
is contained in $A_X^c(\lambda)$. According to Remark~\ref{rem:2513polytope}, we have two possible cases:

\smallskip

\noindent
\emph{Case 3.1.1: $3d_{211}-1$ is a multiple of $\imath$}. Then the admissible operation of adding the $d_{211}$-fold of the first, the $(d_{211}-1)$-fold of the second and the $(1-3d_{211})/\imath$-fold of the last to the third row brings the matrix into the form:
 $$  P_{1,\imath}:=
  {\tiny
\begin{bmatrix}
-3 & -1 & 3 & 0  \\
-3 & -1 & 0 & 2  \\
1 & 0  & 0 & -1 \\
-2\imath & 0 & \imath & \imath 
\end{bmatrix}.
}
$$
We have a look at 
\begin{align*}
 & \conv(0_{\ZZ_4},v(\tau_1)',v_{01},v_{02}) \\
=~ &
\conv(0_{\ZZ_4},(0,0,-1,\imath),(-3,-3,1,-2\imath),(-1,-1,0,0))  \subseteq 
A_X^c(\lambda_0).
\end{align*}
There is a "critical" point that can lie inside this polytope, namely 
$$
(-1,-1,0,-1)= \frac{1}{\imath} v(\tau_1)' +\frac{1}{\imath} v_{01} + \frac{\imath-3}{\imath} v_{02}.
$$
Only for $\imath=2$, it does not and we get our first canonical singularity with defining matrix $P_1:=P_{1,2}$.

\smallskip

\noindent
\emph{Case 3.1.2: $\imath$ is odd and $6d_{211}-3$ is a multiple of $\imath$}. Since $\imath$ and $3$ are coprime, we have that $2d_{211}-1$ is a multiple of $\imath$ and moreover, with admissible operations, we can achieve $d_{211}=(\imath+1)/2$. A simple computation as above shows that $(1,0,\imath-2,\imath-1)$ lies inside $A_X^c(\lambda_1)$ for any odd $\imath$ coprime to three.
We come to:

\medskip

\noindent
\emph{Case 3.2: $t=2$}. In order not to have two integer points in $A_X^c(\lambda)$, we require $s=-1/3$. Then Corollary~\ref{corr:halfcone} tells us that $3d_{211}-3$ is a multiple of $\imath$ and the same admissible operation as in Case 3.1 for $s=-2/3$ brings the matrix into the form
$$
{\tiny
\begin{bmatrix}
-3 & -2 & 3 & 0  \\
-3 & -2 & 0 & 2  \\
0 & 0  & -1 & 1 \\
-2\imath & 0 & \imath & \imath 
\end{bmatrix}.
}
$$
and again as in Case 3.1 the first two columns of $P$ together with $0_{\ZZ_4}$ build up a non-canonical two-dimensional polytope.

Now as we have seen, the only possible matrix with an additional column in $\lambda_0$ has leading block together with the additional column
$$
{\tiny
\begin{bmatrix}
-3 & -1 & 3 & 0  \\
-3 & -1 & 0 & 2  \\
1 & 0  & 0 & -1 \\
-4 & 0 & 2 & 2 
\end{bmatrix}.
}
$$
As we have seen above, the blocks $\lambda_0$ and $\lambda_1$ are interchangeable, so there must be a column in $\lambda_1$ that together with the leading block gives an equivalent matrix. This column is $(1,0,-1,2)$. This is because the admissible operations of subtracting the first from the third and adding the last to the third row, then negating the third row and finally changing the data of the leaves $\lambda_0$ and $\lambda_1$ brings the matrix from above in the form:
$$
{\tiny
\begin{bmatrix}
-3  & 3 & 1 & 0  \\
-3  & 0 & 0 & 2  \\
1   & 0 & -1 & -1 \\
-4  & 2 & 2 & 2 
\end{bmatrix}.
}
$$
Thus it is possible that a defining matrix  $P$ contains $\emph{both}$ of these columns and in fact all subpolytopes of the anticanonical complex of
$$
P_2 :=
{\tiny
\begin{bmatrix}
-3 & -1 & 3 & 1 & 0  \\
-3 & -1 & 0 & 0 & 2  \\
1 & 0  & 0 & -1 & -1 \\
-4 & 0 & 2 & 2 & 2 
\end{bmatrix}
}
$$
are canonical. Thus we get our first non-$\QQ$-factorial variety with defining matrix $P_2$.
As there are no other possible additional columns, we are done with Case 3.

\bigskip


\noindent
\emph{Case 4: leading platonic tuple $(l_{01},2,2)$}.
The matrix $P$ has leading block 
$$
{\tiny
\begin{bmatrix}
-l_{01} & 2  & 0 & 0 & \dots & 0 
\\
-l_{01} & 0 & 2 & 0 & \dots & 0 
\\
-l_{01} & 0 & 0 & 1 & \dots & 0 
\\
\vdots & \vdots & \vdots & \vdots & \ddots & \vdots 
\\
-l_{01} & 0 & 0 &  0 &  \dots &  1 
\\
d_{011} & d_{111} & d_{211} & 0 & \ldots & 0 
\\
\imath(1-l_{01}(r-1)) & \imath & \imath & \imath & \ldots & {\imath} 
\end{bmatrix}
}
$$
with $l_{01}\geq 2$ due to Proposition~\ref{prop:zetaexcep}. If $r\geq 3$, due to irredundancy we have at least another column $v_{32}=e_{3}+d_{321}e_{r+1}+\imath e_{r+2}$ in $P$ with $d_{321}\neq 0$. 
So $A_X^c(\lambda)$ contains the points
\begin{align*}
v(\tau_1)' &= \left(0,\ldots,0,d_{011} + \frac{l_{01}(d_{111} + d_{211})}{2},\imath\right), \\
v(\tau_2)' &= \left(0,\ldots,0,d_{011} + \frac{l_{01}(d_{111} + d_{211})}{2}  + l_{01}d_{321},\imath\right)
\end{align*}
and since $|l_{01}d_{321}| \geq 2$, with Lemma~\ref{le:twopoints}, the corresponding variety can not be canonical. Thus only $r=2$ is possible and the leading block is
$$
{\tiny
\begin{bmatrix}
-l_{01} & 2 & 0  \\
-l_{01} & 0 & 2  \\
d_{011} & d_{111} & d_{211} \\
(1-l_{01})\imath & \imath & \imath 
\end{bmatrix}
}
$$
with $v(\tau_1)' =  \left(0,\ldots,0,d_{011} + \frac{l_{01}(d_{111} + d_{211})}{2},\imath\right)\in \ZZ^4$. We distinguish two cases in the following:

\medskip

\noindent
\emph{Case 4.1: $\imath$ is even}. Since the last two columns of the leading block must be primitive, $d_{111}$ and $d_{211}$ must be odd and thus by adding appropriate multiples of the first two rows to the third, we can assume $d_{111}=d_{211}=1$. So $v(\tau_1)' =  \left(0,0,d_{011} + l_{01},\imath\right)$ is integer. The points $p_{i,t}$, defined as in Case 3, here have the form:
\begin{align*}
p_{0,t}
&:=(-t,-t,d_{011}+l_{01}-t,(1-t)\imath)
& t=0,\ldots,l_{01}
\\
p_{1,t}
&:=(t,0,d_{011}+l_{01}+t(1-d_{011}-l_{01})/2,\imath)
& t=0,1,2
\\
p_{2,t}
&:=(0,t,d_{011}+l_{01}+t(1-d_{011}-l_{01})/2,\imath)
& t=0,1,2
\end{align*}
Again, additional columns must be of the form $v_{i2}=p_{i,t}+se_3$.
 For $i=0,1,2$, we have $v(\tau_2)'= v(\tau_1)'+\delta_i e_3$, with
$$
\delta_0 = s,
\quad
\delta_1 = \delta_2 =  \frac{2l_{01}s}{2l_{01}-(l_{01}-2)t}.
$$ 
In the first case $i=0$, we have integer $s$ and thus $A_X^c$ contains two integer points, we get no canonical variety. The cases $i=1,2$ are again equivalent. We examine $i=1$. If $t=0,2$ or $d_{011}+l_{01}$ is odd, then $s$ is integer. In all these cases, since $l_{01}\geq 2$, we have that
$$
0 \neq \frac{2l_{01}s}{2l_{01}-(l_{01}-2)t} \in \ZZ,
$$
so $A_X^c(\lambda)$ contains two integer points and we are done with these cases. The case $t=1$ and $d_{011}+l_{01}$ even remains. Here $s \in \ZZ + 1/2$. Only for $s=\pm 1/2$, we have
$$
\left|\frac{2l_{01}s}{2l_{01}-(l_{01}-2)t}\right|<1,
$$
otherwise we have two integer points in $A_X^c(\lambda)$. Now with $s=\pm 1/2$, from Corollary~\ref{corr:halfcone} applied to the polytope
$$
\conv\left(0_{\ZZ_4}, \left(0,0,d_{011} + l_{01},\imath\right), \left(0,0,d_{011} + l_{01} \pm \frac{l_{01}}{l_{01}+2},\imath\right)\right) \subseteq A_X^c(\lambda)
$$
follows that $d_{011} + l_{01} \pm 1$ is a multiple of $\imath$. But this is a contradiction since $\imath$ is even and $d_{011} + l_{01}$ is even as well.

\smallskip

\noindent
\emph{Case 4.2: $\imath$ is odd}. If $d_{111} \equiv d_{211} \mod 2$, then by  if necessary adding the last row to the third and then adding appropriate multiples of the first and second row two the third, we achieve 
$d_{111}= d_{211} = 0$. If $d_{111} \equiv d_{211} + 1 \mod 2$ , we achieve $d_{111}= 0$, $d_{211} = 1$. We distinguish both cases in the following. Observe that in the first as well as in the second case, we do not have to distinguish between additional columns in the first or in the second leaf due to admissible operations.

\smallskip

\noindent
\emph{Case 4.2.1: $d_{111}= d_{211} = 0$}. So $v(\tau_1)' =  \left(0,0,d_{011},\imath\right)$. The points $p_{i,t}$, defined as in Case 4.1, here have the form:
\begin{align*}
p_{0,t}
&:=(-t,-t,d_{011},(1-t)\imath)
& t=0,\ldots,l_{01}
\\
p_{1,t}
&:=(t,0,d_{011}(1-t/2),\imath)
& t=0,1,2
\\
p_{2,t}
&:=(0,t,d_{011}(1-t/2),\imath)
& t=0,1,2
\end{align*}
Additional columns must be of the form $v_{i2}=p_{i,t}+se_3$.
We have $v(\tau_2)'= v(\tau_1)'+\delta_i e_3$ with $\delta_i$ as in Case 4.1.
Thus as in Case 4.1, the only possibility for a canonical singularity is $i=1$ ($i=2$ is equivalent), $t=1$, odd $d_{011}$, $s=\pm 1/2$, since otherwise $A_X^c(\lambda)$ would contain two integer points. Moreover, from Corollary~\ref{corr:halfcone} applied on the polytope
$$
\conv\left(0_{\ZZ_4}, \left(0,0,d_{011},\imath\right), \left(0,0,d_{011} \pm \frac{l_{01}}{l_{01}+2},\imath\right)\right) \subseteq A_X^c(\lambda)
$$
follows that $d_{011} \pm 1$ is a multiple of $\imath$. By the admissible operations of adding the $(d_{011}\pm 1)/2$-fold of the first and second to the third row and subtracting the $(d_{011} \pm 1)/\imath$-fold of the last from the third row and optionally negating the third row, we achieve the following form for our matrix:
$$
{\tiny
\begin{bmatrix}
-l_{01} & 2 & 1 & 0  \\
-l_{01} & 0 & 0 & 2  \\
1 & 0 & 0 &  0 \\
(1-l_{01})\imath & \imath & \imath & \imath 
\end{bmatrix}.
}
$$
But $A_X^c(\lambda_1)$ contains a two-dimensional polytope with vertices $0_{\ZZ_4}$ and the second and third column of $P$, which can not be canonical. We are done with Case 4.2.1.
\smallskip

\noindent
\emph{Case 4.2.2: $d_{111}= 0$, $d_{211} = 1$}. So $v(\tau_1)' =  \left(0,0,d_{011} + l_{01}/2,\imath\right)$ is integer. The points $p_{i,t}$, defined as in Case 3, here have the form:
\begin{align*}
p_{0,t}
&:=(-t,-t,d_{011}+(l_{01}-t)/2,(1-t)\imath)
& t=0,\ldots,l_{01}
\\
p_{1,t}
&:=(t,0,(1-t/2)(d_{011}+l_{01}/2),\imath)
& t=0,1,2
\\
p_{2,t}
&:=(0,t,(1-t/2)(d_{011}+l_{01}/2)+t/2,\imath)
& t=0,1,2
\end{align*}
Again, additional columns must be of the form $v_{i2}=p_{i,t}+se_3$.
Again we have $v(\tau_2)'= v(\tau_1)'+\delta_i e_3$ with $\delta_i$ as in Case 4.1.

\smallskip

\noindent
\emph{Case 4.2.2.1: $i=0$}. If $l_{01}$ is even and $t$ as well, then $s$ is integer and $A_X^c(\lambda)$ contains the two integer points $v(\tau_1)'$ and $v(\tau_2)'$. If both are odd, then Corollary~\ref{corr:halfhalfcone} applied to $\conv(0_{\ZZ_4},v(\tau_1)',v(\tau_2)')$ forces $\imath=2$, which is a contradiction, since $\imath$ is odd. So it remains $l_{01} + t$ odd and $s=\pm 1/2$. 

\smallskip

\noindent
\emph{Case 4.2.2.1.1: $l_{01}$ even, $t$ odd}.
Corollary~\ref{corr:halfcone} forces $d_{011} + l_{01}/2 \pm 1$ to be a multiple of $\imath$.
The polytope 
\begin{align*}
\Bb_1 := \conv( &0_{\ZZ_4},\left(0,0,d_{011} + l_{01}/2 ,\imath\right),\left(0,0,d_{011} + l_{01}/2 \pm 1/2,\imath\right), \\
&(-2,-2,d_{011}+(l_{01}-2)/2,-\imath),(-1,-1,d_{011}+(l_{01}-1)/2-\pm 1/2,0))
\end{align*}
is contained in $A_X^c(\lambda_0)$. Its union $\Bb_{12}:=\Bb_1 \cup \Bb_2$ with
\begin{align*}
\Bb_2 := \conv( &0_{\ZZ_4},\left(0,0,d_{011} + l_{01}/2 ,\imath\right),\left(0,0,d_{011} + l_{01}/2 \pm 1/2,\imath\right), \\
&(1,1,d_{011}+(l_{01}+1)/2,2\imath))
\end{align*}
is lattice equivalent to the polytope from Corollary~\ref{corr:quadrangle} and can thus not be canonical. But since $\Bb_2$ is canonical, $\Bb_1$ can not be.

\smallskip

\noindent
\emph{Case 4.2.2.1.2: $l_{01}$ odd, $t$ even}.
Corollary~\ref{corr:halfcone} forces $d_{011} + l_{01}/2 \pm 1/2$ to be a multiple of $\imath$.
The argument is the same as before, now with
\begin{align*}
\Bb_1 := \conv(&0_{\ZZ_4},\left(0,0,d_{011} + l_{01}/2 ,\imath\right),\left(0,0,d_{011} + l_{01}/2 \pm 1/2,\imath\right), \\
&(-2,-2,d_{011}+(l_{01}-2)/2 \pm 1/2,-\imath),(-1,-1,d_{011}+(l_{01}-1)/2,0)) \\
\Bb_2 := \conv(&0_{\ZZ_4},\left(0,0,d_{011} + l_{01}/2 ,\imath\right),\left(0,0,d_{011} + l_{01}/2 \pm 1/2,\imath\right), \\
&(1,1,d_{011}+(l_{01}+1)/2 \pm 1/2,2\imath)).
\end{align*}

\smallskip

\noindent
\emph{Case 4.2.2.2: $i=1$}. If $l_{01}$ and $t$ are even, then $s$ is integer and $A_X^c(\lambda)$ contains two integer points. If $l_{01}$ is even and $t=1$, then if $(d_{011}+l_{01}/2)$ is even, $s$ is integer and $A_X^c(\lambda)$ contains two integer points. If $d_{011}+l_{01}/2$ is odd, we require $s=\pm 1/2$ and Corollary~\ref{corr:halfcone} forces $d_{011}+l_{01}/2 \pm 1$ to be a multiple of $\imath$. By the admissible operations of adding the $(d_{011}+l_{01}/2 \pm 1)/2$-fold of the first and second to the third row and subtracting the $(d_{011}+l_{01}/2 \pm 1)/\imath$-fold of the last from the third row, we achieve the following form for our matrix:
$$
{\tiny
\begin{bmatrix}
-l_{01} & 2 & 1 & 0  \\
-l_{01} & 0 & 0 & 2  \\
-(l_{01}/2 \pm 1) & 0 & 0 &  1 \\
(1-l_{01})\imath & \imath & \imath & \imath 
\end{bmatrix}.
}
$$
But as in Case 4.2.1, $A_X^c(\lambda_1)$ contains a two-dimensional polytope with vertices $0_{\ZZ_4}$ and the second and third column of $P$, which can not be canonical. Now the case $l_{01}$ odd remains. We distinguish $t=0,1,2$.

\smallskip

\noindent
\emph{Case 4.2.2.2.1: $t=0$}. We need $s=\pm 1/2$ here and again Corollary~\ref{corr:halfcone} forces $d_{011}+l_{01}/2 \pm 1/2$ to be a multiple of $\imath$. Similar to the last case, by admissible operations we achieve the form
$$
{\tiny
\begin{bmatrix}
-l_{01} & 2 & 0 & 0  \\
-l_{01} & 0 & 2 & 0  \\
-(l_{01}/2 \pm 1/2) & 0 & 1 &  0 \\
(1-l_{01})\imath & \imath & \imath & \imath 
\end{bmatrix}
}
$$
for our matrix and again $A_X^c(\lambda_1)$ contains a non-canonical two-dimensional polytope with vertices $0_{\ZZ_4}$ and the second and fourth column of $P$.

\smallskip

\noindent
\emph{Case 4.2.2.2.2: $t=1$}. We require $d_{011}/2 + l_{01}/4 + s \in \ZZ$. By if necessary subtracting the second from the third row and afterwards negating the third row, we can achieve that $d_{011}$ is even. Now we distinguish two subcases:

\smallskip

\noindent
\emph{Case 4.2.2.2.2.1: $ l_{01} \equiv 1 \mod 4$}. So $s \in \ZZ + 3/4$. We have a look at the leaving points of the two  elementary cones:
\begin{align*}
v(\tau_1)' = &  \left(0,0,d_{011} + l_{01}/2 ,\imath\right), \\
v(\tau_2)' = &  \left(0,0,d_{011} + l_{01}/2 + 2l_{01}s/(l_{01}+2) ,\imath\right).
\end{align*}
For $|s|\geq 3/4$, we have $|l_{01}/2 + 2l_{01}s/(l_{01}+2)|\geq 3l_{01}/(2l_{01}+4) > 1$ since $l_{01} > 4$. In this case applying Corollary~\ref{corr:halfhalfcone} to $A_X^c(\lambda)$, we get $\imath=2$, which is a contradiction. Thus the only possibility is $s=-1/4$. 
Lemma~\ref{le:1245cone} tells us that either $d_{011} + (l_{01}-1)/2$ or $d_{011} + (l_{01}+1)/2$ is a multiple of $\imath$. 

\smallskip

\noindent
\emph{Case 4.2.2.2.2.1.1: $d_{011} + (l_{01}-1)/2$ is a multiple of $\imath$}. Since $ l_{01} \equiv 1 \mod 4$, it is even as well. Thus adding the $(d_{011} + (l_{01}-1)/2)/2$-fold of the first and the second to the third and subtracting the $(d_{011} + (l_{01}-1)/2)/\imath$-fold of the last from the third row, we  bring our matrix into the form
$$
{\tiny
\begin{bmatrix}
-l_{01} & 2 & 1 & 0  \\
-l_{01} & 0 & 0 & 2  \\
(1-l_{01})/2 & 0 & 0 &  1 \\
(1-l_{01})\imath & \imath & \imath & \imath 
\end{bmatrix}.
}
$$
Again $A_X^c(\lambda_1)$ contains a non-canonical two-dimensional polytope with vertices $0_{\ZZ_4}$ and the second and third column of $P$.

\smallskip

\noindent
\emph{Case 4.2.2.2.2.1.2: $d_{011} + (l_{01}+1)/2$ is a multiple of $\imath$}. It is odd as well. So we can write $d_{011}=k\imath-(l_{01}+1)/2$ with odd $k \in \ZZ$. Adding the $(k-1)\imath/2$-fold of the first and second and the $(1-k)$-fold of the last to the third row, we bring our matrix into the form
$$
{\tiny
\begin{bmatrix}
-l_{01} & 2 & 1 & 0  \\
-l_{01} & 0 & 0 & 2  \\
\imath-(1+l_{01})/2 & 0 & (\imath-1)/2 &  1 \\
(1-l_{01})\imath & \imath & \imath & \imath 
\end{bmatrix}.
}
$$
We have a look at the polytope
\begin{align*}
& \conv\left(0_{\ZZ_4},\left(-l_{01},-l_{01},\imath-\frac{1+l_{01}}{2},(1-l_{01})\imath\right),\left(0,0,\imath-\frac{1}{2},\imath\right),\left(0,0,\frac{3l_{01}+2}{2(l_{01}+2)}\right)\right) \\
 =&\conv(0_{\ZZ_4},v_{01},v(\tau_1)',v(\tau_2)') \subseteq  A_X^c(\lambda_0)
\end{align*}
and the point
$$
(-1,-1,\imath-2,-1)= \frac{1}{l_{01}}v_{01}+\left(1-\frac{\imath+2(1+l_{01})}{\imath l_{01}}\right)v(\tau_1)'+\frac{l_{01}+2}{\imath l_{01}}v(\tau_2)',
$$
which is in the relative interior since $\imath \geq 3$ and $l_{01} \geq 5$. Thus the corresponding variety is not canonical.

\smallskip

\noindent
\emph{Case 4.2.2.2.2.2: $ l_{01} \equiv 3 \mod 4$}. Here $s \in \ZZ+1/4$ and with the same argument as in Case  4.2.2.2.2.1, we have $s=1/4$. Lemma~\ref{le:1245cone} tells us that either $d_{011} + (l_{01}-1)/2$ or $d_{011} + (l_{01}+1)/2$ is a multiple of $\imath$. So we distinguish the same subcases as in Case 4.2.2.2.2.1.:

\smallskip

\noindent
\emph{Case 4.2.2.2.2.2.1: $d_{011} + (l_{01}-1)/2$ is a multiple of $\imath$}. Since $ l_{01} \equiv 3 \mod 4$, it is now odd. With the same admissible operations as in Case 4.2.2.2.2.1.2, we arrive at the matrix
$$
{\tiny
\begin{bmatrix}
-l_{01} & 2 & 1 & 0  \\
-l_{01} & 0 & 0 & 2  \\
\imath-(l_{01}-1)/2 & 0 & (\imath+1)/2 &  1 \\
(1-l_{01})\imath & \imath & \imath & \imath 
\end{bmatrix}.
}
$$
Now the point 
$$
(-1,-1,\imath-1,-1)= \frac{1}{l_{01}}v_{01}+\left(1-\frac{\imath+2(1+l_{01})}{\imath l_{01}}\right)v(\tau_1)'+\frac{l_{01}+2}{\imath l_{01}}v(\tau_2)'
$$
is again in the relative interior of $A_X^c(\lambda_0)$ for $\imath\geq 3$ and $l_{01}\geq 7$. But for $l_{01}=3$, this is the case only for $\imath\geq 4$. So for $\imath=3$, we get a canonical singularity from the matrix
$$
P_3:=
{\tiny
\begin{bmatrix}
-3 & 2 & 1 & 0  \\
-3 & 0 & 0 & 2  \\
2 & 0 & 2 &  1 \\
-6 & 3 & 3 & 3 
\end{bmatrix}.
}
$$

\smallskip

\noindent
\emph{Case 4.2.2.2.2.2.2: $d_{011} + (l_{01}+1)/2$ is a multiple of $\imath$}. Since $ l_{01} \equiv 3 \mod 4$, $d_{011} + (l_{01}+1)/2$ is now even. With the same admissible operations as in Case 4.2.2.2.2.1.1, we arrive at the matrix 
$$
{\tiny
\begin{bmatrix}
-l_{01} & 2 & 1 & 0  \\
-l_{01} & 0 & 0 & 2  \\
(1+l_{01})/2 & 0 & 0 &  1 \\
(1-l_{01})\imath & \imath & \imath & \imath 
\end{bmatrix}.
}
$$
Again $A_X^c(\lambda_1)$ contains a non-canonical two-dimensional polytope with vertices $0_{\ZZ_4}$ and the second and third column of $P$.

\smallskip

\noindent
\emph{Case 4.2.2.2.3: $t=2$}. Here $s$ is integer. Applying Corollary~\ref{corr:halfhalfcone} to $A_X^c(\lambda)$, we get $\imath=2$, which is a contradiction. 

So all in all we found one $\QQ$-factorial matrix in Case $4$, namely 
$$
P_3=
{\tiny
\begin{bmatrix}
-3 & 2 & 1 & 0  \\
-3 & 0 & 0 & 2  \\
2 & 0 & 2 &  1 \\
-6 & 3 & 3 & 3 
\end{bmatrix}.
}
$$
Thus we have to check if an additional column $(0,1,d_{211},3)$ in $\lambda_2$ is possible. But for such a column, we need $|s|=1/4$ and have the same situation as in Case 4.2.2.2.2.2.1, so we get no canonical singularity. We are done with Case $4$.

 
\bigskip

\noindent
\emph{Case 5: leading platonic tuple $(l_{01},l_{11},1)$}.
The matrix $P$ has leading block 
$$
{\tiny
\begin{bmatrix}
-l_{01} & l_{11}  & 0 & 0 & \dots & 0 
\\
-l_{01} & 0 & 1 & 0 & \dots & 0 
\\
-l_{01} & 0 & 0 & 1 & \dots & 0 
\\
\vdots & \vdots & \vdots & \vdots & \ddots & \vdots 
\\
-l_{01} & 0 & 0 &  0 &  \dots &  1 
\\
d_{011} & d_{111} & 0 & 0 & \ldots & 0 
\\
\imath(1-l_{01}(r-1)) & \imath & \imath & \imath & \ldots & {\imath} 
\end{bmatrix}
}
$$
with $l_{01}\geq l_{11} \geq 1$ due to Proposition~\ref{prop:zetaexcep}.
If $r\geq 3$, then due to irredundancy we have at least \emph{two} more columns $v_{22}=e_{2}+d_{221}e_{r+1}$ and $v_{32}=e_{3}+d_{321}e_{r+1}$, where we can assume $d_{221}\geq 1$ by if this is not the case adding the  $-d_{221}$ of the second to the penultimate row. The same holds for $d_{321}$. So the intersections of $\partial A_X^c$ with the elementary cones $\tau_1$ set up by the leading block and $\tau_2$ set up by the leading block with columns $v_{21}$ and $v_{31}$ replaced by $v_{22}$ and $v_{32}$ are the leaving points
\begin{align*}
v(\tau_1)' = & \left( 0,\ldots,0, \frac{l_{11}d_{011}+l_{01}d_{111}}{l_{11}+l_{01}},\imath\right), \\
v(\tau_2)' = &  \left(0,\ldots,0, \frac{l_{11}d_{011}+l_{01}d_{111} + (d_{221}+d_{321})l_{01}l_{11}}{l_{11}+l_{01}},\imath\right).
\end{align*}
Now we distinguish three subcases.

\medskip

\noindent
\emph{Case 5.1: $l_{11}\geq 2$}. Here if $r\geq 3$, we have $|v(\tau_1)' - v(\tau_2)'| \geq \frac{ 2l_{01}l_{11}}{l_{11}+l_{01}} \geq 2$ and thus $A_X^c(\lambda)$ contains two integer points besides $0_{\ZZ_{r+2}}$. Lemma~\ref{le:twopoints} tells us that the corresponding variety can not be canonical. So we have $r=2$.
The necessity of an additional column $v_{21}$ remains and the leading block together with it as well as the two leaving points are
$$
{\tiny
\begin{bmatrix}
-l_{01} & l_{11}  & 0 & 0 
\\
-l_{01} & 0 & 1 & 1 

\\
d_{011} & d_{111} & 0 & d_{221} 
\\
\imath(1-l_{01}) & \imath & \imath & \imath 
\end{bmatrix}, 
}
$$
$$
v(\tau_1)' =  \left( 0,0, \frac{l_{11}d_{011}+l_{01}d_{111}}{l_{11}+l_{01}},\imath\right), 
v(\tau_2)' =  v(\tau_1)' + \frac{d_{221}l_{01}l_{11}}{l_{11}+l_{01}}e_3
$$
So now $|v(\tau_1)' - v(\tau_2)'| \geq \frac{ l_{01}l_{11}}{l_{11}+l_{01}} \geq 2$ if $l_{11}\geq 4$ or if $l_{11}=3$ and $l_{01}\geq 6$. As above, we can exclude these cases with Lemma~\ref{le:twopoints}. We examine the remaining cases with $l_{11}=3$. If $l_{01}=3$, then even for $d_{211}=1$ one of  $v(\tau_1)'$ and $v(\tau_2)'$ is integer and their distance is $3/2$, Lemma~\ref{le:twopoints} rules out this case. If $l_{01}=4$, then $d_{211}=1$ must hold and we have 
$$
v(\tau_1)' =  \left( 0,0, \frac{3d_{011}+4d_{111}}{7},\imath\right), \quad
v(\tau_2)' =  \left(0,0, \frac{3d_{011}+4d_{111} + 12}{7},\imath\right).
$$
So $A_X^c(\lambda)$ might not contain two integer points only if $3d_{011}+4d_{111} \equiv 1 \mod 7$. Corollary~\ref{corr:halfhalfcone} forces $\imath=2$. By admissible operations, we achieve $d_{111}=0$. So $d_{011}=5+7k$ and with Corollary~\ref{corr:halfhalfcone}, we have odd $k$. By adding the $k$-fold of the first, the $3k$-fold of the second and the $-3k/2$-fold of the last to the third row, we achieve $d_{011}=5$. But then $(-1,-1,2,-1)$ lies inside $A_X^c(\lambda_0)$ and the corresponding variety is not canonical. It remains $l_{01}=5$. Again $d_{211}=1$ must hold and we have 
$$
v(\tau_1)' =  \left( 0,0, \frac{3d_{011}+5d_{111}}{8},\imath\right), \quad
v(\tau_2)' =  \left(0,0, \frac{3d_{011}+5d_{111} + 15}{8},\imath\right),
$$
so we have two integer points in $A_X^c(\lambda)$.

 We examine the cases with $l_{11}=2$. First from $|v(\tau_1)' - v(\tau_2)'| = \frac{ 2l_{01}d_{221}}{2+l_{01}} \geq 2$ if $d_{221}\geq 2$ follows that $d_{221} = 1$ must hold. We have 
$$
v(\tau_1)' =  \left( 0,0, \frac{2d_{011}+l_{01}d_{111}}{l_{01}+2},\imath\right), \quad
v(\tau_2)' =  \left(0,0, \frac{2d_{011}+l_{01}d_{111} + 2l_{01}}{l_{01}+2},\imath\right).
$$
Now for $l_{01}\geq 6$, we have $|v(\tau_1)' - v(\tau_2)'|\geq 3/2$  and therefore $A_X^c(\lambda)$ either contains two integer points or a polytope like in Corollary~\ref{corr:halfhalfcone}, which forces $\imath=2$. For $l_{01} = 5$, the same holds, since $|v(\tau_1)' - v(\tau_2)'|=10/7$ and for $2d_{011}+5d_{111} \equiv 0,4,5,6 \mod 7$, the polytope $A_X^c(\lambda)$ contains two integer points while for $2d_{011}+5d_{111} \equiv 1,2,3 \mod 7$, it contains a polytope like in Corollary~\ref{corr:halfhalfcone}. For $l_{01} = 4$, we have $|v(\tau_1)' - v(\tau_2)'|=4/3$ and for $d_{011}+2d_{111} \equiv 0,2 \mod 3$ we have two integer points while for $d_{011}+2d_{111} \equiv 1 \mod 3$, we have a polytope like in Corollary~\ref{corr:halfhalfcone}. For $l_{01} = 3$, we have $|v(\tau_1)' - v(\tau_2)'|=6/5$, so for $2d_{011}+3d_{111} \equiv 0,4 \mod 5$, we have two integer points and for $2d_{011}+3d_{111} \equiv 2 \mod 5$, we have a polytope like in Corollary~\ref{corr:halfhalfcone}. For $l_{01} = 2$, we have $|v(\tau_1)' - v(\tau_2)'|=1$, so for $d_{011}+ d_{111}$ even, we have two integer points and for $d_{011}+d_{111}$ odd, we have a polytope like in Corollary~\ref{corr:halfhalfcone}. This means that $\imath=2$ must hold in all cases but $l_{01} = 3$, $2d_{011}+3d_{111} \equiv 1,3 \mod 5$.
So first let $l_{01} = 3$ and $2d_{011}+3d_{111} \equiv 1,3 \mod 5$.
But using Corollary~\ref{corr:1313cone}, we either get $\imath=2$ or $\imath=3$. In the case $\imath=3$, by admissible operations we achieve $d_{111}=0$ and  our matrix has the form:
$$
{\tiny
\begin{bmatrix}
-3 & 2  & 0 & 0 
\\
-3 & 0 & 1 & 1 

\\
d_{011} & 0 & 0 & 1 
\\
-6 & 3 & 3 & 3 
\end{bmatrix}.
}
$$
Since $d_{111}=0$, we either have $2d_{011}/5=3k+1/5$ or $2d_{011}/5=3k+8/5$ for some $k \in \ZZ$. We can add multiples of $15$ to $d_{011}$ due to admissible operations. Since in the first case we have odd $k$ and in the second one, we have even $k$, we get $d_{011} \in \{4,8\}$. These two cases are equivalent by admissible operations as well. But for $d_{011}=4$, the point $(-1,-1,2,0)$ lies inside $A_X^c(\lambda_0)$ and the corresponding variety is not canonical.

So only the cases with $l_{11}=\imath=2$ are left. Here by primitivity of the second column and admissible operations, we achieve $d_{111}=1$ and have the matrix
$$
{\tiny
\begin{bmatrix}
-l_{01} & 2  & 0 & 0 
\\
-l_{01} & 0 & 1 & 1 

\\
d_{011} & 1 & 0 & 1 
\\
2(1-l_{01}) & 2 & 2 & 2 
\end{bmatrix}.
}
$$
By adding the $k(l_{01}+2)$-fold of the first, the $2k(l_{01}+2)$-fold of the second and the $-k(l_{01}+2)$-fold of the last to the third row for $k=\lfloor d_{011}/(l_{01}+2) \rfloor$, we can assume $0 \leq d_{011} \leq l_{01}+1$. Since
$$
v(\tau_1)' =  \left( 0,0, \frac{2d_{011}+l_{01}}{l_{01}+2},2\right), \quad
v(\tau_2)' =  \left(0,0, \frac{2d_{011}+l_{01} + 2l_{01}}{l_{01}+2},2\right),
$$
we need $2d_{011}+l_{01} = i(l_{01}+2)+j$ with $i=0,2$ and $j \in \{1,2,3\}$ in order not to have $(0,0,1,1)$ or $(0,0,2,1)$ inside $A_X^c(\lambda)$. First consider $i=0$. Since $l_{01}\geq 2$, this forces $d_{011}=0$ and $l_{01}=3$ due to primitivity of the first column.  This results in the matrix
$$
P_4:=
{\tiny
\begin{bmatrix}
-3 & 2  & 0 & 0 
\\
-3 & 0 & 1 & 1 

\\
0 & 1 & 0 & 1 
\\
-4 & 2 & 2 & 2 
\end{bmatrix},
}
$$
giving a canonical singularity. Now consider $i=2$. This leads to $d_{011}=(l_{01}+j')/2$ with $j' \in \{5,6,7\}$. 

\smallskip

\noindent
\emph{Case a: $j'=5$}. Then $l_{01}$ is odd. The point
$$
(-1,-1,2,-1)= \frac{1}{l_{01}}v_{01}+\frac{l_{01}-4}{4l_{01}}v(\tau_1)'+\frac{1}{4}v(\tau_2)'
$$
lies inside $A_X^c(\lambda_0)$ for $l_{01}\geq 5$. But for $l_{01}=3$, the resulting matrix is equivalent to the canonical one from above by adding the first, three times the second and $-2$ times the last to the third row and afterwards negating the third row. So we get no new canonical singularity in this case.

\smallskip

\noindent
\emph{Case b: $j'=6$}. Then $l_{01}$ is even. Due to primitivity of the first column, we require $l_{01} \equiv 0 \mod 4$. The point
$$
(-1,-1,2,-1)= \frac{1}{l_{01}}v_{01}+\frac{l_{01}-2}{4l_{01}}v(\tau_1)'+\frac{l_{01}-2}{4l_{01}}v(\tau_2)'
$$
lies in $A_X^c(\lambda_0)$ and so the corresponding singularity can not be canonical.

\smallskip

\noindent
\emph{Case c: $j'=7$}. Then $l_{01}$ is odd. The point
$$
(-1,-1,2,-1)= \frac{1}{l_{01}}v_{01}+\frac{1}{4}v(\tau_1)'+\frac{l_{01}-4}{4l_{01}}v(\tau_2)'
$$
lies inside $A_X^c(\lambda_0)$ for $l_{01}\geq 5$. For $l_{01}=3$, the resulting matrix again is equivalent to the canonical one from above.

The final step in Case 5.1 is now to check if to the canonical matrix
$$
{\tiny
\begin{bmatrix}
-3 & 2  & 0 & 0 
\\
-3 & 0 & 1 & 1 

\\
0 & 1 & 0 & 1 
\\
-4 & 2 & 2 & 2 
\end{bmatrix}
}
$$
columns in $\lambda_0$ or $\lambda_1$ can be added, as no additional columns in $\lambda_3$ are possible due to convexity.

\smallskip

\noindent
\emph{Case a: additional column $v_{02}=(-t,-t,s,2(1-t))$  with $t=1,2,3$}. Then for $\tau_3:=\cone(v_{02},v_{11},v_{21})$ and $\tau_4:=\cone(v_{02},v_{11},v_{22})$, we get
$$
v(\tau_3)'=\left(0,0,\frac{2s+t}{2+t},2\right), \quad v(\tau_4)'=\left(0,0,\frac{2s+3t}{2+t},2\right).
$$ 
Now $s=0$ is impossible, since in this case $\conv(0_{\ZZ_4},v_{01},v_{02})$ is a non-canonical two-dimensional cone. For $s\leq -1$, the point $(-1,-1,0,-1)$ lies inside $\conv(0_{\ZZ_4},v_{01},v_{02},v(\tau_1)')$. For $s\geq 2$ and $s=1$, $t=2,3$, the point $(0,0,1,1)$ lies inside the polytope $\conv(0_{\ZZ_4},v(\tau_1)',v(\tau_4)')$. But $s=1$, $t=1$ is impossible, since the corresponding point $(-1,-1,1,0)$ already lies inside $\conv(v_{01},v(\tau_1)',v(\tau_2)')$.

\smallskip

\noindent
\emph{Case b: additional column $v_{12}=(t,0,s,2)$  with $t=1,2$}. Here $s=1$ is impossible since the corresponding points already lie inside $\conv(v_{11},v(\tau_1)',v(\tau_2)')$. Then for $\tau_3:=\cone(v_{01},v_{12},v_{21})$ and $\tau_4:=\cone(v_{01},v_{12},v_{22})$, we get
$$
v(\tau_3)'=\left(0,0,\frac{3s}{3+t},2\right), \quad v(\tau_4)'=\left(0,0,\frac{3(s+t)}{3+t},2\right).
$$ 
For $s\leq 0$, the point  $(0,0,0,1)$ lies inside $\conv(0_{\ZZ_4},v(\tau_1)',v(\tau_3)')$ and for $s\geq 2$, the point $(0,0,1,1)$ lies inside $\conv(0_{\ZZ_4},v(\tau_2)',v(\tau_4)')$.
We are done with Case 5.1.

\medskip

\noindent
\emph{Case 5.2: $l_{11}=1$, $l_{01}\geq 2$}. We can assume $d_{111}=0$ and due to irredundancy, we need an additional column $v_{12}=(1,0,d_{121},\imath)$ in $\lambda_1$, where as before, we can assume $d_{121} \geq 1$. If $r\geq 3$, with 
$
\tau_2:=\cone(v_{01},v_{12},v_{32},v_{41},\ldots,v_{r1})
$
we have
\begin{align*}
v(\tau_1)' = & \left( 0,\ldots,0, \frac{d_{011}}{1+l_{01}},\imath\right), \\
v(\tau_2)' = &  \left(0,\ldots,0, \frac{d_{011} + (d_{121}+d_{221}+d_{321})l_{01}}{1+l_{01}},\imath\right).
\end{align*}
But $|v(\tau_1)' - v(\tau_2)'|\geq \frac{3l_{01}}{1+l_{01}}\geq 2$, since $l_{01}\geq 2$, so a corresponding singularity cannot be canonical.
Thus we have $r=2$ and our matrix has the form:
$$
{\tiny
\begin{bmatrix}
-l_{01} & 1 & 1  & 0 & 0 
\\
-l_{01} & 0 & 0 & 1 & 1 

\\
d_{011} & 0 & d_{121} & 0 & d_{221} 
\\
\imath(1-l_{01}) & \imath & \imath & \imath & \imath 
\end{bmatrix}. 
}
$$
With $\tau_2:=\cone(v_{01},v_{12},v_{32})$, we have
\begin{align*}
v(\tau_1)' = & \left( 0,0, \frac{d_{011}}{1+l_{01}},\imath\right), \quad
v(\tau_2)' =   \left(0,0, \frac{d_{011} + (d_{121}+d_{221})l_{01}}{1+l_{01}},\imath\right).
\end{align*}
So $|v(\tau_1)' - v(\tau_2)'|= \frac{(d_{121}+d_{221})l_{01}}{1+l_{01}}\leq 2$ only if $d_{121}=d_{221}=1$. But $A_X^c(\lambda)$ contains two integer points but for $d_{011} \equiv 1 \mod 1+l_{01}$. In this case it  contains a polytope like in Corollary~\ref{corr:halfhalfcone}. This forces $\imath=2$ and $d_{011} = 1 +2k(1+l_{01})$ for some $k \in \ZZ$. But by adding $2k$ times the first and second and $-k$ times the last to the third row, we arrive at the matrix
$$
P_5:=
{\tiny
\begin{bmatrix}
-l_{01} & 1 & 1  & 0 & 0 
\\
-l_{01} & 0 & 0 & 1 & 1 

\\
1 & 0 & 1 & 0 & 1 
\\
2(1-l_{01}) & 2 & 2 & 2 & 2 
\end{bmatrix},
}
$$
giving a canonical singularity for arbitrary $l_{01}\geq 2$.

\medskip

\noindent
\emph{Case 5.3: $l_{11}=l_{01}=1$}. We can assume $d_{111}=0$ and due to irredundancy, we need  additional columns $v_{02}=(-1,-1,d_{021},\imath)$ in $\lambda_0$ and $v_{12}=(1,0,d_{121},\imath)$ in $\lambda_1$, where as before, we can assume $d_{121} \geq 1$ and moreover $d_{021}>d_{011}$ . With $\tau_2:=\cone(v_{02},v_{12},v_{32},v_{41},\ldots,v_{r1})$, we have
\begin{align*}
v(\tau_1)' = & \left( 0,0, \frac{d_{011}}{2},\imath\right), \quad
v(\tau_2)' =   \left(0,0, \frac{d_{021} + d_{121} + d_{221}}{2},\imath\right),
\end{align*}
thus $|v(\tau_1)' - v(\tau_2)'|\geq 3/2$ and one of $v(\tau_1)'$ and $v(\tau_2)'$ is integer, so the corresponding singularity cannot be canonical.
We are done with all cases.

\end{proof}

Now we go through the cases of Proposition~\ref{prop:zetaexcep} (iii) with $\zeta_X >1$, beginning with Case $(i)$ in the following proposition:

\begin{proposition}
\label{prop:zeta=2-432}
Let $X$ be a non-toric threefold singularity of complexity one.
Assume that~$X$ is of canonical multiplicity 
two, even Gorenstein index $\imath \geq 2$, the leading platonic triple is $(4,3,2)$ and $X$ is at most canonical. 
Then $X$ is isomorphic to $X(P_i)$,
where $P_i$ is one of the following matrices:
 $$
 {\tiny
 P_6 = 
 \begin{bmatrix}
-4 & 3 & 0 & 0   \\
-4 & 0 & 2  & 0  \\
0 & 0 &1 & 5 \\
3 & -2 & 0 & 1
\end{bmatrix}
\quad
P_7
=
\begin{bmatrix}
-4 & 3 & 1 & 0   \\
-4 & 0 & 0  & 2  \\
0 & 0 & 3 & 1\\
3 & -2 & 0 & 0
\end{bmatrix}
\quad
P_8 =
 \begin{bmatrix}
-4 & 3 & 1 & 0 & 0   \\
-4 & 0 & 0 & 2  & 0  \\
0 & 0 & 3 & 1 & 7 \\
3 & -2 & 0 & 0 & 1
\end{bmatrix}
}
$$
$$
{\tiny
P_9 =
 \begin{bmatrix}
-4 & 3  & 0 & 0 & 0  \\
-4 & 0  & 2  & 0 & 0  \\
0 & 0  & 1 & 5 & 7\\
3 & -2  & 0 & 1 & 1
\end{bmatrix}
\quad
P_{10}
=
 \begin{bmatrix}
-4 & 3 & 1 & 1 & 0    \\
-4 & 0 & 0 & 0 & 2    \\
0 & 0 & 3  & 5 & 1  \\
3 & -2 & 0 & 0 & 0 
\end{bmatrix}
}
$$

\end{proposition}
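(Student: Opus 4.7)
The strategy mirrors Case~2 of the proof of Proposition~\ref{prop:zeta=1}, but now accounts for the rigidity of the fourth-row data imposed by Case~(i) of Proposition~\ref{prop:zetaexcep}. Writing $\imath = 2k$, the last row of the leading block is forced to be $(k+2,\,-2k,\,k-1)$; any further column $v_{i1}$ in a leaf $i\ge 3$ must have fourth entry $k$, and free columns must have fourth entry $k$. For the leading $P$-elementary cone $\tau_1 = \cone(v_{01},v_{11},v_{21})$ one computes $\ell_{\tau_1,0}=6,\ \ell_{\tau_1,1}=8,\ \ell_{\tau_1,2}=12$ and $\ell_{\tau_1}=2$, giving the lattice leaving point
$$
v(\tau_1)' \;=\; (0,\,0,\,3d_{011}+4d_{111}+6d_{211},\,k)\ \in\ \ZZ^4.
$$
If $r\ge 3$, irredundancy yields a column $v_{32}$ with $d_{321}\ne 0$; the associated fake elementary cone $\tau_2$ produces $v(\tau_2)'-v(\tau_1)'$ of magnitude $\ge 12$ in the third coordinate, giving two lattice points in $A_X^c(\lambda)$ and contradicting Lemma~\ref{le:twopoints}. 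Hence $r=2$.

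For $r=2$, parametrize any extra column in leaf $i$ as $v_{i2} = p_{i,t} + s\,f$, where $f$ is the third standard basis vector, $p_{i,t}$ runs over the lattice points of $\partial A_X^c(\lambda_i,\tau_1)\cap\{x_i=t\}$, and $s\in\QQ\setminus\{0\}$; extra free columns are handled analogously on $\HHH = \bigcap \HHH_i$. Forming the elementary cone $\tau_2$ by swapping the leading column in leaf $i$ for the new one, one obtains
$$
v(\tau_2)' \;=\; v(\tau_1)' + \delta_i\,f,\qquad
\delta_0 = \tfrac{6s}{6-t},\quad \delta_1 = \tfrac{8s}{8-2t},\quad \delta_2 = \tfrac{12s}{12-5t},
$$
exactly as in the $\zeta=1$ case. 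Canonicity via Lemma~\ref{le:twopoints} requires $|\delta_i|<1$ and $v(\tau_2)'\notin\ZZ^4$; Corollaries~\ref{corr:halfcone} and~\ref{corr:halfhalfcone} applied to $\conv(0_{\ZZ^4},v(\tau_1)',v(\tau_2)')$ then pin down divisibility conditions on $d_{ij1}$ modulo $\imath$. Together with primitivity of the leading-block columns, these constraints eliminate every case with $\imath\ge 4$, forcing $\imath=2$, i.e.\ $k=1$, so the fourth row of the leading block becomes $(3,-2,0)$ as in $P_6,\ldots,P_{10}$.

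With $\imath=2$ fixed, normalize the $d_{ij1}$ by admissible operations (adding suitable multiples of rows $1,2,4$ to row $3$) and enumerate compatible extensions of the leading block. A single free column yields $P_6$; a single extra leaf-$1$ column with $l_{12}=1$ yields $P_7$; combining these gives $P_8$; two free columns give $P_9$; two extra leaf-$1$ columns give $P_{10}$. Canonicity of each candidate is verified by the criterion that $A_X^c$ contains no interior lattice point besides the origin, checked by enumerating candidate points as convex combinations of columns and leaving points $v(\tau)'$ over all elementary cones, following the pattern used to verify $P_3,P_4,P_5$. Any further extension---an extra leaf-$0$ or leaf-$2$ column, a third leaf-$1$ column beyond $P_{10}$, or additional free columns beyond $P_9$---produces an explicit interior lattice point of the form $(-1,-1,\ast,\ast)$ or $(0,0,\ast,1)$, ruling it out.

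The main obstacle is the combinatorial bookkeeping of Step~2: the rigidity of the $\zeta=2$ fourth row leaves less slack than in the $\zeta=1$ analysis to absorb congruence conditions through admissible operations, so every $(i,t,s)$-branch must be treated with explicit attention to the parity of $d_{ij1}$ and the precise residues produced by Corollaries~\ref{corr:halfcone} and~\ref{corr:halfhalfcone}. The simultaneous handling of extra leaf-$1$ columns and free $m$-columns, which distinguishes $P_8$--$P_{10}$ from $P_6$ and $P_7$, demands careful coordination to neither double-count admissible-operation equivalence classes nor overlook a canonical matrix.
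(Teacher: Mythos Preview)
Your outline follows the paper's approach closely: decompose by the leaf of the extra column, use the leaving points $v(\tau)'$ together with Lemma~\ref{le:twopoints} to force $\imath=2$, then enumerate the surviving configurations. The identification of $P_6$--$P_{10}$ with a free column, a leaf-$1$ column with $l_{12}=1$, and their combinations is exactly what the paper does.

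There is, however, a genuine gap in your $r\ge 3$ step. Lemma~\ref{le:twopoints} applies to a two-dimensional polygon of index at least $2$; the polygon $A_X^c(\lambda)$ sits at height $\imath/\zeta=\imath/2$, so its index is $\imath/2$. When $\imath=2$ this index is $1$, and two lattice points in $A_X^c(\lambda)$ do \emph{not} contradict canonicity. Thus your argument only shows that $r\ge 3$ forces $\imath=2$, not that $r\ge 3$ is impossible. The paper closes this by normalizing (for $\imath=2$) to $d_{011}=d_{111}=0$, $d_{211}=1$ and then exhibiting the interior lattice point
\[
(-1,-1,2,1)\;=\;\tfrac{1}{4}\,v_{01}+\tfrac{6d_{321}-1}{24d_{321}}\,v(\tau_1)'+\tfrac{1}{24d_{321}}\,v(\tau_2)'
\]
inside $A_X^c(\lambda_0)$---a point in a \emph{leaf}, not in $\lambda$---which kills $r\ge 3$ even when $\imath=2$. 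The same subtlety recurs throughout your Step~2: for a free column, or for leaf-$0$, leaf-$2$, and the leaf-$1$ cases with $t=2,3$, the presence of a second integer point in $A_X^c(\lambda)$ again only forces $\imath=2$, and the paper then rules each of these out by locating a specific interior point such as $(-1,-1,1,1)$ or $(-1,-1,2,1)$ in $A_X^c(\lambda_0)$ after normalization. Your sketch correctly anticipates this in the final enumeration paragraph, but the earlier claim that Lemma~\ref{le:twopoints} alone ``eliminates every case with $\imath\ge 4$, forcing $\imath=2$'' understates what remains: once $\imath=2$ is forced, most additional-column types still need to be excluded by an explicit interior-point computation, and only the free column and the $(i,t)=(1,1)$ column survive.

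One minor correction: in the normal form of Proposition~\ref{prop:zetaexcep}, Case~(i), columns in leaves $i\ge 3$ have last entry $0$, not $k=\imath/2$; only the free columns carry last entry $\imath/\zeta$.
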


\begin{proof}
The matrix $P$ falls under Case $(i)$ of Proposition~\ref{prop:zetaexcep} (iii), thus $\imath$ is even and the leading block has the shape
$$
{\tiny
\begin{bmatrix}
-4 & 3  & 0 & 0 & \dots & 0 
\\
-4 & 0 & 2 & 0 & \dots & 0 
\\
-4 & 0 & 0 & 1 & \dots & 0 
\\
\vdots & \vdots & \vdots & \vdots & \ddots & \vdots 
\\
-4 & 0 & 0 &  0 &  \dots &  1 
\\
d_{011} & d_{111} & d_{211} & 0 & \ldots & 0 
\\
\frac{\imath}{2}+2 & -\imath & \frac{\imath}{2} - 1 & 0 & \ldots & 0 
\end{bmatrix}.
}
$$
If $r\geq 3$, due to irredundancy we have at least another column $v_{32}=e_{3}+d_{321}e_{r+1}$ in $P$ with $d_{321} \geq 1$. So $A_X^c(\lambda)$ contains the integer points
\begin{align*}
v(\tau_1)' &= (0,\ldots,0,3d_{011}+ 4d_{111} + 6d_{211},\imath/2), \\
v(\tau_2)' &= (0,\ldots,0,3d_{011}+ 4d_{111} + 6d_{211}  + 12d_{321},\imath/2)
\end{align*}
and with Lemma~\ref{le:twopoints}, we get $\imath=2$.
By primitivity of the third column, we can write $d_{211}=2k+1$ for some $k\in\ZZ$, and by subtracting the $8k+2d_{011}+3d_{111}$-fold of the first, the $k$-fold of the second and the $12k+3d_{011}+4d_{111}$-fold of the last from the penultimate row, we achieve $d_{011}=d_{111}=0$ and $d_{211}=1$. so we get
$$
v(\tau_1)' = (0,\ldots,0, 6,1), 
\quad
v(\tau_2)' = (0,\ldots,0, 6  + 12d_{321},1).
$$
We have a look at $A_X^c(\lambda_0)$. The point
$$
(-1,-1,2,1)=\frac{1}{4}v_{01}+\frac{6d_{321}-1}{24d_{321}}v(\tau_1)'+\frac{1}{24d_{321}}v(\tau_2)'
$$
lies inside, so we have no canonical singularity.
Thus only $r=2$ is possible.
Now we have to check which additional columns are possible. 

\bigskip

\noindent
\emph{Case 1: additional column $v_{1}=(0,0,d_1,\imath/2)$ in $\lambda$}.
Since $v(\tau_1)'$ is integer, an additional column in the lineality part means two integer points in $A_X^c(\lambda)$, which forces $\imath=2$ with Lemma~\ref{le:twopoints}. With the same arguments as above, the leading block together with the additional column has the form:
$$
{\tiny
\begin{bmatrix}
-4 & 3 & 0 & 0   \\
-4 & 0 & 2  & 0  \\
0 & 0 &1 & d_{1} \\
3 & -2 & 0 & 1
\end{bmatrix}.
}
$$
Now the point
$$
(-1,-1,1,1)=\frac{1}{4}v_{01}+\frac{d_1-4}{4d_1-24}v(\tau_1)'+\frac{1}{12-2d_1}v_{1}'
$$
lies inside $A_X^c(\lambda_0)$ for $d_1\leq 4$, and the point
$$
(-1,-1,2,1)=\frac{1}{4}v_{01}+\frac{d_1-8}{4d_1-24}v(\tau_1)'+\frac{1}{2d_1-12}v_{1}'
$$
lies inside $A_X^c(\lambda_0)$ for $d_1\geq 8$. So the cases $d_1=5,7$ remain as $d_1=6$ is not possible due to $v(\tau_1)' = (0,0, 6,1)$. But by subtracting the second, eight times the first and 12 times the last from the third row and negating the third row afterwards, we see that both cases are equivalent and give a canonical singularity with matrix
$$
P_6:=
{\tiny
\begin{bmatrix}
-4 & 3 & 0 & 0   \\
-4 & 0 & 2  & 0  \\
0 & 0 &1 & 5 \\
3 & -2 & 0 & 1
\end{bmatrix}.
}
$$

\bigskip

\noindent
\emph{Case 2: additional column $v_{02}=(-t,-t,d_{021},(\imath+t)/2)$ in $\lambda_0$}. Since $v_{02}$ is integer, we have $t=4,2$.

\medskip

\noindent
\emph{Case 2.1: $t=4$}. We can assume $d_{021} > d_{011}$ and have the two integer points $v(\tau_i)' = (0,0,3d_{0i1}+ 4d_{111} + 6d_{211},\imath/2)$ inside $A_X^c(\lambda)$, forcing $\imath=2$ and by the same admissible operations as in Case 1, we get $d_{011}=d_{111}=0$, $d_{211}=1$, $v(\tau_1)' = (0,0,6,1)$ and $v(\tau_2)' = (0,0,6+3d_{021},1)$. As we have seen in Case 1, the point $(-1,-1,2,1)$ lies inside $A_X^c(\lambda_0)$.

\medskip

\noindent
\emph{Case 2.2: $t=2$}. Here  $v(\tau_1)'$ is integer and 
$
v(\tau_2)'=(0,0,3(d_{021}+d_{211})/2+d_{111},\imath/2)
$.
We distinguish two subcases in the following.

\smallskip

\noindent
\emph{Case 2.2.1: $\imath \equiv 2 \mod 4$}. Here due to primitivity of $v_{02}$ and $v_{21}$, we require both $d_{021}$ and $d_{211}$ to be odd. Thus $v(\tau_2)'$ is integer, which forces $\imath=2$. But then either $d_{021}\leq 1$ or $d_{021}\geq 5$ and either $(-1,-1,1,1)$ or $(-1,-1,2,1)$ lies inside $A_X^c(\lambda_0)$.

\smallskip

\noindent
\emph{Case 2.2.2: $\imath \equiv 0 \mod 4$}. Here $d_{021}+d_{211}$ must be odd due to Lemma~\ref{le:twopoints}. By if necessary adding the last row to the third, we can achieve $d_{021}$ odd and $d_{211}$ even. Then by adding appropriate multiples of the first two rows to the third, we get $d_{021}=1$ and $d_{211}=0$. We have
\begin{align*}
v(\tau_1)' &= (0,0,3d_{011}+ 4d_{111},\imath/2), \quad
v(\tau_2)' = (0,0,3/2+d_{111},\imath/2).
\end{align*}
Since $A_X^c(\lambda)$ must not contain two integer points due to Lemma~\ref{le:twopoints}, we require $3(d_{011}+ d_{111}) \in \{1,2\}$, which is not possible.

\bigskip

\noindent
\emph{Case 3: additional column $v_{12}=(t,0,d_{121},\imath(1-t)/2)$ in $\lambda_1$}. Here we have the possibilities $t=3,2,1$.

\medskip

\noindent
\emph{Case 3.1: $t=3$}. We can assume $d_{121} > d_{111}$ and have the two integer points $v(\tau_i)' = (0,0,3d_{011}+ 4d_{1i1} + 6d_{211},\imath/2)$ inside $A_X^c(\lambda)$, forcing $\imath=2$ and by the same admissible operations as in Case 1, we get $d_{011}=d_{111}=0$, $d_{211}=1$, $v(\tau_1)' = (0,0,6,1)$ and $v(\tau_2)' = (0,0,6+4d_{121},1)$. As we have seen in Case 1, the point $(-1,-1,2,1)$ lies inside $A_X^c(\lambda_0)$.

\medskip

\noindent
\emph{Case 3.2: $t=2$}. Here both $v(\tau_1)'$ and $v(\tau_2)'=(0,0,d_{011}+ 2(d_{121} + d_{211}),\imath/2)$ are integer, forcing $\imath=2$, and by the same admissible operations as in Case 1, we get $d_{011}=d_{111}=0$, $d_{211}=1$, $v(\tau_1)' = (0,0,6,1)$ and $v(\tau_2)' = (0,0,2(1+d_{121}),1)$. As we have seen in Case 1, one of the points $(-1,-1,1,1)$ and $(-1,-1,2,1)$ lies inside $A_X^c(\lambda_0)$, since either $2(1+d_{121})\leq 4$ or $2(1+d_{121})\geq 8$.

\medskip

\noindent
\emph{Case 3.3: $t=1$}. By subtracting $d_{121}$ times the first from the third row, we achieve $d_{121}=0$. Furthermore, either $\imath \equiv 2 \mod 4$, then due to primitivity of $v_{21}$, the entry $d_{211}$ is odd and by adding appropriate multiples of the second to the third row, we achieve $d_{211}=1$, or $\imath \equiv 0 \mod 4$, then $\imath/2-1$ is odd and by if necessary adding the last and then adding appropriate multiples of the second to the third row, we achieve $d_{211}=1$ as well. So $v(\tau_1)' = (0,0,3d_{011}+ 4d_{111} + 6,\imath/2)$ and $v(\tau_2)'=(0,0,(d_{011}+  2)/3,\imath/2)$. 

\smallskip

\noindent
\emph{Case 3.3.1: $\imath>2$}. Here Lemma~\ref{le:twopoints} forces
$$
3d_{011}+ 4d_{111} + 6 \in \{d_{011}/3, (d_{011} +  1)/3, d_{011}/3 +1, (d_{011}+  4)/3 \}.
$$
But if $3d_{011}+ 4d_{111} + 6  = d_{011}/3$, then $d_{011}=3k$ for some $k \in \ZZ$ and $4k +2d_{111} + 3 =0$ follows, a contradiction. If $3d_{011}+ 4d_{111} + 6  = d_{011}/3 +1$, then $8k +4d_{111} + 5 =0$ follows, a contradiction as well. For $3d_{011}+ 4d_{111} + 6 = (d_{011} +  1)/3$, we have $d_{011}=3k-1$ for some $k \in \ZZ$ and $8k +4d_{111} + 3 =0$ follows, while for $3d_{011}+ 4d_{111} + 6 = (d_{011} +  4)/3$, we have $d_{011}=3k-1$ for some $k \in \ZZ$ as well and $4k +2d_{111} + 1 =0$ follows, both contradictions.

\smallskip

\noindent
\emph{Case 3.3.2: $\imath=2$}. Here, our matrix has the form
$$
{\tiny
\begin{bmatrix}
-4 & 3 & 1 & 0   \\
-4 & 0 & 0  & 2  \\
0 & 0 & d_{121} & 1\\
3 & -2 & 0 & 0
\end{bmatrix}
}
$$
and we have $v(\tau_1)' = (0,0,6,1)$ and $v(\tau_2)'=(0,0,(2+4d_{121})/3,1)$. As we have seen in Case 1, in order not to have $(-1,-1,1,1)$ or $(-1,-1,2,1)$ contained in $A_X^c(\lambda_0)$, we require $4<(2+4d_{121})/3<8$, which leads to $d_{121} \in \{3,5\}$, as $d_{121}=4$ is impossible. But with the same admissible operations as in Case 1, these two are equivalent and give a canonical singularity with matrix
$$
P_7:=
{\tiny
\begin{bmatrix}
-4 & 3 & 1 & 0   \\
-4 & 0 & 0  & 2  \\
0 & 0 & 3 & 1\\
3 & -2 & 0 & 0
\end{bmatrix}.
}
$$

\bigskip

\noindent
\emph{Case 4: additional column $v_{22}=(0,t,d_{221},(\imath-t)/2)$ in $\lambda_2$}. We can assume $d_{021} > d_{011}$ and have the two integer points $v(\tau_i)' = (0,0,3d_{011}+ 4d_{111} + 6d_{2i1},\imath/2)$ inside $A_X^c(\lambda)$, forcing $\imath=2$ and by the same admissible operations as in Case 1, we get $d_{011}=d_{111}=0$, $d_{211}=1$, $v(\tau_1)' = (0,0,6,1)$ and $v(\tau_2)' = (0,0,6+6d_{221},1)$. As we have seen in Case 1, the point $(-1,-1,2,1)$ lies inside $A_X^c(\lambda_0)$.

We finally have to check if the possible additional columns can be combined.
Such a matrix must be of one of the following three forms.
\smallskip

\noindent
\emph{Case a:}
$$
{\tiny
 \begin{bmatrix}
-4 & 3 & 1 & 0 & 0   \\
-4 & 0 & 0 & 2  & 0  \\
0 & 0 & 3 & 1 & d_{1} \\
3 & -2 & 0 & 0 & 1
\end{bmatrix},
}
$$
with $d_{1} \in \{5,7\}$. But for $d_{1}=5$, the column $v_{1}$ would lie inside $\cone(v_{01},v_{12},v_{21})$. The Case $d_{1}=7$ gives a canonical singularity with matrix $P_8$.
\smallskip

\noindent
\emph{Case b:}
$$
{\tiny
 \begin{bmatrix}
-4 & 3  & 0 & 0 & 0  \\
-4 & 0  & 2  & 0 & 0  \\
0 & 0  & 1 & 5 & 7\\
3 & -2  & 0 & 1 & 1
\end{bmatrix},
}
$$
which gives a canonical singularity with matrix $P_9$.
\smallskip

\noindent
\emph{Case c:}
$$
{\tiny
 \begin{bmatrix}
-4 & 3 & 1 & 1 & 0    \\
-4 & 0 & 0 & 0 & 2    \\
0 & 0 & 3  & 5 & 1  \\
3 & -2 & 0 & 0 & 0 
\end{bmatrix},
}
$$
which gives the last canonical singularity with matrix $P_{10}$.

\end{proof}

We come to Case $(ii)$ of Proposition~\ref{prop:zetaexcep} (iii).

\begin{proposition}
\label{prop:zeta=3}
Let $X$ be a non-toric threefold singularity of complexity one.
Assume that~$X$ is of canonical multiplicity 
three, Gorenstein index $\imath \geq 2$, the leading platonic triple is $(3,3,2)$ and $X$ is at most canonical. 
Then $X$  is isomorphic to $X(P_i)$,
where $P_i$ is one of the following matrices:
 $$
 {\tiny
 P_{11} = 
 \begin{bmatrix}
-3 & 3 & 0 & 0   \\
-3 & 0 & 2  & 0  \\
1 & 0 & 0 & 1 \\
0 & 2 & -1 & 1
\end{bmatrix}
\quad
P_{12} =
\begin{bmatrix}
-3 & 3 & 0 & 0   \\
-3 & 0 & 2  & 1  \\
1 & 0 & 0 & 0 \\
0 & 2 & -1 & 0
\end{bmatrix}
}
$$
\end{proposition}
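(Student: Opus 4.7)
Plan. The argument follows the template of Proposition~\ref{prop:zeta=2-432}, adapting the leading-block computations and the explicit lattice-point checks to the platonic triple $(3,3,2)$ at canonical multiplicity $\zeta_X=3$. Place $P$ into the normal form of Proposition~\ref{prop:zetaexcep}, Case~$(ii)$: the last row of the leading block is then fixed as $\bigl(\tfrac{\imath-3}{3},\tfrac{\imath+3}{3},-\tfrac{\imath}{3},\tfrac{\imath}{3},\ldots,\tfrac{\imath}{3}\bigr)$, and since $\zeta_X=3$ we have $\imath\in 3\ZZ_{\geq 1}$. The remaining freedom consists of the triple $(d_{011},d_{111},d_{211})$ in the penultimate row, the length $r$ and any additional columns.

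The first step is to force $r=2$. Assume $r\geq 3$; by irredundancy in the fourth leaf there is an extra column $v_{32}=e_{3}+d_{321}e_{r+1}$ with $d_{321}\neq 0$. For the elementary cones $\tau_1$ generated by the leading block and $\tau_2$ obtained by swapping $v_{31}$ for $v_{32}$, a direct computation yields $\ell_{\tau_1}=\ell_{\tau_2}=3$ and the integer leaving points
$$
v(\tau_1)'=(0,\ldots,0,\,2d_{011}+2d_{111}+3d_{211},\,\imath/3),\qquad v(\tau_2)'=v(\tau_1)'+6d_{321}e_{r+1},
$$
both inside the two-dimensional $\QQ$-Gorenstein polytope $A_X^c(\lambda)$ of index $\imath/3$. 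For $\imath\geq 6$, Lemma~\ref{le:twopoints} rules out canonicity at once. In the residual case $\imath=3$, after normalizing $(d_{011},d_{111},d_{211})$ by admissible operations and fixing $|d_{321}|=1$, the lattice point $(1,0,\ldots,0,1,1)$ admits the convex decomposition $\tfrac{1}{3}v_{11}+\tfrac{5}{18}v(\tau_1)'+\tfrac{1}{18}v(\tau_2)'+\tfrac{1}{3}\cdot 0$, placing it in the relative interior of $A_X^c(\lambda_1)$ off $\HHH_1$; larger $|d_{321}|$ is covered by the same analysis. Hence $r=2$.

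With $r=2$ fixed, admissible row operations on the penultimate row yield the unique normal form $d_{011}=1,\,d_{111}=d_{211}=0$ (the congruence $d_{011}\not\equiv 0\pmod 3$ demanded by primitivity of $v_{01}$ being the only orbit constraint). The unique elementary cone then gives $v(\tau_1)'=(0,0,2,\imath/3)$, and inspecting the triangle $A_X^c(\lambda_0)=\conv(0,v_{01},v(\tau_1)')$ for lattice points produces the off-$\HHH_0$ lattice point $(-1,-1,1,1)$ as soon as $\imath\geq 6$, forcing $\imath=3$. The leading block then equals the first three columns of $P_{11}$.

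It remains to enumerate the additional columns. Each such column must sit on the relevant halfplane, giving a short finite list: a lineality column $v_k=(0,0,a,1)$; $v_{02}=(-t,-t,s,1-t/3)$ with $t\in\{1,2,3\}$; $v_{12}=(t,0,s,1+t/3)$ with $t\in\{1,2,3\}$; or $v_{22}=(0,t,s,(3-t)/3)$ with $t\in\{1,2\}$, with further integrality constraints cutting the options down. For each candidate, we compute the leaving points of the new fake-elementary cones with the remaining two leading-block columns and either exhibit an interior lattice point of $A_X^c(\lambda_i)$ off $\HHH_i$ by an explicit barycentric decomposition, or reduce the matrix by admissible operations to one already handled; only the lineality column $(0,0,1,1)$, yielding $P_{11}$, and the column $(0,1,0,0)$ in $\lambda_2$, yielding $P_{12}$, survive, and their combinations are ruled out by the same technique. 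The principal obstacle is the $\imath=3$ subcase of the first step, where the Gorenstein index of $A_X^c(\lambda)$ drops to $1$ so that Lemma~\ref{le:twopoints} no longer applies and the interior lattice points have to be produced by hand; the rest of the proof is an intricate but essentially mechanical case analysis in the style of Proposition~\ref{prop:zeta=2-432}.
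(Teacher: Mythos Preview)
Your overall strategy matches the paper's, and the $r\geq 3$ reduction is essentially the same. The gap is in the $r=2$ part, where you try to force $\imath=3$ \emph{before} introducing any extra column.

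With only the three leading-block columns, $\sigma_P$ is not full-dimensional, so this is not a valid defining matrix; there is always at least one additional column, and the argument must use it. More concretely, your claimed obstruction --- that $(-1,-1,1,1)$ lies in $A_X^c(\lambda_0)=\conv(0,v_{01},v(\tau_1)')$ off $\HHH_0$ for every $\imath\geq 6$ --- is false. That triangle is two-dimensional in $\QQ^4$; writing $(-1,-1,1,1)=\alpha v_{01}+\beta v(\tau_1)'$ with $v_{01}=(-3,-3,1,\imath/3-1)$ and $v(\tau_1)'=(0,0,2,\imath/3)$ forces $\alpha=\beta=\tfrac13$ from the first three coordinates, and then the fourth coordinate reads $2\imath/9-1/3=1$, i.e.\ $\imath=6$ only. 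For $\imath\geq 9$ the point simply is not in the triangle. Relatedly, your normalization $d_{011}=1$, $d_{111}=d_{211}=0$ uses the row $(\imath/3-1,\imath/3+1,-\imath/3)$ and need not be attainable over $\ZZ$ unless $\imath=3$; the paper performs this reduction only after $\imath=3$ has been established.

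The paper's fix is exactly what your $r\geq 3$ step already does: for each type of additional column (in $\lambda$, in $\lambda_0$, in $\lambda_1$, in $\lambda_2$ with $t=1,2$) compute $v(\tau_2)'$, observe that $A_X^c(\lambda)$ then contains two integer points at height $\imath/3$, and apply Lemma~\ref{le:twopoints} to get $\imath=3$. Only then normalize to $(d_{011},d_{111},d_{211})=(1,0,0)$ and run the interior-point checks in $A_X^c(\lambda_1)$. Your case list for the additional columns is the right one, so reorganizing the logic in this order repairs the proof.
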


\begin{proof}
Our matrix $P$ falls under Case $(ii)$ of Proposition~\ref{prop:zetaexcep} (iii), thus $\imath$ is a multiple of $3$ and the leading block has the shape
$$
{\tiny
\begin{bmatrix}
-3 & 3  & 0 & 0 & \dots & 0 
\\
-3 & 0 & 2 & 0 & \dots & 0 
\\
-3 & 0 & 0 & 1 & \dots & 0 
\\
\vdots & \vdots & \vdots & \vdots & \ddots & \vdots 
\\
-3 & 0 & 0 &  0 &  \dots &  1 
\\
d_{011} & d_{111} & d_{211} & 0 & \ldots & 0 
\\
\frac{\imath}{3}-1 & \frac{\imath}{3}+1 & -\frac{\imath}{3} & 0 & \ldots & 0 
\end{bmatrix}.
}
$$
If $r\geq 3$, due to irredundancy we have at least another column $v_{32}=e_{3}+d_{321}e_{r+1}$ in $P$ with $d_{321} \geq 1$. So $A_X^c(\lambda)$ contains the integer points
\begin{align*}
v(\tau_1)' &= (0,\ldots,0,2(d_{011}+ d_{111}) + 3 d_{211},\imath/3), \\
v(\tau_2)' &= (0,\ldots,0,2(d_{011}+ d_{111}) + 3 d_{211}  + 6d_{321},\imath/3)
\end{align*}
and with Lemma~\ref{le:twopoints}, only $\imath=3$ is possible.
By primitivity of the first column and by if necessary negating the penultimate row, we can assume $d_{011}=3k+1$ for some $k \in \ZZ$.
By adding the $(2d_{211}+d_{111}+4k)$-fold of the first to the penultimate and subtracting the $(2d_{211}+d_{111}+3k)$-fold of the second and the $(3d_{211}+2d_{111}+6k)$-fold of the last to the penultimate row, we achieve $d_{011}=1$, $d_{111}=d_{211}=0$ and $v(\tau_1)' = (0,\ldots,0,2,1)$ and $v(\tau_2)' = (0,\ldots,0,2 + 6d_{321},\imath/3)$. But for $d_{321}\leq 0$, the point $(1,0,0,1)$ and for $d_{321}\geq 2$, the point $(1,0,1,1)$ lies inside $A_X^c(\lambda_1)$, while $d_{321}=1$ is not possible, since then $v_{32}$ would lie inside $\tau_1$. Thus only $r=2$ is possible and the leading block is
$$
{\tiny
\begin{bmatrix}
-3 & 3 & 0  \\
-3 & 0 & 2  \\
d_{011} & d_{111} & d_{211} \\
\frac{\imath}{3}-1 & \frac{\imath}{3}+1 & -\frac{\imath}{3} 
\end{bmatrix}.
}
$$
Now we have to check which additional columns are possible. 

\bigskip

\noindent
\emph{Case 1: additional column $v_{1}=(0,0,d_1,\imath/3)$ in $\lambda$}.
Since $v(\tau_1)'$ is integer, an additional column in the lineality part means two integer points in $A_X^c(\lambda)$, which forces $\imath=3$. The leading block together with the additional column has the form
$$
{\tiny
\begin{bmatrix}
-3 & 3 & 0 & 0   \\
-3 & 0 & 2  & 0  \\
d_{011} & d_{111} & d_{211} & d_{1} \\
0 & 2 & -1 & 1
\end{bmatrix}.
}
$$
By primitivity of the first column and by if necessary negating the third row, we can assume $d_{011}=3k+1$ for some $k \in \ZZ$.
By adding the $(2d_{211}+d_{111}+4k)$-fold of the first to the third and subtracting the $(2d_{211}+d_{111}+3k)$-fold of the second and the $(3d_{211}+2d_{111}+6k)$-fold of the last to the third row, we achieve $d_{011}=1$, $d_{111}=d_{211}=0$ and $v(\tau_1)' = (0,0,2,1)$. Thus $d_1=2$ is impossible and the following cases remain. 

\medskip

\noindent
\emph{Case 1.1: $d_1\leq 0$}. Then inside $A_X^c(\lambda_1)$ lies the point
$$
(1,0,0,1)=\frac{1}{3}v_{01}+\frac{d_1}{3(d_1-2)}v(\tau_1)'-\frac{2}{3(d_1-2)}v_1.
$$ 

\medskip

\noindent
\emph{Case 1.2: $d_1\geq 3$}. Then inside $A_X^c(\lambda_1)$ lies the point
$$
(1,0,1,1)=\frac{1}{3}v_{01}+\frac{(d_1-3)}{3(d_1-2)}v(\tau_1)'+\frac{1}{3(d_1-2)}v_1.
$$

\medskip

\noindent
\emph{Case 1.3: $d_1=1$}. Since $A_X^c$ is canonical in this case, we get a canonical singularity from the matrix
$$
P_{11}:=
{\tiny
\begin{bmatrix}
-3 & 3 & 0 & 0   \\
-3 & 0 & 2  & 0  \\
1 & 0 & 0 & 1 \\
0 & 2 & -1 & 1
\end{bmatrix}
}.
$$

\bigskip

\noindent
\emph{Case 2: additional column $v_{02}=(-t,-t,d_{021},(\imath-t)/3)$ in $\lambda_0$}. Since $v_{02}$ must be integer, we have $t=3$ and can assume $d_{021}>d_{011}$. Thus $A_X^c(\lambda)$ contains the two integer points $ v(\tau_i)' = (0,0,2(d_{0i1}+ d_{111}) + 3 d_{211},\imath/3)$ with $i=1,2$. As in Case 1, this forces $\imath=3$. By the same admissible operations as in Case 1, we can now achieve $d_{011}=1$, $d_{111}=d_{211}=0$, $v(\tau_1)' = (0,0,2,1)$ and $v(\tau_2)' = (0,0,2d_{021},1)$. But for $d_{021}\leq 0$, the point $(1,0,0,1)$ and for $d_{021}\geq 2$, the point $(1,0,1,1)$ lies inside $A_X^c(\lambda_1)$, as Case 1 shows.

\bigskip

\noindent
\emph{Case 3: additional column $v_{12}=(t,0,d_{121},(\imath+t)/3)$ in $\lambda_1$}. The proceeding is exactly the same as in Case 2.

\bigskip

\noindent
\emph{Case 4: additional column $v_{22}=(0,t,d_{221},\imath(1-t)/3)$ in $\lambda_2$}. Here we have the possibilities $t=1,2$.

\medskip

\noindent
\emph{Case 4.1: $t=2$}. Here we can assume $d_{221}>d_{211}$ and $A_X^c(\lambda)$ contains the two integer points $v(\tau_i)' = (0,0,2(d_{011}+ d_{111}) + 3 d_{2i1},\imath/3)$. This again forces $\imath=3$ and the proceeding is again the same as in the previous cases.

\medskip

\noindent
\emph{Case 4.2: $t=1$}. We have the two integer points $v(\tau_1)' = (0,0,2(d_{011}+ d_{111}) + 3 d_{211},\imath/3)$ and 
$v(\tau_2)' = (0,0,d_{011}+ d_{111} + 3 d_{221},\imath/3)$ inside $A_X^c(\lambda)$, again forcing $\imath=3$ and we thus achieve $d_{011}=1$, $d_{111}=d_{211}=0$, $v(\tau_1)' = (0,0,2,1)$ and $v(\tau_2)' = (0,0,1+3d_{021},1)$. So regarding $A_X^c(\lambda_1)$, we see that the only possibility is $d_{021}=0$, giving a canonical singularity with matrix
$$
P_{12}:=
{\tiny
\begin{bmatrix}
-3 & 3 & 0 & 0   \\
-3 & 0 & 2  & 1  \\
1 & 0 & 0 & 0 \\
0 & 2 & -1 & 0
\end{bmatrix}.
}
$$
The two matrices we found cannot be combined, since $(0,0,1,1)$ lies inside
$$
\cone((-3,-3,1,0),(3,0,0,2),(0,1,0,0)).
$$

\end{proof}

We proceed with  Case $(iii)$ of Proposition~\ref{prop:zetaexcep} (iii).

\begin{proposition}
\label{prop:zeta=4}
Let $X$ be a non-toric threefold singularity of complexity one.
Assume that~$X$ is of canonical multiplicity 
four, Gorenstein index $\imath \geq 2$, the leading platonic triple is $(l_{01},2,2)$ with $l_{01}$ odd and $X$ is at most canonical. 
Then $X$ is isomorphic to $X(P_{i})$,
where $P_{i}$ is one of the matrices
\setlength{\arraycolsep}{1mm}
 $$
 {\tiny
 P_{13}
=
\begin{bmatrix}
-(2\mathbf{k}+1) & 2   & 0  & 0 & 0 & \dots & 0 &0
\\
-(2\mathbf{k}+1) & 0   & 2  & 0 &0& \dots & 0 & 0
\\
-(2\mathbf{k}+1) & 0  & 0  & 1 &1 & \dots & 0 &0
\\
\vdots & \vdots  & \vdots  & \vdots & \vdots & \ddots & \vdots & \vdots
\\
-(2\mathbf{k}+1) & 0   & 0 & 0 & 0 & \dots &  1 & 1
\\
\mathbf{d_{0}} & 1  & 1  & 0 & d_{3} & \ldots & 0 & d_{r}
\\
\mathbf{k} & 0  & 1  & 0 & 0  & \ldots & 0 & 0
\end{bmatrix}
\quad
 P_{14}
=
\begin{bmatrix}
-(2\mathbf{k}+1) & 2   & 0 & 0 & 0 & 0 & \dots & 0 &0
\\
-(2\mathbf{k}+1) & 0   & 2 & 2 & 0 &0& \dots & 0 & 0
\\
-(2\mathbf{k}+1) & 0  & 0 & 0 & 1 &1 & \dots & 0 &0
\\
\vdots & \vdots  & \vdots & \vdots & \vdots & \vdots & \ddots & \vdots & \vdots
\\
-(2\mathbf{k}+1) & 0  &  0 & 0 & 0 & 0 & \dots &  1 & 1
\\
\mathbf{d_{0}} & 1  & 1 & d_{2} & 0 & d_{3} & \ldots & 0 & d_{r}
\\
\mathbf{k} & 0  & 1 & 1 & 0 & 0  & \ldots & 0 & 0
\end{bmatrix}
}
$$
In both cases $r\geq 2$ and  $\mathbf{k},\mathbf{d_{0}} \in \ZZ^t$ for some $t \in \ZZ_{\geq 1}$.
\end{proposition}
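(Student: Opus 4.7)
The plan is to start from the normal form of Proposition~\ref{prop:zetaexcep}~(iii) Case~$(iii)$: leading platonic triple $(2k\!+\!1,2,2)$, canonical multiplicity $\zeta_X = 4$, and Gorenstein index $\imath \equiv 2 \mod 4$. No lineality columns $v_\kappa$ can occur, since $d'_{\kappa 2} = \imath/4 \notin \ZZ$. The leading block of $P$ therefore reads
$$
{\tiny
\begin{bmatrix}
-(2k\!+\!1) & 2 & 0 & 0 & \cdots & 0 \\
-(2k\!+\!1) & 0 & 2 & 0 & \cdots & 0 \\
-(2k\!+\!1) & 0 & 0 & 1 & \cdots & 0 \\
\vdots & \vdots & \vdots & \vdots & \ddots & \vdots \\
-(2k\!+\!1) & 0 & 0 & 0 & \cdots & 1 \\
d_{011} & d_{111} & d_{211} & 0 & \cdots & 0 \\
-k\imath/2 & (\imath{-}2)/4 & (\imath{+}2)/4 & 0 & \cdots & 0
\end{bmatrix}}.
$$
A direct computation of $\ell_{\tau_1}=4$ yields the leading-cone leaving point
$$
v(\tau_1)' \ = \ (0,\dots,0,\ d_{011}+(2k+1)(d_{111}+d_{211})/2,\ \imath/4),
$$
whose last coordinate $\imath/4$ is \emph{non-integer}. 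Consequently the coarse obstruction from Lemma~\ref{le:twopoints} applied directly to lattice points in $A_X^c(\lambda)$ must be replaced by the refined half- and quarter-integer polygon obstructions furnished by Corollaries~\ref{corr:halfcone} and~\ref{corr:halfhalfcone} and Lemma~\ref{le:1245cone}, applied to suitable scaled polygons in $\lambda$ spanned by $0$ and the leaving points of two different elementary cones.

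Next I classify the admissible additional columns leaf by leaf. For $i \ge 3$, the normal form forces $d_{ij2} = 0$, so a second column of the form $(0,\dots,0,1,0,\dots,d_i,0)$ may be freely added, producing the parameters $d_3,\dots,d_r$ appearing in $P_{13}$ and $P_{14}$. For blocks~1 and~2, any extra column requires $d_{ij2} = (\imath \pm l_{ij})/4 \in \ZZ$; combined with $l_{ij} \le l_{i1} = 2$ and $\imath \equiv 2 \mod 4$, this singles out $l_{ij}=2$ and shows that each of these two blocks carries at most one further column. After swapping blocks~1 and~2 via admissible operations, one may assume block~1 has only one column, while block~2 carries either one column (case $P_{13}$) or two columns (case $P_{14}$). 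For block~0 every additional column has the shape $v_{0j} = (-(2k'\!+\!1),\dots,-(2k'\!+\!1), d, -k'\imath/2)$ with odd $2k'\!+\!1 = l_{0j}$, again forced by the parity of $\imath$.

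Finally, the leaving point obtained by replacing $v_{01}$ with any such extra block-0 column shares the last coordinate $\imath/4$ with $v(\tau_1)'$; scaling the corresponding two-dimensional polygon in $\lambda$ by a factor of $4$ transforms the canonicity requirement into a $4$-emptiness question of the kind treated in Section~\ref{sec:pol}, and Corollary~\ref{corr:halfhalfcone} then forces $\imath = 2$ in every non-trivial configuration while imposing no further restrictions on the tuples $\mathbf{k},\mathbf{d_0}$ beyond primitivity and pairwise distinctness of the columns. Once $\imath = 2$ is established, it remains to confirm the canonicity of $X(P_{13})$ and $X(P_{14})$ themselves by inspecting each polytope $A_X^c(\lambda_i)\cap\HHH_i$. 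The main obstacle I expect is precisely this final verification: mixing the freely parametrized columns of block~0 with the extra columns in blocks~$\ge 2$ generates a long list of elementary cones, and for each of them the leaving point must be shown to contribute no interior lattice points to $A_X^c(\lambda_i)$. This will again reduce, by projection onto a suitable two-dimensional sublattice, to polygonal $k$-emptiness problems which are settled by Theorem~\ref{th:farey} and the rational-polygon lemmas of the preceding subsection.
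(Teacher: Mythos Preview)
Your overall architecture is correct and matches the paper's: normal form, constraints on extra columns in each leaf from integrality of $d_{ij2}$, force $\imath=2$, then verify the series. However, you cite the wrong key lemma and thereby miss the shortcut that makes the argument clean.

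The leaving points $v(\tau)'$ all have last coordinate $\imath/4$, which for $\imath\equiv 2\bmod 4$ is a \emph{half-integer}. After first normalizing $d_{111}=d_{211}=1$ (the paper does this via admissible row operations, using that exactly one of $(\imath\pm2)/4$ is odd and primitivity then forces one of $d_{111},d_{211}$ odd), the penultimate coordinate of every $v(\tau)'$ becomes an \emph{integer}. So the two-dimensional polygon in $\lambda$ spanned by $0$ and any two leaving points has the shape $\conv\bigl(0,(a,\imath/4),(b,\imath/4)\bigr)$ with $a,b\in\ZZ$. This is precisely the situation of Lemma~\ref{le:halfheight}, which says such a polygon is lattice-point-free iff $\lfloor\imath/4\rfloor=0$, i.e.\ $\imath=2$. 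Corollaries~\ref{corr:halfcone}, \ref{corr:halfhalfcone} and Lemma~\ref{le:1245cone} treat the transposed situation (fractional $x$-coordinate, integer height) and do not apply here; your ``scale by $4$ and invoke $4$-emptiness via Corollary~\ref{corr:halfhalfcone}'' step does not go through as stated.

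You also overestimate the final verification. Once $\imath=2$, the halfplane $\HHH_i$ is given by a linear form taking value $1$ on $\partial A_X^c(\lambda_i)$ for every $i\neq 1,2$, so those pieces are automatically canonical (height one). For $i=1,2$ the form has value $2$, and the only integer points at intermediate value $1$ are of the shape $(1,0,\dots,0,*,0)$, which lie in $\lambda_1$ only. Hence extra columns are forbidden in $\lambda_1$ (any primitive $v_{12}=(2,0,\dots,d,0)$ with $d\neq 1$ forces such an interior point) and unrestricted in all other leaves, giving $P_{13}$ and $P_{14}$ directly. No long list of elementary cones is needed.
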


\begin{proof}
Our matrix $P$ falls under Case $(iii)$ of Proposition~\ref{prop:zetaexcep} (iii), thus $\imath \equiv 2 \mod 4$ and the leading block has the shape
$$
{\tiny
\begin{bmatrix}
-(2k+1) & 2  & 0 & 0 & \dots & 0 
\\
-(2k+1) & 0 & 2 & 0 & \dots & 0 
\\
-(2k+1) & 0 & 0 & 1 & \dots & 0 
\\
\vdots & \vdots & \vdots & \vdots & \ddots & \vdots 
\\
-(2k+1) & 0 & 0 &  0 &  \dots &  1 
\\
d_{011} & d_{111} & d_{211} & 0 & \ldots & 0 
\\
-k\imath/2 & (\imath-2)/4 & (\imath+2)/4 & 0 & \ldots & 0 
\end{bmatrix}.
}
$$
One of $(\imath-2)/4$ and $(\imath+2)/4$ is even, while the other is odd. So if both $d_{111}$ and $d_{211}$ are odd, adding appropriate multiples of the first two rows to the penultimate gives $d_{111}=d_{211}=1$. We achieve the same if one is odd and the other even by before adding the last to the penultimate row. So we proceed with $d_{111}=d_{211}=1$.

\bigskip

\noindent
\emph{Case 1: $r\geq 3$}. Due to irredundancy we have at least another column $v_{32}=e_{3}+d_{321}e_{r+1}$ in $P$ with $d_{321} > 0$. So $A_X^c(\lambda)$ contains the points
\begin{align*}
v(\tau_1)' &= \left(0,\ldots,0,d_{011} + 2k+1,\imath/4\right), \\
v(\tau_2)' &= \left(0,\ldots,0,d_{011} + 2k+1 + (2k+1)d_{321},\imath/4\right).
\end{align*}
Applying Lemma~\ref{le:halfheight}, we get $\imath=2$.
 In this case, all $A_X^c(\lambda_i)$ but for $i=1,2$ have height one. But the only possible integer points in the relative interiors of $A_X^c(\lambda_1)$, $A_X^c(\lambda_2)$ are those of the form $(1,0,\ldots,0,k,0)$. Thus in every leaf but $A_X^c(\lambda_1)$ arbitrary columns can be added to the leading block to get a canonical singularity - if the resulting matrix is an appropriate defining matrix $P$. We write this as the two series $P_{13}$ and $P_{14}$ from above.

\bigskip

\noindent
\emph{Case 2: $r=2$}. If $\imath=2$, all $\partial A_X^c(\lambda_i)$ have height one as above, we are in the same situation as in Case 1 and the resulting matrix is the case $r=2$ of the series $P_{13}$ and $P_{14}$. So we can assume $\imath \geq 6$. Our leading block is
$$
{\tiny
\begin{bmatrix}
-(2k+1)  & 2 & 0   \\
-(2k+1)  & 0  & 2  \\
d_{011} & 1 & 1 \\
-k\imath/2 & (\imath-2)/4 & (\imath+2)/4
\end{bmatrix}
}
$$
and $v(\tau_1)' = \left(0,0,d_{011} + 2k+1,\imath/4\right)$. But for any allowed additional column, we get $v(\tau_2)' = \left(0,0,\kappa,\imath/4\right)$ with $\kappa \in \ZZ$. Applying Lemma~\ref{le:halfheight} again, we get $\imath=2$, a contradiction.
\end{proof}

The next Proposition deals with Case $(iv)$ of Proposition~\ref{prop:zetaexcep} (iii).

\begin{proposition}
\label{prop:zeta=2(2k,2,2)}
Let $X$ be a non-toric threefold singularity of complexity one.
Assume that~$X$ is of canonical multiplicity 
two, even Gorenstein index, the leading platonic triple is $(2k,2,2)$ and $X$ is at most canonical. 
Then $X$  is isomorphic to $X(P_{i})$,
where $P_{i}$ is one of the matrices
 $$
 {\tiny
 P_{15}
 =
 \begin{bmatrix}
-2k  & 2 & 0 & 0  \\
-2k  & 0  & 2 & 0 \\
1 & 1 & 1 & 2k+2\\
1-k & 2 & -1 & 1
\end{bmatrix}
\,
P_{16}
=
 \begin{bmatrix}
-2k  & 2 & 0 & 0 & 0  \\
-2k  & 0  & 2 & 0 & 0 \\
1 & 1 & 1 & 2k & 2k+2\\
1-k & 2 & -1 & 1 & 1
\end{bmatrix}
}
$$
$$
{\tiny
P_{17}
=
\begin{bmatrix}
-4  & 2 & 0 & 0  \\
-4  & 0  & 2 & 1 \\
1 & 1 & 1 & 4 \\
-1 & 2 & -1 & 0
\end{bmatrix}
\,
P_{18}
=
\begin{bmatrix}
-4  & 2 & 0 & 0 & 0 \\
-4  & 0  & 2 & 1 & 1\\
1 & 1 & 1 & 2 & 4 \\
-1 & 2 & -1 & 0 & 0
\end{bmatrix}
\,
P_{19}
=
\begin{bmatrix}
-4  & 2 & 0 & 0 & 0 \\
-4  & 0  & 2 & 1 & 0\\
1 & 1 & 1 & 4 & 4 \\
-1 & 2 & -1 & 0 & 1
\end{bmatrix}
}
 $$
\end{proposition}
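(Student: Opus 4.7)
My plan is to follow the template of Propositions~\ref{prop:zeta=1}--\ref{prop:zeta=4}. First I will put $P$ in the normal form of Proposition~\ref{prop:zetaexcep} using Case~$(iv)$ of its table, so that the bottom-row entries of the three leading-block columns are $\imath/2 - k$, $\imath/2 + 1$ and $-\imath/2$ respectively. A short computation gives $\ell_{\tau_1}=4$ and the leaving point
$$
v(\tau_1)' \ = \ \left(0,\ldots,0,\ d_{011}+k(d_{111}+d_{211}),\ \imath/2\right)
$$
of the primary elementary cone $\tau_1$ spanned by the leading block. Admissible row operations then reduce the entries $d_{011}, d_{111}, d_{211}$ to a manageable normal form; the precise reduction splits according to whether $\imath/2$ is even or odd (equivalently, whether $\imath\equiv 0$ or $2$ modulo $4$), because this controls which parities of $d_{111},d_{211}$ are forced by primitivity of the second and third leading-block columns and which can be normalized away.

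The next step is to force $r=2$ and $\imath=2$. For $r\geq 3$, irredundancy gives an extra column $v_{32}=e_{3}+d_{321}e_{r+1}$ with $d_{321}\geq 1$, producing a secondary elementary cone $\tau_2$; applying Lemma~\ref{le:twopoints} to the pair $v(\tau_1)', v(\tau_2)'$ will show that the displacement between them is incompatible with canonicity unless $\imath=2$, and a further application of Lemma~\ref{le:halfheight} will rule out the borderline configurations (this is the genuinely new feature compared to the odd-$l_{01}$ case of Proposition~\ref{prop:zeta=4}, reflecting the different parity structure of the penultimate row here). Thus $r=2$. In the $r=2$ setting the same dichotomy applies to every candidate additional column in $\lambda$, $\lambda_0$, $\lambda_1$ or $\lambda_2$: analyzing the lattice content of $\conv(0,v(\tau_1)',v(\tau_2)')$ with Corollaries~\ref{corr:halfcone} and~\ref{corr:halfhalfcone} will again pin $\imath$ to $2$.

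With $r=2$ and $\imath=2$ fixed, I will enumerate the admissible placements of the remaining columns. Extra columns in the lineality part $\lambda$---one or two of them at the specific heights dictated by canonicity---will give the two families $P_{15}$ (one extra) and $P_{16}$ (two extras), parametrized by arbitrary $k\geq 1$. The three sporadic matrices $P_{17}$, $P_{18}$, $P_{19}$ will arise only in the special case $k=2$, corresponding respectively to a single extra column in $\lambda_2$ at mid-height, two such columns, and one column in $\lambda_2$ combined with one in $\lambda$. In each case I will close the argument by the standard interior-point test for candidate points of the form $(-1,-1,*,*)\in A_X^c(\lambda_0)$ or $(1,0,*,*)\in A_X^c(\lambda_1)$, in the spirit of Propositions~\ref{prop:zeta=1} and~\ref{prop:zeta=2-432}, and by verifying that the sporadic solutions cannot be further combined.

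The hard part will be showing that no infinite $r$-series exists here (in contrast to the $P_{13}, P_{14}$ series of Proposition~\ref{prop:zeta=4}) and that the sporadic matrices $P_{17}$--$P_{19}$ appear \emph{only} for $k=2$. Both rest on a delicate Farey-strip calculation: the slope $1/(2k+1)$ which allowed free extra columns when $l_{01}=2k+1$ is replaced here by slopes involving $2k$, and the even parity of the leading row creates enough rigidity to exclude almost all additional columns. Carrying out this verification will require careful bookkeeping of the admissible operations available under the normal form of Proposition~\ref{prop:zetaexcep}, together with interior-point checks analogous to those in Cases $4.2.2.2.2.1$--$4.2.2.2.2.2$ of the proof of Proposition~\ref{prop:zeta=1}.
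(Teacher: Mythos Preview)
Your overall plan matches the paper's approach: normalize via Case~$(iv)$ of Proposition~\ref{prop:zetaexcep}, compute $v(\tau_1)'=(0,\ldots,0,d_{011}+k(d_{111}+d_{211}),\imath/2)$, force $r=2$ and $\imath=2$, then enumerate additional columns and eliminate non-canonical configurations by exhibiting interior points of $A_X^c(\lambda_0)$. However, you have the role of Lemma~\ref{le:halfheight} exactly reversed. In Proposition~\ref{prop:zeta=4} the last coordinate of the leaving points is $\imath/4$, which is genuinely half-integral since $\imath\equiv 2\bmod 4$ there; that is precisely why Lemma~\ref{le:halfheight} is invoked. In the present case $\zeta=2$ and $\imath$ is even, so $\imath/2$ is an \emph{integer} and both $v(\tau_1)',v(\tau_2)'$ are honest lattice points. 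Lemma~\ref{le:twopoints} then forces $\imath=2$ immediately, with no half-height analysis whatsoever. The subsequent exclusion of $r\geq 3$ (even at $\imath=2$) is not via Lemma~\ref{le:halfheight} either, but by direct interior-point checks: after normalizing to $d_{111}=d_{211}=1$ and $d_{011}\in\{0,1\}$, the point $(-1,-1,k-1,0)$ respectively $(-1,-1,k,0)$ lies inside $A_X^c(\lambda_0)$. The situation is thus \emph{simpler} than in Proposition~\ref{prop:zeta=4}, not more delicate.

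The case you should isolate explicitly is the additional column $v_{22}=(0,1,d_{221},0)$ in $\lambda_2$ with $l_{22}=1$; this is the only place where $v(\tau_2)'$ can fail to be a lattice point, since its penultimate entry acquires a factor $2ks/(k+1)$ for a possibly half-integral shift $s$. Here Corollary~\ref{corr:halfcone} does enter, but not to pin $\imath$ to $2$: rather, the subcases $d_{011}$ even with $s=\pm 1/2$ are shown to force $\imath\equiv 2\bmod 4$ together with a divisibility condition, and are then killed by the interior point $(-1,-1,(\imath\mp 2-2)/4,(\imath-2)/4)$. Only the subcase with $|v(\tau_1)'-v(\tau_2)'|\geq 1$ survives, and there Lemma~\ref{le:twopoints} gives $\imath=2$, after which the test point $(-1,-1,k,0)$ eliminates all $k\geq 3$ and isolates $P_{17}$ at $k=2$. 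No Farey-strip machinery is involved anywhere; every step is a bare convex-combination check as in Propositions~\ref{prop:zeta=2-432} and~\ref{prop:zeta=3}. (Also note the paper takes $k\geq 2$ throughout this proposition, the case $k=1$ being absorbed into Case~$(v)$.)
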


\begin{proof}
Our matrix $P$ falls under Case $(iv)$ of Proposition~\ref{prop:zetaexcep} (iii), thus $\imath$ is even and the leading block has the shape
$$
{\tiny
\begin{bmatrix}
-2k & 2  & 0 & 0 & \dots & 0 
\\
-2k & 0 & 2 & 0 & \dots & 0 
\\
-2k & 0 & 0 & 1 & \dots & 0 
\\
\vdots & \vdots & \vdots & \vdots & \ddots & \vdots 
\\
-2k & 0 & 0 &  0 &  \dots &  1 
\\
d_{011} & d_{111} & d_{211} & 0 & \ldots & 0 
\\
\imath/2-k & \imath/2+1 & -\imath/2 & 0 & \ldots & 0 
\end{bmatrix}
}
$$
with $k\geq 2$ and we have  $v(\tau_1)' = \left(0,\ldots,0,d_{011} + k(d_{111}+d_{211}),\imath/2\right)$.
If now $r\geq 3$, we have at least another column $v_{321}$ so that $v(\tau_2)' = v(\tau_1)'+2kd_{321}e_{r+1}$ and we can assume $d_{321}\geq 1$ as before. Since both $v(\tau_i)'$ are integer, this forces $\imath=2$. Now $d_{111}$ must be odd, so by if necessary adding the last to the penultimate and appropriate multiples of the first two to the penultimate row we achieve $d_{111}=d_{211}=1$. Now by adding  $-2\lfloor d_{011}/2 \rfloor$ times the first, $\lfloor d_{011}/2 \rfloor$ times the second and $2\lfloor d_{011}/2 \rfloor$ times the last to the penultimate row, we achieve $d_{011} \in \{0,1\}$. Assume $d_{011}=0$, then
$$
(-1,-1,k-1,0)= \frac{1}{2k}v_{01}+\frac{k-1}{2k}v(\tau_1)'
$$
lies inside $A_X^c(\lambda_0)$. But if $d_{011}=1$, then since $k\geq 2$ inside $A_X^c(\lambda_0)$ lies
$$
(-1,-1,k,0)= \frac{1}{2k}v_{01}+\frac{2d_{321}k+2d_{321}-1}{4kd_{321}}v(\tau_1)'+\frac{1}{4kd_{321}}v(\tau_2)'.
$$
So we can assume $r=2$. We achieve $d_{111}=d_{211}=1$ as before. Additional columns in $\lambda$, $\lambda_0$, $\lambda_1$ as well as in $\lambda_2$ with $l_{22}=2$ lead to two integer points  in $A_X^c(\lambda)$ and thus to $\imath=2$. We treat this in the following first case while in the second case we consider the case of an additional column in $\lambda_2$ with $l_{22}=1$.

\bigskip

\noindent
\emph{Case 1}. We have $\imath=2$ and achieve $d_{011}=1$. So $v(\tau_1)' = \left(0,0,1 + 2k,1\right)$. Now if we have $i=1,2$ and an additional column $v_{i2}$ in $\lambda_i$ with $l_{i2}=2$, then we can assume $d_{i21}\geq 2$ and thus $v(\tau_2)' = v(\tau_1)'+k(d_{i21}-1)e_3$. The point
$$
(-1,-1,k,0)= \frac{1}{2k}v_{01}+\frac{k(k-1)(d_{i21}-1)-k}{2k^2(d_{i21}-1)}v(\tau_1)'+\frac{1}{2k(d_{i21}-1)}v(\tau_2)'
$$
lies inside $A_X^c(\lambda_0)$.
For an additional column $v_{02}=(-2k',-2k',d_{021},1-k')$ in $\lambda_0$, where we can assume $d_{021}$ odd and $d_{021}+ 2k' \geq 3+2k$, again $(-1,-1,k,0)$ lies inside. Now assume an additional column $v_1=(0,0,d_1,1)$ in $\lambda$. We can assume $d_1>1+2k$ for if not, i.e. $d_1 \leq 1+2k$, adding the $1+4k$-fold of the first, the $-2(k+1)$-fold of the second and the $-2(2k+1)$-fold of the last to the third row and negating the third row afterwards gives $d_1>1+2k$. Then the point 
$$
(-1,-1,k,0)= \frac{1}{2k}v_{01}+\frac{d_1k-2k^2-d_1+1}{2k(d_1-2k-1)}v(\tau_1)'+\frac{1}{2(d_1-2k-1)}v_1
$$
lies inside $A_X^c(\lambda_0)$ for $d_1\geq 3+2k$. Thus we require $d_1=2+2k$ and get the matrix
$$
P_{15}:=
{\tiny
\begin{bmatrix}
-2k  & 2 & 0 & 0  \\
-2k  & 0  & 2 & 0 \\
1 & 1 & 1 & 2k+2\\
1-k & 2 & -1 & 1
\end{bmatrix},
}
$$
giving a canonical singularity.

\bigskip

\noindent
\emph{Case 2: additional column $v_{22}=(0,1,d_{221},0)$ in $\lambda_2$}. The leading block together with the additional column has the form
$$
{\tiny
\begin{bmatrix}
-2k  & 2 & 0 & 0  \\
-2k  & 0  & 2 & 1 \\
d_{011} & 1 & 1 & d_{221} \\
\imath/2-k & \imath/2+1 & -\imath/2 & 0
\end{bmatrix}
}
$$
and we have $v(\tau_1)' = \left(0,0,d_{011} + 2k,\imath/2\right)$. We write $d_{221}=(d_{011} + 2k+1)/2+s$ for suitable $s$ and get
$$
v(\tau_2)' = \left(0,0,d_{011} + 2k +2ks/(k+1),\imath/2\right).
$$
Then $|v(\tau_1)'-v(\tau_2)'| < 1$ if $|s| < 1/2 +1/(2k)$. Since $k\geq 2$, this is only possible for $d_{011}$ even, $s= \pm 1/2$. We examine these cases and the case $|v(\tau_1)'-v(\tau_2)'| \geq 1$ afterwards.

\medskip

\noindent
\emph{Case 2.1: $d_{011}$ even, $s=1/2$}. Here we have $|v(\tau_1)'-v(\tau_2)'|=k/(k+1)\geq 1/2$. 
With Corollary~\ref{corr:halfcone}, this forces $(\imath/2) | (d_{011} + 2k +1)$ and thus $\imath \equiv 2 \mod 4$ and $k$ even. We achieve $d_{011}=\imath/2-2k-1$ by adding the $(\imath+2)(2d_{011}+4k+2-\imath)/(4\imath)$-fold of the first, the $(\imath -2d_{011}-4k-2)/4$-fold of the second and the $(\imath-2d_{011}-4k-2)/\imath$-fold of the last to the third row. But then inside $A_X^c(\lambda_0)$ lies the point 
$$
(-1,-1,(\imath-6)/4,(\imath-2)/4)=\frac{1}{2k}v_{01}+\frac{k-1}{2k}v(\tau_1)'.
$$

\medskip

\noindent
\emph{Case 2.2: $d_{011}$ even, $s=-1/2$}. Here we have $|v(\tau_1)'-v(\tau_2)'|=k/(k+1)\geq 1/2$. 
With Corollary~\ref{corr:halfcone}, this forces $(\imath/2) | (d_{011} + 2k -1)$ and thus $\imath \equiv 2 \mod 4$ and $k$ even. We achieve $d_{011}=\imath/2-2k+1$ by adding the $(\imath+2)(2d_{011}+4k-2-\imath)/(4\imath)$-fold of the first, the $(\imath -2d_{011}-4k+2)/4$-fold of the second and the $(\imath-2d_{011}-4k+2)/\imath$-fold of the last to the third row. But then the point 
$$
(-1,-1,(\imath-2)/4,(\imath-2)/4)=\frac{1}{2k}v_{01}+\frac{k-1}{2k}v(\tau_1)'
$$
lies inside $A_X^c(\lambda_0)$.

\medskip

\noindent
\emph{Case 2.3: $|v(\tau_1)'-v(\tau_2)'| \geq 1$}. This leads to $\imath=2$ and we can assume $d_{011}=1$ as we have seen above. Now $s$ is integer and we can assume $s\geq 1$ by the same admissible operation as in Case 1. Then $|v(\tau_1)'-v(\tau_2)'|=2sk/(k+1) \geq 2$ if $s\geq 2 $ and this gives the interior point $(-1,-1,k,0)$ in $A_X^c(\lambda_0)$, as we also have seen in Case 1. So $s=1$ remains. 
The point
$$
(-1,-1,k,0)= \frac{1}{2k}v_{01}+\frac{k-3}{4k}v(\tau_1)'+\frac{k+1}{4k}v(\tau_2)'
$$
lies inside $A_X^c(\lambda_0)$ for $k\geq 3$. So we get a canonical singularity from the matrix
$$
P_{17}:=
{\tiny
\begin{bmatrix}
-4  & 2 & 0 & 0  \\
-4  & 0  & 2 & 1 \\
1 & 1 & 1 & 4 \\
-1 & 2 & -1 & 0
\end{bmatrix}.
}
$$

Finally, we have to check which of the additional columns can be combined. On the one hand, we have the matrix
$
P_{15}$
where in the last column, we can also write $2k$ instead of $2k+2$, see Case 1. Thus we can also add \emph{both} columns to the leading block and get another canonical singularity with matrix $P_{16}$. On the other hand, we have the matrix
$
P_{17}$
where we can take $(0,1,2,0)^\top$ instead of the last column and get the same singularity. Thus here as well we can take both of these columns in addition to the leading block and get the matrix $P_{18}$. Now as the leading block of $P_{17}$ is the special case $k=2$ of the leading block of $P_{15}$, we also have to check if we can add columns $(0,0,4,1)^\top$ and $(0,0,6,1)^\top$. But $(0,0,6,1)^\top$ lies inside $\cone(v_{01},v_{11},v_{22})$, while $(0,0,4,1)^\top$ can be added to achieve the matrix 
$$
P_{19}:=
{\tiny
\begin{bmatrix}
-4  & 2 & 0 & 0 & 0 \\
-4  & 0  & 2 & 1 & 0 \\
1 & 1 & 1 & 4 & 4\\
-1 & 2 & -1 & 0 & 1
\end{bmatrix},
}
$$
giving a canonical singularity. To this matrix, the column $(0,1,2,0)^\top$ can not be added, since then $(0,0,4,1)^\top$ would lie inside $\cone(v_{01},v_{11},(0,1,2,0)^\top)$.

\end{proof}

We come to Case $(v)$ of Proposition~\ref{prop:zetaexcep} (iii).

\begin{proposition}
\label{prop:zeta=2(k,2,2)}
Let $X$ be a non-toric threefold singularity of complexity one.
Assume that~$X$ is of canonical multiplicity 
two, even Gorenstein index, the leading platonic triple is $(k,2,2)$ and $X$ is at most canonical. 
Then $X$ is isomorphic to $X(P_{i})$,
where $P_{i}$ is one of the matrices
 $$
 {\tiny
 P_{20}
 =
 \begin{bmatrix}
-\mathbf{k} & 2   & 0  & 0 & 0 & \dots & 0 & 0 & 0& 0
\\
-\mathbf{k}  & 0   & 2  & 0 &0& \dots & 0 & 0 & 0 & 0
\\
-\mathbf{k}  & 0  & 0  & 1 &1 & \dots & 0 & 0 & 0  & 0
\\
\vdots & \vdots  & \vdots  & \vdots & \vdots & \ddots & \vdots & \vdots & \vdots & \vdots
\\
-\mathbf{k}  & 0   & 0 & 0 & 0 & \dots &  1 & 1 & 0 & 0
\\
\mathbf{d_{0}} & 1  & 1  & 0 & d_{3} & \ldots & 0 & d_{r} & d'_{1} & d'_{2}
\\
1-\mathbf{k} & 0  & 2  & 0 & 0  & \ldots & 0 & 0 & 1 & 1
\end{bmatrix}
\enskip
P_{21}
=
\begin{bmatrix}
-(2k+1)  & 2 & 0 & 0   \\
-(2k+1)  & 0  & 2 & 0 \\
-k & 0 & 1  & 1 \\
-4k & 1 & 3 & 2
\end{bmatrix}
}
 $$
 One or both of the last two columns of $P_{20}$ may be omitted, if the general requirements on the matrix are still fulfilled.

\end{proposition}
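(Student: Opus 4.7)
The plan is to imitate the strategy of Propositions~\ref{prop:zeta=2-432},~\ref{prop:zeta=3},~\ref{prop:zeta=4} and~\ref{prop:zeta=2(2k,2,2)}: start from the normal form of Proposition~\ref{prop:zetaexcep}~(iii), case $(v)$, exploit admissible operations to normalise the remaining free entries of the leading block, and then split the analysis into the cases $r\geq 3$ and $r=2$, within each of which the possible additional columns in the leaves $\lambda,\lambda_0,\lambda_1,\lambda_2$ are enumerated.

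After putting $P$ into normal form with $l_{01}=k\geq 2$, the leading block has last row $((1-k)\imath/2,\,(\imath-2)/2,\,(\imath+2)/2)$ and penultimate row $(d_{011},d_{111},d_{211})$. One of $(\imath\pm 2)/2$ is even and the other is odd, so by primitivity at least one of $d_{111},d_{211}$ is odd, and by successively adding the last row and suitable multiples of the first three rows to the penultimate row I can normalise $d_{111}=d_{211}=1$, exactly as in the proof of Proposition~\ref{prop:zeta=2(2k,2,2)}. Then the leaving point of the leading elementary cone becomes
\[
v(\tau_1)' \ = \ (0,\ldots,0,\, d_{011}+k,\, \imath/2).
\]

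For the case $r\geq 3$, irredundancy provides a column $v_{32}=e_3+d_{321}e_{r+1}$ with $d_{321}\geq 1$ and hence a second elementary cone $\tau_2$ with $v(\tau_2)' = v(\tau_1)' + k\,d_{321}\,e_{r+1}$. Both leaving points are integer points of $A_X^c(\lambda)$, so Lemma~\ref{le:twopoints} forces $\imath=2$. With $\imath=2$ the polytopes $A_X^c(\lambda_i)$ for $i\geq 3$ and the halfplane $\HHH$ in the lineality part all have height one, so lattice points are forced to lie on the boundary. Hence arbitrary columns $e_i+d_ie_{r+1}$ in the outer leaves and $d'_\kappa e_{r+1}+e_{r+2}$ in the lineality part may be attached, yielding precisely the series $P_{20}$ with its possibly vanishing $d'$-part.

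For $r=2$, analyse additional columns in $\lambda,\lambda_0,\lambda_1,\lambda_2$ one by one. Columns in $\lambda$ or of $l$-value $\geq 2$ in $\lambda_0,\lambda_1,\lambda_2$ force a second integer point or a Minkowski-type polytope in $A_X^c(\lambda)$; the first alternative, by Lemma~\ref{le:twopoints}, drops $\imath$ to $2$ and reduces to the $r=2$ case of $P_{20}$, while the second alternative, via Corollaries~\ref{corr:halfcone} and~\ref{corr:halfhalfcone}, either collapses to the same series or produces a non-canonical interior point in $A_X^c(\lambda_0)$. The only genuinely new configuration is an additional column $v_{22}=(0,1,d_{221},(\imath-1)/2)$ of $l$-value one in $\lambda_2$, which in particular forces $\imath$ to be even but allows $\imath>2$. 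A discrepancy computation applied to $\conv(0,v(\tau_1)',v(\tau_2)')$, together with Corollary~\ref{corr:halfcone} and Lemma~\ref{le:halfheight}, isolates the single possibility $l_{01}=2k+1$ odd and $\imath=4$; admissible operations (adding appropriate multiples of rows $1,2,r+2$ to the penultimate row) then normalise $d_{011}$ and $d_{221}$ and bring the matrix into the form $P_{21}$. Verifying that no lattice point appears in the relative interior of $A_X^c(\lambda_0)$ in this final case is the main technical obstacle, and is carried out by writing the critical candidate $(-1,-1,\ast,\ast)$ explicitly as a convex combination of $v_{01},v(\tau_1)',v(\tau_2)'$ and checking that the coefficients lie outside $[0,1]$ precisely for $\imath=4$.
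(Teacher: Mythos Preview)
Your sketch has a genuine gap rooted in a false parity claim. You assert that ``one of $(\imath\pm 2)/2$ is even and the other is odd'', but these two numbers differ by $2$ and hence share the \emph{same} parity: both are odd when $\imath\equiv 0\pmod 4$ and both are even when $\imath\equiv 2\pmod 4$. In the second situation primitivity of $v_{11},v_{21}$ indeed forces $d_{111},d_{211}$ odd and your normalisation $d_{111}=d_{211}=1$ works. But for $\imath\equiv 0\pmod 4$, adding the last row to the penultimate flips the parities of $d_{111}$ and $d_{211}$ \emph{simultaneously}, so the parity of $d_{111}+d_{211}$ is an invariant under admissible operations. When it is odd you can only reach $(d_{111},d_{211})=(0,1)$, and then
\[
v(\tau_1)'=\bigl(0,0,\,d_{011}+l_{01}/2,\,\imath/2\bigr)
\]
has a half-integer penultimate coordinate whenever $l_{01}$ is odd. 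This sub-case (the paper's Case~2.3) is exactly where $P_{21}$ lives, and your argument never enters it; the paper has to split at the top into $\imath=2$ versus $\imath\geq 4$ and then, for $\imath\equiv 0\pmod 4$, further by the parity of $d_{111}+d_{211}$.

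The second error is the claimed column $v_{22}=(0,1,d_{221},(\imath-1)/2)$ in $\lambda_2$. In case $(v)$ of Proposition~\ref{prop:zetaexcep}~(iii) the last entry of a $\lambda_2$-column with $l_{2j}=1$ is $(\imath+l_{2j})/2=(\imath+1)/2$, which is never an integer for even $\imath$; you are importing the case-$(iv)$ formula from Proposition~\ref{prop:zeta=2(2k,2,2)}, where the roles of the leaves are different. The extra column of $P_{21}$ is in fact $(0,0,1,2)^{\top}$, a column in the \emph{lineality part} $\lambda$, isolated in the paper's Case~2.3.1 by applying Lemma~\ref{le:twopoints} and Corollary~\ref{corr:halfcone} to $\conv(0,v(\tau_1)',v_1)$ with the half-integer $v(\tau_1)'$ above; this is what pins down $l_{01}=2k+1$ and $\imath=4$. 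Your invocation of Lemma~\ref{le:halfheight} is also misplaced: that lemma treats height $k+\tfrac12$, i.e.\ odd $\imath/\zeta$, which cannot occur here.
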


\begin{proof}
Our matrix $P$ falls under Case $(v)$ of Proposition~\ref{prop:zetaexcep} (iii), thus $\imath$ is even and the leading block has the shape
$$
{\tiny
\begin{bmatrix}
-l_{01} & 2  & 0 & 0 & \dots & 0 
\\
-l_{01} & 0 & 2 & 0 & \dots & 0 
\\
-l_{01} & 0 & 0 & 1 & \dots & 0 
\\
\vdots & \vdots & \vdots & \vdots & \ddots & \vdots 
\\
-l_{01} & 0 & 0 &  0 &  \dots &  1 
\\
d_{011} & d_{111} & d_{211} & 0 & \ldots & 0 
\\
\imath/2(1-l_{01}) & \imath/2-1 & \imath/2+1 & 0 & \ldots & 0 
\end{bmatrix}
}
$$
with $l_{01}\in \ZZ_{\geq 2}$. Now we distinguish the case $\imath=2$ and $\imath\geq 4$.

\bigskip

\noindent
\emph{Case 1: $\imath=2$}.  Here we can assume $d_{111}=d_{211}=1$ by adding appropriate multiples of the first two rows to the penultimate and also $d_{011}=0$ by afterwards adding the $d_{011}$-fold of the second and the $-d_{011}$ fold of the last to the penultimate row. Now we see that $A_X^c(\lambda_0)$ and the $A_X^c(\lambda_i)$ for $i=3,\ldots,r$  have height one and are thus canonical for any allowed additional column. In $A_X^c(\lambda_1)$, $A_X^c(\lambda_2)$, the points that may be inside are of the forms $(1,0,\ldots,0,k,0)$ and $(0,1,0,\ldots,0,k,0)$. So in these leaves, no additional columns are allowed. We denote the resulting series by 
$$
{\tiny
\begin{bmatrix}
-\mathbf{k} & 2   & 0  & 0 & 0 & \dots & 0 & 0 & 0& 0
\\
-\mathbf{k}  & 0   & 2  & 0 &0& \dots & 0 & 0 & 0 & 0
\\
-\mathbf{k}  & 0  & 0  & 1 &1 & \dots & 0 & 0 & 0  & 0
\\
\vdots & \vdots  & \vdots  & \vdots & \vdots & \ddots & \vdots & \vdots & \vdots & \vdots
\\
-\mathbf{k}  & 0   & 0 & 0 & 0 & \dots &  1 & 1 & 0 & 0
\\
\mathbf{d_{0}} & 1  & 1  & 0 & d_{3} & \ldots & 0 & d_{r} & d'_{1} & d'_{2}
\\
1-\mathbf{k} & 0  & 2  & 0 & 0  & \ldots & 0 & 0 & 1 & 1
\end{bmatrix}.
}
$$

\bigskip

\noindent
\emph{Case 2: $\imath\geq 4$}. Here $r\geq 3$ is impossible, since then we would have at least an additional column $v_{321}$ and would get $|v(\tau_1)'-v(\tau_2)'|\geq 2$, so that $A_X^c(\lambda)$ contains two integer points forcing $\imath=2$, a contradiction. So we have $r=2$. We have
$$
v(\tau_1)' = (0,0,d_{011}+l_{01}(d_{111}+d_{211})/2,\imath/2)
$$
and distinguish three subcases in the following.
\medskip

\noindent
\emph{Case 2.1: $\imath \equiv 2 \mod 4$}. Here both $d_{111}$ and $d_{211}$ must be odd and adding appropriate multiples of the first two rows to the third, we achieve $d_{111}=d_{211}=1$. Thus $v(\tau_1)'$ is integer and no second integer point is allowed in $A_X^c(\lambda)$. Since for any allowed additional column such a second integer point will appear, we get no canonical singularity in this case.

\medskip

\noindent
\emph{Case 2.2: $\imath \equiv 0 \mod 4$, $d_{111} \equiv d_{211} \mod 2$}. By if necessary adding the last to the third and afterwards adding appropriate multiples of the first two rows to the third, we achieve $d_{111}=d_{211}=0$, i.e. a leading block of the form
$$
{\tiny
\begin{bmatrix}
-l_{01}  & 2 & 0   \\
-l_{01}  & 0  & 2  \\
d_{011} & 0 & 0 \\
\imath/2(1-l_{01}) & \imath/2-1 & \imath/2+1
\end{bmatrix}.
}
$$
Again $v(\tau_1)'$ is integer and the only additional columns that do not give a second integer point in $A_X^c(\lambda)$ are those of the form $(2,0,d_{121},\imath/2-1)$ and $(0,2,d_{221},\imath/2-1)$ in $\lambda_1$ and $\lambda_2$ respectively with $d_{i21}$ odd. Both leaves are equivalent by adding the first and subtracting the second from the last row, so we consider additional columns $(2,0,d_{121},\imath/2-1)$. For such a column, we have
$$
v(\tau_2)' = (0,0,d_{011}+l_{01}d_{121}/2,\imath/2),
$$
so we require $l_{01}d_{121}/2=\pm 1/2$, which is impossible due to $l_{01}\geq 2$.

\medskip

\noindent
\emph{Case 2.3: $\imath \equiv 0 \mod 4$, $d_{111} \equiv d_{211} +1 \mod 2$}. By the same proceeding as in the previous case, we get $d_{111}=0$, $d_{211}=1$ and $v(\tau_1)' = (0,0,d_{011}+l_{01}/2,\imath/2)$. Now in the following subcases, we check which additional columns are possible.

\smallskip

\noindent
\emph{Case 2.3.1: additional column $v_1=(0,0,d_1,\imath/2)$ in $\lambda$}. With Lemma~\ref{le:twopoints}, we get $l_{01}=2k+1$ odd for some $k \in \ZZ_{\geq 1}$ and  $d_1=d_{011}+(l_{01}\pm 1)/2$. From this we get that $\imath/2$ divides $d_{011}+(l_{01}\pm (-1))/2$.

\smallskip

\noindent
\emph{Case 2.3.1.1: $d_1=d_{011}+(l_{01}+1)/2$}. Here $\imath/2$ divides $d_{011}+k$. By adding the $1-\imath/2$-fold of the first, the $-(1+\imath/2)$-fold of the second and two times the last row to the third, we can add multiples of $\imath$ to $d_{011}$, so we can achieve one of the following cases:

\smallskip

\noindent
\emph{Case 2.3.1.1.1: $d_{011}=-k$}. Here if $\imath \geq 8$, the point
$$
(-1,-1,0,-1)=\frac{1}{2k+1}v_{01}+\frac{2(\imath k -4k-2)}{\imath (2k+1)}v(\tau_1)' + \frac{2}{\imath}v_1
$$
lies inside $A_X^c(\lambda_0)$, so we require $\imath=4$. We get the canonical matrix
$$
P_{21}:=
{\tiny
\begin{bmatrix}
-(2k+1)  & 2 & 0 & 0   \\
-(2k+1)  & 0  & 2 & 0 \\
-k & 0 & 1  & 1 \\
-4k & 1 & 3 & 2
\end{bmatrix}.
}
$$

\smallskip

\noindent
\emph{Case 2.3.1.1.2: $d_{011}=\imath/2-k$}. This is equivalent to Case 2.3.1.1.1 by adding the $\imath/4$-fold of the first two and the $-1$-fold of the last to the third row.

\smallskip

\noindent
\emph{Case 2.3.1.2: $d_1=d_{011}+(l_{01}-1)/2$}. Here $\imath/2$ divides $d_{011}+k+1$. By the same admissible operation as in Case 2.3.1.1, we can assume $d_{011}=-k-1$. But now this is equivalent to the Case  2.3.1.1 by subtracting the second from the third row and afterwards negating the third row.

\smallskip

\noindent
\emph{Case 2.3.2: additional column $v_{02}=(-l_{02},-l_{02},d_{021},\imath/2(1-l_{02}))$ in $\lambda_0$}. For such a column, we have $v(\tau_2)'=(0,0,d_{021}+l_{02}/2, \imath/2)$, which of course must not be equal to $v(\tau_1)'$. Thus we distinguish several subcases.

\smallskip

\noindent
\emph{Case 2.3.2.1: $l_{01} \equiv l_{02} \equiv 0 \mod 2$}. In this case, both $v(\tau_i)'$ are integer, which is a contradiction to $\imath \geq 4$ with Lemma~\ref{le:twopoints}.

\smallskip

\noindent
\emph{Case 2.3.2.2: $l_{01} \equiv l_{02} \equiv 1 \mod 2$}. Here, the third entry of both $v(\tau_i)'$ is in $\ZZ + 1/2$ and so, $A_X^c(\lambda)$ contains a polytope like in Corollary~\ref{corr:halfhalfcone}, forcing $\imath=4$ and - writing $l_{01}=2k_1+1$, $l_{02}=2k_2+1$ - we achieve $d_{011}=-k_1$ as in the previous case, forcing $d_{021}=1-k_2$. But for $k_1 \geq 2$, inside $A_X^c(\lambda_0)$ lies the point
$$
(-1,-1,0,-1)=\frac{1}{2k_1+1}v_{01}+\frac{2k_1-3}{4(2k_1+1)}v(\tau_1)'+\frac{1}{4}v(\tau_2)',
$$
while for $k_1=1$, we have $k_2 \in \{0,1\}$, and thus inside $A_X^c(\lambda_0)$ lies
$$
(-1,-1,0,-1)=\frac{3-2k_2}{12}v_{01}+\frac{1}{4}v_{02}+\frac{k_2}{6}v(\tau_1)'.
$$

\smallskip

\noindent
\emph{Case 2.3.2.3: $l_{01} \equiv l_{02} + 1 \equiv 1 \mod 2$}. Here we drop the condition $l_{01}\geq l_{02}$ coming from $(v_{01},v_{11},v_{21})$ being the leading block so that we do not have to examine a separate subcase. From Lemma~\ref{le:twopoints} and Corollary~\ref{corr:halfcone}, we get that (by if necessary negating the third row) for the third entries of the $v(\tau_i)'$ 
$$
d_{021}+l_{02}/2 = d_{011}+l_{01}/2+ 1/2 = k \imath/2 +1
$$
By the same admissible operations as in Case 2.3.1, we get - writing $l_{01}=2k_1+1$, $l_{02}=2k_2$ - that $d_{011}=-k_1$, $d_{021}=1-k_2$ holds. But as we also have seen in Case 2.3.1.1.1, this forces $\imath=4$ with $v_1$ replaced  with $v(\tau_2)'$. But here, the point
$$
(-1,-1,0,-1)=\frac{1}{2k_2}v_{02}+\frac{k_2-1}{2k_2}v(\tau_2)'
$$
again lies inside $A_X^c(\lambda_0)$, so we get no canonical singularity.

\smallskip

\noindent
\emph{Case 2.3.3: additional column $v_{12}=(-2,-2,d_{121},\imath/2-1)$ in $\lambda_1$}. This leads to $v(\tau_2)'= (0,0,d_{011}+l_{01}(1+d_{121})/2,\imath/2)$, where we can assume $d_{121}\geq 1$. We have $|v(\tau_1)'-v(\tau_2)'|=\frac{l_{01}d_{121}}{2} \geq 3/2$ if $d_{121}\geq 2$ or $l_{01}\geq 3$, which is not possible due to Lemma~\ref{le:twopoints}. So we require $d_{121}=1$ and $l_{01}=2$. But here both $v(\tau_i)'$ are integer, so this is not possible as well.
The proof is complete.
\end{proof}

Finally, we deal with the last Case $(vi)$ of Proposition~\ref{prop:zetaexcep} (iii).

\begin{proposition}
\label{prop:zeta(l0,l1,1)}
Let $X$ be a non-toric threefold singularity of complexity one.
Assume that~$X$ is of arbitrary canonical multiplicity 
$\zeta >1$, the leading platonic triple is $(l_{01},l_{11},1)$ and $X$ is at most canonical. 
Then $X$  is isomorphic to $X(P_{i})$,
where $P_{i}$ is one of the matrices
\setlength{\arraycolsep}{4pt}
$$
{\tiny
P_{22}
\!=\!
\begin{bmatrix}
-4 & 4 & 0 & 0 \\
-4 & 0 & 1 & 1 \\
1  & 0 & 0 & 1 \\
-1 & 3 & 0 & 0 
\end{bmatrix}
\;
P_{23}
\!=\!
\begin{bmatrix}
-4 & 2 & 0 & 0 \\
-4 & 0 & 1 & 1 \\
0 & 1 & 0 & 1 \\
3 & 0 & 0 & 0 
\end{bmatrix}
\;
P_{24}
\!=\!
\begin{bmatrix}
-8 & 2 & 0 & 0 \\
-8 & 0 & 1 & 1 \\
0 & 1 & 0 & 1 \\
5 & 0 & 0 & 0 
\end{bmatrix}
\;
P_{25}
\!=\!
\begin{bmatrix}
-2 & 2 & 0 & 0  \\
-2 & 0 & 1 & 1  \\
1 &  0 &  0 & 1 \\
1 & 3 & 0 & 0
\end{bmatrix}
}
$$
$$
{\tiny
P_{26}
\!=\!
\begin{bmatrix}
-2 & 2     & 0 & 0 & \dots & 0 & 0 & 0& 0
\\
-2  & 0    & 1 &1& \dots & 0 & 0 & 0 & 0
\\
\vdots   & \vdots  & \vdots & \vdots & \ddots & \vdots & \vdots & \vdots & \vdots
\\
-2  & 0    & 0 & 0 & \dots &  1 & 1 & 0 & 0
\\
1 & 1   & 0 & d_{2} & \ldots & 0 & d_{r} & d'_{1} & d'_{2}
\\
0 & 2    & 0 & 0  & \ldots & 0 & 0 & 1 & 1
\end{bmatrix}
\,
P_{27}
\!=\!
\begin{bmatrix}
-3 & 3 & 0 & 0  \\
-3 & 0 & 1 & 1  \\
0 &  1 &  0 & 1\\
2 & 0 & 0 & 0
\end{bmatrix}
\,
P_{28}
\!=\!
\begin{bmatrix}
-5 & 4 & 0 & 0 \\
-5 & 0 & 1 & 1 \\
0 & 1 & 0 & 1 \\
-1 & 2 & 0 & 0
\end{bmatrix}
}
$$
$$
{\tiny
P_{29}
\!=\!
\begin{bmatrix}
2-m_0\zeta & 2 & 0 & 0 \\
2-m_0\zeta & 0 & 1 & 1 \\
1 & 0 & 0 & 1 \\
1+\frac{m_0(2-\zeta)}{2} & 1 & 0 & 0
\end{bmatrix}, \zeta \in 4\ZZ_{\geq 1}
\qquad
P_{30}
\!=\!
\begin{bmatrix}
2-5\zeta & 2 & 0 & 0 \\
2-5\zeta & 0 & 1 & 1 \\
0 & 1 & 0 & 1 \\
5 & 0 & 0 & 0
\end{bmatrix}, \zeta \geq 3
}
$$
$$
{\tiny
P_{31}
\!=\!
\begin{bmatrix}
4-3\zeta & 4 & 0 & 0 \\
4-3\zeta & 0 & 1 & 1 \\
0 & 1 & 0 & 1 \\
\frac{7-3\zeta}{2} & 2 & 0 & 0
\end{bmatrix}, \zeta \in 2\ZZ_{\geq 2}+1
\qquad
P_{32}
\!=\!
\begin{bmatrix}
2-3\zeta & 2 & 0 & 0 & 0 & 0 \\
2-3\zeta & 0 & 1 & 1 & 0 & 0\\
2-3\zeta & 0 & 0 & 0 & 1 & 1\\
0 & 1 & 0 & 1 & 0 & 1 \\
3 & 0 & 0 & 0 & 0 & 0
\end{bmatrix}
}
$$
$$
{\tiny
P_{33}
\!=\!
\begin{bmatrix}
2-3\zeta & 2 & 0 & 0 \\
2-3\zeta & 0 & 1 & 1 \\
0 & 1 & 0 & 2 \\
3 & 0 & 0 & 0
\end{bmatrix}
\,
P_{34}
\!=\!
\begin{bmatrix}
2-3\zeta & 2 & 0 & 0 \\
2-3\zeta & 0 & 1 & 1 \\
0 & 1 & 0 & 1 \\
3 & 0 & 0 & 0
\end{bmatrix}
\,
P_{35}
\!=\!
\begin{bmatrix}
2-3\zeta & 2-3\zeta & 2 & 0 & 0 \\
2-3\zeta & 2-3\zeta & 0 & 1 & 1 \\
0 & 1 & 1 & 0 & 1 \\
3 & 3 & 0 & 0 & 0
\end{bmatrix}
}
$$
$$
{\tiny
P_{36}
\!=\!
\begin{bmatrix}
2-3\zeta & 2-\zeta & 2 & 0 & 0 \\
2-3\zeta & 2-\zeta & 0 & 1 & 1 \\
0 & 3 & 1 & 0 & 1 \\
3 & 1 & 0 & 0 & 0
\end{bmatrix}
}
$$
For 
$
P_{37}
-P_{39} 
$
it holds $k\mu+1 \equiv 0 \mod \zeta$, $d_{11} \leq 0$, $d_{12} \geq 2+k\sum\limits_{i=2}^rd_i$.
\setlength{\arraycolsep}{1.5pt}
$$
{\tiny
P_{37}
\!=\!
\begin{bmatrix}
2(\zeta-k) & 2k     & 0 & 0 & \dots & 0 & 0 
\\
2(\zeta-k)  & 0    & 1 &1& \dots & 0 & 0 
\\
\vdots   & \vdots  & \vdots & \vdots & \ddots & \vdots & \vdots 
\\
2(\zeta-k)  & 0    & 0 & 0 & \dots &  1 & 1 
\\
1 & 1   & 0 & d_{2} & \ldots & 0 & d_{r} 
\\
2\left(\frac{k\mu+1}{\zeta}-\mu\right) & 2\frac{k\mu+1}{\zeta}    & 0 & 0  & \ldots & 0 & 0 
\end{bmatrix}
\,
P_{38}
\!=\!
\begin{bmatrix}
2(\zeta-k) & 2k-\zeta & 2k     & 0 & 0 & \dots & 0 & 0 
\\
2(\zeta-k) & 2k-\zeta  & 0    & 1 &1& \dots & 0 & 0 
\\
\vdots & \vdots  & \vdots  & \vdots & \vdots & \ddots & \vdots & \vdots 
\\
2(\zeta-k) & 2k-\zeta  & 0    & 0 & 0 & \dots &  1 & 1 
\\
1 & 1 & d_{11}  & 0 & d_{2} & \ldots & 0 & d_{r} 
\\
2\left(\frac{k\mu+1}{\zeta}-\mu\right) & 2\frac{k\mu+1}{\zeta}-\mu & 2\frac{k\mu+1}{\zeta}    & 0 & 0  & \ldots & 0 & 0 
\end{bmatrix}
}
$$
$$
{\tiny
P_{39}
\!=\!
\begin{bmatrix}
2(\zeta-k) & 2k-\zeta & 2k-\zeta & 2k     & 0 & 0 & \dots & 0 & 0 
\\
2(\zeta-k) & 2k-\zeta & 2k-\zeta  & 0    & 1 &1& \dots & 0 & 0 
\\
\vdots & \vdots & \vdots  & \vdots  & \vdots & \vdots & \ddots & \vdots & \vdots 
\\
2(\zeta-k) & 2k-\zeta & 2k-\zeta & 0    & 0 & 0 & \dots &  1 & 1 
\\
1  & d_{11} & d_{12} & 1 & 0 & d_{2} & \ldots & 0 & d_{r} 
\\
2\left(\frac{k\mu+1}{\zeta}-\mu\right) & 2\frac{k\mu+1}{\zeta}-\mu  & 2\frac{k\mu+1}{\zeta}-\mu & 2\frac{k\mu+1}{\zeta}    & 0 & 0  & \ldots & 0 & 0 
\end{bmatrix}
}
$$
For $P_{40}-P_{49}$ it holds  $\zeta \equiv 5 \mod 6$.
\setlength{\arraycolsep}{4pt}
$$
{\tiny
P_{40}
\!=\!
\begin{bmatrix}
2-2\zeta & 2 & 0 & 0 & 0 & 0 \\
2-2\zeta & 0 & 1 & 1 & 0 & 0\\
2-2\zeta & 0 & 0 & 0 & 1 & 1\\
1 & 0 & 0 & 1 & 0 & 1 \\
4-\zeta & 1 & 0 & 0 & 0 & 0
\end{bmatrix}
\,
P_{41}
\!=\!
\begin{bmatrix}
2-2\zeta & 2 & 0 & 0 \\
2-2\zeta & 0 & 1 & 1 \\
1 & 0 & 0 & 2 \\
4-\zeta & 1 & 0 & 0
\end{bmatrix}
\,
P_{42}
\!=\!
\begin{bmatrix}
2-2\zeta & 2 & 0 & 0 \\
2-2\zeta & 0 & 1 & 1 \\
1 & 0 & 0 & 1 \\
4-\zeta & 1 & 0 & 0
\end{bmatrix}
}
$$
$$
{\tiny
P_{43}
\!=\!
\begin{bmatrix}
2-2\zeta & 2 & 0 & 0 \\
2-2\zeta & 0 & 1 & 1 \\
2 & 0 & 0 & 1 \\
4-\zeta & 1 & 0 & 0
\end{bmatrix}
\!
P_{44}
\!=\!
\begin{bmatrix}
2-2\zeta & 2-2\zeta & 2 & 0 & 0 \\
2-2\zeta & 2-2\zeta & 0 & 1 & 1 \\
1 & 2 & 0 & 0 & 1 \\
4-\zeta & 4-\zeta & 1 & 0 & 0
\end{bmatrix}
\!
P_{45}
\!=\!
\begin{bmatrix}
2-2\zeta & 2-2\zeta & 2 & 2 & 0 & 0 \\
2-2\zeta & 2-2\zeta & 0 & 0 & 1 & 1 \\
1 & 2 & 0 & 1 & 0 & 1 \\
4-\zeta & 4-\zeta & 1 & 1 & 0 & 0
\end{bmatrix}
}
$$
$$
{\tiny
P_{46}
\!=\!
\begin{bmatrix}
2-2\zeta & 2-2\zeta & 2 & 0 & 0 \\
2-2\zeta & 2-2\zeta & 0 & 1 & 1 \\
1 & 3 & 0 & 0 & 1 \\
4-\zeta & 4-\zeta & 1 & 0 & 0
\end{bmatrix}
\,
P_{47}
\!=\!
\begin{bmatrix}
2-2\zeta & 2 & 2 & 0 & 0 \\
2-2\zeta & 0 & 0 & 1 & 1 \\
1 & 0 & 1 & 0 & 1 \\
4-\zeta & 1 &1 & 0 & 0
\end{bmatrix}
\,
P_{48}
\!=\!
\begin{bmatrix}
2-2\zeta & 2 & 2 & 0 & 0 \\
2-2\zeta & 0 & 0 & 1 & 1 \\
1 & 0 & 2 & 0 & 1 \\
4-\zeta & 1 &1 & 0 & 0
\end{bmatrix}
}
$$
$$
{\tiny
P_{49}
\!=\!
\begin{bmatrix}
2-2\zeta & 2-\zeta & 2 & 0 & 0 \\
2-2\zeta & 2- \zeta& 0  & 1 & 1 \\
1 & 2 & 0  & 0 & 1 \\
4-\zeta & \frac{5-\zeta}{2} & 1  & 0 & 0
\end{bmatrix}
\,
P_{50}
\!=\!
\begin{bmatrix}
3-2\zeta & 3 & 0 & 0 \\
3-2\zeta & 0 & 1 & 1 \\
0 & 1 & 0 & 1 \\
2 & 0 & 0 & 0
\end{bmatrix}
\,
P_{51}
\!=\!
\begin{bmatrix}
3-2\zeta & 3 & 3 & 0 & 0 \\
3-2\zeta & 0 & 0 & 1 & 1 \\
0 & 1 & 2 & 0 & 1 \\
2 & 0 & 0 & 0 & 0
\end{bmatrix}
}
$$
For $P_{52}-P_{54}$ it holds  $\zeta \equiv 6 \mod 9$.
$$
{\tiny
P_{52}
\!=\!
\begin{bmatrix}
3-2\zeta & 3 & 0 & 0 \\
3-2\zeta & 0 & 1 & 1 \\
1 & 0 & 0 & 1 \\
\frac{9-2\zeta}{3} & 1 & 0 & 0
\end{bmatrix}
\quad
P_{53}
\!=\!
\begin{bmatrix}
3-2\zeta & 3 & 3 & 0 & 0 \\
3-2\zeta & 0 & 0 & 1 & 1 \\
1 & 0 & 1 & 0 & 1 \\
\frac{9-2\zeta}{3} & 1 & 1 & 0 & 0
\end{bmatrix}
\quad
P_{54}
\!=\!
\begin{bmatrix}
3-2\zeta & 3-2\zeta & 3 & 0 & 0 \\
3-2\zeta & 3-2\zeta & 0 & 1 & 1 \\
1 & 2 & 0 & 0 & 1 \\
\frac{9-2\zeta}{3} & \frac{9-2\zeta}{3} & 1 & 0 & 0
\end{bmatrix}
}
$$
For $P_{55}-P_{57}$ it holds  $\zeta \equiv 0 \mod 9$.
$$
{\tiny
P_{55}
\!=\!
\begin{bmatrix}
3-2\zeta & 3 & 0 & 0 \\
3-2\zeta & 0 & 1 & 1 \\
1 & 0 & 0 & 1 \\
\frac{12-4\zeta}{3} & 2 & 0 & 0
\end{bmatrix}
\quad
P_{56}
\!=\!
\begin{bmatrix}
3-2\zeta & 3 & 3 & 0 & 0 \\
3-2\zeta & 0 & 0 & 1 & 1 \\
1 & 0 & 1 & 0 & 1 \\
\frac{12-4\zeta}{3} & 2 & 2 & 0 & 0
\end{bmatrix}
\quad
P_{57}
\!=\!
\begin{bmatrix}
3-2\zeta & 3-2\zeta & 3 & 0 & 0 \\
3-2\zeta & 3-2\zeta & 0 & 1 & 1 \\
1 & 2 & 0 & 0 & 1 \\
\frac{12-4\zeta}{3} & \frac{9-2\zeta}{3} & 1 & 0 & 0
\end{bmatrix}
}
$$
$$
{\tiny
P_{58}
\!=\!
\begin{bmatrix}
4-2\zeta & 4 & 0 & 0 \\
4-2\zeta & 0 & 1 & 1 \\
1 & 0 & 0 & 1 \\
\frac{5-\zeta}{2} & 1 & 0 & 0
\end{bmatrix},
\zeta \equiv 7 \mod 12
\qquad
P_{59}
\!=\!
\begin{bmatrix}
4-2\zeta & 4 & 0 & 0 \\
4-2\zeta & 0 & 1 & 1 \\
1 & 0 & 0 & 1 \\
\frac{9-3\zeta}{2} & 3 & 0 & 0
\end{bmatrix},
\zeta \equiv 1 \mod 12
}
$$
For $P_{60}$, $P_{61}$ it holds $\zeta \in 2\ZZ_{\geq 2} +1$.
$$
{\tiny
P_{60}
\!=\!
\begin{bmatrix}
1-m_0\zeta & 1 & 1 & 0 & 0 \\
1-m_0\zeta & 0 & 0 & 1 & 1 \\
1 & 0 & 1 & 0 & 1 \\
2m_0 & 0 & 0 & 0 & 0
\end{bmatrix}
, m_0\geq 2
\qquad
P_{61}
\!=\!
\begin{bmatrix}
2-\zeta & 2 & 0 & 0 \\
2-\zeta & 0 & 1 & 1 \\
-1 & -1 & 0 & 1 \\
2 & 0 & 0 & 0
\end{bmatrix}
}
$$
For $P_{62}-P_{65}$ it holds $\zeta=k\imath+\lll$, $\gcd(k,\lll)=\gcd(\mathfrak{d},\imath)=1$ for all $\mathfrak{d} \in \ZZ\cap[d,d+d_0]$.
\setlength{\arraycolsep}{1.5pt}
$$
{\tiny
P_{62}
\!=\!
\begin{bmatrix}
-k\imath & \lll & 0 & 0 & \cdots & 0 & 0 \\
-k\imath & 0 & 1 & 1 & \cdots & 0 & 0 \\
\vdots & \vdots & \vdots & \vdots & \ddots & \vdots & \vdots \\
-k\imath & 0 & 0 & 0 & \cdots & 1 & 1 \\
d & d & 0 & d_{2} & \cdots & 0 & d_{r} \\
\imath\frac{1-\mu k}{\lll+k \imath} & \frac{\imath+\mu \lll}{\lll+k \imath} & 0 & 0 & \cdots & 0 & 0
\end{bmatrix}
\quad
P_{63}
\!=\!
\begin{bmatrix}
-k\imath & \lll & \lll & 0 & 0 & \cdots & 0 & 0 \\
-k\imath & 0 & 0 & 1 & 1 & \cdots & 0 & 0 \\
\vdots & \vdots & \vdots & \vdots & \vdots & \ddots & \vdots & \vdots \\
-k\imath & 0 & 0 & 0 & 0 & \cdots & 1 & 1 \\
d & 0 & d+d_1 &  0 & d_{2} & \cdots & 0 & d_{r} \\
\imath\frac{1-\mu k}{\lll+k \imath} & \frac{\imath+\mu \lll}{\lll+k \imath} & \frac{\imath+\mu \lll}{\lll+k \imath} & 0 & 0 & \cdots & 0 & 0
\end{bmatrix}
}
$$
$$
{\tiny
P_{64}
\!=\!
\begin{bmatrix}
-k\imath & -k\imath & \lll & 0 & 0 & \cdots & 0 & 0 \\
-k\imath & -k\imath & 0 & 1 & 1 & \cdots & 0 & 0 \\
\vdots & \vdots & \vdots & \vdots & \vdots & \ddots & \vdots & \vdots \\
-k\imath & -k\imath & 0 & 0 & 0 & \cdots & 1 & 1 \\
d & d+d_0 & 0 & 0 & d_{2} & \cdots & 0 & d_{r} \\
\imath\frac{1-\mu k}{\lll+k \imath} & \imath\frac{1-\mu k}{\lll+k \imath} & \frac{\imath+\mu \lll}{\lll+k \imath} & 0 & 0 & \cdots & 0 & 0
\end{bmatrix}
\quad
P_{65}
\!=\!
\begin{bmatrix}
-k\imath & -k\imath & \lll & \lll & 0 & 0 & \cdots & 0 & 0 \\
-k\imath & -k\imath & 0 & 0 & 1 & 1 & \cdots & 0 & 0 \\
\vdots & \vdots & \vdots & \vdots & \vdots & \vdots & \ddots & \vdots & \vdots \\
-k\imath & -k\imath & 0 & 0 & 0 & 0 & \cdots & 1 & 1 \\
d & d+d_0 & 0 & d+d_1 &  0 & d_{2} & \cdots & 0 & d_{r} \\
\imath\frac{1-\mu k}{\lll+k \imath} & \imath\frac{1-\mu k}{\lll+k \imath} & \frac{\imath+\mu \lll}{\lll+k \imath} & \frac{\imath+\mu \lll}{\lll+k \imath} & 0 & 0 & \cdots & 0 & 0
\end{bmatrix}
}
$$
\end{proposition}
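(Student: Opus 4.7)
The plan is to follow the scheme that already carried through Propositions~\ref{prop:zeta=1}--\ref{prop:zeta=2(k,2,2)}. Starting from the normal form of Proposition~\ref{prop:zetaexcep} specialized to Case~$(vi)$, the leading block has leading platonic triple $(l_{01},l_{11},1)$, penultimate row $(d_{011},d_{111},0,\ldots,0)$ and last row $((\imath-\mu l_{01})/\zeta,(\imath+\mu l_{11})/\zeta,0,\ldots,0)$, with $\gcd(\imath,\zeta,\mu)=1$. A short computation shows that the leading $P$-elementary cone $\tau_1$ has leaving point
$$
v(\tau_1)' \ = \ \left(0,\ldots,0,\frac{l_{11}d_{011}+l_{01}d_{111}}{l_{01}+l_{11}},\frac{\imath}{\zeta}\right),
$$
whose penultimate entry I would normalize using admissible row operations of types (iii) and (iv). The whole argument splits along three axes: the three subcases $l_{11}\geq 2$, $l_{11}=1<l_{01}$ and $l_{01}=l_{11}=1$ of the leading platonic triple; the divisibility pattern between $\imath,\zeta,l_{01},l_{11},\mu$; and the distribution of additional columns among the leaves $\lambda_0,\ldots,\lambda_r$ and the lineality part $\lambda$.

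The first move is to control $r$. For any additional column $v_{i2}$ in a leaf $\lambda_i$ with $l_{i1}=1$ (forced by irredundancy for $i\geq 2$), swapping $v_{i1}$ with $v_{i2}$ in $\tau_1$ yields a second $P$-elementary cone $\tau_2$ with
$$
v(\tau_2)' \ = \ v(\tau_1)' \ + \ \frac{l_{01}l_{11}d_{i21}}{l_{01}+l_{11}}\cdot e_{r+1}.
$$
Estimating this distance and combining with Lemma~\ref{le:twopoints} and Corollaries~\ref{corr:halfcone}--\ref{corr:1313cone} collapses the situation in almost all configurations to $\zeta=\imath$ with all extra $d_{i21}=1$, which gives precisely the matrix-only families $P_{37}$--$P_{39}$ and $P_{62}$--$P_{65}$ that are open in $r$. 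A short residual analysis isolates the two further open-in-$r$ families $P_{26}$ (from $l_{01}=l_{11}=\zeta=2$) and $P_{60}$ (from $l_{01}=l_{11}=1$). In every other configuration the same distance estimates force $r=2$.

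The bulk of the work is then the finite case analysis with $r=2$. Writing $\zeta=k\imath+\lll$ with $0\le\lll<\imath$, for $l_{11}\geq 2$ the leaving point $v(\tau_1)'$ is integer (or is pushed to be so by operations on rows $r+1,r+2$), so any additional column creates a second integer point in $A_X^c(\lambda)$ and Lemma~\ref{le:twopoints} together with the Farey-type Corollaries~\ref{corr:halfcone}, \ref{corr:halfhalfcone}, \ref{corr:13cone}, \ref{corr:1313cone} and Lemmas~\ref{le:1245cone}, \ref{le:ncone} carves out a short list of admissible triples $(l_{01},l_{11},\imath)$. After normalizing $d_{011}$ and $d_{111}$, these produce the sporadic matrices $P_{22}$--$P_{25}$, $P_{27}$--$P_{36}$ together with the congruence-class families $P_{40}$--$P_{59}$ attached to $\zeta$ modulo small integers. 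For $l_{11}=1<l_{01}$ the second leaf must itself carry an additional column $v_{12}$, and a shorter variant of the same argument yields the remaining sporadic matrix $P_{28}$ and the enlargement $P_{61}$; the degenerate case $l_{01}=l_{11}=1$ is quickest because there $|v(\tau_1)'-v(\tau_2)'|\geq 3/2$ for any admissible extra column and one of the two leaving points is already integer, leaving only $P_{60}$. For every candidate, canonicity is verified by checking that the only lattice points in the relative interior of each $A_X^c(\lambda_i)$ lie on the halfplane $\HHH_i$, while non-canonicity of excluded cases is always witnessed by an explicit convex combination exhibiting an interior lattice point, typically of the form $(-1,-1,\ast,\ast)\in A_X^c(\lambda_0)$ or $(1,0,\ast,\ast)\in A_X^c(\lambda_1)$, as in Cases~3 and 4 of Proposition~\ref{prop:zeta=1}.

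The final task is combinatorial bookkeeping: for each base matrix that survives I would test which possible additional columns can coexist without producing a new interior lattice point in some $A_X^c(\lambda_i)$; this accounts for the enlarged matrices $P_{32}$, $P_{35}$, $P_{36}$, $P_{44}$--$P_{49}$, $P_{51}$, $P_{53}$, $P_{54}$, $P_{56}$, $P_{57}$ and $P_{63}$--$P_{65}$. The hard part will not be any single computation but rather the sheer number of arithmetic subcases: the Diophantine constraint $\gcd(\imath,\zeta,\mu)=1$ combined with the Farey geometry of Section~\ref{sec:pol} forces a case split along $\zeta$ modulo $\lcm(2,3,4,6,9,12)=36$, with further refinements coming from the parity of $\mu$ and from $\gcd$'s with $\zeta$, and the normal-form reductions from Proposition~\ref{prop:zetaexcep} interact non-trivially with these congruences. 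Keeping the split exhaustive without double-counting, and carrying the correct admissible operation through each arithmetic branch, will be the main logistical difficulty.
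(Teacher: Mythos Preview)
Your decomposition along the values of $l_{11}$ mirrors the $\zeta=1$ proof, but here it misses the controlling arithmetic and, more seriously, breaks down on the largest family of cases. Two structural observations are absent. First, integrality of the last row forces $\zeta\mid l_{01}+l_{11}$ (since $\gcd(\mu,\zeta)=1$), so one should write $l_{01}=m_0\zeta-\lll$, $l_{11}=m_1\zeta+\lll$; this relation drives everything and never appears in your plan. Second, the paper's primary trichotomy is not on $l_{11}$ but on the relation between $\zeta$ and $\imath$: namely $\zeta\mid\imath$, $\zeta<\imath$ with $\zeta\nmid\imath$, and $\zeta>\imath$.

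The third case is where your proposal fails. The last coordinate of $v(\tau_i)'$ is $\imath/\zeta$, and when $\zeta>\imath$ this is strictly below $1$, so $A_X^c(\lambda)$ contains no nonzero lattice point at all; Lemma~\ref{le:twopoints} and the half/third-cone corollaries give you nothing. The paper instead passes to the three–dimensional polytope $\mathsf{Q}=\conv(\nu_0,\nu_1,\nu_\tau)\subset A_X^c(\lambda_0)\cup A_X^c(\lambda_1)$, slices it by $\{x_1=\lll-j\zeta\}$ to find parallel integer segments, and invokes Theorem~\ref{th:toric3dim} (Ishida--Iwashita) to force $\imath\in\{2,3\}$ or a contradiction. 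For $m_0=1$ it goes further: after normalizing $d_{011}=d_{111}$, it computes the Smith normal form of $P^*$, reads off $\Cl(X)\cong\ZZ/\imath\ZZ$ and the quotient presentation, and then appeals directly to Mori's classification of terminal threefolds to obtain $P_{61}$--$P_{65}$. None of this machinery is in your plan.

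This also explains why several of your attributions are off. The open-in-$r$ families $P_{37}$--$P_{39}$ and $P_{62}$--$P_{65}$ do not come from $\zeta=\imath$; they live in $\zeta>\imath$ (with $\imath=2$ for the former). The matrix $P_{60}$ has $l_{11}=1$ but $l_{01}=m_0\zeta-1$ with $m_0\ge 2$, not $l_{01}=l_{11}=1$; $P_{28}$ has $l_{11}=4$; and $P_{61}$ has $l_{11}=2$. Your plan will carry Case~1 ($\zeta\mid\imath$, producing $P_{22}$--$P_{28}$) more or less as written, but for the remaining thirty-odd matrices you need the three–dimensional toric comparison and Mori's theorem rather than the two–point lemma in $\lambda$.
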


\begin{proof}
According to Proposition~\ref{prop:zetaexcep} (iii), there exists an integer $\mu$ with $\gcd(\mu,\zeta,\imath)=1$, so that the matrix $P$ has leading block 
\setlength{\arraycolsep}{4pt}
$$
{\tiny
\begin{bmatrix}
-l_{01} & l_{11}  & 0 & 0 & \dots & 0 
\\
-l_{01} & 0 & 1 & 0 & \dots & 0 
\\
-l_{01} & 0 & 0 & 1 & \dots & 0 
\\
\vdots & \vdots & \vdots & \vdots & \ddots & \vdots 
\\
-l_{01} & 0 & 0 &  0 &  \dots &  1 
\\
d_{011} & d_{111} & 0 & 0 & \ldots & 0 
\\
\frac{\imath-\mu l_{01}}{\zeta} & \frac{\imath+\mu l_{11}}{\zeta} & 0 & 0 & \ldots &0 
\end{bmatrix}
}
$$
with $l_{01}\geq l_{11} \geq 1$.
Moreover we require $\gcd(\mu,\zeta)=1$, otherwise since $(\imath-\mu l_{01})/\zeta$ is integer, $\gcd(\imath,\mu,\zeta)\neq 1$ would follow. Moreover, subtracting the last entry of $v_{01}$ from the last entry of $v_{11}$, we see that $\mu(l_{01}+l_{11})/\zeta$ is integer, leading to $l_{01}+l_{11}=m\zeta$ for some $m \in \ZZ_{\geq 1}$.
We distinguish three cases in the following.

\bigskip

\noindent
\emph{Case 1: $\zeta | \imath$}. Here we require $\gcd(\mu,\zeta)=1$, so $\zeta|l_{01},l_{11}$. Set $\lll_i=l_{i1}/\zeta$ for $i=0,1$. Due to irredundancy we have at least another column $v_{22}=e_{2}+d_{221}e_{r+1}$, where we can assume $d_{221}\geq 1$ by if this is not the case adding the  $-d_{221}$ of the second to the penultimate row. So the intersections of $\partial A_X^c$ with the elementary cones $\tau_1$ set up by the leading block and $\tau_2$ set up by the leading block with column $v_{21}$ replaced by $v_{22}$ are the leaving points
\begin{align*}
v(\tau_1)' = & \left( 0,\ldots,0, \frac{\lll_1d_{011}+\lll_0d_{111}}{\lll_0+\lll_1},\imath/\zeta\right), \\
v(\tau_2)' = &  \left(0,\ldots,0, \frac{\lll_1d_{011}+\lll_0d_{111} + d_{221}\lll_0\lll_1\zeta}{\lll_0+\lll_1},\imath/\zeta\right).
\end{align*}
We distinguish three subcases in the following.

\medskip

\noindent
\emph{Case 1.1: $\zeta = 2$}. We achieve $\mu=1$ by admissible operations and distinguish some more subcases.

\smallskip

\noindent
\emph{Case 1.1.1: $\lll_1\geq 2$}.
Then $|v(\tau_1)'-v(\tau_2)'|\geq 2$, forcing $\imath=2$. Then $A_X^c(\lambda_0)\cap\{x_{r+2}=0\}$ is the convex hull of $0_{\ZZ_{r+2}}$ and the two points
\begin{align*}
w_1 =& \left(-2,-2,\frac{\lll_1d_{011}+\lll_0d_{111}+d_{011}-d_{111}}{\lll_0+\lll_1},0\right),
\\
w_2 =& \left(-2,-2,\frac{\lll_1d_{011}+\lll_0d_{111}+d_{011}-d_{111}+2d_{221}\lll_1(\lll_0-1)}{\lll_0+\lll_1},0\right).
\end{align*}
Thus if $\lll_1\geq 3$ or $\lll_1=2$ and $\lll_0\geq 4$, we have $|w_1-w_2|\geq 2$ and due to Lemma~\ref{le:twopoints}, $A_X^c(\lambda_0)\cap\{x_{r+2}=0\}$ can not be canonical. So we have $\lll_1=2$ and $\lll_0 \in \{2,3\}$ and can assume $d_{111}=0$. 

\smallskip

\noindent
\emph{Case 1.1.1.1: $\lll_0=3$}. In this case,
\begin{align*}
w_1 =& \left(-2,-2,\frac{3}{5}d_{011},0\right),
w_2 = \left(-2,-2,\frac{3}{5}d_{011}+\frac{8}{5}d_{221},0\right),
\end{align*}
which forces $d_{221}=1$, $r=2$ and $d_{011} \equiv 7 \mod 10$ due to Corollary~\ref{corr:halfhalfcone}. By admissible operations, we achieve $d_{011}=-3$. But then, $A_X^c(\lambda_1)$ contains the point $(1,0,0,1)$.

\smallskip

\noindent
\emph{Case 1.1.1.2: $\lll_0=2$}. In this case,
\begin{align*}
w_1 =& \left(-2,-2,\frac{3}{4}d_{011},0\right),
w_2 = \left(-2,-2,\frac{3}{4}d_{011}+d_{221},0\right),
\end{align*}
which forces $d_{221}=1$, $r=2$ and $d_{011} \equiv 1,3,6 \mod 8$ due to Corollary~\ref{corr:halfhalfcone}. By admissible operations, we get $d_{011} \in \{1,6\}$. For $d_{011} =6$, we have the point $(1,0,1,1)$ in $A_X^c(\lambda_1)$, while for  $d_{011} =1$, the resulting matrix
$$
P_{22}:=
{\tiny
\begin{bmatrix}
-4 & 4 & 0 & 0 \\
-4 & 0 & 1 & 1 \\
1  & 0 & 0 & 1 \\
-1 & 3 & 0 & 0 
\end{bmatrix}
}
$$
gives a canonical singularity. It is easy to check that no column can be added to this matrix.

\smallskip

\noindent
\emph{Case 1.1.2: $\lll_1 = 1, \lll_0 \geq 2$}.
We have $|v(\tau_1)'-v(\tau_2)'| \geq 3/2 $ if $\lll_0 \geq 2$ and for $\lll_0 = 2$, the polytope $A_X^c(\lambda)$ either contains two points or a cone like in Corollary~\ref{corr:halfhalfcone} and thus, due to this corollary and Lemma~\ref{le:twopoints}, we get $\imath \in \{2,4\}$.

\smallskip

\noindent
\emph{Case 1.1.2.1: $\imath=4$}. Here we require $d_{211}=1$ and $r=2$, since otherwise $|v(\tau_1)'-v(\tau_2)'| \geq 3$. Moreover, we achieve $d_{111}=0$ by admissible operations. Recall the leaving points
\begin{align*}
v(\tau_1)' = & \left( 0,\ldots,0, \frac{d_{011}}{\lll_0+1},2\right), 
v(\tau_2)' =   \left(0,\ldots,0, \frac{d_{011} + 2\lll_0}{\lll_0+1},2\right).
\end{align*}
We require $d_{011} \equiv 1 \mod 2(\lll_0+1)$ due to Corollary~\ref{corr:halfhalfcone} and Lemma~\ref{le:twopoints} and by admissible operations, we achieve $d_{011} = 1$. But then inside $A_X^c(\lambda_0)$ lies the point
$$
(-1,-1,1,1)=\frac{1}{2\lll_0}v_{01}+\frac{2\lll_0-3}{8\lll_0}v(\tau_1)'+\frac{4\lll_0-1}{8\lll_0}v(\tau_2)'.
$$

\smallskip

\noindent
\emph{Case 1.1.2.2: $\imath=2$}. Here $d_{111}$ must be odd and by adding appropriate multiples of the first to the penultimate row, we achieve $d_{111}=1$. As in Case 1.1.1, we investigate	 $A_X^c(\lambda_0)\cap\{x_{r+2}=0\}$, which here is the convex hull of $0_{\ZZ_{r+2}}$ and the two points
\begin{align*}
w_1 =& \left(-2,-2,\frac{2d_{011}+\lll_0-1}{\lll_0+1},0\right),
w_2 = \left(-2,-2,\frac{2d_{011}+\lll_0-1 + 2d_{211}(\lll_0-1)}{\lll_0+1},0\right).
\end{align*}
Lemma~\ref{le:twopoints} forces $d_{211}=1$ and $r=2$ again and together with Corollary~\ref{corr:halfhalfcone}, it leads to $2d_{011} \equiv \lll_0+4, \lll_0+5, \lll_0+6 \mod 2\lll_0+2$. Since by admissible operations we can add multiples of $\lll_0+1$ to $d_{011}$, we achieve $2d_{011} \in \{ \lll_0+4, \lll_0+5, \lll_0+6 \}$.

\smallskip

\noindent
\emph{Case 1.1.2.2.1: $\lll_0$ odd}. Here we get $d_{011}=(\lll_0+5)/2$. Since $v_{01}$ must be primitive, we require $\lll_0+5 \equiv 2 \mod 4 $ in addition. So $\lll_0 \geq 5$. Then $A_X^c(\lambda_0)$ contains
\begin{align*}
w_1 =& \left(-6,-6,\frac{3\lll_0+7}{\lll_0+1},-2\right),
w_2 = \left(-6,-6,\frac{5\lll_0+1}{\lll_0+1},-2\right)
\end{align*}
and thus the point $(-3,-3,2,-1)$. It can not be canonical.

\smallskip

\noindent
\emph{Case 1.1.2.2.2: $\lll_0=2$}. By admissible operations, the two possible cases $d_{011}=3,4$ are equivalent to $d_{011}=0$ and give a canonical singularity with defining matrix $P_{23}$.

\smallskip

\noindent
\emph{Case 1.1.2.2.3: $\lll_0 \geq 4$ even}. By admissible operations, the two possibilities for $d_{011}$ are equivalent, so we let $d_{011}=(\lll_0+4)/2$. Here $A_X^c(\lambda_0)$ contains the points
\begin{align*}
w_1 =& \left(-6,-6,\frac{3\lll_0+5}{\lll_0+1},-2\right),
w_2 = \left(-6,-6,\frac{5\lll_0-1}{\lll_0+1},-2\right)
\end{align*}
and thus the point $(-3,-3,2,-1)$ if $\lll_0 > 4$. If $\lll_0=4$, we get a canonical singularity with matrix $P_{24}$.

\smallskip

\noindent
\emph{Case 1.1.3: $\lll_1 = \lll_0 =1$}. We distinguish two subcases.

\smallskip

\noindent
\emph{Case 1.1.3.1: $\imath\geq 4$}. Lemma~\ref{le:twopoints} forces $d_{211}=1$ and $r=2$. Together with Corollary~\ref{corr:halfhalfcone}, it leads to $\imath=4$, $d_{111}=0$ and $d_{011} \equiv 1 \mod 4$. By admissible operations, we achieve  $d_{011}=1$. The resulting matrix
$$
P_{25}:=
{\tiny
\begin{bmatrix}
-2 & 2 & 0 & 0  \\
-2 & 0 & 1 & 1  \\
1 &  0 &  0 & 1\\
1 & 3 & 0 & 0
\end{bmatrix}
}
$$
gives a canonical singularity. It is easy to check that no columns can be added.

\smallskip

\noindent
\emph{Case 1.1.3.2: $\imath=2$}. We achieve $d_{011}=d_{111}=1$ here. It is clear that $r$ and the $d_{i11}$ can be arbitrary and canonicity is preserved. In $\lambda_0$ and $\lambda_1$, no columns can be added, but up to two in the lineality part. We denote this series by
$$
P_{26}:=
{\tiny
\begin{bmatrix}
-2 & 2     & 0 & 0 & \dots & 0 & 0 & 0& 0
\\
-2  & 0    & 1 &1& \dots & 0 & 0 & 0 & 0
\\
\vdots   & \vdots  & \vdots & \vdots & \ddots & \vdots & \vdots & \vdots & \vdots
\\
-2  & 0    & 0 & 0 & \dots &  1 & 1 & 0 & 0
\\
1 & 1   & 0 & d_{2} & \ldots & 0 & d_{r} & d'_{1} & d'_{2}
\\
0 & 2    & 0 & 0  & \ldots & 0 & 0 & 1 & 1
\end{bmatrix}.
}
$$

\medskip

\noindent
\emph{Case 1.2: $\zeta = 3$}. Here if $\lll_0\geq 2$, then $|v(\tau_1)'-v(\tau_2)'|=\left|3d_{221}\lll_0\lll_1/(\lll_0+\lll_1)\right|\geq 2$. Moreover, if $\lll_0=\lll_1=1$, then $A_X^c(\lambda)$ contains two integer points as well. Thus $\imath = 3$ due to Lemma~\ref{le:twopoints} and Corollary~\ref{corr:halfhalfcone}. By adding appropriate multiples of the first to the last row, we achieve $\mu \in \{-1,1\}$.

\smallskip

\noindent
\emph{Case 1.2.1: $\lll_0=\lll_1=1$}. Here by interchanging the data of the first two leaves, we achieve $\mu=-1$. We achieve $d_{111}=1$ and $d_{011} \in \{0,1\}$ by admissible operations. The polytope $A_X^c(\lambda_0)$ contains the points
\begin{align*}
w_1 =& \left(-1,-1,\frac{3d_{011}+1}{6},1\right),
w_2 = \left(-1,-1,\frac{3d_{011}+1+3d_{211}}{6},1\right).
\end{align*}
Because otherwise there would be an integer point inbetween $w_1$ and $w_2$, which violates canonicity, we require $d_{211}=1$, $r=2$ and $d_{011}=0$. This gives the next canonical singularity with matrix $P_{27}$. It is easy to check that no additional columns are possible.

\smallskip

\noindent
\emph{Case 1.2.2: $\lll_0\geq 2$, $\lll_1=1$}. We distinguish two subcases.

\smallskip

\noindent
\emph{Case 1.2.2.1: $\mu=-1$}. We achieve $d_{111}=1$ as above. The polytope $A_X^c(\lambda_0)$ contains the points
\begin{align*}
w_1 =& \left(-1,-1,\frac{3d_{011}-1+2\lll_0}{3(\lll_0+1)},1\right),
w_2 = \left(-1,-1,\frac{3d_{011}-1+2\lll_0+3(2d_{211}\lll_0-1)}{3(\lll_0+1)},1\right)
\end{align*}
and thus, there is an integer point inbetween, violating canonicity.

\smallskip

\noindent
\emph{Case 1.2.2.2: $\mu=1$}. We achieve $d_{111}=0$ by admissible operations. The polytope $A_X^c(\lambda_0)$ contains the points
\begin{align*}
w_1 =& \left(-3,-3,\frac{2d_{011}}{\lll_0+1},0\right),
w_2 = \left(-3,-3,\frac{2d_{011} +3d_{211}(\lll_0-1)}{\lll_0+1},0\right),
\end{align*}
so for $\lll_0\geq 3$, due to Corollary~\ref{corr:halfhalfcone} and Lemma~\ref{le:twopoints}, the corresponding singularity cannot be canonical. For $\lll_0=2$, we get $d_{211}=1$ and $r=2$ as above, moreover $d_{011} \equiv 5,7 \mod 9$. Thus by admissible operations, we achieve $d_{011}=-2$. But then the point $(1,0,0,1)$ is contained in $A_X^c(\lambda_1)$. 

\smallskip

\noindent
\emph{Case 1.2.3: $\lll_0,\lll_1\geq 2$}. By if necessary exchanging the data of the first two leaves, we achieve $\mu=1$. Bear in mind that by doing this, we cannot assume $\lll_0\geq \lll_1$ anymore, i.e. the matrix possibly is not in standard form. As in Case 1.2.2.2, there are two points $w_1, w_2$ in $A_X^c(\lambda_0) \cap \{x_1=x_2=-1\}$, but now their distance is $3d_{211}\lll_1(\lll_0-1)/(\lll_0+\lll_1)\geq 3/2$, so that the corresponding singularity cannot be canonical.

\medskip

\noindent
\emph{Case 1.3: $\zeta \geq 4$}. Here  $|v(\tau_1)'-v(\tau_2)'|\geq 2$. Thus $\imath = \zeta$ due to Lemma~\ref{le:twopoints}. If now $A_X^c(\lambda)$ contains two integer points $(0,0,k,1)$ and $(0,0,k+2,1)$, then Theorem~\ref{th:toric3dim} forces $\lll_0=\lll_1=1$. But for $\lll_0,\lll_1\geq 2$, we have $|v(\tau_1)'-v(\tau_2)'|\geq 3$, so $A_X^c(\lambda)$ contains two such points. If $\lll_0=2$, $\lll_1=1$, then $|v(\tau_1)'-v(\tau_2)'| = 8/3$ and the penultimate entry of $v(\tau_1)'$ is a multiple of $1/3$. Therefore also in this case, $A_X^c(\lambda)$ contains two such points. The case $\lll_0=\lll_1=1$ is left. Now if $\zeta\geq 5$, then $A_X^c(\lambda)$ contains two such points as well. Applying Theorem~\ref{th:toric3dim} to $A_X^c(\lambda_0) \cup A_X^c(\lambda_1)$, we get $\zeta=\imath=2$, a contradiction. Thus only $\zeta=4$ is left. We require $d_{221}=1$, $r=2$, since otherwise again two such integer points would be inside $A_X^c(\lambda)$. Moreover, we have $\mu \equiv 1,3 \mod 4$. By exchanging the data of the first two leaves if necessary and adding appropriate multiples of the first row to the last, we achieve $\mu=1$, resulting in $d_{011}=d_{111}=1$. But then, the point $(1,0,1,1)$ lies inside $A_X^c(\lambda_1)$.

\bigskip

\noindent
\emph{Case 2: $\zeta \nmid \imath$, $\zeta < \imath$}. We can write $\imath=k\zeta+i$ for integer $k \geq 1$ and $ 1\leq i \leq \zeta -1$. Moreover, $\zeta \nmid l_{01},l_{11}$ due to integrality of the last entries of $v_{01},v_{11}$ and $\zeta \nmid \imath$.
Thus as in Case 1, the polytope $A_X^c(\lambda)$ contains the points 
\begin{align*}
v(\tau_1)' = & \left( 0,\ldots,0, \frac{l_{11}d_{011}+l_{01}d_{111}}{l_{11}+l_{01}},\imath/\zeta\right), \\
v(\tau_2)' = &  \left(0,\ldots,0, \frac{l_{11}d_{011}+l_{01}d_{111} + d_{221}l_{01}l_{11}}{l_{11}+l_{01}},\imath/\zeta\right).
\end{align*}
Remember that $l_{01}=m\zeta-l_{11}$ for some $m \in \ZZ_{\geq 1}$.
If $A_X^c(\lambda)$ contains an integer point with last coordinate $k$, then it is not canonical. But the line segment $A_X^c(\lambda)\cap \{x_{r+2}=k\}$ contains the points 
\begin{align*}
w_1 = & \left( 0,\ldots,0, k\frac{l_{11}d_{011}+(m\zeta-l_{11})d_{111}}{m(k\zeta+i)},k\right), \\
w_2 = &  \left(0,\ldots,0, k\frac{l_{11}d_{011}+(m\zeta-l_{11})d_{111} + d_{221}(m\zeta-l_{11})l_{11}}{m(k\zeta+i)},k\right).
\end{align*}
Since for $l_{11}\geq 4$, $|w_1-w_2|\geq 1$, we have $l_{11}\leq 3$. Moreover, we require 
$$
d_{221}k(m\zeta-l_{11})l_{11} \leq m(k\zeta+i) - 2
$$
in order to have no integer point in $A_X^c(\lambda)\cap \{x_{r+2}=k\}$. So $d_{211}=1$, $r=2$. 

\medskip

\noindent
\emph{Case 2.1: $l_{11}=3$}.
The inequality from above becomes
$$
m(2k\zeta-i)\leq 9k-2
$$
here. If $m\geq 2$, at least $k(2\zeta-9/2)\leq i-1$ must be fulfilled. Since $i\leq\zeta-1$, this forces $k=i=1$, $m=2$, $\zeta=2$ and thus $l_{01}=1$, a contradiction. If $m=1$, at least $k(2\zeta-9) \leq i-2$ must hold. For $k\geq 2$, this leads to $\zeta \leq 5$ and thus $l_{01} \leq 2$, a contradiction. For $k=1$, it leads to $\zeta =6$, $i=5$ and $l_{01}=3$. But since $\frac{11+3\mu}{6}$ cannot be integral, this is not possible.

\medskip

\noindent
\emph{Case 2.2: $l_{11}=2$}.
The inequality from above becomes
$$
m(k\zeta-i)\leq 4k-2
$$
here and we have $\zeta\geq 3$. The case $m\geq 3$ can be excluded.

\smallskip

\noindent
\emph{Case 2.2.1: $m=2$}. The fraction in the penultimate entries of $w_1$ and $w_2$ can be reduced by two, so even the inequality $k(\zeta-2)\leq i-2$ must be fulfilled. But this is impossible.

\smallskip

\noindent
\emph{Case 2.2.2: $m=1$}. We require $\zeta\geq 4 $ here. The inequality from above becomes $k(\zeta-4)\leq i-2$. This leads to $i \in \{\zeta-1,\zeta-2\}$.

\smallskip

\noindent
\emph{Case 2.2.2.1: $\zeta=4$}. Here $i=2$ is impossible, since again, the fraction in the penultimate entries of $w_1$ and $w_2$ can be reduced by two. The case $i=3$ remains, but the last entry of $v_{01}$, which is $(4k+3-2\mu)/4$, must be integer, which is impossible.

\smallskip

\noindent
\emph{Case 2.2.2.2: $\zeta\geq 5$}. Here $k\geq 3$ is impossible and $k=2$ leads to $\zeta=5$.

\smallskip

\noindent
\emph{Case 2.2.2.2.1: $k=2$}. We require $i=4$ here. But the fraction in the penultimate entries of $w_1$ and $w_2$ can be reduced by two, which makes this case impossible.

\smallskip

\noindent
\emph{Case 2.2.2.2.2: $k=1$}. If $i= \zeta-1$, then $\zeta$ must be odd and we achieve $\mu = (1-\zeta)/2$ and $d_{111}=0$ as well as $d_{011}=\zeta$. But then the point
$$
(-1,-1,3,2)=\frac{1}{\zeta-2}v_{01}+\frac{(\zeta-3)\zeta}{(\zeta-2)(2\zeta-1)}v(\tau_1)'+\frac{(\zeta-3)\zeta}{4\zeta^2-10\zeta+4}v(\tau_1)'
$$
lies in $A_X^c(\lambda_0)$. If $i= \zeta-2$, then $\zeta$ must be odd and we achieve $\mu=1-\zeta$ and $d_{111}=1$ as well as $d_{011}=(3-\zeta)/2$.
But then in $A_X^c(\lambda_0)$ lies the point
$$
(-1,-1,0,2)=\frac{1}{\zeta-2}v_{01}+\frac{(\zeta-3)\zeta}{4(\zeta-2)(\zeta-1)}v(\tau_1)'+\frac{(\zeta-3)\zeta}{4\zeta^2-12\zeta+8}v(\tau_1)'.
$$

\medskip

\noindent
\emph{Case 2.3: $l_{11}=1$, $l_{01}\geq 2$}. We need an additional column $v_{12}=(1,0,\ldots,0,d_{121},0)$ in $\lambda_1$  here.
The polytope $A_X^c(\lambda)$ contains the points 
\begin{align*}
v(\tau_1)' = & \left( 0,\ldots,0, \frac{d_{011}+l_{01}d_{111}}{1+l_{01}},\imath/\zeta\right), \\
v(\tau_3)' = &  \left(0,\ldots,0, \frac{d_{011}+l_{01}d_{111} + (d_{121}+d_{221})l_{01}}{1+l_{01}},\imath/\zeta\right).
\end{align*}
Thus the line segment $A_X^c(\lambda)\cap \{x_{r+2}=k\}$ contains the points 
\begin{align*}
w_1 = & \left( 0,\ldots,0, k\frac{d_{011}+(m\zeta-1)d_{111}}{m(k\zeta+i)},k\right), \\
w_3 = &  \left(0,\ldots,0, k\frac{d_{011}+(m\zeta-1)d_{111} + (d_{121}+d_{221})(m\zeta-1)}{m(k\zeta+i)},k\right).
\end{align*}
So $k(d_{121}+d_{221})(m\zeta-1)\leq m(k\zeta+i)-2$ must hold. But this is impossible.

\medskip

\noindent
\emph{Case 2.4: $l_{01}=l_{11}=1$}. This forces $\zeta=2$ and $i=1$. We need another column $v_{02}=(-1,\ldots,-1,d_{021},0)$ in $\lambda_0$  here.
The polytope $A_X^c(\lambda)$ contains the points 
\begin{align*}
v(\tau_1)' = & \left( 0,\ldots,0, \frac{d_{011}+d_{111}}{2},\imath/2\right), \\
v(\tau_4)' = &  \left(0,\ldots,0, \frac{d_{011}+d_{111} + (d_{021}+d_{121}+d_{221})l_{01}}{2},\imath/2\right).
\end{align*}
Thus the line segment $A_X^c(\lambda)\cap \{x_{r+2}=k\}$ contains the points 
\begin{align*}
w_1 = & \left( 0,\ldots,0, k\frac{d_{011}+d_{111}}{2k+1},k\right), \\
w_3 = &  \left(0,\ldots,0, k\frac{d_{011}+d_{111} + (d_{021}+d_{121}+d_{221})}{2k+1},k\right).
\end{align*}
But $|w_1-w_3|\geq 1$, so $A_X^c(\lambda)\cap \{x_{r+2}=k\}$ contains an integer point.

\bigskip

\noindent
\emph{Case 3: $\zeta > \imath \geq 2$}. Recall that $l_{11}+l_{01}=m\zeta$. We now write $l_{11}=m_1\zeta+\lll$ and $l_{01}=m_0\zeta-\lll$ with $m_0+m_1=m$. Now the leading block is of the form
$$
{\tiny
\begin{bmatrix}
-(m_0\zeta-\lll) & m_1\zeta+\lll  & 0 & 0 & \dots & 0 
\\
-(m_0\zeta-\lll) & 0 & 1 & 0 & \dots & 0 
\\
-(m_0\zeta-\lll) & 0 & 0 & 1 & \dots & 0 
\\
\vdots & \vdots & \vdots & \vdots & \ddots & \vdots 
\\
-(m_0\zeta-\lll) & 0 & 0 &  0 &  \dots &  1 
\\
d_{011} & d_{111} & 0 & 0 & \ldots & 0 
\\
\frac{\imath+\mu \lll}{\zeta} -\mu m_0 & \frac{\imath+\mu \lll}{\zeta} +\mu m_1 & 0 & 0 & \ldots &0 
\end{bmatrix}.
}
$$ 
Since $A_X^c(\lambda)$ provides no restrictions in this case any more, our approach is different than in the previous cases. Observe that all $v_{0i}$ and $v_{1i}$ lie on one affine hyperplane and moreover  $v(\tau_1)'$ lies on the line segment with end points $v_{01}$ and $v_{11}$. Thus $A_X^c(\lambda_0) \cup A_X^c(\lambda_1)$ contains a polytope of the form $\Qq:=\conv ( \nu_0, \nu_1, \nu_\tau )$ with
\begin{align*}
\nu_0 & := \left(\lll-m_0\zeta,d_{011},\frac{\imath+\mu \lll}{\zeta} -\mu m_0\right), \\
\nu_1 & := \left(m_1\zeta+\lll,d_{111},\frac{\imath+\mu \lll}{\zeta} +\mu m_1\right), \\
\nu_\tau & :=  \left(0,\frac{(m_0\zeta-\lll)d_{111}+(m_1\zeta+\lll)d_{011}+\left(\sum_{i=2}^{r}d_{i21}\right)(m_0\zeta-\lll)(m_1\zeta+\lll)}{(m_0+m_1)\zeta},\frac{\imath}{\zeta}\right).
\end{align*}
Now we search for integer points inside this polytope. If we find a line segment inside that contains three  or more integer points and neither $\nu_0$ nor $\nu_1$, then Theorem~\ref{th:toric3dim} inter alia forces $\imath=2$. If we find two parallel line segments inside that both contain two integer points, Theorem~\ref{th:toric3dim} forces $\imath=2$ as well. If one of those parallel line segments even contains three integer points, the corresponding singularity cannot be canonical. We distinguish some subcases.

\medskip

\noindent
\emph{Case 3.1: $m_1\geq 1$.} We have $m_0 \geq 2$ here, due to $l_{01}\geq l_{11}$. We have a look at the line segment $\Qq_1=\Qq \cap \{x_1 =\lll\}$, it has the endpoints
\begin{align*}
w_1 & := \left(\lll, \frac{d_{111}m_0+d_{011}m_1}{m_0+m_1},\frac{\imath+\mu \lll}{\zeta}\right) \\
w_2 & := w_1 + \frac{\left(\sum_{i=2}^{r}d_{i21}\right)m_1(m_0\zeta-\lll)}{m_0+m_1} e_2.
\end{align*}
So it contains at least $k$ integer points if 
$$
\left(\sum_{i=2}^{r}d_{i21}\right)m_1(m_0\zeta-\lll) -k(m_0+m_1)+1 \geq 0.
$$
The line segment $\Qq_0=\Qq \cap \{x_1 =\lll-\zeta\}$ on the other hand contains at least $k$ integer points if 
$$
\left(\sum_{i=2}^{r}d_{i21}\right)m_0(m_1\zeta+\lll) -k(m_0+m_1)+1 \geq 0.
$$

\smallskip

\noindent
\emph{Case 3.1.1: $m_1\geq 2$.} We have $m_0 \geq 3$ here. Both line segments contain three or more integer points.

\smallskip

\noindent
\emph{Case 3.1.2: $m_1=1$.} We have $m_0 \geq 2$ here.

\smallskip

\noindent
\emph{Case 3.1.2.1: $\zeta =3$.} This forces $\imath=r=2$, $d_{221}=1$. 

\smallskip

\noindent
\emph{Case 3.1.2.1.1: $\lll=1$.} We achieve $\mu=d_{111}=1$. Now both line segments contain two or more points if $m_0\geq 3$. If $m_0 \geq  5$, then $\Qq_0$ contains three or more points. We have a look at the remaining cases.

\smallskip

\noindent
\emph{Case 3.1.2.1.1.1: $m_0=4$.} Here $\Qq_1$ contains three or more integer points if not $d_{011} \equiv 2,3,4 \mod 5$ and $\Qq_0$ if not $d_{011} \equiv 2,4 \mod 5$. We achieve $d_{011} \in \{ 2,4 \}$ by admissible operations. If $d_{011} = 2$, then $(-1,-1,1,0)$ is an inner point of $A_X^c(\lambda_0)$. If $d_{011}=4$, then $(-1,-1,2,0)$ is an inner point of $A_X^c(\lambda_0)$.

\smallskip

\noindent
\emph{Case 3.1.2.1.1.2: $m_0=3$.}  Here $\Qq_1$ or $\Qq_0$ contains three or more integer points if not $d_{011} \equiv 0,2 \mod 4$. But this is not possible due to primitivity of $v_{01}$.

\smallskip

\noindent
\emph{Case 3.1.2.1.1.3: $m_0=2$.} We achieve $d_{011} \in \{ 0,1,2\}$ here. But for $d_{011}=1,2$, the point $(-1,-1,1,0)$ lies inside $A_X^c(\lambda_0)$, while for $d_{011}=0$, we get a canonical singularity from the matrix
$$
P_{28}:=
{\tiny
\begin{bmatrix}
-5 & 4 & 0 & 0 \\
-5 & 0 & 1 & 1 \\
0 & 1 & 0 & 1 \\
-1 & 2 & 0 & 0
\end{bmatrix}.
}
$$
It is easy to check that no column can be added.

\smallskip

\noindent
\emph{Case 3.1.2.1.2: $\lll=2$.} We achieve $\mu=-1$, $d_{111}=0$ and can assume $m_0 \geq 3$ here. But $\Qq_1$ contains at least two and $\Qq_0$ at least three integer points, so we get no canonical singularity.

\smallskip

\noindent
\emph{Case 3.1.2.2: $\zeta = 4$.} Both $\Qq_1$ and $\Qq_0$ contain at least two integer points. This forces $\imath=2$. Since $\gcd(\zeta,\imath,\mu)=1$, we have $\mu \in \{-1,1\}$ and consequently $\lll=2$. Then $Q_0$ contains three or more points if not $m_0=2$. We achieve $\mu=-1$ and $d_{011}=1$. But then $A_X^c(\lambda_1)$ contains the point $(1,0,1,0)$.

\smallskip

\noindent
\emph{Case 3.1.2.3: $\zeta \geq 5$.} At least one of $\Qq_1$ and $\Qq_0$ contains three or more integer points while the other contains two or more. So we get no canonical singularity from this case.

\medskip

\noindent
\emph{Case 3.2: $m_1=0, m_0 \geq 2$.} 

\smallskip

\noindent
\emph{Case 3.2.1: $\lll=1$.} This leads to $\mu=-\imath$ and we require an additional column $v_{12}=(1,0,\ldots,0,d_{121},0)$, while we can assume $d_{111}=0$. Thus $A_X^c(\lambda_0) \cup A_X^c(\lambda_1)$ now contains a polytope of the form $\Qq^*:=\conv ( \nu_0, \nu_1, \nu_{1,2}, \nu_{\tau,2} )$ with
\begin{align*}
\nu_{1,2} & := \left(1,d_{121},0\right), \\
\nu_{\tau,2} & :=  \left(0,\frac{d_{011}+\left(\sum_{i=1}^{r}d_{i21}\right)(m_0\zeta-1)}{m_0\zeta},\frac{\imath}{\zeta}\right).
\end{align*}
Now the line segment $\Qq_0^*=\Qq^* \cap \{x_1=1-\zeta\}$ has the endpoints
\begin{align*}
w_1^* & := \left(1-\zeta, \frac{d_{011}}{m_0},\imath\right) \\
w_2^* & := w_1^* + \frac{\left(\sum_{i=1}^{r}d_{i21}\right) (m_0-1)}{m_0} e_2.
\end{align*}
If this line segment contains two integer points, then with the two integer points in $\Qq^* \cap \{x_1=1\}$ and applying Corollary~\ref{corr:quadrangle}, we get a contradiction. This forces $r=2$ and $d_{121}=d_{221}=1$ as well as $d_{011} \equiv 1 \mod m_0$. Our matrix is of the form
$$
{\tiny
\begin{bmatrix}
1-m_0\zeta & 1 & 1 & 0 & 0 \\
1-m_0\zeta & 0 & 0 & 1 & 1 \\
d_{011} & 0 & 1 & 0 & 1 \\
m_0\imath & 0 & 0 & 0 & 0
\end{bmatrix}.
}
$$
Now $\Qq^*$ lies inside a polytope $\Qq^+=\conv(\nu_0,\nu_1,(1,2,0))$, while $\Qq^+ \setminus \Qq^*$ can not contain any integer point. Since $\Qq^+$ is of type $(ii)$ from Theorem~\ref{th:toric3dim}, we get $\imath=2$, odd $\zeta\geq 3$ and $d_{011}=1$. This gives the series $P_{60}$ of canonical singularities.

\smallskip

\noindent
\emph{Case 3.2.2: $\lll \geq 2$.} 
 Now the line segment $\Qq_0=\Qq \cap \{x_1=\lll-\zeta\}$ has the endpoints
\begin{align*}
w_1 & := \left(\lll-\zeta, \frac{d_{111}(m_0-1)+d_{011}}{m_0},\frac{\imath+\mu( \lll-\zeta)}{\zeta}\right) \\
w_2 & := w_1 + \frac{\left(\sum_{i=2}^{r}d_{i21}\right)\lll (m_0-1)}{m_0} e_2.
\end{align*}
We have a second line segment $\Qq_0'=\Qq \cap \{x_1=\lll-2\zeta\}$ with the endpoints
\begin{align*}
w_1' & := \left(\lll-2\zeta, \frac{d_{111}(m_0-2)+2d_{011}}{m_0},\frac{\imath+\mu( \lll-2\zeta)}{\zeta}\right), \\
w_2' & := w_1 + \frac{\left(\sum_{i=2}^{r}d_{i21}\right)\lll (m_0-2)}{m_0} e_2.
\end{align*}
If both of these line segments have two or more integer points, the singularity cannot be canonical.

\smallskip

\noindent
\emph{Case 3.2.2.1: $m_0 \geq 4$.} Here, both line segments contain two integer points if $\lll \geq 3$. This forces $\lll=r=2$, $d_{221}=1$ and we achieve $d_{111}=1$.
Now we have $A_X^c(\lambda_0)=\Qq^-\cap \{x_1\leq 0\}$, where $\Qq^-$ is the polytope
$$
\left(\!
\left(2-m_0\zeta,2-m_0\zeta,d_{011},\frac{\imath+2\mu}{\zeta}-\mu m_0\right),
\left(2,2,1,\frac{\imath+2\mu }{\zeta}\right),
\left(2,2,3,\frac{\imath+2\mu }{\zeta}\right)\!
\right).
$$
We have two possibilities for $\mu$ now.

\smallskip

\noindent
\emph{Case 3.2.2.1.1: $\mu=(\zeta-\imath)/2$.} We achieve $d_{111}=0$. Since $\Qq^-\cap \{x_1 > 0\}$
can not contain any integer point, $A_X^c(\lambda_0)$ is canonical if and only if $\Qq^-$ is. Then Theorem~\ref{th:toric3dim} forces  $\imath=2$ and $d_{011}=1$. Since $\gcd(\mu,\zeta,\imath)=1$ must hold, we require $\zeta \equiv 0 \mod 4$. We get a canonical singularity from the corresponding matrix
$$
P_{29}:=
{\tiny
\begin{bmatrix}
2-m_0\zeta & 2 & 0 & 0 \\
2-m_0\zeta & 0 & 1 & 1 \\
1 & 0 & 0 & 1 \\
1+\frac{m_0(2-\zeta)}{2} & 1 & 0 & 0
\end{bmatrix}
}.
$$

\smallskip

\noindent
\emph{Case 3.2.2.1.2: $\mu=-\imath/2$.} Here $\imath$ must be even. We achieve $d_{111}=1$. First assume $d_{011} =km_0+2$ with $k \in \ZZ$. But then the point $(1-\zeta,1-\zeta,k+1,\imath/2)$ lies in $A_X^c(\lambda_0)$. If $d_{011}=km_0$, then $\Qq_0$ and $\Qq_0'$ contain two integer points. This is impossible. Now let $d_{011}=km_0+j$ with $j \neq 0,2$. Only for $2d_{011} \equiv 3,4,5 \mod m_0$, the line segment $\Qq_0'$ contains one integer point. But since this point is in the relative interior of $\Qq_0'$ and since $\Qq_0$ has two integer points in its relative interior, Theorem~\ref{th:toric3dim} forces $\imath=2$.

\smallskip

\noindent
\emph{Case 3.2.2.1.2.1: $m_0$ even.} Here we achieve $d_{011}=(4-m_0)/2$. But $A_X^c(\lambda_0)$ contains the point $(1-2\zeta,1-2\zeta,0,2)$.

\smallskip

\noindent
\emph{Case 3.2.2.1.2.2: $m_0$ odd.} Here we achieve $d_{011}=(3-m_0)/2$. If $m_0 \geq 7$, then the point $(1-2\zeta,1-2\zeta,0,2)$ again lies in $A_X^c(\lambda_0)$. For $m_0=5$, we get a canonical singularity with defining matrix $P_{30}$.

\smallskip

\noindent
\emph{Case 3.2.2.2: $m_0 =3$.} Here for $\lll\geq 5$, both $\Qq_0$ and $\Qq_0'$ contain two or more integer points. 

\smallskip

\noindent
\emph{Case 3.2.2.2.1: $\lll=4$.} Here we require $r=2$, $d_{221}=1$ and since $\Qq_0$ contains three or more integer points, also $\imath=2$.  

\smallskip

\noindent
\emph{Case 3.2.2.2.1.1: $\zeta$ odd.} We get $\mu=(\zeta-1)/2$ here and achieve $d_{111}=1$, $d_{011}=0$. The corresponding singularity is canonical with matrix $P_{31}$.

\smallskip

\noindent
\emph{Case 3.2.2.2.1.2: $\zeta$ even.} We get $\mu \in \{(\zeta-2)/4,(3\zeta-2)/4\}$. If $\mu = (\zeta-2)/4$, we achieve $d_{111}=0$, $\zeta=8k+2$, since otherwise $\Qq_0'$ would contain a 2-fold point. But then $\Qq_0$ contains a $2$-fold point. If $\mu = (3\zeta-2)/4$, we achieve $d_{111}=0$, $\zeta=8k+6$, since otherwise $\Qq_0'$ would contain a $2$-fold point. But then $\gcd(\mu,\imath,\zeta)=2\neq 1$, a contradiction.

\smallskip

\noindent
\emph{Case 3.2.2.2.2: $\lll=3$.} Here we require $r=2$, $d_{221}=1$ and since $\Qq_0$ contains two points that do not lie on the line between $\nu_0$ and $\nu_1$ and $\Qq_0'$ contains at least one integer point, $\imath=2$ is forced. Moreover $\zeta$ must be odd, since otherwise due to integrality of $(3\mu+2)/\zeta$, we would have even $\mu$ as well, violating $\gcd(\mu,\zeta,\imath)=1$. We have the possibilities $\mu \in \{(\zeta-2)/3,2(\zeta-1)/3\}$. But in both cases $\Qq_0$ contains a 2-fold point.

\smallskip

\noindent
\emph{Case 3.2.2.2.3: $\lll=2$.} Here $\sum_{i=2}^{r}d_{i21}\geq 3$ is impossible.

\smallskip

\noindent
\emph{Case 3.2.2.2.3.1: $\sum_{i=2}^{r}d_{i21}=2$.} Here $\Qq_0$ contains three or more integer points and $\Qq_0'$ at least one, which forces $\imath=2$. We have the possibilities $\mu \in \{-1,(\zeta-2)/2\}$. If $\mu=-1$, we achieve $d_{111}=1$ and $d_{011}=0$. Since $\sum_{i=2}^{r}d_{i21}=2$ is possible for either $r=2$ and $d_{221}=2$ or $r=3$ and $d_{321}=d_{221}=1$, we get two canonical singularities with defining matrices $P_{32}$ and $P_{33}$ from this. It is easy to check that no column can be added. If $\mu=(\zeta-2)/2$, then due to $\gcd(\zeta,\mu,\imath)=1$, we have $\zeta \in 4\ZZ$. But then $\Qq_0$ contains a $2$-fold point.

\smallskip

\noindent
\emph{Case 3.2.2.2.3.2: $\sum_{i=2}^{r}d_{i21}=1$.} This requires $r=2$, $d_{221}=1$. We have the possibilities $\mu \in \{-\imath/2, (\zeta-\imath)/2\}$.

\smallskip

\noindent
\emph{Case 3.2.2.2.3.2.1: $\mu=-\imath/2$.} We need $\imath$ even and get $d_{111}=1$ here.

\smallskip

\noindent
\emph{Case 3.2.2.2.3.2.1.1: $d_{011} \equiv 0 \mod 3$.} This forces $\imath=2$ since $\Qq_0$ contains two integer points one of which lies inside the polytope spanned by the other and $\nu_0$ and $\nu_1$. We achieve $d_{011}=0$. This gives a canonical singularity. We see that two columns can be added: $(2-3\zeta,2-3\zeta,1,3)$ and $(2-\zeta,2-\zeta, 3,0)$. Both together can not be added since then $(1-\zeta,1-\zeta,1,1)$ would lie inside $A_X^c(\lambda_0)$. We get the defining matrices $P_{34}$, $P_{35}$, $P_{36}$.

\smallskip

\noindent
\emph{Case 3.2.2.2.3.2.1.2: $d_{011} \equiv 1 \mod 3$.} 
For $\imath=2$, this is equivalent to $d_{011} \equiv 0 \mod 3$. So we can assume $\imath>2$.
We write $d_{011}=3k+1$. Here $\Qq$ contains the polytope 
$$
\Qq' = \conv\left(\left(2-3\zeta,3k+1,3\imath/2\right),\left(2,1,0\right), 
\left(2-\zeta,k+2,\imath/2\right)\right)
$$
with integer vertices. This polytope  must fall under Case $(iii)$ of Theorem~\ref{th:toric3dim} if canonical. Let $\alpha,\beta \in \ZZ$ with $-\alpha \zeta  + \beta \imath/2 =1$. This is possible due to $1 = \gcd(\zeta,\imath,\mu)=\gcd(\zeta,\imath/2)$. Then multiplication  from the left with the unimodular matrix
$$
{\tiny
\begin{bmatrix}
\alpha(1+k) & -1 & \beta(1+k) \\
-\alpha k & 1 & -\beta k \\
\imath/2 & 0 & \zeta
\end{bmatrix}
}
$$
transforms $\Qq'$ to the polytope
\begin{align*}
\conv \left(\right. & \left(2\alpha(k+1)+2, -2\alpha k +1, \imath\right), \left(2\alpha(k+1)-1, -2\alpha k +2, \imath\right), \\
&\left. \left(2\alpha(k+1)-1, -2\alpha k +1, \imath\right)\right).
\end{align*}
Then due to Theorem~\ref{th:toric3dim}, we get $k \equiv -\zeta \mod \imath/2$. By admissible operations,  we achieve $k=-\zeta$. Then by subtracting the first from the third row, we achieve $d_{011}=d_{111}=-1$. We have a look at $\Qq \cap \{x_2=0\}$, this polytope contains the points
$$
\left(2-3\zeta/2,0,3\imath/4\right),
\quad
\left(2-\zeta,0,2\imath/2\right),
\quad
\left(\frac{3\zeta-4}{3\zeta-2},0,\frac{3\imath}{6\zeta-4}\right).
$$
But since the first two differ by $\zeta/2$ and the last two by $\frac{3\zeta-5}{3\zeta-2}\geq 1/3$ in the last coordinate, by Corollary~\ref{corr:halfcone} we get $\imath=2$, which we excluded.

\smallskip

\noindent
\emph{Case 3.2.2.2.3.2.1.3: $d_{011} \equiv 2 \mod 3$.} We write $d_{011}=3k-1$. Here $\Qq_0'$ contains the 2-fold point $(2-2\zeta,2-2\zeta,2k,\imath)$.

\smallskip

\noindent
\emph{Case 3.2.2.2.3.2.2: $(\zeta-\imath)/2$.} We get $d_{111}=1$ as well. Since $\gcd(\mu,\zeta,\imath)=1$, we require $\zeta-\imath \equiv 2 \mod 4$.

\smallskip

\noindent
\emph{Case 3.2.2.2.3.2.2.1: $d_{011} \equiv 2 \mod 3$.}
Since $\Qq$ contains a polytope of Type $(iii)$ from Theorem~\ref{th:toric3dim}, we get that $\Qq$ must be lattice equivalent to a polytope $\Qq_s$ with vertices
$$
(0,k,\imath),(1,k+2,\imath), \left( \frac{6\zeta-2}{3\zeta},k+\frac{8-6\zeta}{3\zeta},\imath\right),
$$ 
where we can assume $0\leq k \leq \imath-1$. Now the point $(1,k,\imath-1)$ lies inside $\conv(\Qq_s,0_{\ZZ_3})$, if $2/(k+4)\geq 1/(\imath-1)$. This is the case for any possible $k$ if $\imath\geq 5$. For $\imath=4$, it is not the case only for $k=3$, but this is equivalent to $k=-1$, and then the point $(-1,1,3)$ lies inside. For $\imath=3$ and $k=0,1$, the point $(1,0,2)$ lies inside, while for $k=2$, the point $(1,1,2)$ lies inside. Now $\imath=2$ remains, from which follows $\zeta \in 4\ZZ$. Since $d_{011}$ must be odd, we achieve $d_{011}=-1$. We get a canonical singularity, for which it is easy to check, that no columns can be added. The matrix appears as the case $m_0=3$ of $P_{29}$.

\smallskip

\noindent
\emph{Case 3.2.2.2.3.2.2.2: $d_{011} \equiv 0 \mod 3$.} As we have seen above in the respective case for $\mu-\imath/2$, this forces $\imath=2$. But then $\Qq_0$ contains a 2-fold point.

\smallskip

\noindent
\emph{Case 3.2.2.2.3.2.2.3: $d_{011} \equiv 1 \mod 3$.} If $\imath$ is even, $\Qq_0$ contains a $2$-fold point. So $\imath$ must be odd. But as we have seen in the respective case for $\mu=-\imath/2$, we require $\imath=2$, a contradiction.

\smallskip

\noindent
\emph{Case 3.2.2.3: $m_0 = 2$.}

\smallskip

\noindent
\emph{Case 3.2.2.3.1: $\sum_{i=2}^{r}d_{i21}\geq 3$.}
This forces $\imath=2$. Since $\Qq_0$ contains a 2-fold point otherwise, we require that $\lll$ is even. This determines $\mu$ if we suppose $1 \leq \mu \leq \zeta-1$. By admissible operations, we achieve $d_{011}=d_{111}=1$. We get a series of canonical singularities, where optionally one or two columns of the form $(\lll-\zeta,\lll-\zeta,d_{0i1},(\lll\mu+2)/\zeta-\mu)$ can be added. This gives the defining matrices $P_{37}$, $P_{38}$, $P_{39}$.

\smallskip

\noindent
\emph{Case 3.2.2.3.2: $\sum_{i=2}^{r}d_{i21}=2$.}
The polytope $\Qq$ in any case contains a lattice polytope and at least one integer point in the relative interior, leading to $\imath=2,3$ with Theorem~\ref{th:toric3dim}.

\smallskip

\noindent
\emph{Case 3.2.2.3.2.1: $\imath=3$.} This forces $\lll=2$. By admissible operations, we achieve $d_{111}=0$, since $(2\mu+3)/\zeta$ is odd. We achieve $\mu=(\zeta-3)/2$. Moreover $d_{011}$ must be odd, so we achieve $d_{011} \in \{-1,1\}$. Due to $\gcd(\mu,\zeta,\imath)=1$ and since $\zeta$ must be odd, we achieve $\zeta \equiv 1,-1 \mod 6$.

First let $d_{011}=-1$. Then $\zeta \equiv -1 \mod 6$ is not allowed since $\Qq_0$ would contain a 3-fold point. If $\zeta \equiv 1 \mod 6$, then $A_X^c(\lambda_0)$ contains the point $((4-\zeta)/3,(4-\zeta)/3,0,(7-\zeta)/6)$.

Now let $d_{011}=1$. If $\zeta \equiv 1 \mod 6$, then $A_X^c(\lambda_0)$ contains the point $((4-\zeta)/3,(4-\zeta)/3,1,(7-\zeta)/6)$. For $\zeta \equiv 5 \mod 6$, we get a canonical singularity. Since for $\sum_{i=2}^{r}d_{i21}=2$, we have the two possibilities $r=2, d_{221}=2$ and $r=3, d_{221}=d_{321}=1$, we get two different singularities with defining matrices $P_{40}$, $P_{41}$.

\smallskip

\noindent
\emph{Case 3.2.2.3.2.2: $\imath=2$.} As we have seen in Case 3.2.2.3.1, we require $\lll$ even. The resulting singularities can be seen as part of the series from Case 3.2.2.3.1.

\smallskip

\noindent
\emph{Case 3.2.2.3.3: $\sum_{i=2}^{r}d_{i21}=1$.}
This forces $r=2$, $d_{221}=1$. 

\smallskip

\noindent
\emph{Case 3.2.2.3.3.1: $\imath=2$.}
The resulting singularities must be part of the series from Case 3.2.2.3.1, if $\lll$ is even. So  we assume that $\lll$ is odd. But then $\Qq_0$ contains a  $2$-fold point, which is not possible.

\smallskip

\noindent
\emph{Case 3.2.2.3.3.2: $\imath=3$.} We have $\lll\leq 4$.

\smallskip

\noindent
\emph{Case 3.2.2.3.3.2.1: $\lll=2$.} As in Case 3.2.2.3.2.1, we achieve $d_{111}=0$,  $\mu=(\zeta-3)/2$ and $\zeta \equiv 5 \mod 6$. The possibilities for $d_{011}$ are $1,2$. These two can also be combined, yielding one additional column. Another column of the form $(2,0,1,1)$ can be added to this combination. Now let $d_{011}=1$ not  in combination with $d_{021}=2$. Then in addition $d_{021}=3$ is possible or a column of one of the forms $(2,0,1,1), (2,0,2,1), (2-\zeta,2-\zeta,2,(5-\zeta)/2)$. Any combination of possible columns with $d_{011}=2$ and without $d_{021}=1$ is isomorphic to one of the previous ones. We get the defining matrices $P_{42}-P_{49}$.

\smallskip

\noindent
\emph{Case 3.2.2.3.3.2.2: $\lll=3$.} We achieve $d_{111}=1$. If $3 \nmid \zeta$, then we achieve $\mu=-1$. We achieve $d_{011} \in \{0,1\}$. If $d_{011} = 1$, then the point $(2-\zeta,2-\zeta,1,1)$ lies inside $A_X^c(\lambda_0)$. If $d_{011} = 0$, the resulting singularity is canonical. We can also add the column $(3,0,2,0)$ and get the defining matrices $P_{50}$, $P_{51}$. These singularities are also possible for $3 | \zeta$, $\mu=-1$. So we assume $3 | \zeta$, $\mu \in \{\zeta/3-1,2\zeta/3-1\}$ now. We achieve $d_{111}=0$. 

First assume $\mu=\zeta/3-1$, then we require $\zeta/3 \equiv 0,2 \mod 3$. If $\zeta/3 \equiv 0 \mod 3$, then we achieve $d_{011} \in \{1,2,4,5\}$. But then the point $(2-\zeta/3,2-\zeta/3,1,1-\zeta/9)$ lies inside $A_X^c(\lambda_0)$. If $\zeta/3 \equiv 2 \mod 3$, then we achieve $d_{011} \in \{1,2\}$, both giving isomorphic canonical singularities. For $d_{011}=1$, we can also add the column $(3,0,1,1)$. We get the defining matrices $P_{52}-P_{54}$.

Now finally assume $\mu=2\zeta/3-1$, then we require $\zeta/3 \equiv 0,1 \mod 3$. If $\zeta/3 \equiv 1 \mod 3$, then we achieve $d_{011} \in \{1,2,4,5\}$. But then the point $(2-\zeta/3,2-\zeta/3,1,(15-2\zeta)/9)$ lies inside $A_X^c(\lambda_0)$. If $\zeta/3 \equiv 0 \mod 3$, we achieve $d_{011} \in \{1,2\}$, both giving isomorphic canonical singularities. For $d_{011}=1$, we can also add the column $(3,0,1,2)$. We get the defining matrices $P_{55}-P_{57}$.

\smallskip

\noindent
\emph{Case 3.2.2.3.3.2.3: $\lll=4$.} We have the possibilities $\mu \in \{(\zeta-3)/4, 3(\zeta-1)/4\}$ and can assume $d_{111}=0$. We have $\zeta \equiv 1,2 \mod 3$, otherwise $\gcd(\mu,\zeta,\imath)=3\neq 1$.

\smallskip

\noindent
\emph{Case 3.2.2.3.3.2.3.1: $\mu =(\zeta-3)/4$.} This leads to $\zeta \equiv 7,-1 \mod 12$.  For $\zeta \equiv 7 \mod 12$, we get $d_{011}=1$ and get a canonical singularity with defining matrix $P_{58}$. For $\zeta \equiv -1 \mod 12$, we achieve $d_{011} \in \{1,2,4,5\}$. But then the point $((8-\zeta)/3,(8-\zeta)/3,1,(11-\zeta)/8)$ lies inside $A_X^c(\lambda_0)$.

\smallskip

\noindent
\emph{Case 3.2.2.3.3.2.3.2: $\mu =3(\zeta-1)/4$.} This leads to $\zeta \equiv 1,5 \mod 12$. For $\zeta \equiv 5 \mod 12$, we achieve $d_{011} \in \{1,2,4,5\}$. But then for $\zeta \geq 17$, the point $((8-\zeta)/3,(8-\zeta)/3,1,(9-\zeta)/4)$ lies inside $A_X^c(\lambda_0)$. For $\zeta=5$, the point $(1,0,1,1)$ lies inside $A_X^c(\lambda_1)$. For $\zeta \equiv 1 \mod 12$, we get $d_{011}=1$ and get a canonical singularity with defining matrix $P_{59}$.

\smallskip

\noindent
\emph{Case 3.2.2.3.3.3: $\imath\geq 4$.} Here $\lll \geq 4 $ is impossible due to Theorem~\ref{th:toric3dim}.

\smallskip

\noindent
\emph{Case 3.2.2.3.3.3.1: $\lll=3$.} We require $d_{011} \equiv d_{111} \mod 2$.  Here $\Qq$ contains the polytope
$$
\Qq' = \conv(\nu_0,\nu_1,(3-\zeta,(d_{011}+d_{111})/2+1,(\imath+3\mu)/\zeta-\mu)). 
$$ 
This polytope must fall under Case $(iii)$ of Theorem~\ref{th:toric3dim} if canonical. But then $\Qq$ contains a polytope of the form
$$
\conv\left(\left(k+1/3,\imath\right),\left(k+(5-2\zeta)/(3-2\zeta),\imath\right)\right),
$$
which can not be canonical for $\imath \geq 4$ due to Corollary~\ref{corr:1313cone}.

\smallskip

\noindent
\emph{Case 3.2.2.3.3.3.2: $\lll=2$.} 

\smallskip

\noindent
\emph{Case 3.2.2.3.3.3.2.1: $\imath$ odd.} This leads to odd $\zeta$ and $\mu=(\zeta-\imath)/2$. We achieve $d_{111}=0$. First assume that $d_{011}$ is odd. But then $\Qq$ contains a polytope of the form
$$
\conv\left(\left(k+1/2,\imath\right),\left(k+(3-2\zeta)/(2-2\zeta),\imath\right)\right),
$$
which can not be canonical for $\imath \geq 4$ due to Corollary~\ref{corr:1313cone}. Now assume that $d_{011}$ is even. Then applying Corollary~\ref{corr:halfcone}, we get that $\imath|2$, a contradiction.

\smallskip

\noindent
\emph{Case 3.2.2.3.3.3.2.2: $\imath$ even.} Here $\zeta$ can be odd or even.

\smallskip

\noindent
\emph{Case 3.2.2.3.3.3.2.2.1: $\zeta$ odd.} This forces $\mu = -\imath/2$ and we achieve $d_{111}=1$ and $d_{011}$ odd. Again applying Corollary~\ref{corr:halfcone}, we get that $\imath|2$, a contradiction since $\imath\geq 4$.

\smallskip

\noindent
\emph{Case 3.2.2.3.3.3.2.2.2: $\zeta$ even.} For $\mu = -\imath/2$, we have the same conclusion as in the previous case. So $\mu=(\zeta-\imath)/2$ remains. The proceeding is exactly the same as in Case 3.2.2.3.3.3.2.1.

\medskip

\noindent
\emph{Case 3.3: $m_1=0, m_0 =1$.} 

\smallskip

\noindent
\emph{Case 3.3.1: $\lll=1$.} Here as in Case 3.2.1, the polytope $A_X^c(\lambda_0) \cup A_X^c(\lambda_1)$ now contains a polytope of the form $\Qq^*:=\conv ( \nu_0, \nu_1, \nu_{1,2}, \nu_{\tau,2} )$ with
\begin{align*}
\nu_{1,2} & := \left(1,d_{121},0\right), \\
\nu_{\tau,2} & :=  \left(0,\frac{d_{011}+\left(\sum_{i=1}^{r}d_{i21}\right)(m_0\zeta-1)}{m_0\zeta},\frac{\imath}{\zeta}\right).
\end{align*}
Moreover, $\Qq^*$ lies inside a polytope $\Qq^+=\conv(\nu_0,\nu_1,(1,\sum_{i=1}^{r}d_{i21},0))$, while $\Qq^+ \setminus \Qq^*$ can not contain any integer point. Since $\sum_{i=1}^{r}d_{i21}\geq 2$ and $\Qq^+$ is of type $(iii)$ from Theorem~\ref{th:toric3dim}, we get $\zeta =k\imath+1$ with $k \in  \ZZ_{\geq 1}$ and $\gcd(d_{011},\imath)=1$.

\smallskip

\noindent
\emph{Case 3.3.2: $\lll\geq 2$.}
We first assume $\sum_{i=2}^{r} d_{i21}=1$, i.e. $r=2$ and $d_{i21}=1$. For greater values of $\sum_{i=2}^{r} d_{i21}$, the polytope $\Qq$ contains a respective  polytope $\Qq'$ of a singularity with $\sum_{i=2}^{r} d_{i21}=1$. Thus only if for a configuration of $\imath, \zeta, \lll, \mu$ we find a canonical singularity with $\sum_{i=2}^{r} d_{i21}=1$, it is possible to find one with $\sum_{i=2}^{r} d_{i21}>1$.
Our matrix $P$ has the following form:
$$
{\tiny
\begin{bmatrix}
\lll-\zeta & \lll & 0 & 0 \\
\lll-\zeta & 0 & 1 & 1 \\
d_{011} & d_{111} & 0 & 1 \\
\frac{\imath+\mu\lll}{\zeta}-\mu & \frac{\imath+\mu\lll}{\zeta} & 0 & 0
\end{bmatrix}.
}
$$
Now by admissible operations, multiples of $\imath$ can be added to $d_{011}$ without changing $d_{111}$ and multiples of $\zeta$ and $\mu$ can be added to \emph{the difference between $d_{011}$ and $d_{111}$}. Since $\gcd(\mu,\zeta,\imath)=1$, we achieve $d_{011}=d_{111}:=d \in \ZZ$.
We bring $P^*$ in Smith normal form. Since $\gcd(\mu,\zeta)=1$, we can choose $\alpha,\beta \in \ZZ$ with $\alpha\zeta+\beta\mu=1$. Then
$$
{\tiny
S:=
\begin{bmatrix}
-1 & 1 & \lll-\zeta & 0 \\
0 & 0 & 1 & 0 \\
0 & 0 & -1 & 1 \\
\frac{\beta \imath +\lll}{\zeta} & 1-\frac{\beta \imath +\lll}{\zeta} & d+\lll\left(1-\frac{\beta \imath +\lll}{\zeta}\right) & -d
\end{bmatrix},
\quad
T:=
\begin{bmatrix}
\alpha & 0 & 0 & -\mu \\
0 & 1 & 0 & 0 \\
0 & 0 & 1 & 0 \\
\beta & 0 & 0 & \zeta
\end{bmatrix}
}
$$
are unimodular and $S\cdot P^* \cdot T = {\rm Diag}(1,1,1,\imath)$.
This means that the class group is $\ZZ/\imath\ZZ$ and the total coordinate space given by the equation $T_0^{\lll-\zeta}+T_1^{\lll}+T_2T_3$ is the index one cover, while
$$
Q=
\begin{bmatrix}
\frac{\beta \imath +\lll}{\zeta} & 1-\frac{\beta \imath +\lll}{\zeta} & d+\lll\left(1-\frac{\beta \imath +\lll}{\zeta}\right) & -d
\end{bmatrix} \in (\ZZ/\imath\ZZ)^4
$$ 
is the grading matrix. Moreover the only integer points in $\partial A_X^c$ are  the columns of the matrix $P$. This means that the corresponding singularity is canonical if and only if it is terminal. We thus can use the classification of Mori~\cite{mori} to determine the canonical ones of this type. The relevant theorem of~\cite{mori} is Theorem 12 due to the form of the equation of the index one cover. There are three possible cases.

\smallskip

\noindent
\emph{Case 3.3.2.1: Case (1) of~\cite[Thm. 12]{mori} holds.} Since we can exchange the data of the first two leaves, we can assume $\lll-\zeta=-k\imath$ for some $k \in \ZZ\geq 1$. Thus from~\cite[Thm. 12]{mori} we get that $\gcd(d,\imath)=1$ must hold and $\frac{\beta \imath +\lll}{\zeta} \equiv 1 \mod \imath$, so $\frac{1-\mu k}{\zeta}$ must be integer. So $1 =\gcd(k,\zeta)=\gcd(k,k\imath+\lll)=\gcd(k,\lll)$ must hold.
Our matrix now has the form
$$
{\tiny
\begin{bmatrix}
-k\imath & \lll & 0 & 0 \\
-k\imath & 0 & 1 & 1 \\
d & d & 0 & 1 \\
\imath\frac{1-\mu k}{\lll+k \imath} & \frac{\imath+\mu \lll}{\lll+k \imath} & 0 & 0
\end{bmatrix}.
}
$$
It is now clear that $\sum_{i=2}^{r} d_{i21}$ can be augmented in this case in any way  perhaps losing terminality but keeping canonicity. Also in $\lambda_1$ columns of the form $(-k\imath,\ldots,-k\imath,d,\frac{\imath+\mu \lll}{\lll+k \imath})$ can be added as well as  columns of the form $(\lll,0,\ldots,0,d_{021},\imath\frac{1-\mu k}{\lll+k \imath})$ in $\lambda_0$, as long as for all $\delta \in \ZZ \cap [d,d_{021}]$, we have $\gcd(\delta,\imath)=1$. We get four series $P_{62}$-$P_{65}$ of canonical singularities.

\smallskip

\noindent
\emph{Case 3.3.2.2: Case (2) of~\cite[Thm. 12]{mori} holds.} Here we have $\imath=4$, require $d=-1$ and a look at $Q$ tells us that $\lll=2$ must hold and $2|(2\mu+4)/\zeta$, so we achieve $\mu=-2$. Thus $\zeta$ must be odd. It is easy to check that no column can be added to the resulting matrix $P_{61}$.

\smallskip

\noindent
\emph{Case 3.3.2.3: Case (3) of~\cite[Thm. 12]{mori} holds.} 
Let without restriction $\lll-\zeta=-2k$ for some $k \in \ZZ_{\geq 1}$ and $(1-\mu k)/\zeta$ integer.
So $1=\gcd(k,\zeta)=\gcd(k,2k+\lll)=\gcd(k,l)$ follows. That means we are in Case (1) of~\cite[Thm. 12]{mori}.
The determination of defining matrices $P$ for canonical threefold singularities with two-torus action is complete.
\end{proof}

\subsection{Proof of Theorems and Corollaries~\ref{th:class} - \ref{cor:cdvroot}}

Propositions~\ref{prop:zeta=1}-\ref{prop:zeta(l0,l1,1)} provide   the defining matrices $P$ of the non-toric canonical threefold singularities with two-torus action, while Proposition~\ref{prop:ClXQtoric} provides  the Cox rings and class groups of the toric ones. Thus the remaining task for proving Theorems~\ref{th:class} and~\ref{th:class2} is to determine the Cox rings and class groups of those non-toric ones, that do not belong to a "many parameter series".

\begin{proof}[Proof of Theorems~\ref{th:class} and~\ref{th:class2}]
The Cox ring (without the grading) can be read off directly from the defining matrix $P$, see Construction~\ref{constr:RAP0}.
As already stated in the proof of Proposition~\ref{prop:ClXQtoric}, to determine class group and grading matrix $Q$, we need to find unimodular matrices $V$ and $W$, so that $V \cdot P^* \cdot W = S$ is in Smith normal form. The cases without parameters can be done by computer, e.g.~using~\cite{HaKe}.
 
The few parameter series can be done by computer as well - with a little more effort. Bear in mind that we have to find Smith normal forms over $\ZZ[x_1,\ldots,x_\mathfrak{n}]$ with $\mathfrak{n} \in \{1,2,3\}$ here. But this is not a principal ideal domain, so it is not guaranteed that a Smith normal form exists. In other words, it is not clear a priori that for a defining matrix $P$ with parameters, we get polynomial formulae for the class group and the grading matrix $Q$. 
But we can compute the Smith normal form for several integer values of the parameters and interpolate the entries of respective $V$, $W$, $S$ by polynomials. We then take the interpolating polynomials as entries of matrices $\mathbf{V}$, $\mathbf{W}$, $\mathbf{S}$ and check a posteriori if these matrices are well defined.

This works for all few parameter series and we are done with the computation of the class groups and grading matrices. 
 \end{proof}

\begin{remark}
The following list provides information about which matrices from Propositions~\ref{prop:zeta=1}-\ref{prop:zeta(l0,l1,1)} correspond to the singularities from Theorems~\ref{th:class} and~\ref{th:class2}. Bear in mind that the toric  singularities with Nos. 1-5 from Theorem~\ref{th:class} correspond to the cones given by the Cases  $(ii)-(vi)$ of Theorem~\ref{th:toric3dim}.

\begin{longtable}{cccccccccccccc}
 $P_{1-4}$ & $P_5$ & $P_{6-12}$ & $P_{13}$ & $P_{14}$ & $P_{15,16}$  & $P_{17-19}$ & $P_{20}$ & $P_{21}$  \\
 \hline
 6-9 & 26 & 10-16 & 56a & 56b & 27,28 & 17-19 & 57a & 29 
 \\
 \\
$P_{22-25}$ & $P_{27,28}$ & $P_{29-31}$ & $P_{32,33}$ & $P_{34}$ & $P_{35,36}$ & $P_{37-39}$ & $P_{40,41}$ & $P_{42}$ 
\\
\hline
 20-23 & 24,25 & 30-32 & 34,35 & 33 & 36,37 & 58a-c & 39,40 & 38 
\\
 \\
  $P_{43}$ & $P_{44-51}$ & $P_{52-54}$ & $P_{55-57}$ & $P_{58,59}$ & $P_{60,61}$ & $P_{62-65}$
 \\
 \hline
  41 & 42-49 & 50-$52_{\zeta \in 9\ZZ-3}$ & 50-$52_{\zeta \in 9\ZZ}$ & 53 & 54,55 & 59a-d
\end{longtable}
\end{remark}

\begin{proof}[Proof of Corollary~\ref{cor:terminal}]
To check which of the canonical singularities are terminal, we have a look at $\partial A_X^c$ in each case. If there is an integer point different from the columns of the defining matrix $P$, the singularity $X(P)$ is not terminal.
\end{proof}

\begin{proof}[Proof of Theorem~\ref{th:CRI}]
For the canonical threefold singularities, the Cox rings can be read off directly from the defining matrix $P$. For the occuring compound du Val singularities, the complete Cox ring iteration is given by~\cite[Th. 1.8]{ltpticr}. Cox rings of the singularities $Y_i$ can be computed using~\cite[Rem. 6.7]{ltpticr}. Factoriality is characterized by~\cite[Cor. 5.8 (i)]{ltpticr}. This gives us all necessary information to draw the Cox ring iteration tree.
\end{proof}

\begin{proof}[Proof of Corollary~\ref{cor:cdvroot}]
We only have to show the generalized compound du Val property for the singularities $Y_1,Y_3,Y_5,Y_7$. We consider the hyperplane section given by $T_1=T_3$ and are done. The second assertion follows from~\cite[Th. 1.6]{ltpticr}.
\end{proof}


\begin{thebibliography}{99999999}

\bibitem{ltpticr}
I. Arzhantsev, L. Braun, J. Hausen, M. Wrobel :
\emph{Log terminal singularities, platonic tuples and iteration of Cox rings.}  
European Journal of Mathematics \textbf{4} (2018), no. 1, 242--312.

\bibitem{michelepaper}
B.~Bechtold, J.~Hausen, E.~Huggenberger, M.~Nicolussi:
\emph{On terminal Fano 3-folds with 2-torus action.}
Int. Math. Res. Not. \textbf{5} (2016), 1563--1602. 


\bibitem{Bries}
Brieskorn, E.:
\emph{Rationale Singularit\"aten komplexer Fl\"achen.}(German) 
Invent. Math. \textbf{4}, (1967/1968) 336--358. 

\bibitem{CLS}
D.~Cox, J.B.~Little, H.K.~Schenck:
\emph{Toric varieties.} 
Graduate Studies in Mathematics \textbf{124}, 
American Mathematical Society, 
Providence, RI, 2011. 

\bibitem{Da}
D. I.~Dais:
\emph{Resolving 3-dimensional toric singularities.}
Geometry of toric varieties,  
S\'{e}min. Congr. \textbf{6} (2002), Soc. Math. France, Paris, 155--186. 




\bibitem{HaKe}
J.~Hausen, S.~Keicher:
\emph{A software package for Mori dream spaces.}
LMS J. Comput. Math. \textbf{18} (2015), no. 1, 647--659. 

\bibitem{hausenherpp}
J.~Hausen, E.~Herppich:
\emph{Factorially graded rings of complexity one.}
Torsors, \'etale homotopy and applications to rational points, 
 London Math. Soc. Lecture Note Ser. \textbf{405} (2013), 
Cambridge Univ. Press, Cambridge,  414--428. 

\bibitem{hausensüß}
J.~Hausen, H.~S\"u\ss:
\emph{The Cox ring of an algebraic variety with torus action.}
Adv. Math. \textbf{225} (2010), no. 2, 977--1012.

\bibitem{hausenwrob1}
J.~Hausen, M.~Wrobel:
\emph{Non-complete rational $T$-varieties of complexity one}. 
Mathematische Nachrichten \textbf{290.5-6} (2017): 815--826.

\bibitem{ocri}
J.~Hausen, M.~Wrobel:
\emph{On iteration of Cox rings.}
Journal of Pure and Applied Algebra
\textbf{222}, no. 9, (2018), 2737--2745.


\bibitem{II}
M. Ishida, N. Iwashita,: 
\textit{Canonical cyclic quotient singularities of dimension three}. 
Complex analytic singularities, Adv. Stud. Pure Math.  \textbf{8} (1987), North-Holland, Amsterdam, 135--151.

\bibitem{kollarmori} 
J. Kollar, S. Mori: 
\textit{Birational geometry of algebraic varieties 
With the collaboration of C. H. Clemens and A. Corti. Translated from the 1998 Japanese original.}
Cambridge Tracts in Mathematics \textbf{134}, Cambridge University Press, Cambridge, 1998.

\bibitem{mark}
D. Markushevich:
\emph{Canonical singularities of three-dimensional hypersurfaces.} (Russian),
Izv. Akad. Nauk SSSR Ser. Mat. \textbf{49} (1985), no. 2, 334--368.
English translation: Math. USSR-Izv. \textbf{26} (1986), no. 2, 315--345.


\bibitem{mori}
S.~Mori. 
\emph{On 3-dimensional terminal singularities.}
Nagoya Mathematical Journal \textbf{98} (1985), 43--66.

\bibitem{reidcan3fold} 
M.~Reid: 
\emph{Canonical 3-folds}. 
Journ\'ees de G\'eometrie Alg\'ebrique d'Angers, Juillet 1979/Algebraic Geometry, Angers, 1979, 273--310, Sijthoff \& Noordhoff, Alphen aan den Rijn-Germantown, Md., 1980.

\bibitem{reidminmod3fold}
M.~Reid:
\emph{Minimal models of canonical 3-folds.}
Algebraic varieties and analytic varieties (Tokyo, 1981), 
131--180, Adv. Stud. Pure Math. \textbf{1} (1983), 
North-Holland, Amsterdam.

\bibitem{reidYPG} 
M.~Reid:
\emph{Young person's guide to canonical singularities.}
 Algebraic geometry, Bowdoin, 1985 (Brunswick, Maine, 1985), 
 Proc. Sympos. Pure Math. \textbf{46}, Providence, R.I., 
 American Mathematical Society, 345--414.
 
 


\end{thebibliography}
\end{document}